\colorlet{green}{black!30!green} 
\tikzstyle directed=[postaction={decorate,decoration={markings,
    mark=at position #1 with {\arrow{>}}}}]
\tikzstyle rdirected=[postaction={decorate,decoration={markings,
    mark=at position #1 with {\arrow{<}}}}]
\tikzset{anchorbase/.style={baseline={([yshift=-0.5ex]current bounding box.center)}}}
\tikzset{
    partial ellipse/.style args={#1:#2:#3}{
        insert path={+ (#1:#3) arc (#1:#2:#3)}
    }
}
\newcommand{\rep}{\Rep(\glnn{N})}
\newcommand{\reptwo}{\Rep(\slnn{2})}
\newcommand{\sessAWeb}{N\cat{A}\cat{Web}_\mathrm{s}^{+,\mathrm{ess}}}
\newcommand{\repp}{\Rep^{+}(\glnn{N})}
\newcommand{\reph}{\Rep(\h)}
\newcommand{\rephp}{\Rep^{+}(\h)}
\newcommand{\Sym}{\mathrm{Sym}}
\def\X{{\mathbbm X}}
\def\h{{\mathfrak h}}
\def\1{\mathbbm{1}}%
\newtheorem{theorem}{Theorem}
\newtheorem{corollary}[theorem]{Corollary}
\newtheorem{definition}[theorem]{Definition}
\newtheorem{lemma}[theorem]{Lemma}
\newtheorem{proposition}[theorem]{Proposition}
\newtheorem{question}[theorem]{Question}
\newtheorem{remark}[theorem]{Remark}
\newcommand{\slnn}[1]{\mf{sl}_{#1}}
\newcommand{\glnn}[1]{\mf{gl}_{#1}}
\newcommand{\sh}{w}
\newcommand{\cat}[1]{\ensuremath{\mbox{\bfseries {\upshape {#1}}}}}
\newcommand{\diagrep}{\phi} 
\newcommand{\pTr}{\mathrm{pTr}}
\newcommand{\bVn}{\bigwedge}
\newcommand{\Kar}{\operatorname{Kar}}
\newcommand{\sym}{\mathrm{Sym}}
\newcommand{\Web}[1][]{#1\cat{Web}}
\newcommand{\Webp}[1][]{#1\cat{Web}^{+}}
\newcommand{\Webq}[1][]{#1\cat{Web}_{q}}
\newcommand{\wrap}{D}
\newcommand{\wrapi}{D^{-1}}
\newcommand{\AWeb}[1][]{#1\cat{A}\cat{Web}}
\newcommand{\AWebq}[1][]{#1\cat{A}\cat{Web}_{q}}
\newcommand{\AWebp}[1][]{#1\AWeb^+}
\newcommand{\essAWeb}[1][]{#1\AWeb^{\mathrm{ess}}}
\newcommand{\essAWebp}[1][]{#1\AWeb^{\mathrm{ess},+}}
\newcommand{\essbAWeb}[1][]{\overline{#1\AWeb}^{\mathrm{ess}}}
\newcommand{\essbAWebp}[1][]{\overline{#1\AWeb}^{\mathrm{ess},+}}
\newcommand{\cev}[1]{\reflectbox{\ensuremath{\vec{\reflectbox{\ensuremath{#1}}}}}}
\newcommand{\Rep}{\cat{Rep}}
\newcommand{\Hom}{{\rm Hom}}
\renewcommand{\to}{\rightarrow}
\newcommand{\id}{{\rm id}}
\def\mf{\mathfrak}
\def\shuffle{\,\raise 1pt\hbox{$\scriptscriptstyle\cup{\mskip
               -4mu}\cup$}\,}
\numberwithin{equation}{section}
\def\comm#1{}%
\def\emph#1{{\sl #1\/}}
\let\tilde=\widetilde
\let\phi=\varphi
\let\theta=\vartheta
\let\epsilon=\varepsilon
\def\C{{\mathbbm C}}
\def\N{{\mathbbm N}}
\def\R{{\mathbbm R}}
\def\Z{{\mathbbm Z}}
\def\1{\mathbbm{1}}%
\def\la{\langle}
\def\ra{\rangle}
\begin{document}
%

\title[Extremal weight projectors II]
{
Extremal weight projectors II}

\author{Hoel Queffelec}
 \address{IMAG\\ Univ. Montpellier\\ CNRS \\ Montpellier \\ France}
\email{hoel.queffelec@umontpellier.fr}

 \author{Paul Wedrich}
\address{Mathematical Sciences Institute\\ The Australian National University \\ Australia}
\email{p.wedrich@gmail.com}
\urladdr{paul.wedrich.at}

\begin{abstract}
In previous work, we have constructed diagrammatic idempotents in an affine extension of the Temperley--Lieb category, which describe extremal weight projectors for $\slnn{2}$, and which categorify Chebyshev polynomials of the first kind. In this paper, we generalize the construction of extremal weight projectors to the case of $\glnn{N}$ for $N \geq 2$, with a view towards categorifying the corresponding torus skein algebras via Khovanov--Rozansky link homology. As by-products, we obtain compatible diagrammatic presentations of the representation categories of $\glnn{N}$ and its Cartan subalgebra, and a categorification of power-sum symmetric polynomials.
\end{abstract}

\maketitle

\section{Introduction}

The topological motivation for this article is the search for an extension of Khovanov--Rozansky link homologies \cite{Kh1, KhR} to invariants of links in 3-manifolds other than $\R^3$. Since quantum link homologies, in their mode of definition and computation, currently depend on the presentation of links as 2-dimensional projections, the most accessible 3-manifolds in this endeavor are thickened surfaces $\Sigma\times I$. Just as Khovanov homology categorifies the Jones polynomial, the surface link homologies should categorify surface skein modules \cite{Prz1,Tur, APS}. These admit an algebra structure induced by stacking links, with distinguished bases (conjecturally) satisfying strong integrality and positivity properties \cite{FG, Thu, Le, MS}. They provide quantizations of surface character varieties that play an important role in quantum Teichm{\"u}ller theory \cite{BW1}. Both aspects make such \textit{skein algebras} prime targets for categorification via link homology technology.
\medskip

In order to categorify quantum invariants, it is useful to have explicit, combinatorial or diagrammatic descriptions of underlying representation categories. For example, Khovanov homology can be built from a categorification of the Temperley--Lieb category, which describes the representation category of $U_q(\slnn{2})$, and all incarnations of Khovanov--Rozansky homology implicitly employ a categorification of the \textit{MOY} or \textit{web calculus} for the representation category of $U_q(\glnn{N})$ \cite{MOY, CKM}.
\medskip

The main purpose of the present paper is to provide representation-theoretic tools for categorifying skein algebras. The key novelty when working with skein algebras is that their proposed distinguished bases are obtained from links colored not by irreducible representations of the corresponding quantum group, but colored only by their \textit{extremal weight spaces}. In \cite[Section 1.3]{QW} we have proposed a strategy for lifting these colorings to the categorified level of toric link homologies, which is inspired by Khovanov's categorification of the colored Jones polynomial~\cite{Kh7}. The main tool necessary in this approach is a diagrammatic presentation of the representation category of a Cartan subalgebra $U(\mathfrak{h})\subset U(\glnn{N})$, which is the first result in this paper.

\begin{theorem}[Corollary~\ref{cor:diagrep}]
\label{thm:1} There exists a diagrammatic presentation for the representation category of $U(\mathfrak{h})$, given by an affine extension $\essAWeb[N]$ of the web calculus for $U(\glnn{N})$ in which $\bVn^k(V)$-labeled essential circles are set to zero for $0<k<N$.
\end{theorem}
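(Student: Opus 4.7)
The plan is to exhibit an equivalence of symmetric monoidal categories (likely up to additive/idempotent closure) between $\essAWeb[N]$ and a suitable full subcategory of $\Rep(U(\mathfrak{h}))$ generated by the restrictions of the exterior powers $\bVn^k V$. The natural candidate for the comparison functor is the composition
\[
\Phi \maps \essAWeb[N] \xrightarrow{\;\diagrep_{\glnn{N}}\;} \Rep(U(\glnn{N})) \xrightarrow{\;\Res\;} \Rep(U(\mathfrak{h})),
\]
where $\diagrep_{\glnn{N}}$ extends the CKM web functor over the affine generators, and $\Res$ is pullback along $\mathfrak{h} \hookrightarrow \glnn{N}$. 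On objects, a sequence of labels is sent to the restriction of the corresponding tensor product of exterior powers; on morphisms, planar webs go via CKM and the affine wrap generators go to the natural transformations they categorify.

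The first substantive step is to verify that $\Phi$ descends through the quotient by $\bVn^k V$-labeled essential circles for $0 < k < N$. Under $\Res$, each $\bVn^k V$ splits as the direct sum $\bigoplus_{|S|=k} L_S$ of one-dimensional weight spaces of weight $\sum_{i \in S}\epsilon_i$. The image of an essential circle labeled $\bVn^k V$ factors through the (extremal) $\mathfrak{h}$-invariants of $\bVn^k V \otimes (\bVn^k V)^*$ that can be realized as a natural endomorphism of the identity of the tensor subcategory, and since no weight of $\bVn^k V$ is zero for $0 < k < N$ (every index set $S$ of size strictly between $0$ and $N$ gives a nonzero $\mathfrak{h}$-character), this natural transformation is forced to vanish. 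For $k=0, N$ the relevant weight is zero and the essential circles survive, as required.

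It then remains to check that $\Phi$ is essentially surjective, full, and faithful. Essential surjectivity is straightforward: every finite-dimensional $\mathfrak{h}$-module of integer weights decomposes into weight lines, and each line arises as a summand of some tensor product of $\bVn^k V$'s (allowing duals, which the web calculus already provides). Fullness reduces to showing that any $\mathfrak{h}$-intertwiner between weight-line decompositions of tensors of exterior powers can be realized by a web: weight considerations decouple the hom space into a sum over weight preserving assignments, and for each assignment one constructs an explicit web representative using the essential-circle-free generators.

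The main obstacle is \emph{faithfulness}, i.e.\ showing that the relations in $\essAWeb[N]$ (affine web relations together with the essential circle quotient) are complete for the restriction to $\mathfrak{h}$. The natural strategy is to produce an explicit spanning set for the hom spaces of $\essAWeb[N]$ via a straightening algorithm that reduces every diagram to a normal form indexed by weight-assignment data, and then to match the cardinality of this spanning set against the dimension of the corresponding hom space in $\Rep(U(\mathfrak{h}))$, which is computable directly from weight multiplicities. Combined with fullness, this dimension count forces the spanning set to be a basis and $\Phi$ to be faithful. Much of this will presumably piggyback on the structural results about $\essAWeb[N]$ established earlier in the paper (notably the presentation used to define it), so the role of the corollary is to package those results into the clean equivalence statement above.
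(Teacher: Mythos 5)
Your overall architecture (restrict the CKM functor along $\h\hookrightarrow\glnn{N}$, check the essential-circle relations are respected, prove fullness and faithfulness, then pass to the Karoubi envelope) is the same as the paper's. But there is a genuine gap at the descent step, and it propagates. You never say what the wrap generator $\wrap$ is sent to, and your stated reason for the vanishing of the $k$-labeled essential circles is wrong: a $\bVn^k V$-labeled essential circle is an endomorphism of the tensor unit, hence a scalar, and that scalar is $\sum_{|S|=k}\prod_{i\in S}\gamma_i=e_k(\gamma_1,\dots,\gamma_N)$, where the $\gamma_i$ are the eigenvalues of $\phi(\wrap)$ on the weight lines of $V$. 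This is not forced to vanish by any weight consideration --- the space of $\h$-invariants in $\bVn^k V\otimes(\bVn^k V)^*$ is nonzero (it contains $\sum_S v_S\otimes v_S^*$), and if one took $\phi(\wrap)=\id$ the circle would evaluate to $\binom{N}{k}\neq 0$. The vanishing for $0<k<N$ holds only because the paper \emph{chooses} $\phi(\wrap)(v_j)=\zeta^j v_j$ with $\zeta=e^{2\pi i/N}$, so that the characteristic polynomial is $X^N-1$ and $e_k(\zeta,\zeta^2,\dots,\zeta^N)=0$ exactly for $0<k<N$, while $e_N=\pm 1$ keeps the $N$-labeled circle invertible.

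This choice is not a technicality you can postpone: it is also what makes fullness work. The paper's fullness argument bends every hom space to $\Hom(\emptyset,\vec{l}\otimes\cev{k})$ and then realizes the projection $V\twoheadrightarrow\C\la v_j\ra\hookrightarrow V$ as a polynomial $P_j(\wrap)$ in the wrap --- which requires $\phi(\wrap)$ to have $N$ \emph{distinct} eigenvalues. Your claim that ``for each weight assignment one constructs an explicit web representative using the essential-circle-free generators'' cannot be carried out with planar webs alone; the only source of weight-line projectors is the wrap, and only for a separable choice of its spectrum. Your faithfulness strategy (normal form / spanning set mapped to a linearly independent family) does match the paper's, but it too is phrased in terms of the idempotents $P_{\epsilon_i}(w_i)$, so it presupposes the same choice. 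In short: the missing idea is the specific normalization $\wrap\mapsto\mathrm{diag}(\zeta,\dots,\zeta^N)$, which is simultaneously what makes the quotient relations hold and what supplies the projectors needed for fullness and the normal form.
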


We emphasize that we deal with universal enveloping algebras, rather than their quantizations. Those are intended to control the $\C$-linear \textit{morphism spaces} in our skein module categorifications \cite[Section 1.3]{QW}, whereas the expected quantum parameter $q$ is promoted to a grading on \textit{objects}. This is related to the fact that \textit{annular Khovanov homology} \cite{GLW} has a natural action of $U(\slnn{2})$, not of $U_q(\slnn{2})$. An additional quantization seems possible, see Remark~\ref{rem:q}, but will not play a role here. Note that the diagrammatic presentation from Theorem~\ref{thm:1} is compatible with one that follows from the work of Cautis--Kamnitzer \cite[Section 2.6]{CK_ann}.
\medskip

In \cite{QW}, we have constructed a diagrammatic presentation as in Theorem~\ref{thm:1} for the case of $\slnn{2}$, and identified  idempotent morphisms that encode the projections onto extremal weight spaces in finite-dimensional $U(\slnn{2})$-representations. These \textit{extremal weight projectors} are analogous to, but finer than Jones--Wenzl projectors~\cite{Jon2,Wen}, and they can also be defined recursively. 
In this article, we identify and study extremal weight projectors for $\glnn{N}$. 

\begin{theorem} The diagrammatic category $\essAWeb[N]$ contains recursively defined idempotents that correspond to projections onto the extremal weight spaces in the $U(\glnn{N})$-representations $\sym^k(V)$. 
\end{theorem}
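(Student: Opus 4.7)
The plan is to construct, inductively on $k$, elements $P_k \in \End_{\essAWeb[N]}(\sym^k(V))$ whose image under the diagrammatic presentation functor of Theorem~\ref{thm:1} is precisely the projection onto the sum of extremal weight spaces of the $U(\h)$-module underlying $\sym^k(V)$. Concretely, these extremal weight spaces are the $N$ one-dimensional lines spanned by the pure powers $v_i^k$ of the weight basis of $V$, carrying weights $k\epsilon_i$; their direct sum is a $U(\h)$-subrepresentation and therefore corresponds to an honest idempotent endomorphism on the representation-theoretic side.

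First I would pin down the decategorified skeleton. The character of the extremal weight subspace of $\sym^k(V)$ is the power-sum symmetric polynomial $p_k = \sum_{i=1}^N x_i^k$, and Newton's identities express $p_k$ as an explicit integer linear combination of products $p_{k-j} e_j$, with $e_j$ the elementary symmetric polynomials. Under the isomorphism between symmetric polynomials and the Grothendieck ring of $\Rep(\glnn{N})$, $e_j$ corresponds to the class of $\bVn^j(V)$, so Newton's identities provide a recursion in $\mathrm{K}_0$ that singles out a class categorifying $p_k$.

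Next I would lift this recursion into $\essAWeb[N]$. The ``essential'' $V$-labeled circle, obtained via the affine wrap generator $\wrap$ analogous to the one from~\cite{QW}, plays the role of $P_1$, and higher $P_k$ are assembled from split and merge webs between $\sym^k(V)$ and the tensor factorizations $\sym^{k-j}(V) \otimes \bVn^j(V)$, weighted by the Newton coefficients, with the inductively defined $P_{k-j}$ inserted on the symmetric factor. The key structural input is that essential circles labeled by $\bVn^j(V)$ with $0<j<N$ are set to zero in $\essAWeb[N]$, while the $\bVn^N(V)$-labeled essential circle is invertible; this is exactly what is needed to make Newton's recursion collapse onto the extremal weight terms.

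Finally I would verify two things: (a) that $P_k \circ P_k = P_k$ inside $\essAWeb[N]$, and (b) that $P_k$ maps to the extremal weight projector under the functor of Theorem~\ref{thm:1}. Claim (b) reduces, thanks to the faithfulness provided by Theorem~\ref{thm:1}, to checking that $P_k$ acts as the identity on each $v_i^k$ and annihilates all other weight vectors, which is a character-level match combined with a direct evaluation. The main obstacle is (a): diagrammatically proving idempotency requires iterated use of the essential-circle-killing relations, square-switch and web associativity identities, and careful bookkeeping of the Newton coefficients to force the cross-terms in $P_k \circ P_k$ to cancel. I expect this to be a direct but combinatorially heavier analog of the Chebyshev cancellations in~\cite{QW}, with the new subtlety being the need to manage $N$ extremal weights and mixed contributions from $\bVn^j(V)$-strands for all $0 \leq j \leq N$, rather than just the two extremal weights of the $\slnn{2}$ case.
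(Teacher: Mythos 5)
Your overall architecture (define the idempotents recursively, check idempotency diagrammatically, identify the image under $\phi$ by a weight computation) matches the paper's, but the recursion you propose is genuinely different from the one actually used, and it has a gap I do not see how to repair. The paper's projectors $T_m$ live on $m$ parallel $1$-labelled strands and are built by averaging over wraps: $T_2=\tfrac{1}{N}\sum_{k=0}^{N-1}\wrap^{-k}\otimes\wrap^{k}$ and $T_{m+1}=(\id_{m-1}\otimes T_2)(T_m\otimes\id_1)$; the vanishing sums of $N$-th roots of unity then kill every non-extremal weight vector. Your recursion instead routes through split--merge composites $\sym^k(V)\to\sym^{k-j}(V)\otimes\bVn^j V\to\sym^k(V)$ weighted by Newton coefficients. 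By the Pieri rule, $\sym^{k-j}(V)\otimes\bVn^j V$ contains a copy of $\sym^k(V)$ only for $j\in\{0,1\}$, so for $j\geq 2$ these composites are identically zero and the Newton terms you need do not survive. More structurally, any endomorphism of $\sym^k(V)$ assembled from planar split and merge webs, with the wrap confined to $P_1$ (which in any case should be $\id_V$, since every weight of $V$ is extremal --- the $1$-labelled \emph{essential circle} is zero in $\essAWeb[N]$ by definition of the quotient), is a morphism of $U(\glnn{N})$-representations and hence a scalar on the irreducible $\sym^k(V)$ by Schur's lemma; it can never be the proper projection onto the extremal weight lines. The wrap must enter the recursion at every stage, which is exactly what the averaging formula accomplishes.

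Two further cautions. Newton's identities are signed identities in $K_0$; the paper categorifies them (Theorem~\ref{thm:newton}) as a zig-zag isomorphism between direct sums in the Karoubi envelope, proved \emph{after} and \emph{using} the projectors $T_m$, not as a construction of them. And invoking faithfulness of $\phi$ to get idempotency is circular in the paper's logical order: Theorem~\ref{thm:faithfulness} is itself proved using crossing absorption for $T_2$ (the hard Lemma~\ref{lem:crossabs}) and the wrap eigenprojectors, so idempotency and absorption must be established diagrammatically first --- and the paper does so in the central extension $\essbAWeb[N]$, where no faithfulness statement is even available.
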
 
In fact we prove a slightly stronger version of this theorem in a central extension of $\essAWeb[N]$, which has an additional grading by winding number, that will be important for categorifying skein modules, see Theorem~\ref{thm:Tm}.
\medskip 

The $\slnn{2}$ extremal weight projectors can be considered as categorifications of Chebyshev polynomials of the first kind by decategorifying their images to elements of the representation ring $K_0(\reptwo)\cong \Z[X]$. Analogously, the extremal weight projectors for $\glnn{N}$ categorify power-sum symmetric polynomials in the representation ring of $\glnn{N}$. Such categorifications of classical orthogonal polynomials are of independent interest, see e.g. \cite{KSa}. Motivated by this, we prove a categorified Newton identity.

\begin{theorem}[Theorem~\ref{thm:newton}] The extremal weight projectors satisfy a categorified version of the Newton identity relating power-sum symmetric and elementary symmetric polynomials.
\end{theorem}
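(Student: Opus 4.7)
The plan is to promote the classical Newton identity $p_k = \sum_{i=1}^{k-1}(-1)^{i-1} e_i \, p_{k-i} + (-1)^{k-1} k\, e_k$ to a morphism-level equality in the central extension of $\essAWeb[N]$ from Theorem~\ref{thm:Tm}. Under the categorification set up in the previous theorems, $p_j$ corresponds to the extremal weight projector $\Pi_j$ onto the top weight space of $\sym^j(V)$, while $e_i$ corresponds to the essential circle $c_i$ labeled by $\bVn^i(V)$. The target is therefore an identity of the shape
\[
  \Pi_k \;=\; \sum_{i=1}^{k-1} (-1)^{i-1}\, c_i \star \Pi_{k-i} \;+\; (-1)^{k-1} k \cdot c_k,
\]
where $\star$ denotes concentric stacking in the annulus and $k \cdot c_k$ is interpreted as a $k$-fold multiple in the appropriate $\C$-linear sense.

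I would proceed by induction on $k$, seeded by the recursive definition of $\Pi_k$ supplied by Theorem~\ref{thm:Tm}. Each recursion step attaches an extra strand to $\Pi_{k-1}$ in the spirit of the Jucys--Murphy construction used for Jones--Wenzl and Chebyshev projectors, and then applies the web decomposition $V \otimes \bVn^{i-1}(V) \cong \bVn^{i}(V) \oplus (\text{lower wedge terms})$ repeatedly to peel off essential $\bVn^i(V)$-loops from the strand. Crucially, the defining simplification of $\essAWeb[N]$ that sets $c_j = 0$ for $0 < j < N$ is used to discard all contributions whose wedge wrapping is not total, so that only genuine essential circles survive. Collecting the remaining terms produces either $c_i \star \Pi_{k-i}$ with alternating sign (matching the sum on the right of the Newton identity), or total collapses of the symmetric strand onto a single $\bVn^k(V)$ essential loop, which assemble into the final $k \cdot c_k$ term.

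The main obstacle is bookkeeping rather than conceptual. The coefficient $k$ in $k \cdot c_k$ needs a clean categorical meaning: it should arise from the $k$ distinct positions at which a recursively built $\Pi_k$ can collapse into a single essential $\bVn^k$-loop, and one must verify that these contributions combine coherently rather than cancel or overcount once the alternating signs from the recursion are reassembled. At the same time, the winding-number shifts imposed by the central extension must align across the various terms, since each essential circle $c_i$ carries its own winding grading and both sides of the identity must lie in the same graded sector. I expect the cleanest strategy is to organize the iterated recursion as a finite chain of webs, match it term-by-term with the expansion of $\sum_{i=1}^{k-1}(-1)^{i-1} c_i \star \Pi_{k-i}$, and then close the induction by invoking the identity for $\Pi_{k-i}$ with $i \geq 1$.
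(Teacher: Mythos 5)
There is a genuine gap, and it sits at the very first step: you propose to categorify $e_i$ by the essential circle $c_i$ in the annulus. But in $\essAWeb[N]$ (and already in the central extension $\essbAWeb[N]$) the circles $c_i$ for $0<i<N$ are set to zero by definition of the quotient (see \eqref{eqn:essentialideal} and Proposition~\ref{prop:nff}), so your proposed morphism-level identity
\[
T_k \;=\; \sum_{i=1}^{k-1}(-1)^{i-1}\,c_i\star T_{k-i} \;+\;(-1)^{k-1}k\cdot c_k
\]
degenerates: for $k<N$ every term on the right vanishes, and for $k=N$ only the scalar multiple of $c_N$ survives, neither of which equals $T_k$. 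The object that decategorifies to $e_j(\X)$ is not the closed circle (whose evaluation is merely the scalar $e_j(\vec{\gamma})$) but the anti-symmetric clasp $V_j$ of Definition~\ref{defn:clasps}, the idempotent on $j$ parallel strands whose image is $\bVn^j V$. Relatedly, an identity with alternating signs cannot be realized as a direct sum decomposition of objects; the paper first rearranges the Newton identity so that all coefficients are non-negative, $\sum_{i}e_{k-1-2i}\,p_{2i+1}=k\,e_k+\sum_{i}e_{k-2i}\,p_{2i}$, and then proves an isomorphism in $\Kar(\essAWeb[N])^*$ between the corresponding direct sums of objects $(k,V_{k-1-2i}\otimes T_{2i+1})$ and $(k,V_{k-2i}\otimes T_{2i})$, together with a $k$-fold direct sum of $(k,V_k)$.

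Your treatment of the multiplicity $k$ also does not match how it must actually arise. In the paper the coefficient $k$ is an honest $k$-fold direct sum of copies of $(k,V_k)$, and the needed orthogonal decomposition of the idempotent $V_{k-1}\otimes\id_1-(k-1)(V_{k-1}\otimes\id_1)(\id_{k-2}\otimes T_2)(V_{k-1}\otimes\id_1)$ into $k$ summands each isomorphic to $V_k$ is produced by expanding the identity between two clasps into the wrap-eigenprojections $P_{i_1}\otimes\cdots\otimes P_{i_k}$ and grouping terms according to which eigenvalue occupies the last strand --- not by counting ``positions at which the projector collapses onto an essential loop.'' The rest of the isomorphism is a zig-zag of explicit maps between adjacent summands $(k,V_{k-l}\otimes T_l)$, whose composites are evaluated using the recursion \eqref{eq:recursive} for $T_m$, the analogous recursion for the clasps, and the vanishing of any web in which an anti-symmetric clasp and an extremal weight projector share two strands. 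An induction that peels essential $\bVn^i(V)$-loops off a single strand, as you sketch, cannot get started once those loops are identically zero in the category.
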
 

The main application for extremal weight projectors, however, is in categorifying toric skein modules. In a separate paper \cite{QW3}, we will construct a categorification of the $\glnn{2}$ skein module of the thickened torus via a toric $\glnn{2}$ foam category. The category $\essAWeb[2]$ describes morphism spaces in this foam category, with affine webs corresponding to rotationally symmetric foams. In particular, the toric foam category contains primitive idempotents given by rotation foams obtained from extremal weight projectors, and it is the target of a toric link homology functor. The use of $\glnn{2}$ foams, as opposed to Bar-Natan cobordisms \cite{BN2}, is necessary to guarantee the functoriality of the resulting link homology, see \cite{Blan, ETW}. In preparation for this construction, we prove a delooping lemma for $\glnn{2}$ webs and decomposition formulas for tensor products of $\glnn{2}$ extremal weight projectors in Section~\ref{sec:two}.

\begin{remark} Affine web categories have appeared before in work of the first-named author \cite{Queff_aff} on skein modules, and of Cautis--Kamnitzer \cite{CK_ann} on a K-theoretic version of the derived geometric Satake correspondence for $SL_N$. The main differences are that here we work at $q=1$, which makes the affine web categories symmetric monoidal, and that we take a quotient by $\bVn^k(V)$-labeled essential circles for $0<k<N$. It is unclear to us how to define an analogous quotient for generic $q$ that would admit extremal weight projectors.
\end{remark}
\begin{remark}
\label{rem:q}
Affine web categories at generic $q$ describe morphism spaces in quantized toric foam categories, which can be defined using a quantized horizontal trace construction. This is analogous to the quantized annular Bar-Natan cobordisms of Beliakova-Putyra-Wehrli \cite{BPW}. However, such a quantization involves a non-canonical choice of a simple closed curve on the torus, that breaks a natural mapping class group action which is desirable for categorified skein modules. In \cite{QW3}, we thus proceed with affine webs at $q=1$ and unquantized toric foam categories.
\end{remark}

\subsection*{Acknowledgements}  We would like to thank Anna Beliakova, C\'edric Bonnaf\'e, Eugene Gorsky, Joel Kamnitzer and Radmila Sazdanovi\'c for interesting discussions. Part of this work was done during the programme ``Homology theories in low dimensional topology'' at the Isaac Newton Institute for Mathematical Sciences, which was supported by the UK EPSRC [Grant Number EP/K032208/1]. We thank the Newton Institute for providing the ideal working environment for categorifying Newton identities.

\subsection*{Funding}
The work of H.~Q. was partially supported by a PEPS Jeunes Chercheuses et Jeunes Chercheurs, the ANR Quantact and by the CNRS-MSI partnership ``LIA AnGe''. The work of P.~W. was supported by the Leverhulme Trust [Research Grant RP2013-K-017] and the Australian Research Council Discovery Projects ``Braid groups and higher representation theory'' and ``Low dimensional categories'' [DP140103821, DP160103479].

\section{Affine \texorpdfstring{$\glnn{N}$}{gl(N)} webs and extremal weight projectors}
We start by recalling the diagrammatic calculus of $\glnn{N}$ webs, which describes the category of representations of $U_q(\glnn{N})$ that is monoidally generated by exterior powers of the vector representation and their duals. 
\subsection{The category of \texorpdfstring{$\glnn{N}$}{gl(N)} webs}
\label{sec:webs}
The category $\Webq[N]$ of $\glnn{N}$ webs is the $\C(q)$-linear pivotal tensor category with objects generated by points on the line $\R$ that are labeled with integers in the set $\{1,\dots,N\}$ and that carry an orientation \textit{up} or \textit{down}. We may consider the tensor unit as a $0$-labeled point without orientation. The morphisms are generated by trivalent graphs, properly embedded in the strip $\R\times [0,1]$, with edges oriented and labeled in the same set, with a flow condition at each vertex imposing that the sum of incoming labels equals the sum of the outgoing ones. These graphs are interpreted as mapping from the bottom sequence of boundary points (with labels and orientations) to that at the top. The morphisms are considered modulo isotopy relative to the boundary and subject to the local relations

\begin{gather}
\label{eq:webrel}  
\begin{tikzpicture}[anchorbase,scale=.7]
    \draw [very thick,directed=.55] (0,0) to (0,.5); 
    \draw [very thick,directed=.55] (0,1.5) to (0,2); 
   \draw [very thick,directed=.55] (0,.5) to  [out=45,in=315] (0,1.5);
    \draw [very thick,directed=.55] (0,.5) to  [out=135,in=225] (0,1.5);
\node at (.5,1) {\tiny$l$} ;
\node at (-.5,1) {\tiny$k$} ;   
  \end{tikzpicture}
  \;\;=\;\; {k+l \brack k}\;
  \begin{tikzpicture}[anchorbase,scale=.7]
    \draw [very thick] (0,0) -- (0,2);
    \node at (.45,.25) {\tiny$k+l$} ;    
  \end{tikzpicture}\quad,\quad
  \begin{tikzpicture}[anchorbase,scale=.7]
    \draw [very thick,directed=.55,directed=.9,directed=.2] (0,0) to (0,2); 
   \draw [very thick,directed=.55] (0,1.25) to  [out=90,in=180] (.25,1.5) to [out=0,in=90] (.5,1.25) to (.5,.75) to  [out=-90,in=0] (.25,.5) to [out=180,in=-90] (0,.75);
\node at (.75,1) {\tiny$l$} ;
\node at (.25,.25) {\tiny$k$} ;   
  \end{tikzpicture}
  \;\;=\;\; {N-k \brack l}\;
  \begin{tikzpicture}[anchorbase,scale=.7]
    \draw [very thick] (0,0) -- (0,2);
    \node at (.25,.25) {\tiny$k$} ;    
  \end{tikzpicture}
  \quad,\quad  
\begin{tikzpicture}[anchorbase,scale=.7]
    \draw [very thick,directed=.55] (0,0) to [out=90,in=225] (0.5,.75);
        \draw [very thick,directed=.55] (1,0) to [out=90,in=315] (0.5,.75);   
    \draw [very thick,directed=.55] (2,0) to [out=90,in=315] (1,1.5);
     \draw [very thick,directed=.55] (0.5,0.75) to [out=90,in=225] (1,1.5);
     \draw [very thick,directed=.55] (1,1.5) to (1,2);
  \end{tikzpicture}
  \;\;=\;\;
  \begin{tikzpicture}[anchorbase,scale=.7]
    \draw [very thick,directed=.55] (0,0) to [out=90,in=225] (1,1.5);
        \draw [very thick,directed=.55] (1,0) to [out=90,in=225] (1.5,.75);   
    \draw [very thick,directed=.55] (2,0) to [out=90,in=315] (1.5,.75);
     \draw [very thick,directed=.55] (1.5,0.75) to [out=90,in=315] (1,1.5);
     \draw [very thick,directed=.55] (1,1.5) to (1,2);
  \end{tikzpicture}
\\
\nonumber
\begin{tikzpicture}[anchorbase,scale=.75]
  \draw [very thick,->] (0,0) -- (0,1.5);
  \draw [very thick, ->] (1,0) -- (1,1.5);
  \draw [very thick, directed=.5] (0,.4) -- (1,.6);
  \draw [very thick, directed=.5] (1,.9) -- (0,1.1);
  \node at (.5,.2) {\tiny $b$};
  \node at (.5,1.2) {\tiny $a$};
  \node at (0,-.2) {\tiny $k$};
  \node at (1,-.2) {\tiny $l$};
\end{tikzpicture} 
\;\;=\;\;
\sum_{t} {k-l+a-b\brack t}\;\;
\begin{tikzpicture}[anchorbase,scale=.75]
  \draw [very thick,->] (0,0) -- (0,1.5);
  \draw [very thick, ->] (1,0) -- (1,1.5);
  \draw [very thick, directed=.5] (1,.4) -- (0,.6);
  \draw [very thick, directed=.5] (0,.9) -- (1,1.1);
  \node at (.5,.2) {\tiny $a-t$};
  \node at (.5,1.2) {\tiny $b-t$};
  \node at (0,-.2) {\tiny $k$};
  \node at (1,-.2) {\tiny $l$};
\end{tikzpicture}
\quad,\quad
\begin{tikzpicture}[anchorbase,scale=.75]
  \draw [very thick,->] (0,0) -- (0,1.5);
  \draw [very thick, <-] (1,0) -- (1,1.5);
  \draw [very thick, rdirected=.5] (0,.6) to [out=330,in=210](1,.6);
  \draw [very thick, rdirected=.5] (1,.9) to [out=150,in=30]  (0,.9);
  \node at (.5,.1) {\tiny $1$};
  \node at (.5,1.3) {\tiny $1$};
  \node at (0,-.2) {\tiny $k$};
  \node at (1,-.2) {\tiny $l$};
\end{tikzpicture}
\;\;=\;\;
\begin{tikzpicture}[anchorbase,scale=.75]
  \draw [very thick,->] (0,0) -- (0,1.5);
  \draw [very thick, <-] (1,0) -- (1,1.5);
  \draw [very thick, rdirected=.5] (1,.4) to [out=150,in=30] (0,.4);
  \draw [very thick, rdirected=.5] (0,1.1) to [out=330,in=210] (1,1.1);
  \node at (.5,.2) {\tiny $1$};
  \node at (.5,1.2) {\tiny $1$};
  \node at (0,-.2) {\tiny $k$};
  \node at (1,-.2) {\tiny $l$};
\end{tikzpicture}
+
[N-k-l] \begin{tikzpicture}[anchorbase,scale=.75]
  \draw [very thick,->] (0,0) -- (0,1.5);
  \draw [very thick, <-] (1,0) -- (1,1.5);
  \node at (0,-.2) {\tiny $k$};
  \node at (1,-.2) {\tiny $l$};
\end{tikzpicture}
\end{gather}
as well as their reflections and orientation reversals. The following relations are useful consequences of the ones above:
\begin{equation}
\label{eq:webrel2}
\begin{tikzpicture}[anchorbase,scale=.75]
  \draw [very thick,directed=.5] (0,0) circle (.5);
  \node at (.6,-.25) {\tiny $k$};
\end{tikzpicture}
\;\; = \;\;
\begin{tikzpicture}[anchorbase,scale=.75]
  \draw [very thick,rdirected=.5] (0,0) circle (.5);
  \node at (.6,-.25) {\tiny $k$};
\end{tikzpicture}
\;\;=\;\;{N \brack k}\quad, \quad 
\begin{tikzpicture}[anchorbase,scale=.5]
\draw [very thick,->] (0,0) to  (0,2);
\draw [very thick,->] (1,2) to (1,0);
  \node at (0,-.2) {\tiny $N$};
  \node at (1,-.2) {\tiny $N$};
\end{tikzpicture}
\;\; =\;\;
\begin{tikzpicture}[anchorbase,scale=.5]
\draw [very thick,->] (0,0) to (0,.5) to [out=90,in=90] (1,.5) to (1,0);
\draw [very thick,->] (1,2) to (1,1.5) to [out=270,in=270] (0,1.5) to (0,2);
  \node at (0,-.2) {\tiny $N$};
  \node at (1,-.2) {\tiny $N$};
\end{tikzpicture}
\quad , \quad
\begin{tikzpicture}[anchorbase,scale=.5]
\draw [very thick,->] (0,0) to  (0,2);
\draw [very thick,->] (1,2) to (1,0);
  \node at (0,-.2) {\tiny $k$};
  \node at (1,-.2) {\tiny $N$};
\end{tikzpicture}
\;\; = \;\;
\begin{tikzpicture}[anchorbase,scale=.5]
\draw [very thick,->] (0,0) to (0,0.25) to [out=90,in=240] (0.5,0.75) to [out=120,in=240] (0.5,1.25) to[out=120,in=270] (0,1.75) to  (0,2);
\draw [very thick,<-] (1,0) to (1,0.25) to [out=90,in=0] (.5,.75) ;
\draw [very thick] (1,2) to (1,1.75) to [out=270,in=0] (0.5,1.25);
  \node at (0,-.2) {\tiny $k$};
  \node at (1,-.2) {\tiny $N$};
   \node at (1.25,1) {\tiny $N-k$};
\end{tikzpicture}
\end{equation}

\subsection{Link with \texorpdfstring{$U_q(\glnn{N})$}{Uq(gl(N)}-representation theory}
Let $\Rep(U_q(\glnn{N}))$ denote the $\C(q)$-linear pivotal tensor category of $U_q(\glnn{N})$-representations that is monoidally generated by exterior powers of the vector representation and their duals. The main purpose of the diagrammatic calculus of $\glnn{N}$ webs is to describe this category.

\begin{theorem}\label{thm:CKM} There exists an equivalence of $\C(q)$-linear pivotal tensor categories
\[\phi \colon \Webq[N] \to \Rep(U_q(\glnn{N}))\]
that sends $k$-labeled upward points to $k$-fold exterior powers of the vector representation of $U_q(\glnn{N})$.
\end{theorem}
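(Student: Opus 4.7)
The plan is to follow the Cautis--Kamnitzer--Morrison strategy \cite{CKM}, which is by now the standard route to such equivalences.

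First I would construct the functor $\phi$ on generators. Send each $k$-labeled upward point to $\bigwedge_q^k(V)$, where $V$ is the vector representation of $U_q(\glnn{N})$, and each $k$-labeled downward point to its dual. The trivalent merge vertex $(k,l)\to (k+l)$ is sent to the unique (up to scalar) inclusion $\bigwedge_q^{k+l}(V)\hookrightarrow \bigwedge_q^k(V)\otimes \bigwedge_q^l(V)$ dual to the quantum wedge projection, and similarly the split vertex is sent to the projection. The cups and caps are sent to the pivotal structure maps $\coev,\ev$ determined by the standard ribbon structure on $U_q(\glnn{N})$-mod. Well-definedness amounts to checking that the local relations in \eqref{eq:webrel} and \eqref{eq:webrel2} hold after applying $\phi$. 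The bigon, square, and associativity relations are direct consequences of the fact that $\bigwedge_q^k(V)$ is a simple summand of $V^{\otimes k}$ with a specific multiplicity formula; the square-switch / H-relation follows from a Koszul-type identity inside $\bigwedge_q^*(V)\otimes \bigwedge_q^*(V)$; the oriented mixed relation (the last one) is the $q$-deformed version of the $\glnn{N}$ skein relation between $\bigwedge_q^k$ and its dual, which encodes the defining relation of the Hecke algebra action.

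Next I would verify essential surjectivity. By definition, $\Rep(U_q(\glnn{N}))$ is the pivotal tensor subcategory monoidally generated by the $\bigwedge_q^k(V)$ and their duals, so every object is a tensor product of such factors, hence directly in the image of $\phi$ on objects.

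The main obstacle, as always, is full faithfulness. The standard argument uses $q$-skew Howe duality: for large $M$, the commuting action of $U_q(\glnn{M})$ on $\bigwedge_q^{\bullet}(\C^M \otimes \C^N)$ provides a surjection from (an idempotented form of) $U_q(\glnn{M})$ onto $\Hom$-spaces between tensor products of exterior powers of $V$ labeled by $\glnn{M}$-weights. One first proves that $\phi$ is full on the subcategory whose objects are tensor products of upward-oriented exterior powers, by lifting the generators of the idempotented quantum group to ladder webs and checking that the quantum Serre and $E,F$-commutation relations correspond to web relations; this shows webs span. Faithfulness in this upward subcategory is obtained by comparing graded dimensions of $\Hom$-spaces: on the web side, the relations in \eqref{eq:webrel} plus a diamond-lemma-style normal form reduce every web to a basis of ladder webs indexed by $\glnn{M}$-weights of a tensor product, and one checks this count agrees with the dimension of the image. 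Finally, one extends from the upward subcategory to the full pivotal category by invoking the universal property of the pivotal envelope: every object with downward strands is a retract of an upward object via the duality morphisms, and the relations in \eqref{eq:webrel2} together with the ribbon structure ensure that the functor extends uniquely.

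I would expect the skew-Howe duality dimension count to be the technically heaviest step; once it is in place the rest is essentially formal. Since the theorem is quoted verbatim from \cite{CKM}, in practice I would simply cite their main theorem rather than reproduce the argument.
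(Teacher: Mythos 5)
Your proposal is correct and matches the paper's treatment: the paper does not reprove this result but attributes it to Cautis--Kamnitzer--Morrison (who state it for $U_q(\slnn{N})$, with $\glnn{N}$ versions in \cite{QS2,TVW}) and then simply records the functor $\phi$ explicitly on generators, exactly as you propose to do. Your sketch of the skew Howe duality argument is the standard one underlying that citation, so there is nothing to add beyond noting the $\slnn{N}$-to-$\glnn{N}$ adaptation handled by the later references.
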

Essentially, this theorem is due to Cautis--Kamnitzer--Morrison~\cite{CKM}, although they state it for $U_q(\slnn{N})$. Versions for $U_q(\glnn{N})$ have appeared in \cite{QS2,TVW}. We now describe the functor $\phi$ explicitly.

Recall that $U_q(\glnn{N})$ is the $\C(q)$-algebra generated by $E_i,F_i$ for $1\leq i \leq N-1$ and $L^{\pm 1}_j$ for $1\leq j \leq N$ subject to the following relations:
\begin{gather}
L_i E_i=q E_i L_i,\quad L_iF_i=q^{-1}F_iL_i,\quad L_{i+1} E_i=q^{-1} E_iL_{i+1},\quad L_{i+1}F_i=q F_i L_{i+1}
\\
[E_i,F_j]=\delta_{i,j} \frac{L_iL_{i+1}^{-1}-L_{i+1}L_{i}^{-1}}{q-q^{-1}},\quad [L_i,L_j]=0.
\\
E_i^2 E_j -[2] E_i E_j E_i + E_j E_i^2 =0 \text{ if } |i-j|=1  \text{ and } [E_i,E_j]=0 \text{ otherwise; analogously for } Fs.
\end{gather}
It is a Hopf algebra with coproduct, antipode and counit as follows:
\begin{gather*}
\Delta(E_i)=E_i\otimes L_iL_{i+1}^{-1} +1\otimes E_i, \quad \Delta(F_i)=F_i\otimes 1 +L_i^{-1}L_{i+1}\otimes F_i, \quad \Delta(L_i^{\pm 1})=L_i^{\pm 1}\otimes L_i^{\pm 1}\\
S(L_i^{\pm 1})=L_i^{\mp}, \quad S(E_i)=-E_i L_i^{-1}L_{i+1}, \quad S(F_i)= - L_iL_{i+1}^{-1}F_i \\
\epsilon(L_i^{\pm 1})= 1, \quad \epsilon(E_i)=0, \quad \epsilon(F_i)=0
\end{gather*}
Let $V=\C(q)\langle v_1,v_2,\dots v_N\rangle$ denote the vector representation of $U_q(\glnn{N})$ and $V^*=\C(q)\langle v_1^*,v_2^*,\dots,v_N^* \rangle$ its dual. 

To leftward oriented cups and caps, the functor $\phi$ associates the natural evaluation and co-evaluation maps for duals:
\begin{align*}
\begin{tikzpicture}[anchorbase, scale=.5]
\draw [very thick,->] (1,2) to (1,1.7)to [out=270,in=0] (.5,1.2) to [out=180,in=270] (0,1.7) to (0,2);
\end{tikzpicture}
\quad \xmapsto{\phi} \quad
\begin{cases}
\C \to  V \otimes V^* \\
1\mapsto  \sum_{k=1}^N v_k\otimes v_k^*
\end{cases}
\quad ,\quad
\begin{tikzpicture}[anchorbase, scale=.5]
\draw [very thick,->] (1,0) to (1,0.3)to [out=90,in=0] (.5,.8) to [out=180,in=90] (0,.3) to (0,0);
\end{tikzpicture}
\quad \xmapsto{\phi} \quad
\begin{cases}
V^*\otimes V \to \C \\
v_{k}^*\otimes v_{l} \mapsto \delta_{k,l}
\end{cases}
\end{align*}
Let $\bVn^k V$ denote the k-th exterior power of $V$. This has a basis indexed by subsets $S=\{i_1,\dots,i_k \}\subset \{1,2,\dots, N\}$ of size $k$. If $1\leq i_1<\cdots < i_k\leq N$, we use the following notation for the corresponding basis vector: $v_S:=v_{i_1}\wedge \cdots \wedge v_{i_k}$. The dual $(\bVn^k V)^*\cong \bVn^k V^*$ then has the dual basis given by vectors $v_S^*$ and we have corresponding thickness $k$ cap and cup morphisms as above. 

\begin{align*}
\begin{tikzpicture}[anchorbase, scale=.5]
\draw [very thick,->] (1,2) to (1,1.7)to [out=270,in=0] (.5,1.2) to [out=180,in=270] (0,1.7) to (0,2);
\node at (.5,1.5) {\tiny$k$} ;
\end{tikzpicture}
\quad \xmapsto{\phi} \quad
\begin{cases}
\C \to  \bVn^k V \otimes \bVn^k V^* \\
1\mapsto  \sum_{|S|=k} v_S\otimes v_S^*
\end{cases}
\quad ,\quad
\begin{tikzpicture}[anchorbase, scale=.5]
\draw [very thick,->] (1,0) to (1,0.3)to [out=90,in=0] (.5,.8) to [out=180,in=90] (0,.3) to (0,0);
\node at (.5,.5) {\tiny$k$} ;
\end{tikzpicture}
\quad \xmapsto{\phi} \quad
\begin{cases}
\bVn^k V^*\otimes \bVn^k V \to \C \\
v_{S}^*\otimes v_{T} \mapsto \delta_{S,T}
\end{cases}
\end{align*}

The other duality maps, that is, rightward oriented caps and cups, are perturbed by powers of $q$. 
\begin{align*}
\begin{tikzpicture}[anchorbase, scale=.5]
\draw [very thick,<-] (1,2) to (1,1.7)to [out=270,in=0] (.5,1.2) to [out=180,in=270] (0,1.7) to (0,2);
\node at (.5,1.5) {\tiny$k$} ;
\end{tikzpicture}
\quad \xmapsto{\phi} \quad
\begin{cases}
\C \to  \bVn^k V^* \otimes \bVn^k V \\
1\mapsto  \sum_{|S|=k} q^{-\epsilon_S}v_S^*\otimes v_S
\end{cases}
\quad ,\quad
\begin{tikzpicture}[anchorbase, scale=.5]
\draw [very thick,<-] (1,0) to (1,0.3)to [out=90,in=0] (.5,.8) to [out=180,in=90] (0,.3) to (0,0);
\node at (.5,.5) {\tiny$k$} ;
\end{tikzpicture}
\quad \xmapsto{\phi} \quad
\begin{cases}
\bVn^k V\otimes \bVn^k V^* \to \C \\
v_{S}\otimes v_{T}^* \mapsto \delta_{S,T} q^{\epsilon_S}
\end{cases}
\end{align*}
Here $\epsilon_S= \sum_{i\in S}(N+1-2i)$.

Merges of thick strands act as exterior product:
\begin{align}
\begin{tikzpicture}[anchorbase, scale=.5]
\draw [very thick, directed=.55] (0,0) to [out=90,in=225] (.5,.7);
\draw [very thick, directed=.55] (1,0) to [out=90,in=315]  (.5,.7);
\draw [very thick,->] (.5,.7) -- (.5,1.4);
\node at (-.2,-.2) {\tiny$k$} ;
\node at (1.2,-.2) {\tiny$l$} ;
\node at (.5,1.6) {\tiny$k+l$} ;
\end{tikzpicture}
\quad \xmapsto{\phi} \quad
\begin{cases}
\bVn^k V\otimes \bVn^l V \to \bVn^{k+l} V \\
v_{S}\otimes v_{T} \mapsto 0 \quad \quad\quad \quad\text{ if } S\cap T\neq \emptyset \\
v_{S}\otimes v_{T} \mapsto (-q)^{\epsilon_{S,T}} v_{S\cup T} \quad\text{ otherwise } 
\end{cases}
\end{align}
Here $\epsilon_{S,T}$ is the number of inversions in the concatenation of the ordered lists of elements of $S$ and $T$. The split vertex acts as follows:
\begin{align}
\begin{tikzpicture}[anchorbase,scale=.5]
\draw [very thick, directed=.5] (.5,0) -- (.5,.7);
\draw [very thick, directed=1] (.5,.7) to [out=135,in=270]  (0,1.3)to (0,1.4);
\draw [very thick, directed=1] (.5,.7) to [out=45,in=270]  (1,1.3) to (1,1.4);
\node at (-.2,1.6) {\tiny$k$} ;
\node at (1.2,1.6) {\tiny$l$} ;
\node at (.5,-.2) {\tiny$k+l$} ;
\end{tikzpicture}
\quad \xmapsto{\phi}\quad
\begin{cases}
\bVn^{k+l} V  \to \bVn^{k} V \otimes \bVn^{l} V  \\
v_S \mapsto (-1)^{k l}\sum_{T\subset S, |T|=k} (-q)^{-\epsilon_{S\setminus T,T}} v_T\otimes v_{S\setminus T}
\end{cases}
\end{align}

Analogous formulas hold for merges and splits of duals, which implies that merges and splits can be slid around caps and cups:
\begin{align}
\begin{tikzpicture}[anchorbase, scale=.5]
\draw [very thick, <-] (0,0) to [out=90,in=225] (.5,.7);
\draw [very thick, <-] (1,0) to [out=90,in=315]  (.5,.7);
\draw [very thick,rdirected=.55] (.5,.7) -- (.5,1.4);
\node at (-.2,-.2) {\tiny$k$} ;
\node at (1.2,-.2) {\tiny$l$} ;
\node at (.5,1.6) {\tiny$k+l$} ;
\end{tikzpicture}
\quad \xmapsto{\phi} \quad
\begin{cases}
\bVn^k V^*\otimes \bVn^l V^* \to \bVn^{k+l} V^* \\
v_{S}^*\otimes v_{T}^* \mapsto 0 \quad \quad\quad \quad\text{ if } S\cap T\neq \emptyset \\
v_{S}^*\otimes v_{T}^* \mapsto (-1)^{k l}(-q)^{-\epsilon_{S,T}} v_{S\cup T}^* \quad\text{ otherwise } 
\end{cases}
\end{align}
and:
\begin{align}
\begin{tikzpicture}[anchorbase,scale=.5]
\draw [very thick, <-] (.5,0) -- (.5,.7);
\draw [very thick, rdirected=.55] (.5,.7) to [out=135,in=270]  (0,1.3)to (0,1.4);
\draw [very thick, rdirected=.55] (.5,.7) to [out=45,in=270]  (1,1.3) to (1,1.4);
\node at (-.2,1.6) {\tiny$k$} ;
\node at (1.2,1.6) {\tiny$l$} ;
\node at (.5,-.2) {\tiny$k+l$} ;
\end{tikzpicture}
\quad \xmapsto{\phi}\quad
\begin{cases}
\bVn^{k+l} V^*  \to \bVn^{k} V^* \otimes \bVn^{l} V^*  \\
v_S^* \mapsto \sum_{T\subset S, |T|=k} (-q)^{\epsilon_{S\setminus T,T}} v_T^*\otimes v_{S\setminus T}^*
\end{cases}
\end{align}
This concludes the description of the functor $\phi$.

The category $\Rep(U_q(\glnn{N}))$ is braided, and by virtue of Theorem~\ref{thm:CKM}, so is $\Webq[N]$. The diagrammatic description of the braiding of two fundamental $U_q(\glnn{N})$-representations in $\Webq[N]$ is given as follows:
\begin{equation} \label{eq:crossing}
\begin{tikzpicture}[anchorbase,scale=.75]
  \draw [very thick, ->] (1,0) -- (0,1.5);
  \draw [white, line width=.15cm] (0,0) -- (1,1.5);
  \draw [very thick, ->] (0,0) -- (1,1.5);
  \node at (0,-.3) {\tiny $k$};
  \node at (1,-.3) {\tiny $l$};
\end{tikzpicture}
\quad = \quad
(-q)^{kl}\sum_{\substack{a,b\geq 0\\ b-a=k-l}} (-q)^{b-k}\quad
\begin{tikzpicture}[anchorbase,scale=.75]
  \draw [very thick,->] (0,0) -- (0,1.5);
  \draw [very thick, ->] (1,0) -- (1,1.5);
  \draw [very thick, directed=.5] (0,.4) -- (1,.6);
  \draw [very thick, directed=.5] (1,.9) -- (0,1.1);
  \node at (.5,.2) {\tiny $b$};
  \node at (.5,1.2) {\tiny $a$};
  \node at (0,-.2) {\tiny $k$};
  \node at (1,-.2) {\tiny $l$};
\end{tikzpicture}
\end{equation}

In particular, a crossing of two $1$-labeled strands is given by:
\[
\begin{tikzpicture}[anchorbase,scale=.75]
  \draw [very thick, ->] (1,0) -- (0,1.5);
  \draw [white, line width=.15cm] (0,0) -- (1,1.5);
  \draw [very thick, ->] (0,0) -- (1,1.5);
\end{tikzpicture}
\quad = \quad
\begin{tikzpicture}[anchorbase,scale=.75]
  \draw [very thick, ->] (0,0) -- (0,1.5);
  \draw [very thick, ->] (1,0) -- (1,1.5);
\end{tikzpicture}
\quad - \;\;q\;\;
\begin{tikzpicture}[anchorbase,scale=.75]
  \draw [very thick] (0,0) to [out=90,in=-90] (.5,.5);
  \draw [very thick] (1,0) to [out=90,in=-90] (.5,.5);
  \draw [very thick, directed=.5] (.5,.5) -- (.5,1);
  \node at (.7,.75) {\tiny $2$};
  \draw [very thick,->] (.5,1) to [out=90,in=-90] (0,1.5);
  \draw [very thick,->] (.5,1) to [out=90,in=-90] (1,1.5);
\end{tikzpicture}
\]
For negative crossings, one uses the above formulas with $q$ inverted.

\begin{lemma} \label{lem:Reid} The following analogs of Reidemeister moves hold in $\Webq[N]$, where strands can carry all possible orientations and labels.
\begin{gather*}
  q^{-k(N-1)} \begin{tikzpicture}[anchorbase,scale=.7]
   \draw [very thick] (0,0) to [out=90,in=180] (.75,1.25) to [out=0,in=90] (1.25,.75); 
     \draw [white, line width=.15cm]  (1.25,.75) to [out=-90,in=0] (.75,.25) to [out=180,in=-90] (0,1.5);     
      \draw [very thick]  (1.25,.75) to [out=-90,in=0] (.75,.25) to [out=180,in=-90] (0,1.5);     
        \node at (0,-.3) {\tiny $k$};
  \end{tikzpicture}
  \;\;=\;\;
  \begin{tikzpicture}[anchorbase,scale=.7]
    \draw [very thick] (0,0) -- (0,1.5); 
     \node at (0,-.3) {\tiny $k$};
  \end{tikzpicture}
  \;\;=\;\;
  q^{k(N-1)} 
    \begin{tikzpicture}[anchorbase,scale=.7]
      \draw [very thick]  (1.25,.75) to [out=-90,in=0] (.75,.25) to [out=180,in=-90] (0,1.5);     
       \draw [white,line width=.15cm] (0,0) to [out=90,in=180] (.75,1.25) to [out=0,in=90] (1.25,.75);   
        \draw [very thick] (0,0) to [out=90,in=180] (.75,1.25) to [out=0,in=90] (1.25,.75);   
         \node at (0,-.3) {\tiny $k$};
  \end{tikzpicture}, 
  \\
  \begin{tikzpicture}[anchorbase,scale=.5]
    \draw [very thick] (0,0) to [out=90,in=-90] (1,1) to [out=90,in=-90] (0,2);
    \draw [white, line width=.15cm] (1,0) to [out=90,in=-90] (0,1) to [out=90,in=-90] (1,2);
    \draw [very thick] (1,0) to [out=90,in=-90] (0,1) to [out=90,in=-90] (1,2);
  \end{tikzpicture}
\quad=\quad
  \begin{tikzpicture}[anchorbase,scale=.5]
    \draw [very thick] (0,0) -- (0,2);
    \draw [very thick] (1,0) -- (1,2);
  \end{tikzpicture}
  \quad,\quad
  \begin{tikzpicture}[anchorbase,scale=.5]
    \draw [very thick] (0,0) to [out=90,in=-90] (.7,.7) to [out=90,in=-90] (1.4,1.4) -- (1.4,2.1);
        \draw [white, line width=.15cm] (.7,0) to [out=90,in=-90] (0,.7) -- (0,1.4) to [out=90,in=-90] (.7,2.1);
    \draw [very thick] (.7,0) to [out=90,in=-90] (0,.7) -- (0,1.4) to [out=90,in=-90] (.7,2.1);
        \draw [white, line width=.15cm] (1.4,0) -- (1.4,.7) to [out=90,in=-90] (.7,1.4) to [out=90,in=-90] (0,2.1);
    \draw [very thick] (1.4,0) -- (1.4,.7) to [out=90,in=-90] (.7,1.4) to [out=90,in=-90] (0,2.1);
  \end{tikzpicture}
\quad=\quad
  \begin{tikzpicture}[anchorbase,scale=.5]
    \draw [very thick] (0,0) -- (0,.7) to [out=90,in=-90] (.7,1.4) to [out=90,in=-90] (1.4,2.1);    
    \draw [white, line width=.15cm] (.7,0) to [out=90,in=-90] (1.4,.7) -- (1.4,1.4) to [out=90,in=-90] (.7,2.1);
    \draw [very thick] (.7,0) to [out=90,in=-90] (1.4,.7) -- (1.4,1.4) to [out=90,in=-90] (.7,2.1);
        \draw [white, line width=.15cm] (1.4,0) to [out=90,in=-90] (.7,.7) to [out=90,in=-90] (0,1.4) -- (0,2.1);
            \draw [very thick] (1.4,0) to [out=90,in=-90] (.7,.7) to [out=90,in=-90] (0,1.4) -- (0,2.1);
  \end{tikzpicture}
  \quad,\quad
    \begin{tikzpicture}[anchorbase,scale=.5]
    \draw [very thick] (.5,0) to  (.5,.7) to [out=45,in=-90] (1,1.4) -- (1,2.1);
       \draw [very thick] (.5,.7) to [out=135,in=-90] (0,1.4) -- (0,2.1);
       \draw [white, line width=.15cm] (-.4,1.4) to  (1.4,1.4);
    \draw [very thick] (-.4,1.4) to  (1.4,1.4);
  \end{tikzpicture}
  \quad =\quad
  \begin{tikzpicture}[anchorbase,scale=.5]
    \draw [very thick] (.5,0) to  (.5,1.4) to [out=45,in=-90] (1,2.1);
       \draw [very thick] (.5,1.4) to [out=135,in=-90] (0,2.1);
    \draw [white, line width=.15cm] (-.4,0.7) to  (1.4,0.7);
    \draw [very thick] (-.4,0.7) to  (1.4,0.7);
  \end{tikzpicture}
  \end{gather*}
  We will refer to the last relation as a \textit{forkslide move}.
\end{lemma}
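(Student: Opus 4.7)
The proof is essentially an application of Theorem~\ref{thm:CKM}. That equivalence transports the ribbon structure of $\Rep(U_q(\glnn{N}))$ to $\Webq[N]$, with braiding explicitly given by the crossing formula \eqref{eq:crossing}. Granting this, Reidemeister II and III are formal consequences of the defining axioms of a braiding: invertibility of $\sigma_{V,W}$ gives R2, and the Yang--Baxter equation (automatic from the hexagon axioms together with naturality) gives R3. The forkslide move is an instance of naturality of $\sigma_{-,-}$ with respect to the merge morphism $\bVn^k V \otimes \bVn^l V \to \bVn^{k+l} V$ (and its analogues for splits and for reversed orientations); since merges and splits are $U_q(\glnn{N})$-equivariant by the description of $\phi$ in Section~\ref{sec:webs}, this naturality is automatic and the move follows.

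The only substantive content is the framed Reidemeister I relation with twist factor $q^{\pm k(N-1)}$, which is equivalent to showing that the ribbon twist on $\bVn^k V$ equals $q^{k(N-1)}\cdot \id$. I would verify this diagrammatically: substitute \eqref{eq:crossing} into the left-hand curl, and use cap--cup sliding to convert each summand into a vertical strand with a closed loop attached. Evaluating the loops via the bubble formula \eqref{eq:webrel2} reduces the claim to a scalar identity among $q$-binomial coefficients, which collapses to $q^{k(N-1)}$ by a standard $q$-Vandermonde-type manipulation. The negative-crossing case is symmetric, handled by replacing $q$ with $q^{-1}$ in the crossing formula and running the same argument. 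Alternatively, one may avoid any $q$-binomial combinatorics by appealing directly to the fact that $\theta_V = q^{N-1}\id$ on the vector representation and using the ribbon identity $\theta_{U\otimes W} = (\theta_U\otimes\theta_W)\,\sigma_{W,U}\sigma_{U,W}$ restricted to the $q$-antisymmetric summand $\bVn^k V \subset V^{\otimes k}$.

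The main obstacle will be the $q$-power bookkeeping in the R1 computation: one must carefully track the exponents coming from the crossing formula, the orientation-dependent $\epsilon_S$ factors in the cups and caps, and the bubble evaluations, and then recognize the resulting sum. All other parts of the lemma (R2, R3, and the forkslide) reduce cleanly to the braided or ribbon structure of $\Rep(U_q(\glnn{N}))$ without requiring any further diagrammatic manipulation, so no genuine difficulty arises there.
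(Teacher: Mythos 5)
Your treatment of Reidemeister II, III and the forkslide is exactly what the paper does: these are formal consequences of the braiding axioms (invertibility, hexagon/Yang--Baxter, and naturality with respect to the equivariant merge and split maps), and the paper simply cites the braiding property together with \cite[Corollary 6.2.3]{CKM}. For Reidemeister I the routes diverge slightly: the paper verifies the twist factor $q^{\pm k(N-1)}$ \emph{inductively} in $k$, as in \cite[Lemma 2.9]{Queff_aff} --- one opens a blister on the $k$-labeled strand, uses the forkslide to pull the vertex through the curl, and reduces the $k$-curl to a $(k-1)$-curl next to a $1$-curl, so that only the base case $k=1$ requires an explicit expansion. Your primary proposal (substitute \eqref{eq:crossing} into the curl for general $k$ and evaluate bubbles via \eqref{eq:webrel2}) also works, but it front-loads all of the $q$-binomial bookkeeping that the induction is designed to avoid; the inductive route buys a one-line reduction at the cost of already having the forkslide in hand, which you do.

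One caveat on your ``alternative'' via the ribbon element: the assertion $\theta_V=q^{N-1}\operatorname{id}$ is not a fact about the standard ribbon structure on $\Rep(U_q(\glnn{N}))$ (there one has $\theta_V=q^{\langle \epsilon_1,\epsilon_1+2\rho\rangle}=q^{N}$), and more generally the standard twist on $\bVn^kV$ is $q^{k(N-k+1)}$, not $q^{k(N-1)}$. The exponent $k(N-1)$ in the lemma is an artifact of the paper's rescaled braiding \eqref{eq:crossing} \emph{and} of the $q^{\pm\epsilon_S}$-twisted rightward cups and caps used to close the curl; matching these conventions is precisely the content of the computation, so the ribbon identity cannot be invoked off the shelf without redoing that bookkeeping. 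Your first route (or the paper's induction) is therefore the one to carry out; the shortcut as stated is circular unless you first establish the curl value for $k=1$ in the paper's normalization, at which point you are back to the inductive argument.
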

\begin{proof} The Reidemeister II, III and forkslide moves follow from the property of a braiding, and our braiding convention is only a minor rescaling of the one in \cite[Corollary 6.2.3]{CKM}, see also \cite[Section 2.4]{TVW}. The Reidemeister I moves can be verified inductively as in \cite[Lemma 2.9]{Queff_aff}.
\end{proof}
  
\begin{definition}
We denote by $\Webp[N]$ the full subcategory of $\Web[N]$ with objects given by arbitrary sequences with exclusively upward pointing orientations. 
\end{definition}
In the following we will use the same superscript $+$ to indicate analogous full subcategories of other web categories, consisting of those objects with upward (or outward) pointing orientations. The next lemma is a well-known consequence the proof of Theorem~\ref{thm:CKM} using quantum skew Howe duality, see \cite{CKM,TVW}.

\begin{lemma}\label{lem:upward} The morphism spaces of $\Webp[N]$ are spanned by upward-pointing webs, i.e. webs whose edges have no horizontal tangent vectors.
\end{lemma}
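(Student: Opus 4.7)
The plan is to invoke quantum skew Howe duality, exactly as in the proof of Theorem~\ref{thm:CKM}. Fix objects $\underline{a}=(a_1,\dots,a_n)$ and $\underline{b}=(b_1,\dots,b_m)$ consisting of upward-pointing labeled points, and set $A=\sum a_i$, $B=\sum b_j$. Under the equivalence $\phi$, any morphism $f\in\Hom_{\Webp[N]}(\underline{a},\underline{b})$ corresponds to a $U_q(\glnn{N})$-equivariant map between $\bigwedge^{a_1}V\otimes\cdots\otimes\bigwedge^{a_n}V$ and $\bigwedge^{b_1}V\otimes\cdots\otimes\bigwedge^{b_m}V$. Such a morphism can only be nonzero when $A=B$, and I will assume this.

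First, I would recall the ladder presentation. The $\C(q)$-algebra $\dot{U}_q(\glnn{m+n})$ acts on $\bigwedge^A(V\otimes\C(q)^{m+n})$, and the weight space for weight $(\underline{a},\underline{b})$ (read as a sequence of entries of an $(m+n)$-tuple) is precisely $\bigwedge^{a_1}V\otimes\cdots\otimes\bigwedge^{a_n}V\otimes \bigwedge^{b_1}V\otimes\cdots\otimes\bigwedge^{b_m}V$. By quantum skew Howe duality (the Howe dual pair $(\glnn{m+n},\glnn{N})$), the action of $\dot{U}_q(\glnn{m+n})$ on each weight space generates the full commutant of $U_q(\glnn{N})$; in particular, the space of $U_q(\glnn{N})$-equivariant maps from the $\underline{a}$-weight space to the $\underline{b}$-weight space is spanned by the action of elements of the idempotented quantum group $\mathbf{1}_{\underline{b}}\dot{U}_q(\glnn{m+n})\mathbf{1}_{\underline{a}}$.

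Second, I would translate this action into diagrams. The Chevalley generators $E_i\mathbf{1}_{\underline{\lambda}}$ and $F_i\mathbf{1}_{\underline{\lambda}}$, after bending the $m$ rightmost strands downward, are realized in $\Web[N]$ by elementary ladder rungs: namely, a pair of a split vertex immediately followed by a merge vertex connecting the $i$-th and $(i+1)$-st upward strands, with a single $1$-labeled rung between them. These generating diagrams are already \emph{upward-pointing}, have no horizontal tangent vectors, and their compositions (``ladder webs'') remain upward-pointing. Applying the Howe-dual interpretation to $f$ and pulling back through $\phi$ expresses $f$ as a $\C(q)$-linear combination of such ladder webs. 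This is the content of the proof of \cite[Theorem~3.3.1]{CKM} (see also \cite{TVW}), and the lemma follows.

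The only mildly delicate point is ensuring that when one bends the ``right half'' of the $\glnn{m+n}$-tensor factor downward to translate the Howe-dual action into a morphism $\underline{a}\to\underline{b}$, the ladder generators remain visibly upward and do not reintroduce caps/cups. This is automatic because the Chevalley generators act internally to a single side of the tensor product, so the bending region does not interact with the rungs. I expect no other obstacle, since all the representation-theoretic content is already provided by Theorem~\ref{thm:CKM} and the underlying skew Howe duality.
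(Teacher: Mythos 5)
Your proposal is correct and is essentially the paper's own argument: the paper gives no proof but states that the lemma is ``a well-known consequence of the proof of Theorem~\ref{thm:CKM} using quantum skew Howe duality'' with references to \cite{CKM,TVW}, which is exactly the ladder-web/skew-Howe argument you spell out. (The bending step you worry about is not even needed in the standard formulation, since the double commutant already identifies $\Hom_{U_q(\glnn{N})}(\bigwedge^{\underline{a}}V,\bigwedge^{\underline{b}}V)$ with the image of $\mathbf{1}_{\underline{b}}\dot{U}_q(\glnn{m+n})\mathbf{1}_{\underline{a}}$ directly, with all strands upward throughout.)
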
 
  
In the following, we will consider \emph{skein modules} of isotopy classes of webs embedded in different surfaces, modulo the local relations from \eqref{eq:webrel}, and we will also vary the ground ring. In the following sections we deal with webs over $\C$, whose defining relations are obtained from \eqref{eq:webrel} by specializing $q=1$. We indicate categories of webs at $q=1$ by the omission of the $q$-subscript, e.g. $\Web[N]$ instead of $\Webq[N]$. The functor $\phi$ also specializes to $q=1$ and then relates $\Web[N]$ to the symmetric monoidal category of $U(\glnn{N})$-representations.

Note that setting $q=1$ identifies the evaluation of positive and negative crossings in terms of webs in \eqref{eq:crossing}, and so we sometimes omit to display any over- or under-crossing information in graphics. In particular, the braid group action induced by 1-labeled crossings becomes a symmetric group action.

\subsection{Affinization at \texorpdfstring{$q=1$}{q=1}}
In \cite{QW}, we considered an affine extension of the Temperley-Lieb category, and extended an analog of the functor $\phi$ to this more general category. Just as in this simpler $\slnn{2}$ case, we will consider a more general affine web category that will give us the freedom to extend the diagrammatic presentation of the representation category of $U(\glnn{N})$ to a Cartan subalgebra.

We define the category $\AWeb[N]$ to be the $\C$-linear category of webs drawn on an annulus subject to the same local relations as in $\Web[N]$, i.e. relations \eqref{eq:webrel} at $q=1$. It is easy to see that the morphisms of $\AWeb[N]$ can be obtained by gluing the strip in which diagrams in $\Web[N]$ live into an annulus, and adding new \textit{wrapping morphisms}. 

\[
\wrap 
\;\;=\;\;
\begin{tikzpicture}[anchorbase, scale=.3]
\draw (0,0) circle (1);
\draw (0,0) circle (3);
\draw [very thick] (-.8,.6) .. controls (-2.6,1.2) and (-2,-2) .. (0,-2) .. controls (2,-2) and (1.2,.9) ..  (2.4,1.8);
\draw [very thick] (-.4,.9) .. controls (-.8,1.8) and (-.9,1.2) .. (-1.8,2.4);
\node at (0,2) {$\dots$};
\draw [very thick] (.8,.6) .. controls (1.2,.9) and (.75,1.3) .. (1.5,2.6);
\node at (0,-1) {$*$};
\node at (0,-3) {$*$};
\draw [dashed] (0,-1) to (0,-3);
\end{tikzpicture}
\quad,\quad
\wrapi
\;\;=\;\;
\begin{tikzpicture}[anchorbase, scale=.3]
\draw (0,0) circle (1);
\draw (0,0) circle (3);
\draw [very thick] (.8,.6) .. controls (2.6,1.2) and (2,-2) .. (0,-2) .. controls (-2,-2) and (-1.2,.9) ..  (-2.4,1.8);
\draw [very thick] (.4,.9) .. controls (.8,1.8) and (.9,1.2) .. (1.8,2.4);
\node at (0,2) {$\dots$};
\draw [very thick] (-.8,.6) .. controls (-1.2,.9) and (-.75,1.3) .. (-1.5,2.6);
\node at (0,-1) {$*$};
\node at (0,-3) {$*$};
\draw [dashed] (0,-1) to (0,-3);
\end{tikzpicture}
\]

Note that we remember where the ends of the strip have been glued by drawing a dashed line between two base points on the boundary components of the annulus. We denote this segment by $\alpha$. We also stress that in $\AWeb[N]$ webs can come with any orientation on the boundary. 

\subsection{Link with representation theory}
We will extend the domain of the functor $\phi$ from $\Web[N]$ to $\AWeb[N]$ by sending the wrapping morphisms to maps between $U(\glnn{N})$-representations, which respect the weight space decomposition but break the $U(\glnn{N})$-action. This will allow us to build new diagrammatic projectors, and we will now explain how to choose this preferred extension. Note that other extensions via evaluation representations would allow us to preserve the $U(\glnn{N})$ action, but this would not serve our purpose of finding a diagrammatic presentation of the representation category of a Cartan subalgebra.

We first consider a single counterclockwise wrap morphism $\wrap=\wrap_1$ of a single 1-labeled outward pointing strand.\vspace{-3mm}
\[
\begin{tikzpicture}[anchorbase, scale=.3]
\draw (0,0) circle (1);
\draw (0,0) circle (3);
\draw [very thick,->] (0,1) .. controls (0,1.7) and (-1.6,1) .. (-1.6,0) to [out=-90,in=180] (0,-1.8) to [out=0,in=-90] (2.1,0) to [out=90,in=-90] (0,3);
\node at (-.2,2) {\tiny$1$};
\node at (0,-3) {$*$};
\draw [dashed] (0,-1) to (0,-3);
\end{tikzpicture}
\]
The requirement that $\phi(\wrap)$ respects the weight space decomposition of $V$ implies that $\phi(\wrap)(v_k)= \gamma_k v_k$ for some $\gamma_k\in \C$ and the desired invariance under ambient isotopy forces these scalars to be invertible. In fact, this choice of scalars determines the action of $\phi(\wrap_k)$, the $k$-labeled version of the wrap: by opening a $k$-blister and sliding one vertex around the wrap $\wrap_k$, the eigenvalues of $\phi(\wrap_k)$ can be seen to be $k$-fold products of the eigenvalues of $\phi(\wrap)$: $\phi(\wrap_k)(v_S)= (\prod_{i\in S}\gamma_i) v_S$. Furthermore, inverse wraps have inverse eigenvalues: $\phi(\wrap_k^{-1})(v_S)= (\prod_{i\in S}\gamma_i)^{-1} v_S$. Next, we would like to have relations of the form:\vspace{-3mm}

\begin{equation}
\label{eq:capslide}
\phi
\left(
\begin{tikzpicture}[anchorbase, scale=.3]
\draw (0,0) circle (1);
\draw (0,0) circle (3);
\draw [very thick] (-.8,.6) to [out=150,in=90] (-1.75,0) to[out=270,in=180] (0,-2) to [out=0,in=270] (2,0) to [out=90,in
=225] (2.25,.9) to [out=45,in=315] (2.25,1.35)to[out=135,in=45] (1.8,1.35) to (.8,.6) ;
\draw [very thick] (-.4,.9)to  (-1.2,2.7);
\node at (0,2) {$\dots$};
\draw [very thick] (.4,.9)  to (1.2,2.7);
\node at (0,-1) {$*$};
\node at (0,-3) {$*$};
\draw [dashed] (0,-1) to (0,-3);
\end{tikzpicture}
\right)
\;=\;
\phi\left(
\begin{tikzpicture}[anchorbase, scale=.3]
\draw (0,0) circle (1);
\draw (0,0) circle (3);
\draw [very thick] (.8,.6) to [out=30,in=90] (1.75,0) to[out=270,in=0] (0,-2) to [out=180,in=270] (-2,0) to [out=90,in
=315] (-2.25,.9) to [out=135,in=225] (-2.25,1.35)to[out=45,in=135] (-1.8,1.35) to (-.8,.6) ;
\draw [very thick] (-.4,.9)to  (-1.2,2.7);
\node at (0,2) {$\dots$};
\draw [very thick] (.4,.9)  to (1.2,2.7);
\node at (0,-1) {$*$};
\node at (0,-3) {$*$};
\draw [dashed] (0,-1) to (0,-3);
\end{tikzpicture}
\right)
\end{equation}
To ensure that $\phi$ respects such isotopy relations for sliding cups and caps around the annulus, we need to have
$\phi(\wrap_k)(v_S^*)= (\prod_{i\in S}\gamma_i)^{-1} v_S^*$ and $\phi(\wrap_k^{-1})(v_S^*)= (\prod_{i\in S}\gamma_i) v_S^*$, which determine the maps assigned to inward pointing versions of $D$ and $D^{-1}$.

In order to be able to project onto the 1-dimensional spaces spanned by specific standard basis vectors in $V$, we would like $\phi(\wrap)$ to have distinct eigenvalues on the $v_k$. Furthermore, we would like to find a set of diagrammatic relations in the annular web category that enforces a choice of $\phi(\wrap)$ with distinct eigenvalues, or in other words, with a separable characteristic polynomial $\prod_{i=1}^N(X-\gamma_i)= \sum_{k=0}^N X^{N-k} (-1)^{k} e_k(\vec{\gamma})$. Here $e_k(\vec{\gamma})=e_k(\{\gamma_1,\dots,\gamma_N\})$ denotes the $k$-th elementary symmetric polynomial evaluated at the complex numbers $\gamma_1,\dots,\gamma_N$.

\begin{lemma} The coefficients of the characteristic polynomial of $\phi(\wrap)$ are determined by the image of $\phi$ on essential circles in the annulus. More precisely:

 \begin{equation}
\label{eqn:essentialideal}
  \phi\left(
  \begin{tikzpicture}[anchorbase, scale=.3]
    \draw (0,0) circle (1);
    \draw (0,0) circle (3);
    \draw [very thick, directed=.55] (0,0) circle (2);
    \node at (0,-1) {$*$};
    \node at (-2.5,0) {\tiny$k$};
    \node at (0,-3) {$*$};
    \draw [dashed] (0,-1) to (0,-3);
  \end{tikzpicture}
  \right)
  = \sum_{|S|=k} (\prod_{i\in S} \gamma_i) \id_\C = e_k(\vec{\gamma})
\end{equation}
\end{lemma}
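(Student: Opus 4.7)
The plan is to realize the essential $k$-labeled circle as the closure (categorical trace) of the $k$-labeled wrap morphism $\wrap_k$, and then compute this trace using the already-established diagonalization of $\phi(\wrap_k)$.

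First, I would observe that up to isotopy in the annulus, the essential $k$-labeled directed circle can be obtained from the endomorphism $\wrap_k \colon k \to k$ (which is the $k$-labeled analogue of the wrap, acting on a single $k$-labeled upward-oriented strand) by attaching a rightward cup at the top and a rightward cap at the bottom. In pictures:
\[
\begin{tikzpicture}[anchorbase, scale=.3]
\draw (0,0) circle (1);
\draw (0,0) circle (3);
\draw [very thick, directed=.55] (0,0) circle (2);
\node at (0,-1) {$*$};
\node at (-2.5,0) {\tiny$k$};
\node at (0,-3) {$*$};
\draw [dashed] (0,-1) to (0,-3);
\end{tikzpicture}
\;\; = \;\;
\begin{tikzpicture}[anchorbase,scale=.5]
\draw [very thick, ->] (0,0) to (0,2);
\draw [very thick, ->] (1,2) to (1,0);
\draw [very thick] (0,0) to [out=270,in=270] (1,0);
\draw [very thick] (0,2) to [out=90,in=90] (1,2);
\node at (-.6,1) {$\wrap_k$};
\node at (0.5,-0.5) {\tiny $k$};
\end{tikzpicture}
\]
where on the right the $\wrap_k$ box denotes the wrap applied to the $k$-labeled strand, and the closure is by standard cap/cup morphisms.

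Second, applying $\phi$ and using its monoidality, the right-hand side becomes the composition of $\phi$ of a coevaluation, $\phi(\wrap_k) \otimes \id$, and $\phi$ of an evaluation. At $q=1$, the pairing formulas from Section~\ref{sec:webs} reduce to the undeformed evaluation/coevaluation on $\bigwedge^k V$, so this composition is exactly the ordinary trace of $\phi(\wrap_k)$ on $\bigwedge^k V$, viewed as a scalar endomorphism of $\C$.

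Third, I would invoke the computation that already appears in the text preceding the lemma: by opening a $k$-blister and using the forkslide moves from Lemma~\ref{lem:Reid} together with the isotopy invariance \eqref{eq:capslide}, one sees that $\phi(\wrap_k)$ is diagonal in the basis $\{v_S : |S|=k\}$ of $\bigwedge^k V$ with eigenvalues $\prod_{i\in S} \gamma_i$. Taking the trace gives
\[
\phi\!\left(
\begin{tikzpicture}[anchorbase, scale=.3]
\draw (0,0) circle (1);
\draw (0,0) circle (3);
\draw [very thick, directed=.55] (0,0) circle (2);
\node at (0,-1) {$*$};
\node at (-2.5,0) {\tiny$k$};
\node at (0,-3) {$*$};
\draw [dashed] (0,-1) to (0,-3);
\end{tikzpicture}
\right)
\;=\; \mathrm{tr}_{\bigwedge^k V}\bigl(\phi(\wrap_k)\bigr) \;=\; \sum_{|S|=k}\prod_{i\in S}\gamma_i \;=\; e_k(\vec\gamma),
\]
as claimed. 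The main thing to verify carefully is the first step — that the closure of $\wrap_k$ genuinely represents the essential $k$-circle up to web isotopy in the annulus — but this is immediate from the pictorial description of $\wrap_k$ and is compatible with the orientation conventions, since the $q=1$ specialization erases all sign and power-of-$q$ discrepancies between the two orientations of the cap/cup pair.
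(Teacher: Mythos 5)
Your proposal is correct and follows essentially the same route as the paper: the paper also writes the essential $k$-circle as the composite of a $k$-cup, the $k$-wrap, and a $k$-cap, and reads off $\sum_{|S|=k}\prod_{i\in S}\gamma_i$ from the cup and the diagonal action of $\phi(\wrap_k)$ on the $v_S$. Your extra care about the choice of cap/cup orientation being immaterial at $q=1$ is a reasonable detail the paper leaves implicit.
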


\begin{proof} The morphism can be written as the composition of a $k$-cup, a $k$-wrap and a $k$-cap. The sum $\sum_{|S|=k}$ comes from the cup and the factors from the action of the wrap on $v_S$.
\end{proof}

We now prescribe $\phi(\wrap)$ to have the separable characteristic polynomial $X^N-1$, and we may index the roots as $\gamma_k=\zeta^k=e^{k 2 \pi i /N}$. This choice of relation is homogeneous with respect to a $\Z/N\/Z$-grading by winding number, see \ref{def:grading}. We now extend the definition of $\phi$ to the new generators $\wrap$ in the general case, that is, allowing more than one strand.
\begin{definition}
  Let $V\otimes W$ be the image under $\phi$ of the domain of $\wrap$ and $W\otimes V$ its co-domain. Then we define $\phi(\wrap)$ to be the linear map determined by $v_{k}\otimes w\mapsto \zeta^{k} w\otimes v_k
  $
  for $v_{k}\in V$ and any $w\in W$. Furthermore we set $\phi(\wrapi)=\phi(\wrap)^{-1}$ and analogously for the duals.
\end{definition}

Let $\h$ denote a Cartan subalgebra in $\glnn{N}$ and consider $U(\h)=\langle L_1^{\pm 1},\dots,L_N^{\pm 1} \rangle \subset U(\glnn{N})$. We denote by $\reph$ the category of finite-dimensional $U(\h)$-representations of integral weights. Note that the inclusion $\h\hookrightarrow \glnn{N}$ induces a restriction functor $\Rep(\glnn{N})\to\reph$ and that $\phi(\wrap)$ and $\phi(\wrapi)$ are morphisms in $\reph$.

\begin{lemma}\label{lem:welldef} The functor $\phi\colon \AWeb[N] \to \reph$ is well-defined. 
\end{lemma}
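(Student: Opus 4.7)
The task is to verify that $\phi$ respects all defining relations of $\AWeb[N]$. These relations split into two families: (i) the local web relations from \eqref{eq:webrel} at $q=1$, and (ii) the annular isotopy relations, which govern how the wrap morphisms $\wrap$, $\wrapi$ interact with ordinary web generators and with each other.

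For (i), the restriction $\phi|_{\Web[N]}$ is the $q=1$ specialization of the Cautis--Kamnitzer--Morrison functor from Theorem~\ref{thm:CKM}, post-composed with the restriction functor $\Rep(\glnn{N}) \to \reph$ along $\h \hookrightarrow \glnn{N}$, so these relations are respected automatically.

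For (ii), three types of relation must be checked. First, invertibility $\wrap \circ \wrapi = \id$ is immediate from the definition $\phi(\wrap)(v_k \otimes w) = \zeta^k w \otimes v_k$ together with the prescription $\phi(\wrapi) = \phi(\wrap)^{-1}$. Second, naturality: sliding any web generator (merge, split, crossing, cup, cap) through the dashed segment $\alpha$ is respected because $\phi(\wrap)$ acts diagonally with scalars $\zeta^k$ on the weight basis of $V$, and every ordinary web generator is $\glnn{N}$-equivariant, hence $\h$-equivariant and weight-preserving; wrapping a weight vector before or after such a map multiplies by the same scalar $\zeta^{\sum_{i \in S} i}$ indexed by the multiset $S$ of strand labels passing through. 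For crossings at $q=1$, this is compatible with the symmetric group action, since swapping preserves the multiset of weight indices. For cups and caps, the slide relation~\eqref{eq:capslide} forces (and is consistent with) the extension $\phi(\wrap)(v_S^*) = \zeta^{-\sum_{i \in S} i} v_S^*$ on duals, as discussed in the paragraphs preceding the lemma. Third, the blister formula defining $\wrap_k$ in terms of $\wrap_1$ is consistent because the scalar $\prod_{i \in S} \zeta^i$ obtained by splitting a $k$-labeled strand into $k$ parallel $1$-labeled strands and wrapping each depends only on the unordered multiset $S$, not on the manner of splitting.

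The main obstacle is organizing the case analysis for the naturality relations into a manageable list --- in particular, verifying that the choice of diagonal eigenvalues $\zeta^k$ on $V$ and $\zeta^{-k}$ on $V^{\ast}$ simultaneously satisfies the slide relations for all six orientation/duality combinations of cups, caps, merges and splits. Once assembled, however, each individual verification reduces to a short weight-basis computation using the explicit formulas for $\phi$ collected in Section~\ref{sec:webs}.
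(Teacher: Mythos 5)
Your proposal is correct and follows essentially the same route as the paper: reduce to relations supported in $\Web[N]$ (handled by the $q=1$ specialization of Theorem~\ref{thm:CKM} composed with restriction along $\h\hookrightarrow\glnn{N}$) plus the annular slide relations for cups, caps and vertices past the wraps. Your observation that the wrap eigenvalue of a basis vector depends only on its $\h$-weight, while all ordinary generators are weight-preserving, is exactly the content behind the paper's terser remark that the images of $\wrap$ and $\wrapi$ ``were precisely chosen for these relations to hold.''
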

\begin{proof} All morphisms in $\AWeb[N]$ are compositions of caps or cups between adjacent strands, vertices, as well as the morphisms $\wrap$ and $\wrapi$. Any relation satisfied by compositions of these generating morphisms is either supported in $\Web[N]\subset \AWeb[N]$ (and is thus respected by $\phi$) or involves some generators $\wrap$ and $\wrapi$. Since $\phi$ maps $\wrap$ and $\wrapi$ to inverse isomorphisms, it suffices to check that $\phi$ respects the isotopy relations that caps, cups and vertices can be slid along wraps around the annulus. However, the images under $\phi$ of the wrap morphisms were precisely chosen for these relations to hold. 
\end{proof}

\subsection{The tensor product on annular webs}
Let $\otimes\colon \AWeb[N]\times \AWeb[N]\to \AWeb[N]$ denote the bi-functor given on objects by $(m,n)\mapsto m+n$ and on morphisms by superposing a pair of annular webs $(W_1,W_2)$:

\[
 \begin{tikzpicture}[anchorbase, scale=.25]
\draw[thick] (0,0) circle (2.5);
\fill[black,opacity=.2] (0,0) circle (2.5);
\draw[thick,fill=white] (0,0) circle (1.5);
\draw[thick] (0,0) circle (4.5);
\fill[black,opacity=.2] (4.5,0) arc (0:360:4.5) -- (3.5,0) arc (360:0:3.5);
\draw [thick] (0,0) circle (3.5);
\draw (0,0) circle (1);
\draw (0,0) circle (5);
\draw[dotted] (-2.29,2.29) to [out=225,in=90] (-3.25,0) to [out=270,in=135] (-2.29,-2.29);
\draw[dotted] (-3.4,3.4) to [out=225,in=90] (-4.75,0) to [out=270,in=135] (-3.4,-3.4);
\draw[dotted] (-1.93,1.93) to [out=225,in=90] (-2.75,0) to [out=270,in=135] (-1.93,-1.93);
\draw[dotted] (-0.88,0.88) to [out=225,in=90] (-1.25,0) to [out=270,in=135] (-0.88,-0.88);
\draw [white,line width=.15cm] (-.6,.8) to (-1.8,2.4);
\draw [white,line width=.15cm] (-.6,-.8) to (-1.8,-2.4);
\draw [very thick] (-.6,.8) to (-2.1,2.8);
\draw [very thick] (-.6,-.8) to (-2.1,-2.8);
\draw [very thick] (-2.7,3.6) to (-3,4);
\draw [very thick] (-2.7,-3.6) to (-3,-4);
\draw[dotted] (2.29,2.29) to [out=315,in=90] (3.25,0) to [out=270,in=45] (2.29,-2.29);
\draw[dotted] (3.4,3.4) to [out=315,in=90] (4.75,0) to [out=270,in=45] (3.4,-3.4);
\draw[dotted] (1.93,1.93) to [out=315,in=90] (2.75,0) to [out=270,in=45] (1.93,-1.93);
\draw[dotted] (0.88,0.88) to [out=315,in=90] (1.25,0) to [out=270,in=45] (0.88,-0.88);
\draw [very thick] (1.5,2) to (1.98,2.64);
\draw [very thick] (2.79,3.72) to (3,4);
\draw [very thick] (1.5,-2) to (1.98,-2.64);
\draw [very thick] (2.79,-3.72) to (3,-4);
\draw [very thick] (.6,-.8) to (.9,-1.2);
\draw [very thick] (.6,.8) to (.9,1.2);
\node at (0,-1) {$*$};
\node at (0,-5) {$*$};
\draw [dashed] (0,-1) to (0,-5);
\node at (0,1.95) {\tiny$W_2$};
\node at (0,3.95) {\tiny$W_1$};
\end{tikzpicture}
\]
and resolving all crossings via \eqref{eq:crossing}. This is well-defined thanks to Lemma \ref{lem:Reid} and it induces a symmetric monoidal structure on $\AWeb[N]$ such that both the inclusion $\Web[N]\to \AWeb[N]$ and  $\phi \colon \AWeb[N] \to \reph$ become symmetric monoidal functors. Here we have used that $q=1$.

\begin{definition} \label{def:si} For $m\geq 0$ we denote by $\AWeb[N](m)$ the endomorphism algebra in $\AWeb[N]$ of the object consisting of a sequence of $m$ points with label $1$ and outward orientation. We denote by $s_i$ for $1\leq i \leq m$ the element of this endomorphism algebra that is given by the crossing between the strands in positions $i$ and $i+1$, with positions understood modulo $m$. We also write $u_i=\id_2-s_i$ for the corresponding \textit{dumbbell web}.

More generally, for two objects $\vec{k}$ and $\vec{l}$ we use the shorthand $\AWeb[N](\vec{k},\vec{l}):= \Hom_{\AWeb[N]}(\vec{k},\vec{l})$. We will also use these notation conventions in other web categories. 
\end{definition}

The following lemma will allow us to freely express webs in terms of images of $1$-labeled tangles, which will be very useful in a number of proofs. 

\begin{lemma} \label{lem:resolveastangles}Every element of $\AWeb[N](m)$ for $m\geq 0$ can be written as a $\C$-linear combination of $1$-labeled annular tangles.
\end{lemma}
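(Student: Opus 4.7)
The plan is to express any morphism in $\AWeb[N](m)$ as a $\C$-linear combination of diagrams built entirely from $1$-labeled caps, cups, identities, crossings, and wraps, that is, as $1$-labeled annular tangles. I proceed in two reduction steps: first, replace all thick edges and thick wraps by bundles of $1$-labeled strands connected by trivalent vertices; second, eliminate the remaining trivalent vertices using the crossing relation at $q=1$.

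For the first reduction, specializing the initial relation of \eqref{eq:webrel} to $q=1$ gives $(\text{merge})\circ(\text{split}) = \binom{k+l}{k}\,\id_{k+l}$, and iterating until every intermediate edge has label $1$ yields
\[
\id_k \;=\; \tfrac{1}{k!}\,\mu_k\circ\sigma_k,
\]
where $\sigma_k\colon k\to 1^{\otimes k}$ is the full split into $1$-labeled strands and $\mu_k\colon 1^{\otimes k}\to k$ is the corresponding full merge. Inserting this at every internal edge of label $k\geq 2$ rewrites any web, up to an overall scalar, as one whose internal edges are all $1$-labeled; the merges and splits that were attached to the original thick edge become absorbed into composite vertex configurations with $1$-labeled legs. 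For a wrap $\wrap_k$, annular isotopy lets a merge slide across the wrap, turning $\wrap_k$ into wraps of the incoming sub-bundles. Applying this with the full split yields
\[
\wrap_k \;=\; \tfrac{1}{k!}\,\mu_k\circ(\wrap_1)^{\otimes k}\circ\sigma_k,
\]
and analogously for $\wrapi_k$. After this step every web in the combination is built from $1$-labeled caps, cups, identities, crossings, merges, splits, $\wrap_1$, and $\wrapi_1$ only.

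For the second reduction, the crossing relation \eqref{eq:crossing} at $q=1$ with $k=l=1$ reads $s_i = \id - u_i$, where $u_i$ is the dumbbell (a merge of two $1$-labeled strands immediately followed by a split). Rewriting as $u_i = \id - s_i$ lets every dumbbell substructure in a web be exchanged for a linear combination of the identity and a crossing. Since the boundary of $\AWeb[N](m)$ contains only $1$-labeled points, every merge in the reduced web is eventually followed by a matching split, so iterated application of this substitution (with induction on the number of trivalent vertices) eliminates all such vertices. The resulting diagrams contain only $1$-labeled caps, cups, identities, crossings, and wraps, i.e.\ they are $1$-labeled annular tangles. The main obstacle is the wrap-reduction formula, which depends on the annular isotopy invariance built into $\AWeb[N]$; any downward orientations on interior edges are first reoriented upward using the pivotal structure before applying the above procedure.
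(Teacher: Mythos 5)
Your first reduction step is fine, and it matches the first of the two operations the paper uses: cutting every internal $k$-labeled edge open via $\id_k=\tfrac1{k!}\mu_k\sigma_k$, and sliding merges around wraps and caps to reduce to $1$-labeled wraps and caps. But there is a genuine gap in your second step. After the first reduction, each original trivalent vertex of output label $c$ gets absorbed (by associativity of merges and splits) into a full merge-split configuration $\sigma_c\circ\mu_c$ on $c$ parallel $1$-labeled strands, i.e.\ a $c$-labeled dumbbell. Your only tool for eliminating vertices is the relation $u_i=\id-s_i$, which resolves the $2$-labeled dumbbell. For $c\geq 3$ (which occurs whenever $N\geq 3$ and the web has edges of label $\geq 3$), the configuration $\sigma_c\mu_c$, written as a binary tree of merges followed by a binary tree of splits, contains \emph{no} $2$-labeled dumbbell substructure: the innermost binary merge is followed by another merge, not by its matching split, so your substitution never applies and the induction on the number of trivalent vertices does not get started. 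The assertion that ``every merge is eventually followed by a matching split'' is true globally but does not produce the local $u_i$ patterns your substitution needs.

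What is missing is precisely the second operation in the paper's proof: a derived relation expressing the $k$-fold merge-split $\sigma_k\mu_k$ as $(-1)^k$ times one configuration involving a $(k-1)$-fold merge-split plus $(-1)^{k+1}$ times another, obtained from the square-switch relation in \eqref{eq:webrel}. Iterating this reduces all dumbbells to thickness $2$, after which your expansion $u_i=\id-s_i$ finishes the job. (Equivalently, one must diagrammatically derive that $\tfrac1{k!}\sigma_k\mu_k$ is the signed symmetrizer $\tfrac1{k!}\sum_\sigma\mathrm{sgn}(\sigma)\,\sigma$; this is true but is exactly the content of the inductive relation you omit, not a consequence of the $k=2$ case alone.) For $N=2$ your argument is essentially complete, since only $2$-labeled dumbbells can occur; for general $N$ the key inductive step must be supplied.
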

As we will see later, we may assume that the closed components are essential circles (possibly carrying higher labels) and the non-closed components are oriented $1$-labeled arcs from the inner to the outer boundary circle, that are everywhere outward pointing. 
\begin{proof} We include this classical proof for completeness. It is well-known that every closed web can be written as a $\C$-linear combination of webs $W$ with only $1$-labeled edges, interacting at most in $2$-labeled dumbbells, see e.g. \cite[Proof of Lemma 4.1]{TVW}. Indeed, the argument can be inductively built from the following two operations:
\[
\begin{tikzpicture}[anchorbase]
  \draw [very thick] (0,0) -- (0,2);
  \node at (.3,1) {\tiny $k$};
\end{tikzpicture}
\quad = \frac{1}{k!}
\begin{tikzpicture}[anchorbase]
  \draw [very thick] (0,0) -- (0,.5);
  \draw [very thick] (0,.5) .. controls (-.5,.75) and (-.5,1.25) .. (0,1.5);
  \draw [very thick] (0,.5) .. controls (-.3,.75) and (-.3,1.25) .. (0,1.5);
  \node at (.15,1) {\small $\dots$};
  \draw [very thick] (0,.5) .. controls (.5,.75) and (.5,1.25) .. (0,1.5);
  \draw [very thick] (0,1.5) --(0,2);
\end{tikzpicture}
\quad \text{and}\quad
\begin{tikzpicture}[anchorbase]
  \draw [very thick] (-.5,0) .. controls (-.5,.25) .. (0,.5);
  \draw [very thick] (-.3,0) .. controls (-.3,.25) .. (0,.5);
  \node at (.1, .05) {\small $\dots$};
  \draw [very thick] (.5,0) .. controls (.5,.25) .. (0,.5);
  \draw [very thick] (0,.5) -- (0,1);
  \draw [very thick] (0,1) .. controls (-.5,1.25) .. (-.5,1.5);
  \node at (-.1,1.45) {\small $\dots$};
  \draw [very thick] (0,1) .. controls (.3,1.25) .. (.3,1.5);
  \draw [very thick] (0,1) .. controls (.5,1.25) .. (.5,1.5);
  \draw[decorate,decoration={brace}] (.5,-.1) -- node[midway,font=\small,yshift=-.25cm] {$k$} (-.5,-.1);
  \draw[decorate,decoration={brace}] (-.5,1.6) -- node[midway,font=\small,yshift=.25cm] {$k$} (.5,1.6);
\end{tikzpicture}
\quad = \;(-1)^k\;\;
\begin{tikzpicture}[anchorbase]
  \draw [very thick] (-.5,0) .. controls (-.5,.75) and (.5,.75) .. (.5,1.5);
  \draw [very thick] (-.3,0) .. controls (-.3,.25) .. (.1,.5);
  \node at (.1, .05) {\small $\dots$};
  \draw [very thick] (.5,0) .. controls (.5,.25) .. (.1,.5);
  \draw [very thick] (.1,.5) .. controls (.1,.75) and (-.1,.75) .. (-.1,1);
  \draw [very thick] (-.1,1) .. controls (-.5,1.25) .. (-.5,1.5);
  \node at (-.1,1.45) {\small $\dots$};
  \draw [very thick] (-.1,1) .. controls (.3,1.25) .. (.3,1.5);
  \draw[decorate,decoration={brace}] (.5,-.1) -- node[midway,font=\small,yshift=-.25cm] {$k-1$} (-.3,-.1);
  \draw[decorate,decoration={brace}] (-.5,1.6) -- node[midway,font=\small,yshift=.25cm] {$k-1$} (.3,1.6);
\end{tikzpicture}
\quad + \; (-1)^{k+1}\;\;
\begin{tikzpicture}[anchorbase]
  \draw [very thick] (-.5,0) .. controls (-.5,.25) and (-.2,.5) .. (-.2,.85);
  \draw [very thick] (-.2,.85) .. controls (-.2,.9) and (-.1,.9) .. (-.1,1);
  \draw [very thick] (-.3,0) .. controls (-.3,.25) .. (.1,.5);
  \node at (.1, .05) {\small $\dots$};
  \draw [very thick] (.5,0) .. controls (.5,.25) .. (.1,.5);
  \draw [very thick] (.1,.5) .. controls (.1,.6) and (.2,.6) .. (.2,.65);
  \draw [very thick] (.2,.65) .. controls (.1,.75) and (-.1,.75) .. (-.2,.85);
  \draw [very thick] (.2,.65) .. controls (.2,1) and (.5,1.25) .. (.5,1.5);
  \draw [very thick] (-.1,1) .. controls (-.5,1.25) .. (-.5,1.5);
  \node at (-.1,1.45) {\small $\dots$};
  \draw [very thick] (-.1,1) .. controls (.3,1.25) .. (.3,1.5);
  \draw[decorate,decoration={brace}] (.5,-.1) -- node[midway,font=\small,yshift=-.25cm] {$k-1$} (-.3,-.1);
  \draw[decorate,decoration={brace}] (-.5,1.6) -- node[midway,font=\small,yshift=.25cm] {$k-1$} (.3,1.6);
\end{tikzpicture}
\] 
The remaining $2$-labeled dumbbells can now be expanded in terms of crossings and their oriented resolutions, resulting in a linear combination of $1$-labeled annular tangles.
\end{proof}
An analogous result is true at generic $q$ over $\C(q)$.

\begin{lemma}\label{lem:endempty} The endomorphism algebra of the empty object in $\AWeb[N]$ is isomorphic to $\C[c_1,\dots, c_{N-1}, c_N^{\pm 1}]$, where $c_i$ denotes the counter-clockwise oriented $i$-labeled circle and $c_N^{-1}$ the $N$-labeled clockwise oriented circle.
\end{lemma}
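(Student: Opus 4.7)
\emph{Proof plan.} The plan is to construct an algebra homomorphism $\psi\colon \C[c_1,\dots,c_{N-1},c_N^{\pm 1}] \to \End_{\AWeb[N]}(\emptyset)$ that sends $c_k$ to the counter-clockwise $k$-labeled essential circle (and $c_N^{-1}$ to the clockwise $N$-labeled essential circle), and to show it is bijective. Commutativity of the images is automatic since $\End_{\AWeb[N]}(\emptyset)$ is commutative by the Eckmann--Hilton argument applied to the monoidal structure on $\AWeb[N]$. The relation $c_N \cdot c_N^{-1} = 1$ is proved diagrammatically by applying the second identity of \eqref{eq:webrel2} locally on a radial slice of two concentric antiparallel $N$-labeled essential circles: this reconnects them into a single contractible $N$-labeled loop, which evaluates to $\binom{N}{N} = 1$ by the first identity of \eqref{eq:webrel2}.

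For surjectivity, given any closed annular web $W$, first apply Lemma~\ref{lem:resolveastangles} to express $W$ as a $\C$-linear combination of closed $1$-labeled annular tangles. Resolving crossings via the $q=1$ specialization of \eqref{eq:crossing} converts each tangle into a linear combination of closed webs with only $1$- and $2$-labeled edges. Contractible components are evaluated to scalars via the first identity of \eqref{eq:webrel2}, while essential components are straightened and combined into concentric counter-clockwise essential circles using forkslide moves, annular isotopies, and merge-split relations. Any residual clockwise essential circle of label $k$ with $1 \leq k \leq N-1$ is handled by a further application of the third identity of \eqref{eq:webrel2} (in its rotated form) to the product $c_k^{\mathrm{cw}} \cdot c_N$, which identifies it with $c_{N-k}$ up to a controlled correction, so that $c_k^{\mathrm{cw}} = c_{N-k} \cdot c_N^{-1}$ lies in the image of $\psi$.

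For injectivity, observe that the construction of $\phi$ extends verbatim to a family of functors $\phi_{\vec{\gamma}}\colon \AWeb[N] \to \reph$ parametrized by any choice of invertible eigenvalues $\vec{\gamma} = (\gamma_1,\dots,\gamma_N) \in (\C^*)^N$ for the wrap morphism, since the isotopy relations for sliding cups, caps, and vertices around the annulus depend only on invertibility. By \eqref{eqn:essentialideal}, $\phi_{\vec{\gamma}}(c_k) = e_k(\vec{\gamma})$. Hence if $\psi(p) = 0$ for some Laurent polynomial $p$, then $p(e_1(\vec{\gamma}),\dots,e_N(\vec{\gamma})) = 0$ for all $\vec{\gamma} \in (\C^*)^N$, and the algebraic independence of the elementary symmetric polynomials forces $p = 0$.

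The main obstacle is surjectivity: making the inductive simplification of an arbitrary closed annular web into disjoint essential circles genuinely algorithmic. This is where all the web relations and annular isotopies must be exploited uniformly, and where the role of $c_N^{\pm 1}$ in mediating between oriented essential circles of complementary labels becomes essential. The remaining steps are either formal (commutativity, well-definedness of $\psi$) or rest on the algebraic independence of elementary symmetric polynomials combined with the flexibility of choosing the wrap eigenvalues.
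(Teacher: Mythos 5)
Your overall architecture matches the paper's proof: surjectivity by reducing an arbitrary closed annular web to a polynomial in essential circles, and injectivity by evaluating essential circles to elementary symmetric polynomials in the wrap eigenvalues and invoking their algebraic independence. Your injectivity argument via the family $\phi_{\vec{\gamma}}$ over all $\vec{\gamma}\in(\C^*)^N$ is equivalent to the paper's single functor $\phi_R$ over the Laurent polynomial ring $R=\C[\X^{\pm 1}]$ (a Laurent polynomial vanishing at every point of the image of $\vec{\gamma}\mapsto(e_1(\vec{\gamma}),\dots,e_N(\vec{\gamma}))$, which is all of $\C^{N-1}\times\C^*$, is zero), and your diagrammatic verifications that $c_N c_N^{-1}=1$ and that the clockwise $k$-circle equals $c_{N-k}c_N^{-1}$ via \eqref{eq:webrel2} are exactly what the paper uses.

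The one genuine gap is the step you yourself flag as ``the main obstacle.'' After Lemma~\ref{lem:resolveastangles} you have closed $1$-labeled annular tangles, but a single component of such a tangle can wind $n$ times around the annulus for any $n\geq 1$, and such a component is \emph{not} isotopic to a disjoint union of simple essential circles; reducing it to a linear combination of webs supported on concentric circles of various labels is a nontrivial computation (at $q=1$ it amounts to evaluating the annular closure of an $n$-cycle in terms of the $c_k$). Your appeal to ``forkslide moves, annular isotopies, and merge-split relations'' does not carry this out. The paper closes exactly this hole by citing the annular evaluation algorithm of \cite{QR2}, which shows that any component wrapping monotonically around the annulus resolves into concentric essential circles. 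So the approach is sound and would succeed, but as written the surjectivity half is incomplete at its only substantive point.
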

\begin{proof}
The proof proceeds in two steps. First we show that any closed web in the annulus can be written as a $\C$-linear combination of collections of essential circles from the set $\{c_1,c_2, \dots, c_N, c_N^{-1}\}$. The essential circles commute, which is easily seen by applying Reidemeister II moves, and $c_N$ and $c_N^{-1}$ are indeed mutually inverse. Second, we check that the counter-clockwise oriented circles are algebraically independent. 

By Lemma~\ref{lem:resolveastangles} any closed web may be expressed as a linear combination of annular links. Moreover, we may assume that every link component wraps exclusively clockwise or counter-clockwise around the annulus. The annular evaluation algorithm from \cite{QR2} implies that any such component can be resolved in terms of concentric circles of various labels. Hence, the same is true for the original web. To finish the first part, note that the clockwise-oriented circle of label $i$ is equal to $c_{N-i}c_N^{-1}$. This follows from the last relation in \eqref{eq:webrel2}. 

In order to prove algebraic independence, we use an extension of the functor $\phi$ from Lemma~\ref{lem:welldef}. Let $R=\C[\X^{\pm 1} ]=\C[X_1^{\pm 1},\dots, X_N^{\pm 1}]$ be a Laurent polynomial ring in $N$ variables and consider the category $R\reph$ obtained by tensoring all morphism spaces in $\reph$ by $R$. Then $\phi_R\colon \AWeb[N] \to R\reph$ can be defined just as $\phi$ was above, except that the $X_i$ now play the role of the eigenvalues of the $1$-labeled counter-clockwise wrap: $\phi_R(D)(v_k):=X_k v_k$. 

For a closed annular web $W$, it follows that $\phi(W)\in R$. In particular, counter-clockwise oriented circles $c_i$ evaluate to elementary symmetric polynomials $e_i(\X)$ in the variables $X_k$ and their clockwise counterparts evaluate to $e_i(\X^{-1})$. As a consequence of the first part of this proof, the evaluation of closed annular webs $\phi_R(W)$ takes values in the symmetric part $R^{\mathfrak{S}_N} \cong \C[e_1(\X),\dots, e_{N-1}(\X), e_{N}(\X)^{\pm 1}]$ of $R$. Now the algebraic independence of the $c_i$ follows from the algebraic independence of their images $e_i(\X)$ under $\phi_R$.
\end{proof}

An analogous version of this result holds for $\AWebq[N]$ over $\C(q)$. The second part in its $q=1$ version is already sufficient to establish algebraic independence of the counter-clockwise essential circles in $\AWebq[N]$.

Just as in the non-annular case, we denote by $\AWebp[N]$ the full subcategory of $\AWeb[N]$ with objects given by all upward, or \emph{outward}, pointing boundary sequences. The following is the analog of Lemma~\ref{lem:upward}.

\begin{lemma}\label{lem:upwardannulus} The morphism spaces of $\AWebp[N]$ are spanned by webs with all edges outward oriented, but potentially superposed with essential circles.
\end{lemma}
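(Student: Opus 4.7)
The plan is to reduce to a topological analysis of 1-labeled annular tangles, via an extension of Lemma~\ref{lem:resolveastangles}.

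First, I would reduce to the case of 1-labeled outward boundary: any $k$-labeled outward endpoint can be resolved, near the boundary, into $k$ 1-labeled outward endpoints via a merge which is itself an outward-oriented web, so it suffices to establish the claim for Hom spaces with 1-labeled outward boundary. Next, I would observe that the argument behind Lemma~\ref{lem:resolveastangles} invokes only local label-expansion rules (resolving higher-labeled edges into antisymmetrized configurations of 1-labeled edges and rewriting the resulting dumbbells via crossings), which are valid regardless of the boundary type and hence extend to every Hom space in $\AWebp[N]$ with 1-labeled outward boundary. Consequently, every morphism in $\AWebp[N]$ can be written as a $\C$-linear combination of 1-labeled annular tangles with 1-labeled outward boundary.

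Then I would classify the components of such a tangle. At $q=1$ positive and negative crossings coincide (as noted after \eqref{eq:crossing}), so the tangle is determined up to isotopy by its underlying oriented multicurve. Each component is either a closed loop or an arc between two boundary points; outward-orientation compatibility forces every arc to connect the inner to the outer boundary, since an arc with both endpoints on the same boundary circle would have to flow into the annulus at one endpoint and out of it at the other, contradicting the outward condition at both. Contractible closed loops evaluate to the scalar $N$ via the first relation of \eqref{eq:webrel2} at $q=1$ and can be absorbed into the coefficient, while essential closed loops are isotopic to standard concentric essential circles using Reidemeister moves from Lemma~\ref{lem:Reid}.

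The principal obstacle will be to separate the essential circles from the arcs as a genuine superposition while arranging each remaining arc to have strictly positive radial tangent component. I would handle this by passing to the universal cover: lifting the tangle to the infinite strip covering the annulus unwinds each arc into a curve joining the two boundary components and sends the essential circles to parallel horizontal lines. After straightening each lifted arc to be radially monotonic via strip isotopy (using the symmetric braiding at $q=1$ to absorb crossings among lifted arcs) and projecting back down, one obtains a superposition of concentric essential circles with arcs that are everywhere outward pointing, possibly carrying tangential winding, which is the desired form.
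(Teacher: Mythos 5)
Your overall strategy coincides with the paper's: reduce to $1$-labeled boundary by a merging argument, apply Lemma~\ref{lem:resolveastangles} to pass to $1$-labeled annular tangles, convert the closed components into essential circles, and isotope the arcs to be everywhere outward (the paper calls this last step a ``shortcut'' available because $q=1$; your universal-cover straightening and the observation that outward-compatibility forces every arc to run from the inner to the outer boundary make that shortcut precise and are correct).

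There is, however, one step that fails as stated: the claim that every non-contractible closed loop of the tangle is isotopic to a standard concentric essential circle. The tangles produced by Lemma~\ref{lem:resolveastangles} are diagrams with crossings, so their closed components are immersed curves that may wind around the annulus more than once, and such components genuinely arise from the resolution procedure. For instance, opening a blister on the $2$-labeled essential circle $c_2$ and expanding the resulting dumbbell as $u=\id-s$ produces the closed $1$-labeled curve $w$ winding twice around the annulus, with $w=c_1^2-2c_2$ in $\AWeb[N]$. This $w$ is neither contractible nor isotopic to a concentric circle, so neither of your two cases covers it; it has to be \emph{resolved} into a polynomial in the circles $c_i$ (including circles of label $>1$) rather than isotoped. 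This is exactly what Lemma~\ref{lem:endempty} supplies, via the annular evaluation algorithm of \cite{QR2}, and the paper's proof invokes that lemma at precisely this point. Replacing your isotopy claim for closed components by an appeal to Lemma~\ref{lem:endempty} repairs the argument; the remainder of your proof is sound.
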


Note that Lemma \ref{lem:endempty} allows us to restrict to counter-clockwise essential circles , except for the $N$-labeled ones.

\begin{proof}
 We will prove the claim for webs $W$ whose source and target objects are only $1$-labeled. The general claim follows by a usual merging argument.

Lemma~\ref{lem:resolveastangles} allows us to write $W$ as a linear combination of annular tangles. The closed components of these tangles evaluate to essential circles thanks to Lemma~\ref{lem:endempty}, while the non-closed components can be isotoped to be outward oriented arcs. (This shortcut is possible since we work at $q=1$.) The superposition of such outward arcs is itself a linear combination of outward webs, as desired. 
\end{proof}

\subsection{Equivalences between blocks}\label{sec:lambdaequiv}

The categories $\Web[N]$, $\AWeb[N]$ and all of their specializations and quotients considered in the following decompose into blocks (i.e. full subcategories) indexed by $m\in \Z$, which consist of those objects, whose signed sum of labels equals $m$. Here we count upward oriented boundary points positively, and downward pointing ones negatively. We indicate such blocks by the subscript $m$, e.g. $\Web[N]_{m}$.

We will use the notation $\lambda$ for the endofunctors of these categories that act on objects by tensoring with a single $N$-labeled upward boundary point and on morphisms by superposing with an upward-oriented $N$-labeled edge. We denote by $\lambda^*$ the analogous operation with downward orientations. We give an example for $\AWeb[N]$:
\[
\begin{tikzpicture}[anchorbase, scale=.4]
\draw[thick] (0,0) circle (2.5);
\fill[black,opacity=.2] (0,0) circle (2.5);
\draw[thick,fill=white] (0,0) circle (1.5);
\draw[dotted] (-1.93,1.93) to [out=45,in=180] (0,2.75) to [out=0,in=135] (1.93,1.93);
\draw[dotted] (-0.88,0.88) to [out=45,in=180] (0,1.25) to [out=0,in=135] (0.88,0.88);
\draw (0,0) circle (1);
\draw (0,0) circle (3);
\draw [very thick] (.8,.6) to (1.2,.9);
\draw [very thick] (-.8,.6) to (-1.2,.9);
\draw [very thick] (2,1.5) to (2.4,1.8);
\draw [very thick] (-2,1.5) to (-2.4,1.8);
\node at (0,-1) {$*$};
\node at (0,-3) {$*$};
\draw [dashed] (0,-1) to (0,-3);
\end{tikzpicture}
\quad
\xrightarrow{\lambda}
\quad 
 \begin{tikzpicture}[anchorbase, scale=.4]
\draw[thick] (0,0) circle (2.5);
\fill[black,opacity=.2] (0,0) circle (2.5);
\draw[thick,fill=white] (0,0) circle (1.5);
\draw[dotted] (-1.93,1.93) to [out=45,in=180] (0,2.75) to [out=0,in=135] (1.93,1.93);
\draw[dotted] (-0.88,0.88) to [out=45,in=180] (0,1.25) to [out=0,in=135] (0.88,0.88);
\draw [very thick] (.8,.6) to (1.2,.9);
\draw [very thick] (-.8,.6) to (-1.2,.9);
\draw [very thick] (2,1.5) to (2.4,1.8);
\draw [very thick] (-2,1.5) to (-2.4,1.8);
\draw [white, line width=.12cm] (1,0) to (3,0);
\draw [very thick,->] (1,0) to (3,0);
\node at (3.3,0) {\tiny $N$};
\node at (0,-1) {$*$};
\node at (0,-3) {$*$};
\draw [dashed] (0,-1) to (0,-3);
\draw (0,0) circle (1);
\draw (0,0) circle (3);
\end{tikzpicture}
\quad
\xrightarrow{\lambda^*}
\quad 
 \begin{tikzpicture}[anchorbase, scale=.4]
\draw[thick] (0,0) circle (2.5);
\fill[black,opacity=.2] (0,0) circle (2.5);
\draw[thick,fill=white] (0,0) circle (1.5);
\draw[dotted] (-1.93,1.93) to [out=45,in=180] (0,2.75) to [out=0,in=135] (1.93,1.93);
\draw[dotted] (-0.88,0.88) to [out=45,in=180] (0,1.25) to [out=0,in=135] (0.88,0.88);
\draw [very thick] (.8,.6) to (1.2,.9);
\draw [very thick] (-.8,.6) to (-1.2,.9);
\draw [very thick] (2,1.5) to (2.4,1.8);
\draw [very thick] (-2,1.5) to (-2.4,1.8);
\draw [white, line width=.12cm] (1,0) to (3,0);
\draw [very thick,->] (1,0) to (3,0); 
\node at (3.3,0) {\tiny $N$};
\draw [white, line width=.12cm] (.8,-.6) to (2.4,-1.8);
\draw [very thick,<-] (.8,-.6) to (2.4,-1.8);
\node at (2.64,-1.98) {\tiny $N$};
\node at (0,-1) {$*$};
\node at (0,-3) {$*$};
\draw [dashed] (0,-1) to (0,-3);
\draw (0,0) circle (1);
\draw (0,0) circle (3);
\end{tikzpicture}
\] 
Here we display the $N$-labeled strand as crossing over the remaining web for better visibility, even though this has no significance at $q=1$.

\begin{lemma} The endofunctors $\lambda$ and $\lambda^*$ restrict to mutually quasi-inverse equivalences between the blocks $\AWeb[N]_m$ and $\AWeb[N]_{m+N}$.
\end{lemma}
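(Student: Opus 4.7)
The plan is to construct a natural isomorphism $\mu\colon \lambda^*\lambda \Rightarrow \id_{\AWeb[N]_m}$, and an analogous natural isomorphism $\nu\colon \lambda\lambda^* \Rightarrow \id_{\AWeb[N]_{m+N}}$, which will together exhibit $\lambda$ and $\lambda^*$ as mutual quasi-inverses between these two blocks. The construction will use only $N$-labeled cups and caps.

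For $X\in \AWeb[N]_m$, the object $\lambda^*\lambda(X)$ is obtained by appending to $X$, just to the right of the segment $\alpha$, an $N$-upward and then an $N$-downward boundary point. I define $\mu_X\colon \lambda^*\lambda(X) \to X$ to be the annular web equal to the identity on the $X$-part and containing an $N$-labeled cap joining the two additional radial strands near the inner boundary, and I define its candidate inverse $\mu_X^{-1}\colon X \to \lambda^*\lambda(X)$ analogously, with an $N$-labeled cup placed near the outer boundary.

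The two key computations are then immediate. First, $\mu_X\circ\mu_X^{-1} = \id_X$: the composed annular web consists of $\id_X$ on the $X$-part together with a closed $N$-labeled circle placed to the right of $\alpha$, disjoint from the $X$-strands, and this circle evaluates to ${N \brack N} = 1$ by the first relation in \eqref{eq:webrel2}. Second, $\mu_X^{-1}\circ\mu_X = \id_{\lambda^*\lambda(X)}$: the composed web carries precisely the cap-then-cup configuration on two parallel $N$-labeled strands (one oriented upward, one downward) appearing on the right-hand side of the second relation in \eqref{eq:webrel2}, which equals the identity on those two strands by that same relation. Naturality of $\mu$ and of $\mu^{-1}$ is automatic: for any $f\colon X\to Y$ in $\AWeb[N]_m$, the morphism $\lambda^*\lambda(f)$ differs from $f$ only by two additional identity radial $N$-strands, which are disjoint from the cap/cup defining $\mu$; hence $f\circ\mu_X = \mu_Y\circ\lambda^*\lambda(f)$ as annular webs, and similarly for $\mu^{-1}$.

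The natural isomorphism $\nu$ is built in the same way with the roles of $\lambda$ and $\lambda^*$ interchanged, using the orientation-reversed $N$-labeled cups and caps; its verification reduces to the reflected and reoriented forms of the same two relations in \eqref{eq:webrel2}. I do not expect a substantive obstacle here: the entire content is that the $N$-labeled strand encodes a one-dimensional representation, so its duality is trivial, and the relevant relations \eqref{eq:webrel2} are local, so the annular setting introduces no additional subtlety.
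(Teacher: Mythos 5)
Your proposal is correct and follows essentially the same route as the paper: the paper's proof exhibits exactly these $N$-labeled cup/cap annular webs as the natural isomorphisms relating $\lambda^*\circ\lambda$ (resp.\ $\lambda\circ\lambda^*$) to the identity, leaving the verification to ``isotopy relations''. You merely make explicit what the paper leaves implicit, namely that the two composites reduce to the $N$-labeled circle relation and the cap--cup relation of \eqref{eq:webrel2}.
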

\begin{proof}
Using isotopy relations, it is easy to see that the following types of webs provide natural isomorphisms between the identity functor on $\AWeb[2]_m$ and the endofunctor given by the composition $\lambda^*\circ \lambda$:
\[ \begin{tikzpicture}[anchorbase, scale=.4]
\draw[dotted] (-1.93,1.93) to [out=45,in=180] (0,2.75) to [out=0,in=135] (1.93,1.93);
\draw[dotted] (-0.88,0.88) to [out=45,in=180] (0,1.25) to [out=0,in=135] (0.88,0.88);
\draw [very thick] (.8,.6) to (2.4,1.8);
\draw [very thick] (-.8,.6) to (-2.4,1.8);
\draw [white, line width=.12cm] (2.4,-1.8) to (1.2,-.9) to  [out=135, in=180] (1.5,0)to (3,0);
\draw [very thick,->] (2.4,-1.8) to (1.2,-.9) to  [out=135, in=180] (1.5,0)to (3,0);
\node at (3.3,0) {\tiny $N$};
\node at (0,-1) {$*$};
\node at (0,-3) {$*$};
\draw [dashed] (0,-1) to (0,-3);
\draw (0,0) circle (1);
\draw (0,0) circle (3);
\end{tikzpicture}
\] 
Analogously, there are natural transformations between $\lambda \circ \lambda^*$ and the identity functor on $\AWeb[2]_{m}$. 
\end{proof}

\subsection{The quotients by essential circles}
It is a key observation that the functor $\phi \colon \AWeb[N] \to \reph$ is not faithful.
\begin{proposition}\label{prop:nff}~\vspace{-.5cm}
 \begin{equation}
\label{eqn:essentialidealtwo}
  \phi\left(
  \begin{tikzpicture}[anchorbase, scale=.3]
    \draw (0,0) circle (1);
    \draw (0,0) circle (3);
    \draw [very thick,directed=.55] (0,0) circle (2);
  \draw [very thick] (-.9,.4) to (-2.8,1.2);
    \node at (0,1.35) {$\cdots$};
    \node at (0,2.5) {$\cdots$};
    \draw [very thick] (.9,.4) to (2.8,1.2);
    \node at (0,-1) {$*$};
    \node at (0,-3) {$*$};
    \draw [dashed] (0,-1) to (0,-3);
    \node at (-2.5,0) {\tiny $k$};
  \end{tikzpicture}
  \right)
  =
   \phi\left(
  \begin{tikzpicture}[anchorbase, scale=.3]
    \draw (0,0) circle (1);
    \draw (0,0) circle (3);
    \draw [very thick,rdirected=.55] (0,0) circle (2);
  \draw [very thick] (-.9,.4) to (-2.8,1.2);
    \node at (0,1.35) {$\cdots$};
    \node at (0,2.5) {$\cdots$};
    \draw [very thick] (.9,.4) to (2.8,1.2);
    \node at (0,-1) {$*$};
    \node at (0,-3) {$*$};
    \draw [dashed] (0,-1) to (0,-3);
    \node at (-2.5,0) {\tiny $k$};
  \end{tikzpicture}
  \right)
  =\begin{cases} (-1)^{N-1} \id \quad \text{ if } k=N\\
  0 \quad\quad\quad\quad \text{ otherwise }
  \end{cases}.
\end{equation}
\end{proposition}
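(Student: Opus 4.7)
The plan is to decompose the essential circle as the categorical trace of a $k$-wrap morphism, reducing the computation to a scalar factor $e_k(\vec{\gamma})$ multiplied by $\phi$ of the remaining web, and then to evaluate this scalar using our chosen normalization $\gamma_i = \zeta^i$.

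Concretely, I would decompose the $k$-labeled essential circle at a fixed angular position as a $k$-cup followed by a counter-clockwise $k$-wrap followed by a $k$-cap. Under $\phi$: the cup produces $\sum_{|S|=k} v_S \otimes v_S^*$; the wrap transports the $\bVn^k V$-factor once around the annulus, acting as $\phi(\wrap_k)(v_S \otimes x) = (\prod_{i\in S} \gamma_i)\, x \otimes v_S$ past any tensor factor $x$ carried by strands it encounters; and the cap contracts the returning $v_S$ against $v_S^*$. Since the eigenvalues $\prod_{i\in S}\gamma_i$ depend only on the wrapped strand's own content and not on the other content of the annulus, they factor out of the computation as the scalar $\sum_{|S|=k}\prod_{i\in S}\gamma_i = e_k(\vec{\gamma})$, multiplied by $\phi$ of the remaining web. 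In the pictured configuration the remaining web is the identity on the boundary object represented by the radial strands, yielding $\phi(W_c) = e_k(\vec{\gamma}) \cdot \id$.

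The scalar is then computed from the defining identity $\prod_{i=1}^N (X - \gamma_i) = X^N - 1$: comparing with the general expansion $\prod_{i=1}^N (X - \gamma_i) = \sum_{k=0}^N (-1)^k e_k(\vec{\gamma})\, X^{N-k}$ yields $e_k(\vec{\gamma}) = 0$ for $0 < k < N$ and $e_N(\vec{\gamma}) = (-1)^{N-1}$, matching the claimed values. For the clockwise-oriented circle the same factorization applies with inverse eigenvalues $\gamma_i^{-1} = \zeta^{-i}$, producing the scalar $e_k(\vec{\gamma}^{-1})$; but the multisets $\{\zeta^i\}_{i=1}^N$ and $\{\zeta^{-i}\}_{i=1}^N$ coincide (both equal the complete set of $N$-th roots of unity), so symmetry of $e_k$ gives $e_k(\vec{\gamma}^{-1}) = e_k(\vec{\gamma})$, and both orientations yield the same scalar.

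The main subtle point is the factorization in step one, where we use that $\phi(\wrap_k)$ was defined to be diagonal on the standard basis of $\bVn^k V$ and natural on tensor factors: the eigenvalue produced when the wrap passes $v_S$ depends only on $S$, not on what the wrap passes through. This is precisely the property guaranteed by the isotopy relations verified in Lemma~\ref{lem:welldef}, and it allows the trace of $\phi(\wrap_k)$ to be computed independently from $\phi$ of the rest of the annular web.
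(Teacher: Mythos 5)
Your proposal is correct and takes essentially the same route as the paper: the cup--wrap--cap factorization and the resulting evaluation to the scalar $e_k(\vec{\gamma})$ are exactly the content of the lemma containing \eqref{eqn:essentialideal}, from which the paper deduces the proposition by comparing $\prod_{i=1}^N(X-\gamma_i)=X^N-1$ with $\sum_{k=0}^N(-1)^k e_k(\vec{\gamma})X^{N-k}$, giving $e_k(\vec{\gamma})=0$ for $0<k<N$ and $e_N(\vec{\gamma})=(-1)^{N-1}$. Your observation that the clockwise circle yields the same scalar because $\{\zeta^{-i}\}_{i=1}^N=\{\zeta^{i}\}_{i=1}^N$ as multisets is a detail the paper leaves implicit, and is a welcome addition.
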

\begin{proof} This follows from \eqref{eqn:essentialideal}.
\end{proof}

\begin{definition} We let $\essbAWeb[N]$ denote the quotient of $\AWeb[N]$ by the ideal generated by the relations for $k<N$ shown in \eqref{eqn:essentialideal}, with any number of through-strings. We define $\essAWeb[N]$ to be quotient of $\AWeb[N]$ by the ideal generated by all relations in \eqref{eqn:essentialideal}. 
\end{definition} 

Note that the monoidal structure $\otimes$ and the functor $\phi$ descend to the quotients $\essbAWeb[N]$ and $\essAWeb[N]$. One of the key results of this paper will be to prove that this category $\essAWeb[N]$ is equivalent to the full subcategory of $\reph$ generated by the objects of $\Rep(\glnn{N})$.

The category $\essbAWeb[N]$ is a central extension of $\essAWeb[N]$ by the invertible element $c_N$ given by $N$-labeled counter-clockwise oriented essential circle, which we denote by $c_N$. Conversely, $\essAWeb[N]$ is obtained from $\essbAWeb[N]$ by setting $c_N=(-1)^{N-1}$.

\begin{definition}\label{def:grading} Let $W$ be a web in $\AWeb[N]$. Then the \emph{flow winding number} $w_f(W)$ of $W$ is given by the algebraic intersection number of the web with the segment $\alpha$ (assuming no trivalent vertex occurs on it), where $k$-labeled edges crossing $\alpha$ count as $\pm k$.
\end{definition}
It is clear that all web relations in $\AWeb[N]$ and $\essbAWeb[N]$ preserve the flow winding number. This $\Z$-grading on the morphism spaces of these categories descends to a $\Z/N\Z$ grading on $\essAWeb[N]$.

The following corollaries are implied by Lemma~\ref{lem:endempty} and Lemma~\ref{lem:upwardannulus}.

\begin{corollary}\label{cor:end} The endomorphism algebra of the empty object in $\essAWeb[N]$ is $\C$ and in $\essbAWeb[N]$ it is isomorphic to the Laurent polynomial ring over $\C$ generated by an essential $N$-labeled circle. 
\end{corollary}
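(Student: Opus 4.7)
The plan is to deduce both statements from Lemma~\ref{lem:endempty}, which identifies $\End_{\AWeb[N]}(\emptyset)$ with $\C[c_1,\ldots,c_{N-1},c_N^{\pm 1}]$. The defining relations of $\essbAWeb[N]$ impose $c_k=0$ for $k<N$ in $\End(\emptyset)$, giving a canonical surjection $\C[c_N^{\pm 1}] \twoheadrightarrow \End_{\essbAWeb[N]}(\emptyset)$, while the further quotient to $\essAWeb[N]$ additionally sets $c_N = (-1)^{N-1}$, yielding a surjection $\C \twoheadrightarrow \End_{\essAWeb[N]}(\emptyset)$. Both halves of the corollary then reduce to proving injectivity of these surjections.

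For $\essAWeb[N]$ this is immediate: by Proposition~\ref{prop:nff} the functor $\phi \colon \AWeb[N] \to \reph$ of Lemma~\ref{lem:welldef} descends to $\essAWeb[N]$, and $\phi(\id_\emptyset) = \id_\C \neq 0$ forces $\End_{\essAWeb[N]}(\emptyset)$ to be nonzero; being a quotient of $\C$, it must equal $\C$.

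For $\essbAWeb[N]$, I would separate powers of $c_N$ by a family of functors $\phi_v \colon \essbAWeb[N] \to \reph$ indexed by $v \in \C^*$. Concretely, I would imitate the construction of $\phi$ but prescribe the eigenvalues of $\phi_v(\wrap)$ to be the $N$ roots of $X^N - v$ rather than of $X^N - 1$. The proof of Lemma~\ref{lem:welldef} goes through verbatim to give a well-defined functor on $\AWeb[N]$, and the argument of Proposition~\ref{prop:nff} shows that $\phi_v$ annihilates every essential $k$-circle (with arbitrary numbers of through-strings) for $k<N$, so $\phi_v$ descends to $\essbAWeb[N]$, where it sends $c_N \mapsto (-1)^{N-1}v$. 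Any nonzero Laurent polynomial $p \in \C[c_N^{\pm 1}]$ is then nonzero in $\End_{\essbAWeb[N]}(\emptyset)$, since $\phi_v(p) = p((-1)^{N-1}v) \neq 0$ for some $v \in \C^*$.

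The main subtlety is confirming that $\phi_v$ kills essential $k$-circles with arbitrary through-strings, not only the closed case \eqref{eqn:essentialideal}. This is essentially the content of Proposition~\ref{prop:nff}: at $q=1$ such a configuration can be isotoped to the disjoint union of the essential circle with an identity on the through-strings, so its image factors as $e_k(\vec\gamma) \cdot \id$, which vanishes for $0<k<N$ since the $\gamma_i$ are roots of $X^N - v = \sum_k (-1)^k e_k(\vec{\gamma}) X^{N-k}$ and thus satisfy $e_k(\vec\gamma)=0$ in that range, with $e_N(\vec\gamma) = (-1)^{N-1} v$.
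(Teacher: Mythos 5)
Your argument is correct, but for the $\essbAWeb[N]$ half it takes a genuinely different route from the paper. The paper's (suppressed) proof also starts from Lemma~\ref{lem:endempty} and the observation that the generators kill $c_1,\dots,c_{N-1}$ and force $c_N^{-1}$ to be the clockwise circle, but it establishes linear independence of the powers of $c_N$ by a purely combinatorial grading argument: each $c_N^k$ has flow winding number $kN$ (Definition~\ref{def:grading}), all relations in $\essbAWeb[N]$ preserve this $\Z$-grading, and each power is nonzero because it acts nontrivially under $\phi$. You instead separate the powers of $c_N$ representation-theoretically, by deforming the eigenvalues of $\phi(\wrap)$ to the roots of $X^N-v$ and letting $v$ vary; this is essentially a one-parameter specialization of the generic functor $\phi_R$ that the paper already uses inside the proof of Lemma~\ref{lem:endempty}, so all the well-definedness checks you invoke are legitimately available. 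The paper's grading argument is shorter given that the grading has just been introduced; yours has the advantage of not needing the grading at all and of making explicit that the only obstruction to descending $\phi_v$ to $\essbAWeb[N]$ is the vanishing of $e_k(\vec\gamma)$ for $0<k<N$. One small phrasing caveat: an essential $k$-circle with through-strands cannot literally be ``isotoped to a disjoint union'' in the annulus, since an essential circle must meet every arc joining the two boundary components; the correct justification for $\phi_v$ killing the ideal generators with through-strings is that such a configuration \emph{is} the superposition product $c_k\otimes\id$ and $\phi_v$ is monoidal for $\otimes$ (the same point the paper elides in the one-line proof of Proposition~\ref{prop:nff}). Your treatment of the $\essAWeb[N]$ half matches the standard argument: the endomorphism algebra is a quotient of $\C$ and is nonzero because $\phi$ descends.
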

\comm{
\begin{proof}
It is not hard to see that every closed web in the annulus can be written in terms of concentric essential circles using relations in $\AWeb[N]$. 
All $k$-labeled essential circles for $0<k<N$ are set to zero. The remaining $N$-labeled circles of opposite orientation are inverse to each other. These elements are non-zero, since they act non-trivially under the functor to representation theory. Finally, integer powers of $N$-labeled circles are linearly independent because they are of different flow winding number.
\end{proof}
}

\begin{corollary}\label{cor:out} The morphism spaces of $\essAWebp[N]$ are spanned by outward pointing webs. The same is true in $\essbAWebp[N]$ up to superposition with an integer power of the $N$-labeled essential circle $c_N$.
\end{corollary}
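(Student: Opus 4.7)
My plan is to deduce this from Lemma~\ref{lem:upwardannulus}, whose conclusion is essentially the same statement one level up in $\AWebp[N]$, and then to simplify the essential circles in the quotient.

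Concretely, given a morphism $W$ of $\essAWebp[N]$ (respectively $\essbAWebp[N]$), I lift it to a morphism of $\AWebp[N]$. By Lemma~\ref{lem:upwardannulus}, this lift can be written as a $\C$-linear combination of terms, each of which is an outward-oriented web superposed with a collection of essential circles. Using Lemma~\ref{lem:endempty} (which says that all essential circles can be expressed in terms of counter-clockwise circles $c_1,\dots,c_{N-1},c_N^{\pm 1}$, with $c_N^{-1}$ represented by the clockwise $N$-labeled circle via the last relation in \eqref{eq:webrel2}), I may further assume that the superposed essential circles are drawn from the set $\{c_1,\dots,c_{N-1},c_N^{\pm 1}\}$, and that they commute with the outward web and among themselves.

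Now I pass to the quotient. By definition of $\essbAWeb[N]$, the relations in Proposition~\ref{prop:nff} force each essential circle $c_k$ with $0<k<N$ (and with any number of through-strings running in parallel, as in the version of \eqref{eqn:essentialideal} used to cut out the defining ideal) to act as zero on any superposition. Hence any term of the linear combination containing such a factor $c_k$ with $0<k<N$ vanishes in $\essbAWebp[N]$; the surviving terms are outward-oriented webs superposed with an integer power of $c_N$. This establishes the claim for $\essbAWebp[N]$. For $\essAWebp[N]$ I use the additional identification $c_N=(-1)^{N-1}$, which turns each remaining power of $c_N$ into a scalar, so the surviving terms are $\C$-linear combinations of pure outward-pointing webs.

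No genuine obstacle arises, since the hard work of obtaining outward webs from arbitrary annular webs was already carried out in Lemmas~\ref{lem:resolveastangles},~\ref{lem:endempty}, and~\ref{lem:upwardannulus}; the only point to verify is that the superposed essential circles in the decomposition of Lemma~\ref{lem:upwardannulus} can indeed be absorbed into scalars (respectively powers of $c_N$) in the two quotients, which follows directly from the defining relations.
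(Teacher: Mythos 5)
Your proposal is correct and follows exactly the route the paper intends: the paper states that Corollary~\ref{cor:out} is implied by Lemma~\ref{lem:endempty} and Lemma~\ref{lem:upwardannulus}, and the remark immediately after Lemma~\ref{lem:upwardannulus} already records that the superposed essential circles may be taken from $\{c_1,\dots,c_{N-1},c_N^{\pm 1}\}$, after which the quotient relations kill the circles $c_k$ with $0<k<N$ and (in $\essAWeb[N]$) turn $c_N$ into the scalar $(-1)^{N-1}$. Nothing is missing.
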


\begin{lemma} In $\essbAWeb[N](1)$ we have $\wrap^N=(-1)^{N-1}c_N$. In $\essAWeb[N](1)$ this specializes to $\wrap^N = \id_1$.
\label{lem:DNone}
\end{lemma}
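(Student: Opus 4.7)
The plan is to establish the first equality $\wrap^N = (-1)^{N-1} c_N$ in $\essbAWeb[N](1)$ by a diagrammatic bundling argument, and then determine the scalar using the representation functor $\phi$. The second equality in $\essAWeb[N](1)$ follows immediately, since by definition $c_N = (-1)^{N-1}$ in $\essAWeb[N]$, so $(-1)^{N-1} c_N = (-1)^{2(N-1)} \id_1 = \id_1$.

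For the diagrammatic reduction, the idea is to combine the $N$ windings of the single $1$-labeled spiral $\wrap^N$ into a single $N$-labeled wrap, which can then be identified with $c_N$ superposed with a radial $\id_1$. Concretely, I would first isotope $\wrap^N$ so that its $N$ consecutive turns sit in nearly concentric layers joined by short connecting arcs. Using the merge and split relations from \eqref{eq:webrel} together with the fork-slide moves of Lemma~\ref{lem:Reid}, these parallel $1$-labeled segments can be bundled into a single $N$-labeled wrap, at the cost of combinatorial binomial factors. The short connecting arcs can be absorbed using the $N$-labeled cap-cup identity (second relation of \eqref{eq:webrel2}), which expresses that $\bigwedge^N V$ is one-dimensional. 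The result is a scalar multiple of $c_N \otimes \id_1$.

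To pin down this scalar, I would apply $\phi$ to both sides. From $\phi(\wrap)(v_k) = \zeta^k v_k$ with $\zeta = e^{2\pi i / N}$, we get $\phi(\wrap^N)(v_k) = \zeta^{Nk} v_k = v_k$, i.e., $\phi(\wrap^N) = \id_V$. On the other hand, by the essential circle formula \eqref{eqn:essentialideal},
\[
\phi(c_N) = e_N(\zeta, \zeta^2, \dots, \zeta^N) = \prod_{k=1}^{N} \zeta^{k} = \zeta^{N(N+1)/2} = (-1)^{N-1}.
\]
Hence $\phi((-1)^{N-1} c_N \otimes \id_1) = \id_V$, matching $\phi(\wrap^N)$. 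Since $\phi$ is well-defined on $\essbAWeb[N]$ and the diagrammatic step already gives proportionality, the scalar is forced to be $(-1)^{N-1}$.

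The main obstacle is the bundling step, since $\wrap^N$ is a single connected edge rather than $N$ independent parallel strands: the merge relations cannot be applied directly and one must introduce auxiliary vertices (e.g.\ using the bubble identity $\binom{k+l}{k}\id_{k+l} = $ split $\circ$ merge) to artificially stratify the spiral before bundling, with careful bookkeeping of the combinatorial factors. An alternative route, which sidesteps the explicit bundling, would be to argue that the winding-$N$ component of $\End_{\essbAWeb[N]}(\mathbf{1})$ is at most one-dimensional over $\C$, using Corollary~\ref{cor:out} together with the vanishing of $k$-labeled essential circles for $0 < k < N$ in $\essbAWeb[N]$; then since both $\wrap^N$ and $c_N \otimes \id_1$ lie in this component and both have non-zero image under $\phi$, their proportionality is automatic and the value of the scalar follows from the computation above.
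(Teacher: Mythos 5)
Your reduction to the first equality, the computation of the scalar via $\phi$ (indeed $e_N(\zeta,\dots,\zeta^N)=\zeta^{N(N+1)/2}=(-1)^{N-1}$ and $\phi(\wrap^N)=\id_V$), and the specialization to $\essAWeb[N]$ are all fine. The problem is that neither of your two routes to the proportionality $\wrap^N\propto c_N\otimes\id_1$ actually closes. The bundling route is exactly the hard part and is left as a sketch: since $\wrap^N$ is a single connected $1$-labeled arc, the merge/split relations do not apply until you have artificially stratified the spiral, and the ``careful bookkeeping of combinatorial factors'' you defer is where the content of the lemma lives. Your fallback route is worse, because the dimension claim is not justified by the cited results: Lemma~\ref{lem:upwardannulus} and Corollary~\ref{cor:out} only tell you that $\essbAWeb[N](1)$ is \emph{spanned} by the elements $\wrap^k c_N^j$ with $k,j\in\Z$, so the flow-winding-$N$ graded piece has the spanning set $\{\wrap^{N(1-j)}c_N^{\,j}\}_{j\in\Z}$, which is a priori infinite. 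Proving that this piece is one-dimensional is equivalent to proving $\wrap^N=(-1)^{N-1}c_N$ in the first place, so the argument is circular. (Appealing to faithfulness of $\phi$ would also be circular here: Theorem~\ref{thm:faithfulness} is proved later and its proof uses the idempotents $P_k$, which rely on this very lemma.)

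For contrast, the paper proves the identity by a direct diagrammatic induction that avoids bundling altogether: starting from $(-1)^{N-1}c_N$ superposed with the radial identity strand, it resolves one crossing at a time between the through-strand and the essential circle; at each step the term containing a $k$-labeled essential circle with a through-string ($0<k<N$) dies by \eqref{eqn:essentialidealtwo}, while the surviving term peels off one factor of $\wrap$ and lowers the label of the circle by one. After $N$ steps only $\wrap^N$ remains. If you want to salvage your approach, that crossing-resolution recursion is the missing mechanism; the $\phi$-computation then serves only as a sanity check on the sign, not as part of the proof.
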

\begin{proof} We compute:
 \begin{align*} 
 (-1)^{N-1} c_N
  \; &= (-1)^{N-1}\; \begin{tikzpicture}[anchorbase, scale=.3]
    \draw (0,0) circle (1);
    \draw (0,0) circle (3);
    \draw [very thick,->] (0,1) to (0,3);
    \draw [very thick,directed=.55] (0,0) circle (2);
    \node at (0,-1) {$*$};
    \node at (0,-3) {$*$};
        \node at (0.5,-2.4) {\tiny $N$};
    \draw [dashed] (0,-1) to (0,-3);
  \end{tikzpicture}
  \; = \; \begin{tikzpicture}[anchorbase, scale=.3]
    \draw (0,0) circle (1);
    \draw (0,0) circle (3);
    \draw [very thick,directed=.65] (0,1) to [out=90,in=270] (-.8,1.8);
     \draw [very thick,->] (.8,1.8) to [out=90,in=270] (0,2.8) to (0,3);
    \draw [very thick,directed=.55,directed=.25] (0,0) circle (2);
    \node at (0,-1) {$*$};
    \node at (0,-3) {$*$};
            \node at (0.5,-2.4) {\tiny $N$};
    \draw [dashed] (0,-1) to (0,-3);
  \end{tikzpicture}
    \; = \; \begin{tikzpicture}[anchorbase, scale=.3]
    \draw (0,0) circle (1);
    \draw (0,0) circle (3);
    \draw [very thick,directed=.65] (0,1) to [out=90,in=270] (.8,1.8);
     \draw [very thick,->] (-.8,1.8) to [out=90,in=270] (0,2.8) to (0,3);
    \draw [very thick,directed=.55,directed=.25] (0,0) circle (2);
    \node at (0,-1) {$*$};
    \node at (0,-3) {$*$};
            \node at (.8,-2.4) {\tiny $N$-$1$};
    \draw [dashed] (0,-1) to (0,-3);
  \end{tikzpicture} \circ \wrap
  \\
  &= (-1)^{N-1} \; \begin{tikzpicture}[anchorbase, scale=.3]
    \draw (0,0) circle (1);
    \draw (0,0) circle (3);
    \draw [very thick,->] (0,1) to (0,3);
    \draw [very thick,directed=.55] (0,0) circle (2);
    \node at (0,-1) {$*$};
    \node at (0,-3) {$*$};
        \node at (0.8,-2.4) {\tiny $N$-$1$};
    \draw [dashed] (0,-1) to (0,-3);
  \end{tikzpicture} \circ \wrap
  \;+\; 
  \begin{tikzpicture}[anchorbase, scale=.3]
    \draw (0,0) circle (1);
    \draw (0,0) circle (3);
    \draw [very thick,directed=.65] (0,1) to [out=90,in=270] (-.8,1.8);
     \draw [very thick,->] (.8,1.8) to [out=90,in=270] (0,2.8) to (0,3);
    \draw [very thick,directed=.55,directed=.25] (0,0) circle (2);
    \node at (0,-1) {$*$};
    \node at (0,-3) {$*$};
            \node at (0.8,-2.4) {\tiny $N$-$1$};
    \draw [dashed] (0,-1) to (0,-3);
  \end{tikzpicture}
  \circ \wrap
   \; = \; \begin{tikzpicture}[anchorbase, scale=.3]
    \draw (0,0) circle (1);
    \draw (0,0) circle (3);
    \draw [very thick,directed=.65] (0,1) to [out=90,in=270] (.8,1.8);
     \draw [very thick,->] (-.8,1.8) to [out=90,in=270] (0,2.8) to (0,3);
    \draw [very thick,directed=.55,directed=.25] (0,0) circle (2);
    \node at (0,-1) {$*$};
    \node at (0,-3) {$*$};
            \node at (.8,-2.4) {\tiny $N$-$2$};
    \draw [dashed] (0,-1) to (0,-3);
  \end{tikzpicture} \circ \wrap^2
  \\
  &= \cdots = - \; \begin{tikzpicture}[anchorbase, scale=.3]
    \draw (0,0) circle (1);
    \draw (0,0) circle (3);
    \draw [very thick,->] (0,1) to (0,3);
    \draw [very thick,directed=.55] (0,0) circle (2);
    \node at (0,-1) {$*$};
    \node at (0,-3) {$*$};
        \node at (0.5,-2.4) {\tiny $1$};
    \draw [dashed] (0,-1) to (0,-3);
  \end{tikzpicture} \circ \wrap^{N-1} 
  \;+\; \wrap^N = \wrap^N
 \end{align*}
The second equality arises by resolving the crossing while the third is an isotopy. The equalities then alternate between such which hold by expanding a crossing and such that use isotopies and \eqref{eqn:essentialidealtwo}.
\end{proof}
\begin{remark}Analogous one shows $\sum_{i=0}^N D^{N-i} (-1)^i c_i =0$ in $\AWeb[N]$, c.f. \cite[Section 8.2]{CK_ann}.
\end{remark}

Consider the algebra $\C[\wrap^{\pm 1}]/\la \wrap^N-1\ra$. Lemma~\ref{lem:DNone} implies that this surjects onto the subalgebra of $\essAWeb[N](1)$ generated by wraps, and the flow winding grading implies that the surjection is an isomorphism. The Chinese remainder theorem implies $\C[\wrap^{\pm 1}]/\la \wrap^N-1\ra\cong \bigoplus_{j=1}^N \C[\wrap]/\la \wrap-e^{j 2 \pi i /N}\ra$ and we denote by $P_k\in \C[\wrap]$ representatives for the idempotents that project onto the direct summands $\C[\wrap]/\la \wrap-e^{k 2 \pi i /N}\ra$. By abuse of notation we also write $P_k$ for the corresponding orthogonal idempotents in $\essAWeb[N](1)$. It is a straightforward but crucial observation that $\phi(P_k(\wrap))$ is the projection $V\twoheadrightarrow \C\la v_k\ra \hookrightarrow V$.

\begin{theorem} \label{thm:fullness}
  The functor $\phi:\AWeb[N]\mapsto \reph$ is full. 
\end{theorem}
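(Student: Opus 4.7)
The plan is to reduce the fullness claim to surjectivity of $\phi$ on the endomorphism algebras $\AWeb[N](m)$ from Definition~\ref{def:si}, and then to realize each elementary matrix unit of $\End_{\reph}(V^{\otimes m})$ as the image of an explicit web built from wrap projectors and crossings.

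For the reduction, first use the duality cups and caps in \eqref{eq:webrel2} to bend every downward strand to the opposite side of a Hom space; this produces a $\phi$-equivariant bijection of morphism spaces that reduces the problem to objects with only upward boundary points. Second, the merge and split webs present $\bigwedge^k V$ as a direct summand of $V^{\otimes k}$: by the first relation in \eqref{eq:webrel} at $q=1$, an iterated split-then-merge across all strands of a $k$-labeled edge evaluates to $k!$ times an identity, so an appropriate normalization of merge is a retraction of split. Iterating, any upward object $X$ is a direct summand of some $V^{\otimes K_X}$ via explicit web morphisms $i_X, p_X$ with $p_X \circ i_X = \id_X$, and any $f \in \Hom_{\reph}(X, Y)$ factors as $f = \phi(p_Y) \circ \tilde{f} \circ \phi(i_X)$ with $\tilde{f} := \phi(i_Y) \circ f \circ \phi(p_X)$. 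Finally, since every weight of $V^{\otimes a}$ has total sum $a$, the space $\Hom_{\reph}(V^{\otimes a}, V^{\otimes b})$ vanishes unless $a = b$, matching the block structure of $\AWeb[N]$. It therefore suffices to show surjectivity of $\phi\colon\AWeb[N](m) \to \End_{\reph}(V^{\otimes m})$ for each $m \geq 0$.

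For the main step, decompose $\End_{\reph}(V^{\otimes m}) = \bigoplus_{\mu} \End_{\C}(V^{\otimes m}[\mu])$ by weight. Each block $\End_{\C}(V^{\otimes m}[\mu])$ is spanned by matrix units $E_{\vec{i}, \vec{j}}\colon v_{\vec{k}} \mapsto \delta_{\vec{k}, \vec{i}}\, v_{\vec{j}}$, indexed by sequences $\vec{i}, \vec{j} \in \{1,\dots,N\}^m$ sharing the same underlying multiset. Since $\phi(D)$ acts on $V$ as a diagonal matrix with $N$ distinct eigenvalues $\zeta, \zeta^2, \dots, \zeta^N$, Lagrange interpolation produces polynomials $P_a(X) \in \C[X]$ with $P_a(\zeta^b) = \delta_{ab}$, so that $\phi(P_a(D))$ is the projection $V \twoheadrightarrow \C v_a \hookrightarrow V$. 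By monoidality of $\phi$, the web $\bigotimes_{\ell=1}^m P_{i_\ell}(D) \in \AWeb[N](m)$ maps under $\phi$ to the projection $V^{\otimes m} \twoheadrightarrow \C v_{\vec{i}}$. Choosing any permutation $\sigma \in S_m$ with $\sigma \cdot \vec{i} = \vec{j}$ and realizing it as a composition of $1$-labeled crossings---which at $q=1$ act as honest transpositions on $V^{\otimes m}$---the composite
\[
\phi\left( \sigma \circ \bigotimes_{\ell=1}^m P_{i_\ell}(D) \right) = E_{\vec{i}, \vec{j}},
\]
together with linearity, finishes the proof.

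The only delicate point is checking that the three reductions are genuinely $\phi$-equivariant, and in particular that the normalized merge/split webs realize $\bigwedge^k V$ as a summand of $V^{\otimes k}$ already at the level of $\reph$-morphisms, which follows from the analogous statement in $\Rep(\glnn{N})$ via the restriction functor $\Rep(\glnn{N}) \to \reph$. The remainder is mechanical precisely because we work at $q=1$, where the braid group action on $V^{\otimes m}$ degenerates to the symmetric group action and weight projectors can be written as polynomials in the single generator $D$.
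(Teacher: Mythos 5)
Your proof is correct, and it hinges on the same key ingredient as the paper's: the eigenprojectors $P_a(D)$, polynomials in the wrap whose $\phi$-images are the rank-one projections $V\twoheadrightarrow \C v_a\hookrightarrow V$. The packaging differs in two mild but genuine ways. The paper bends every Hom space all the way down to $\Hom(\emptyset,\vec{l}\otimes\cev{k})$, identifies the target with the zero weight space of $\phi(\vec{l}\otimes\cev{k})$, and then only needs to exhibit the single vector $v_k\otimes v_k^*$ in the image of $\phi$; you stop the bending at all-upward objects, retract onto $V^{\otimes m}$ via the normalized merge/split pair, and span all of $\End_{\reph}(V^{\otimes m})$ by the matrix units $\sigma\circ\bigotimes_{\ell}P_{i_\ell}(D)$. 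That spanning set is essentially the family $\phi_\epsilon^{\epsilon'}$ which the paper only introduces later to prove faithfulness (Proposition~\ref{prop:faithful}), so your route has the small advantage of setting up both halves of the eventual equivalence in one pass. Secondly, you define the $P_a$ by Lagrange interpolation directly in $\AWeb[N](1)$, whereas the paper builds them via the Chinese remainder theorem in $\C[\wrap^{\pm 1}]/\la \wrap^N-1\ra$ and therefore proves fullness of the induced functor on the quotient $\essAWeb[N]$ (which implies the stated claim since the quotient functor is full); your version avoids that detour. The remaining steps --- the weight argument forcing $K_X=K_Y$, the $q=1$ identification of $1$-labeled crossings with transpositions, and $\tfrac{1}{k!}M\circ S=\id$ from the iterated bigon relation --- are all sound.
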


\begin{proof} We show that the induced functor on the quotient $\essAWeb[N]$ is full. For this, let $\vec{k}$ and $\vec{l}$ be objects in $\essAWeb[N]$ and $\cev{k}$ be obtained from $\vec{k}$ by inverting orientations and the order of the sequence. We consider the isomorphisms of the form $f\colon \essAWeb[N](\vec{k},\vec{l}) \to \essAWeb[N](\emptyset,\vec{l} \otimes \cev{k})$ and their inverses, which are given by the following operations on diagrams.
\[  f\;\;= \;\begin{tikzpicture}[anchorbase, scale=.3]
\draw[thick] (0,0) circle (3.5);
\fill[black,opacity=.2] (0,0) circle (3.5);
\draw[thick,fill=white] (0,0) circle (2.5);
\draw (0,0) circle (1);
\draw (0,0) circle (4);
\draw[dotted] (-2.65,2.65) to [out=225,in=90] (-3.75,0) to [out=270,in=135] (-2.65,-2.65);
\draw[dotted] (-1.59,1.59) to [out=225,in=90] (-2.25,0) to [out=270,in=135] (-1.59,-1.59);
\draw [very thick] (-2.4,3.2) to (-2.1,2.8);
\draw [very thick] (-2.4,-3.2) to (-2.1,-2.8);
\draw [white,line width=.15cm] (0,2.4) to (0,3.6);
\draw[very thick] (-1.5,2) to [out=300,in=270] (0,2.5) to (0,4);
\draw [white,line width=.15cm] (1.97,1.47) to (3.16,2.37);
\draw[very thick] (-1.5,-2) to [out=60,in=270] (-1.75,0) to [out=90,in=180] (0,1.75) to [out=0,in=210] (2,1.5) to (3.2,2.4);
\node at (0,-1) {$*$};
\node at (0,-4) {$*$};
\draw [dashed] (0,-1) to (0,-4);
\end{tikzpicture}  
\quad,\quad 
f^{-1}\;\;=\;
 \begin{tikzpicture}[anchorbase, scale=.3]
\draw[thick] (0,0) circle (2.5);
\fill[black,opacity=.2] (0,0) circle (2.5);
\draw[thick,fill=white] (0,0) circle (1.5);
\draw (0,0) circle (1);
\draw (0,0) circle (4);
\draw[dotted] (-2.29,2.29) to [out=225,in=90] (-3.25,0) to [out=270,in=135] (-2.29,-2.29);
\draw [very thick] (-2.4,3.2) to (-1.5,2);
\draw [very thick] (-2.4,-3.2) to (-1.5,-2);
\draw [white,line width=.15cm] (1,0) to (2.6,0);
\draw [white,line width=.15cm] (.6,-.8) to (1.8,-2.4);
\draw [very thick] (1,0) to (2.5,0) to [out=0,in=30] (1.5,2);
\draw [very thick] (.6,-.8) to (1.8,-2.4) to[out=300,in=225]  (2.46,-2.46) to [out=45,in=285] (3.38,.9) to[out=105,in=345] (.9,3.38) to [out=165,in=90] (0,3) to (0,2.5);
\draw[dotted] (1.94,1.94) to [out=135,in=0] (0,2.75) ;
\draw[dotted] (1.94,-1.94) to [out=45,in=270] (2.75,0) ;
\draw[dotted]  (1.25,0) to [out=270,in=45] (0.88,-0.88);
\node at (0,-1) {$*$};
\node at (0,-4) {$*$};
\draw [dashed] (0,-1) to (0,-4);
\end{tikzpicture}
\]
Since these operations are given by tensoring with an identity morphism and then pre-composing with cups, or post-composing with caps, there are corresponding isomorphisms $\phi(f^{\pm 1})$ between the relevant morphism spaces in the target category $\reph$. To prove the theorem, it suffices to check that $\phi$ restricts to a surjective map from the morphism space $\essAWeb[N](\emptyset,\vec{l} \otimes \cev{k})$ to $\Hom_{\reph}(\C,\phi(\vec{l}\otimes \cev{k})$, which is isomorphic to the zero weight space in $\phi(\vec{l}\otimes \cev{k})$ by evaluating at $1\in \C$. 

    It is not hard to see that it suffices to prove this in the case where $\vec{l}\otimes \cev{k}$ consists entirely of entries $1$, the first $n$ of which point outward and the last $n$ inward. Indeed, if all weight zero vectors in this tensor product are hit, then composing with merge morphisms and permutations, one can hit any weight zero vector in a tensor product of fundamentals and their duals.
    
Actually, it is sufficient to find $v_k\otimes v_k^*$ in the image of $\phi(-)(1)$ applied to $\essAWeb[N](\emptyset,(1,1^*))$, as then we can take diagrammatic tensor products of such generators, composed with permutations to find any standard basis vector of weight zero. Now we invert the bending process via the isomorphisms $f^{\pm 1}$ and $\phi(f^{\pm 1})$, and the remaining problem becomes equivalent to finding the projection $V \twoheadrightarrow \C\la v_k\ra\hookrightarrow V$ in the image of $\phi$. But this we have already seen; it is given by $\phi(P_k)$.
\end{proof}

Later, we will prove that $\phi$ induces a functor from $\essAWeb[N]$ to $\reph$ that is not only full, but also faithful (see Theorem \ref{thm:faithfulness}). Nevertheless, we will continue to work in the more general framework of $\essbAWeb[N]$ whenever possible.

\subsection{Extremal weight projectors}
In \cite{QW}, we defined the concept of extremal weight projectors in the context of (affine) $\slnn{2}$ skein theory. This involved finding a suitable quotient of the affine Temperley-Lieb category, in which we identified a family of idempotents akin to Jones-Wenzl projector and corresponding, on the representation-theoretic side, to projections onto the direct sum of the top and bottom weight spaces in the tensor powers of the vector representation of $U(\slnn{2})$. The same question naturally extends beyond the $\slnn{2}$ case, and the definition can be adapted and generalized to the $\glnn{N}$ case as follows.

\begin{definition}\label{def:T} The elements $T_m\in\essbAWeb[N](m)$ are recursively defined via:
\begin{itemize} 
\item $T_1=\id_1$,
\item $T_2=\frac{1}{N}\sum_{k=0}^{N-1} \wrap^{-k}\otimes \wrap^k$,
\item $T_{m+1}=(\id_{m-1}\otimes T_2)(T_m\otimes \id_1)$ for $m\geq 2$.
\end{itemize}
\end{definition}

\comm{
The recursive relation can be depicted as follows:
\begin{equation}
\label{eq:recursive}
 \begin{tikzpicture}[anchorbase, scale=.3]
\draw[thick] (0,0) circle (4);
\fill[black,opacity=.2] (0,0) circle (4);
\draw[thick,fill=white] (0,0) circle (2);
\draw[dotted] (-1.05,1.05) to [out=45,in=180] (0,1.5) to [out=0,in=135] (1.05,1.05);
\draw [thick] (0,0) circle (2);
\draw[dotted] (-3.16,3.16) to [out=45,in=180] (0,4.5) to [out=0,in=135] (3.16,3.16);
\draw (0,0) circle (1);
\draw (0,0) circle (5);
\draw [very thick] (.8,.6) to (1.6,1.2);
\draw [very thick] (1,0) to (2,0);
\draw [very thick,->] (4,0) to (5,0);
\draw [very thick] (-.8,.6) to (-1.6,1.2);
\draw [very thick,->] (3.2,2.4) to (4,3);
\draw [very thick,->] (-3.2,2.4) to (-4,3);
\node at (0,-1) {$*$};
\node at (0,-5) {$*$};
\draw [dashed] (0,-1) to (0,-5);
\node at (0,2.95) {\small$T_{m+1}$};
\end{tikzpicture}
\;\; :=\;\;
 \begin{tikzpicture}[anchorbase, scale=.3]
\draw[thick] (0,0) circle (2.5);
\fill[black,opacity=.2] (0,0) circle (2.5);
\draw[thick,fill=white] (0,0) circle (1.5);
\draw[dotted] (-1.93,1.93) to [out=45,in=180] (0,2.75) to [out=0,in=135] (1.93,1.93);
\draw[dotted] (-0.88,0.88) to [out=45,in=180] (0,1.25) to [out=0,in=135] (0.88,0.88);
\draw[thick] (0,0) circle (4.5);
\fill[black,opacity=.2] (4.5,0) arc (0:360:4.5) -- (3.5,0) arc (360:0:3.5);
\draw [thick] (0,0) circle (3.5);
\draw[dotted] (-3.4,3.4) to [out=45,in=180] (0,4.75) to [out=0,in=135] (3.4,3.4);
\draw[dotted] (-2.29,2.29) to [out=45,in=180] (0,3.25) to [out=0,in=135] (2.29,2.29);
\draw (0,0) circle (1);
\draw (0,0) circle (5);
\draw [very thick] (.8,.6) to (1.2,.9);
\draw [very thick] (-.8,.6) to (-1.2,.9);
\draw [very thick] (-2,1.5) to (-2.8,2.1);
\fill [white] (1.4,.2) -- (2.6,.2) -- (2.6,-.2) -- (1.4,-.2);
\fill [white] (3.4,.2) -- (4.6,.2) -- (4.6,-.2) -- (3.4,-.2);
\draw [draw =white, double=black, thick, double distance=1.25pt] (1,0) -- (2.5,0) to [out=0,in=210] (2.8,2.1);
\draw [very thick,->] (2,1.5) to [out=30,in=180] (3.5,0) to (5,0);
\draw [very thick] (2.8,2.1) -- (2.8,2.1);
\draw [very thick,->] (3.6,2.7) to (4,3);
\draw [very thick,->] (-3.6,2.7) to (-4,3);
\node at (0,-1) {$*$};
\node at (0,-5) {$*$};
\draw [dashed] (0,-1) to (0,-5);
\node at (0,1.95) {\tiny$T_m$};
\node at (0,3.95) {\tiny$T_m$};
\end{tikzpicture}
\;\;=\;\;
 \begin{tikzpicture}[anchorbase, scale=.3]
\draw[thick] (0,0) circle (2.5);
\fill[black,opacity=.2] (0,0) circle (2.5);
\draw[thick,fill=white] (0,0) circle (1.5);
\draw[dotted] (-1.93,1.93) to [out=45,in=180] (0,2.75) to [out=0,in=135] (1.93,1.93);
\draw[dotted] (-0.88,0.88) to [out=45,in=180] (0,1.25) to [out=0,in=135] (0.88,0.88);
\draw[thick] (0,0) circle (4.5);
\fill[black,opacity=.2] (4.5,0) arc (0:360:4.5) -- (3.5,0) arc (360:0:3.5);
\draw [thick] (0,0) circle (3.5);
\draw[dotted] (-3.4,3.4) to [out=45,in=180] (0,4.75) to [out=0,in=135] (3.4,3.4);
\draw[dotted] (-2.29,2.29) to [out=45,in=180] (0,3.25) to [out=0,in=135] (2.29,2.29);
\draw [very thick] (.8,.6) to (1.2,.9);
\draw [very thick] (-.8,.6) to (-1.2,.9);
\draw [very thick] (-2,1.5) to (-2.8,2.1);
\draw [very thick] (2,1.5) to (2.8,2.1);
\fill [white] (1.4,.2) -- (2.6,.2) -- (2.6,-.2) -- (1.4,-.2);
\fill [white] (3.4,.2) -- (4.6,.2) -- (4.6,-.2) -- (3.4,-.2);
\draw [draw =white, line width=.15cm] (1,0) -- (5,0);
\draw [very thick,->] (1,0) to (5,0);
\draw [very thick] (2.8,2.1) -- (2.8,2.1);
\draw [very thick,->] (3.6,2.7) to (4,3);
\draw [very thick,->] (-3.6,2.7) to (-4,3);
\draw (0,0) circle (1);
\draw (0,0) circle (5);
\node at (0,-1) {$*$};
\node at (0,-5) {$*$};
\draw [dashed] (0,-1) to (0,-5);
\node at (0,1.95) {\tiny$T_m$};
\node at (0,3.95) {\tiny$T_m$};
\end{tikzpicture}
\;-\;
 \begin{tikzpicture}[anchorbase, scale=.3]
\draw[thick] (0,0) circle (2.5);
\fill[black,opacity=.2] (0,0) circle (2.5);
\draw[thick,fill=white] (0,0) circle (1.5);
\draw[dotted] (-1.93,1.93) to [out=45,in=180] (0,2.75) to [out=0,in=135] (1.93,1.93);
\draw[dotted] (-0.88,0.88) to [out=45,in=180] (0,1.25) to [out=0,in=135] (0.88,0.88);
\draw[thick] (0,0) circle (4.5);
\fill[black,opacity=.2] (4.5,0) arc (0:360:4.5) -- (3.5,0) arc (360:0:3.5);
\draw [thick] (0,0) circle (3.5);
\draw[dotted] (-3.4,3.4) to [out=45,in=180] (0,4.75) to [out=0,in=135] (3.4,3.4);
\draw[dotted] (-2.29,2.29) to [out=45,in=180] (0,3.25) to [out=0,in=135] (2.29,2.29);
\draw (0,0) circle (1);
\draw (0,0) circle (5);
\draw [very thick] (.8,.6) to (1.2,.9);
\draw [very thick] (-.8,.6) to (-1.2,.9);
\draw [very thick] (-2,1.5) to (-2.8,2.1);
\fill [white] (1.4,.2) -- (2.6,.2) -- (2.6,-.2) -- (1.4,-.2);
\fill [white] (3.4,.2) -- (4.6,.2) -- (4.6,-.2) -- (3.4,-.2);
\draw [draw =white, double=black, thick, double distance=1.25pt] (1,0) -- (2.5,0) to [out=0,in=30] (2,1.5);
\draw [double] (2.65,0.8) to (2.95,.95);
\draw [very thick,->] (2.8,2.1) to [out=210,in=105] (3.09,0.83)to [out=285,in=180] (3.5,0) to (5,0);
\draw [very thick] (2.8,2.1) -- (2.8,2.1);
\draw [very thick,->] (3.6,2.7) to (4,3);
\draw [very thick,->] (-3.6,2.7) to (-4,3);
\node at (0,-1) {$*$};
\node at (0,-5) {$*$};
\draw [dashed] (0,-1) to (0,-5);
\node at (0,1.95) {\tiny$T_m$};
\node at (0,3.95) {\tiny$T_m$};
\end{tikzpicture}
\end{equation}
}

\begin{theorem} \label{thm:extweightproj}
  The element $\diagrep(T_m)$ is the endomorphism of $V^{\otimes m}$ projecting onto the extremal weight space $\C\la v_{i\cdots i}|i\in \{1,\dots,N\}\ra$ in $\Sym^m(V)\subset V^{\otimes m}$.
\end{theorem}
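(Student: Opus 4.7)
The plan is to proceed by induction on $m$, using the explicit description of $\phi$ on the wrap generator $D$ together with the recursive definition of $T_m$. First I would compute the base case $m=2$ directly. Since $\phi(D)(v_k)=\zeta^k v_k$ with $\zeta=e^{2\pi i/N}$, one has
\[
\phi(D^{-k}\otimes D^k)(v_i\otimes v_j)=\zeta^{k(j-i)}\,v_i\otimes v_j,
\]
so averaging gives
\[
\phi(T_2)(v_i\otimes v_j)=\frac{1}{N}\sum_{k=0}^{N-1}\zeta^{k(j-i)}\,v_i\otimes v_j=\delta_{i,j}\,v_i\otimes v_j,
\]
i.e.\ $\phi(T_2)$ is precisely the projection $V^{\otimes 2}\twoheadrightarrow \C\langle v_i\otimes v_i\mid i\in\{1,\dots,N\}\rangle\hookrightarrow V^{\otimes 2}$.

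Next, I would do the inductive step. Assume that $\phi(T_m)$ is the projection of $V^{\otimes m}$ onto $E_m:=\C\langle v_{i\cdots i}\mid i\in\{1,\dots,N\}\rangle$. Applying the recursion $T_{m+1}=(\id_{m-1}\otimes T_2)(T_m\otimes\id_1)$ and using that $\phi$ is a symmetric monoidal functor, one obtains on basis vectors
\[
\phi(T_{m+1})(v_{i_1}\otimes\cdots\otimes v_{i_{m+1}})=\delta_{i_1,i_2}\cdots\delta_{i_{m-1},i_m}\,\phi(T_2)(v_{i_m}\otimes v_{i_{m+1}})\otimes(\text{earlier factors})
\]
after reorganizing; carrying out the $\delta$'s, this is $\delta_{i_1,i_2}\cdots\delta_{i_m,i_{m+1}}\,v_{i_1}\otimes\cdots\otimes v_{i_{m+1}}$, which is exactly the projection onto $E_{m+1}$. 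Since $v_{i\cdots i}$ is manifestly $\mathfrak{S}_m$-invariant, $E_m\subset\sym^m(V)$, and since $\sym^m(V)$ has highest weight $m\epsilon_1$ with Weyl-group orbit $\{m\epsilon_i\}_{i=1}^N$, the subspace $E_m$ is exactly the sum of extremal weight spaces in $\sym^m(V)$.

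I do not anticipate a serious obstacle here: the argument is a direct unwinding of Definition~\ref{def:T} through the explicit formulas for $\phi$ on wraps established in the construction of the functor. The only minor subtlety is that the recursion is written in the symmetric monoidal category $\essbAWeb[N]$, so one must be careful that $\id_{m-1}\otimes T_2$ and $T_m\otimes\id_1$ act on the last two strands, respectively the first $m$ strands, consistently with the superposition tensor product — but this is a bookkeeping check, not a structural difficulty. One should also note that the statement is really about $\phi(T_m)$ regarded as an endomorphism of $V^{\otimes m}$, and that it is automatically idempotent by construction (being a composition of idempotent building blocks), so the ``projection'' assertion follows once the image is identified with $E_m$.
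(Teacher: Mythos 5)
Your proposal is correct and follows essentially the same route as the paper's proof: the same base-case computation of $\phi(T_2)$ via the geometric sum $\frac{1}{N}\sum_k \zeta^{k(j-i)}$, and the same induction using the recursion $T_{m+1}=(\id_{m-1}\otimes T_2)(T_m\otimes\id_1)$ to force all consecutive indices to agree. The closing observation identifying $\C\la v_{i\cdots i}\ra$ with the extremal weight spaces of $\Sym^m(V)$ is a harmless elaboration the paper leaves implicit.
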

\begin{proof} For $m=1$ this is tautological. For $m=2$ we compute $\phi( \wrap^{-1}\otimes \id_1)(v_{i j})=\zeta^{-i} v_{i j}$ and $\phi(\id_1 \otimes \wrap)(v_{i j})=\zeta^{j} v_{i j}$. Thus we have:
\[\phi(T_2)(v_{i j}) =  \frac{1}{N}\sum_{k=0}^{N-1} \zeta^{k(j-i)} v_{i j} = \begin{cases} v_{i i} &\text{ if } i=j\\
0 &\text{ if } i \neq j
\end{cases}\]
For the vanishing, recall that $0=(X^N-1)=(X-1)(1+X+\cdots+X^{N-1})$, so if $X^N=1$ but $X\neq 1$, then $X$ is a zero of the cyclotomic polynomial. This is precisely the case for $\zeta^{(j-i)}$ for $i\neq j$.

For the induction step, we see immediately from the recursion that $\phi(T_{m+1})$ annihilates $v_{\epsilon_1 \epsilon_2\cdots \epsilon_{m+1}}$ unless $\epsilon_1=\cdots=\epsilon_{m}=:\epsilon$. In the remaining cases we have:
\begin{align*}
\phi(\id_{m-1}\otimes T_2)\phi(T_m\otimes \id_1)(v_{\epsilon\cdots \epsilon k}) &= \phi(\id_{m-1}\otimes T_2)(v_{\epsilon\cdots \epsilon k}) =\begin{cases} v_{\epsilon \cdots \epsilon \epsilon} &\text{ if } k=\epsilon\\ 0 &\text{ if } k\neq \epsilon \end{cases}
\end{align*} 
So $\phi(T_{m+1})$ is the extremal weight projector.
\end{proof}

In the following we show that the $T_m$ are idempotents that satisfy a number of properties analogous to the extremal weight projectors $\phi(T_m)$. This will lead to a proof that $\phi$ is indeed faithful on $\essAWeb[N]$. We start by studying properties of $T_2$.

\begin{lemma} \label{lem_T2idem}$T_2$ is an idempotent in $\essbAWeb[N]$ and thus also in the quotient $\essAWeb[N]$.
\end{lemma}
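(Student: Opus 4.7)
The plan is to reduce the idempotency of $T_2$ to the single identity $(\wrapi\otimes \wrap)^N = \id_2$ in $\essbAWeb[N](2)$, and then to finish with a short averaging-type computation.

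First, I will set $X := \wrapi\otimes \wrap$. Since composition distributes over $\otimes$ in a monoidal category, one has $X^N = \wrap^{-N}\otimes \wrap^N$. By Lemma~\ref{lem:DNone}, $\wrap^N = (-1)^{N-1} c_N \cdot \id_1$, and inverting this relation, $\wrap^{-N} = (-1)^{N-1} c_N^{-1}\cdot \id_1$. The element $c_N$ lies in the endomorphism algebra of the empty object and is therefore central in $\essbAWeb[N]$ (using that we are in a symmetric monoidal category at $q=1$), with $c_N \cdot c_N^{-1} = 1$. Consequently
\[X^N = (-1)^{2(N-1)}\, c_N^{-1} c_N \cdot \id_2 \;=\; \id_2.\]

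Second, with $T_2 = \tfrac{1}{N}\sum_{k=0}^{N-1} X^k$, I will expand
\[T_2^2 \;=\; \frac{1}{N^2}\sum_{j,k=0}^{N-1} X^{j+k},\]
reduce exponents modulo $N$ via $X^N = \id_2$, and count. For each $r \in \{0,\ldots,N-1\}$, the number of pairs $(j,k) \in \{0,\ldots,N-1\}^2$ with $j+k \equiv r \pmod{N}$ is exactly $N$ (namely $r+1$ pairs with $j+k=r$ together with $N-1-r$ pairs with $j+k = N+r$). This gives $T_2^2 = \tfrac{1}{N}\sum_{r=0}^{N-1} X^r = T_2$, so $T_2$ is idempotent in $\essbAWeb[N]$.

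Descent to $\essAWeb[N]$ is then automatic, since $T_2$ is defined by the same formula in both categories and is therefore the image of the $\essbAWeb[N]$-idempotent under the quotient functor $\essbAWeb[N] \to \essAWeb[N]$ (which sets $c_N = (-1)^{N-1}$). The only delicate point in the argument is the bookkeeping with the central element $c_N$ in the computation $X^N = \id_2$; once this is in hand, the rest is just the standard observation that $T_2$ is the averaging projector for the finite cyclic group generated by $X$.
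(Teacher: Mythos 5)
Your proposal is correct and follows essentially the same route as the paper: the paper's proof likewise expands $T_2^2$ as a double sum, reduces exponents using $\wrap^{-N-i}\otimes\wrap^{N+i} = (-1)^{2N-2}c_N^{-1}c_N(\wrap^{-i}\otimes\wrap^{i}) = \wrap^{-i}\otimes\wrap^{i}$ (the same fact as your $X^N=\id_2$, resting on Lemma~\ref{lem:DNone}), and collects $N$ copies of each residue class. Your write-up is just a more explicit version of the same averaging argument, and the descent to $\essAWeb[N]$ is handled identically.
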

\begin{proof}
We compute: 
\begin{align*}T_2^2 =  \frac{1}{N^2}\sum_{k=0}^{N-1}\sum_{l=0}^{N-1} \wrap^{-k-l} \otimes \wrap^{k+l} = \frac{1}{N}\sum_{l=0}^{N-1} \wrap^{-l} \otimes \wrap^l =T_2
\end{align*}
Here we have used that $\wrap^{-N-i}\otimes\wrap^{N+i} = (-1)^{2N-2}{c_N}^{-1}c_N (\wrap^{-i}\otimes\wrap^{i}) =\wrap^{-i}\otimes\wrap^{i}$.
\end{proof}

\begin{lemma} \label{lem:T2comm}
In $\essbAWeb[N](3)$ we have that $\id_1 \otimes T_2$ and $T_2\otimes \id_1$ commute. 
\end{lemma}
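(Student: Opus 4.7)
My plan is to unfold the definition of $T_2$ on both sides and reduce the commutation claim to the interchange law in the symmetric monoidal structure on $\essbAWeb[N]$ established in Section~2.5.

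First, using Definition~\ref{def:T} together with associativity of $\otimes$, I will rewrite both endomorphisms as linear combinations of triple tensor products of powers of $\wrap$ in $\essbAWeb[N](3)$:
\begin{align*}
T_2\otimes\id_1 &= \frac{1}{N}\sum_{k=0}^{N-1}\wrap^{-k}\otimes\wrap^{k}\otimes\id_1, \\
\id_1\otimes T_2 &= \frac{1}{N}\sum_{l=0}^{N-1}\id_1\otimes\wrap^{-l}\otimes\wrap^{l}.
\end{align*}

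Then I will apply the interchange law $(f_1\otimes f_2\otimes f_3)\circ(g_1\otimes g_2\otimes g_3)=(f_1 g_1)\otimes(f_2 g_2)\otimes(f_3 g_3)$ on each summand to conclude that both $(T_2\otimes\id_1)\circ(\id_1\otimes T_2)$ and $(\id_1\otimes T_2)\circ(T_2\otimes\id_1)$ collapse to the same expression $\frac{1}{N^2}\sum_{k,l}\wrap^{-k}\otimes\wrap^{k-l}\otimes\wrap^{l}$. This relies only on the trivial fact that powers of $\wrap$ commute with each other inside $\essbAWeb[N](1)$, since $\wrap$ generates a commutative subalgebra of endomorphisms of a single $1$-labeled object.

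I do not anticipate any genuine obstacle: the content of the lemma is bookkeeping with the symmetric monoidal structure, which is already in hand. The only point worth flagging is that one should not be distracted by the fact that wraps on different strands within a single fixed annulus can interact nontrivially via crossings; the interchange law sidesteps this because we are already expressing everything as a pure tensor of single-strand endomorphisms before composing. The same argument generalizes to show that $\id_{i-1}\otimes T_2\otimes\id_{m-i-1}$ and $\id_{j-1}\otimes T_2\otimes\id_{m-j-1}$ commute whenever the supports $\{i,i+1\}$ and $\{j,j+1\}$ are disjoint, a fact likely to reappear in later inductions based on Definition~\ref{def:T}.
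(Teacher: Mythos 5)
Your proposal is correct and is essentially the paper's own proof: the paper likewise computes both composites via the interchange law and observes that each summand collapses to $\wrap^{-k}\otimes\wrap^{k-l}\otimes\wrap^{l}$, so both products equal $\frac{1}{N^2}\sum_{k,l}\wrap^{-k}\otimes\wrap^{k-l}\otimes\wrap^{l}$. Your remark that the symmetric monoidal structure (valid because $q=1$) is what licenses the interchange law is exactly the right point to flag.
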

\begin{proof}
$(\id_1 \otimes T_2)(T_2\otimes \id_1)= \frac{1}{N^2}\sum_{k,l=0}^{N-1}\wrap^{-k} \otimes \wrap^{k-l}\otimes \wrap^{l} = (\id_1 \otimes T_2)(T_2\otimes \id_1)$
\end{proof}

The following lemma states a relation that is trivially satisfied on the representation-theoretic side, i.e. after applying $\phi$, but which is non-obvious in $\essbAWeb[N]$ and $\essAWeb[N]$. For the latter, the result can be deduced from \cite[Equation (37)]{CK_ann}.

\begin{lemma}\label{lem:crossabs} The idempotent $T_2$ absorb the crossing $s=s_1$ between its two strands. More precisely $s T_2 = T_2 s = T_2$  in  $\essbAWeb[N](2)$ and thus also in $\essAWeb[N](2)$.
\end{lemma}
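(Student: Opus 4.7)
The plan is to split the proof into two parts corresponding to $sT_2 = T_2 s$ and $T_2 s = T_2$. For the commutation, one applies the slide relation $s \cdot (D^a \otimes D^b) = (D^b \otimes D^a) \cdot s$ (which holds because the crossing of two strands simply swaps which strand carries which wrap) to each summand of $T_2 = \tfrac{1}{N}\sum_{k=0}^{N-1} D^{-k} \otimes D^k$, and then reindexes $k \mapsto N-k$. The reindexing is valid in $\essbAWeb[N]$ because, by Lemma~\ref{lem:DNone}, $D^{\pm N}$ equals $(-1)^{N-1}c_N^{\pm 1}$, and the two central factors on the two tensor slots cancel.

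For the harder equality $T_2 s = T_2$, one reformulates it as $T_2 u = 0$, where $u = \id_2 - s = \Delta \circ m$ is the dumbbell web arising from the $q=1$ crossing expansion $s = \id - u$. Since the $q=1$ bigon relation reads $m \circ \Delta = 2 \cdot \id_2$, the split map $\Delta$ is left-invertible, and it is therefore enough to prove the stronger claim $m \circ T_2 = 0$ in $\essbAWeb[N]$.

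To establish $m \circ T_2 = 0$, the essential tool is the isotopy $m \cdot (D \otimes D) = D_2 \cdot m$, obtained by sliding the merge vertex once around the annulus, whose iterates rewrite each summand as $m \cdot (D^{-k} \otimes D^k) = D_2^{k} \cdot m \cdot (D^{-2k} \otimes \id)$. Combined with the antisymmetry relation $m \circ s = -m$ (itself a consequence of the crossing resolution together with $m\Delta = 2\,\id_2$), the plan is to reorganize the resulting sum into a linear combination of closed annular webs whose surviving components are essential $k$-labeled circles with $0 < k < N$; these vanish in $\essbAWeb[N]$ by the defining relations of the quotient.

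The principal obstacle is executing this last reduction cleanly, without invoking the faithfulness of the representation functor $\phi$, which is only proven later as Theorem~\ref{thm:faithfulness}. A cleaner alternative strategy is to use the Lagrange decomposition $T_2 = \sum_{i=1}^{N} P_i \otimes P_i$ into the primitive idempotents of $\C[D^{\pm 1}]/\langle D^N - (-1)^{N-1}c_N\rangle$, and to prove that each corner $(P_i \otimes P_i)\,\essbAWeb[N](2)\,(P_i \otimes P_i)$ is one-dimensional over the center. This forces $s(P_i \otimes P_i) = \pm (P_i \otimes P_i)$, and the sign is fixed to $+1$ by a single evaluation of $\phi$ on this one-dimensional corner; summing over $i$ then yields $sT_2 = T_2$. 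This approach bypasses full faithfulness of $\phi$ but still hinges on the primitivity of the idempotents $P_i \otimes P_i$, which in turn relies on the vanishing of the non-trivial essential circles in $\essbAWeb[N]$.
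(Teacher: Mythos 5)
Your reduction is sound as far as it goes: the commutation $sT_2=T_2s$ via the slide relation and the reindexing $k\mapsto N-k$ (legitimate by Lemma~\ref{lem:DNone}) is correct, and so is the equivalence of $T_2u=0$ with $M T_2=0$, where $M$ denotes the merge vertex (your $m$). The genuine gap is that the vanishing itself is never proved. Your first plan --- ``reorganize the resulting sum into a linear combination of closed annular webs whose surviving components are essential $k$-labeled circles with $0<k<N$'' --- cannot work as stated: $M T_2$ is a morphism from two $1$-labeled points to a $2$-labeled point, not a closed web, and the identities $M(\wrap\otimes\wrap)=\wrap_2 M$ and $Ms=-M$ do not by themselves produce any intermediate-label essential circles. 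In the paper's proof the vanishing ultimately comes from a different source: one shows $N\,T_2u=\wrap^{1-N}SA_{N+1}M$, which is zero because $A_{N+1}$ contains an $(N{+}1)$-labeled edge. Reaching that identity requires the analysis of the endomorphism algebra of the $2$-labeled object in Lemma~\ref{lem:endtwo}, the Chebyshev-type recursion $A_k=B_1A_{k-1}-A_{k-2}\wrap_2$ of Corollary~\ref{cor:endtwob} (where the essential-circle relations $E_k=0$ for $k<N$ enter only as the vanishing of error terms), and the combinatorial identity of Lemma~\ref{lem:XY}. None of this substantial content is present in, or recoverable from, your sketch.

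The alternative strategy does not close the gap either. In $\essbAWeb[N]$ the algebra $\C[\wrap^{\pm1}]/\langle \wrap^N-(-1)^{N-1}c_N\rangle$ is isomorphic to $\C[\wrap^{\pm1}]$ with $c_N$ identified with $(-1)^{N-1}\wrap^N$; it is an integral domain and has no nontrivial idempotents, so the decomposition $T_2=\sum_i P_i\otimes P_i$ simply does not exist in the central extension where the lemma is asserted (the paper only has it in the quotient $\essAWeb[N]$, Corollary~\ref{cor:Ttwo}). Even working in $\essAWeb[N]$, the corner $(P_i\otimes P_i)\,\essAWeb[N](2)\,(P_i\otimes P_i)$ is a priori spanned by the two elements $P_i\otimes P_i$ and $(P_i\otimes P_i)s(P_i\otimes P_i)$: every morphism reduces to a linear combination of words in the wraps $w_1,w_2$ and the crossing $s$, such a word can be normalized to $w_1^aw_2^b\sigma$ with $\sigma\in\{\id_2,s\}$, and the wraps are absorbed by the $P_i$ as scalars. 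Hence the one-dimensionality you need is exactly the assertion that $(P_i\otimes P_i)s(P_i\otimes P_i)$ is a scalar multiple of $P_i\otimes P_i$, which is the content of Corollary~\ref{cor:crossabs} --- a statement the paper deduces from Lemma~\ref{lem:crossabs}, not the other way around. The proposed reduction is therefore circular, and the hard diagrammatic work that constitutes the paper's proof remains to be done.
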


The diagrammatic proof is involved and we postpone it until Section~\ref{sec:proofs}.

\begin{lemma} In $\essbAWeb[N](2)$ we have $\wrapi T_2 \wrap=T_2$.
\end{lemma}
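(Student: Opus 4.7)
The plan is to verify the identity by a direct computation that reduces to the mutual commutativity of powers of the single-strand wrap $\wrap$. The key algebraic input is the explicit formula
\[
T_2 \;=\; \frac{1}{N}\sum_{k=0}^{N-1} \wrap^{-k}\otimes \wrap^k
\]
from Definition~\ref{def:T}.

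First, I would unpack the meaning of $\wrap$ in $\essbAWeb[N](2)$. The natural interpretation is the simultaneous wrap of both $1$-labeled strands around the annulus; since two parallel strands wrap the annulus without any braiding, this morphism is isotopic to $\wrap\otimes\wrap$, the product of the two single-strand wraps. Substituting into the definition yields
\[
\wrapi T_2 \wrap \;=\; (\wrap^{-1}\otimes\wrap^{-1})\cdot\frac{1}{N}\sum_{k=0}^{N-1}(\wrap^{-k}\otimes \wrap^k)\cdot(\wrap\otimes\wrap) \;=\; \frac{1}{N}\sum_{k=0}^{N-1}(\wrap^{-k}\otimes\wrap^k) \;=\; T_2,
\]
using $\wrap^{-1}\wrap^{\pm k}\wrap=\wrap^{\pm k}$ on each tensor factor. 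The very same cancellation occurs if one instead reads $\wrap$ as a wrap of only one of the two strands (i.e.\ $\wrap\otimes\id$ or $\id\otimes\wrap$).

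I do not expect any real obstacle. The only configuration that would require more care is if $\wrap$ were interpreted as a wrap that braids one strand over the other during the wrapping (so $\wrap$ picks up a crossing $s_1$ on each side). In that case I would first invoke Lemma~\ref{lem:crossabs} to absorb the crossings, $s_1 T_2 s_1 = T_2$, and then apply the previous computation. In every interpretation compatible with Definition~\ref{def:T}, the argument therefore reduces to the trivial observation that iterated wraps on a single strand commute.
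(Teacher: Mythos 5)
Your proposal hinges on identifying the two-strand wrap $\wrap\in\essbAWeb[N](2)$, and the first two readings you offer are not the right ones. By construction (see the picture defining $\wrap$ and the formula $\phi(\wrap)(v_k\otimes w)=\zeta^k\,w\otimes v_k$), the wrap of a multi-strand object sends one strand around the annulus past the base-point arc while the others stay put, cyclically rotating the strand positions; concretely $\id_1\otimes\wrap=\wrap s$ and $\wrapi\otimes\id_1=\wrapi s$, so $\wrap=(\id_1\otimes\wrap)s$ involves a genuine crossing. It is neither $\wrap\otimes\wrap$ nor $\wrap\otimes\id_1$; under either of those readings the claim would indeed be a triviality, but it would not be the lemma. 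Only your third, hedged case is the operative one.

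Fortunately, that case you handle correctly: writing $\wrapi T_2\wrap=s\,(\id_1\otimes\wrapi)\,T_2\,(\id_1\otimes\wrap)\,s$, conjugation by $\id_1\otimes\wrap$ fixes each summand $\wrap^{-k}\otimes\wrap^{k}$ of $T_2$ because single-strand wraps on distinct tensor factors commute, and then $s\,T_2\,s=T_2$ by Lemma~\ref{lem:crossabs} (whose proof is independent of the present statement, so there is no circularity). This is essentially the paper's argument: there one rewrites $T_2=\tfrac{1}{N}\sum_{k=0}^{N-1}(\wrapi s)^{k}(\wrap s)^{k}$, observes that conjugation by $\wrap$ shifts the summation index by one modulo $N$ (using $(\wrapi s)^N(\wrap s)^N=\id_2$), and again closes the computation with crossing absorption. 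Your packaging of that step is, if anything, slightly cleaner; the only real defect is that you should commit to the correct definition of $\wrap$ rather than leaving it as a contingency.
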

\begin{proof} 
For this we rewrite $T_2$ in terms of $\wrapi\otimes \id_1= \wrapi s$ and $\id_1 \otimes \wrap= \wrap s$. Using Lemma~\ref{lem:crossabs} and $(\wrapi s)^N (\wrap s)^N = \wrap^{-N}\otimes \wrap^N= 1$, we compute:
\[\wrapi T_2 \wrap= \wrapi s T_2 \wrap= \frac{1}{N}\sum_{k=0}^{N-1} \wrapi s ( \wrapi s)^{k}(\wrap s)^k\wrap s s=T_2 s= T_2\]
\vspace{-5mm}
\end{proof}

\begin{theorem} \label{thm:Tm}
  The elements $T_m$ of $\essbAWeb[N]$ satisfy the following properties: 
  \begin{enumerate}
  \item $T_m^2=T_m$;
  \item $T_m (\id_{k}\otimes T_n\otimes \id_{m-n-k}) =(\id_{k}\otimes T_n\otimes \id_{m-n-k}) T_m = T_m$ for $1\leq n< m$ and $0\leq k\leq m-n$;
  \item $(T_k\otimes \id_{m-k})(\id_{m-l}\otimes T_l)= (\id_{m-l}\otimes T_l) (T_k\otimes \id_{m-k})  = T_{m}$ for $k+l> m$; \label{item:overlap}
\item $T_m s_i= s_i T_m=T_m$ for $m\geq 2$; 
\item $T_m u_i= u_i T_m=0$ for $m\geq 2$;
 \item $\wrapi T_m \wrap=T_m$. 
  \end{enumerate}
Here, $s_i$ and $u_i$ again refer to crossings and dumbbell webs between the strands in position $i$ and $i+1$, see Definition~\ref{def:si}.
\end{theorem}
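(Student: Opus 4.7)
The plan is to identify a structural backbone for $T_m$ that reduces properties (1)--(5) to bookkeeping with commuting idempotents, then handle (6) separately via an explicit closed form. Define $E_i^{(n)} := \id_{i-1} \otimes T_2 \otimes \id_{n-i-1} \in \essbAWeb[N](n)$ for $1 \leq i \leq n-1$. By Lemma~\ref{lem_T2idem} and Lemma~\ref{lem:T2comm} (and the trivial disjoint-support commutativity for $|i-j| \geq 2$), the collection $\{E_i^{(n)}\}_i$ consists of pairwise commuting idempotents. Iterating the defining recursion $T_{n+1} = E_n^{(n+1)}(T_n \otimes \id_1)$ then yields $T_n = \prod_{i=1}^{n-1} E_i^{(n)}$, with the product taken in any order. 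This at once gives an \emph{absorption lemma}: $T_n E_i^{(n)} = E_i^{(n)} T_n = T_n$, obtained by sliding the extra factor next to its twin in the product and invoking idempotence.

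Properties (1)--(5) then follow by the same bookkeeping. For (4), write $T_m = (\prod_{j \neq i} E_j^{(m)}) \cdot E_i^{(m)}$ using commutativity; then $T_m s_i = (\prod_{j \neq i} E_j^{(m)})(E_i^{(m)} s_i) = T_m$, applying Lemma~\ref{lem:crossabs} inside the factor $E_i^{(m)}$. Property (5) is immediate from (4) since $u_i = \id_2 - s_i$. For (1), $T_m^2 = T_m \cdot \prod_i E_i^{(m)} = T_m$ by repeated absorption. For (2), $\id_k \otimes T_n \otimes \id_{m-n-k} = \prod_{j=1}^{n-1} E_{k+j}^{(m)}$ and gets absorbed factor by factor. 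For (3), $T_k \otimes \id_{m-k}$ and $\id_{m-l} \otimes T_l$ are products of $E_i^{(m)}$'s over index sets $\{1,\ldots,k-1\}$ and $\{m-l+1,\ldots,m-1\}$; the hypothesis $k+l > m$ guarantees these cover $\{1,\ldots,m-1\}$, so commutativity plus idempotence eliminate any duplicates and the combined product reconstructs $T_m$.

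For (6) I move to an explicit closed form. By induction from the definition of $T_2$ and the recursion one verifies
\[
T_m \;=\; \frac{1}{N^{m-1}} \sum_{\substack{(b_1, \ldots, b_m) \in (\Z/N)^m \\ b_1 + \cdots + b_m \equiv 0}} \wrap^{b_1} \otimes \cdots \otimes \wrap^{b_m}.
\]
The essential identity is that the $m$-strand wrap $\wrap$ cyclically shifts such tensor products of single-strand wraps:
\[
\wrap \cdot (\wrap^{b_1} \otimes \cdots \otimes \wrap^{b_m}) \;=\; (\wrap^{b_2} \otimes \cdots \otimes \wrap^{b_m} \otimes \wrap^{b_1}) \cdot \wrap,
\]
which I would derive as an annular isotopy extending the two-strand relations $\wrap^{-1} \otimes \id_1 = \wrap^{-1} s$ and $\id_1 \otimes \wrap = \wrap s$ that appear in the proof of the preceding lemma $\wrapi T_2 \wrap = T_2$. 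Conjugation by $\wrap$ then cyclically permutes the exponent tuple in each summand; since the constraint $\sum b_i \equiv 0$ is preserved under cyclic rotation, the summation set is stable and we obtain $\wrapi T_m \wrap = T_m$.

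The main obstacle will be this cyclic-shift identity for the multi-strand $\wrap$ in (6). Representation-theoretically it is immediate, but to stay inside the diagrammatic category $\essbAWeb[N]$ one must justify it as an annular isotopy that tracks how a power of $\wrap$ sitting on strand $1$ slides around the global wrap to end up on strand $m$. This parallels the two-strand derivation used in the proof of $\wrapi T_2 \wrap = T_2$, but requires a careful $m$-strand extension; everything else in the proof is a direct consequence of the commuting-idempotents package.
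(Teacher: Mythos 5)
Your treatment of (1)--(3) and (5) via the commuting idempotents $E_i^{(n)}$ is exactly the paper's argument and is fine. The genuine gap is in (4): by Definition~\ref{def:si} the crossings $s_i$ are indexed \emph{modulo} $m$, so the claim includes the wrap-around crossing $s_m$ between the strands in positions $m$ and $1$, which goes around the back of the annulus. Your argument only produces $T_m s_i = T_m$ for $1\leq i\leq m-1$, since only those $i$ correspond to a factor $E_i^{(m)}$ in the product decomposition; there is no factor of $T_2$ sitting on the pair $(m,1)$. This is not a cosmetic omission: the paper's proof is architected around it. It first proves absorption for $i<m$, uses that to establish (6) by writing $\wrapi(\id_{m-2}\otimes T_2)\wrap$ as a conjugate of $T_2\otimes\id_{m-2}$ by a product of crossings that the flanking copies of $T_{m-1}$ absorb, and only then recovers the $i=m$ case from $T_m s_m = T_m\wrapi s_{m-1}\wrap$. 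Your plan can be repaired along the same lines, but only if your independent proof of (6) actually closes, and you must then explicitly derive the $s_m$ case from it.

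On (6) you genuinely diverge from the paper, and two issues remain. First, the closed form
\[
T_m=\tfrac{1}{N^{m-1}}\textstyle\sum_{\sum b_i\equiv 0}\wrap^{b_1}\otimes\cdots\otimes\wrap^{b_m}
\]
with exponents in $\Z/N$ is not well defined in $\essbAWeb[N]$: by Lemma~\ref{lem:DNone}, $\wrap^N=(-1)^{N-1}c_N\neq\id_1$ there, so shifting one exponent by $N$ changes the morphism by a unit. (It is fine in the quotient $\essAWeb[N]$, cf.\ Lemma~\ref{lem:projexpand}, but the theorem is asserted in $\essbAWeb[N]$.) The fix is to take integer exponents with $\sum b_i=0$ in $\Z$, one representative per residue tuple — this is what the recursion actually produces, e.g.\ $(-k,k-l,l)$ for $T_3$ — and then cyclic rotation preserves the condition and the $c_N$ factors cancel. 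Second, the cyclic-shift identity for the multi-strand wrap, which you correctly identify as the crux, is asserted but not proved; it does follow from the relations $w_i s_{i-1}=s_{i-1}w_{i-1}$ and $w_is_i=s_iw_{i+1}$ used in the proof of Proposition~\ref{prop:faithful}, together with $\wrap=w_m s_{m-1}\cdots s_1$, but as it stands your (6) is a proof sketch with its key step outstanding, whereas the paper's conjugation argument reduces everything to the already-established two-strand identity $\wrapi T_2\wrap=T_2$ and requires no new diagrammatic input.
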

\begin{proof} We have already checked in Lemma~\ref{lem_T2idem} that $T_2$ is idempotent. From the definition, it is clear that $T_m$ is a product of $m-1$ distinct factors of the form $\id_{k}\otimes T_2 \otimes \id_{m-2-k}$ for $0\leq k\leq m-2$. Lemma~\ref{lem:T2comm} implies that these factors commute and so $T_m$ is an idempotent (1) that absorbs smaller $T_n$, i.e. (2). It is also clear that overlapping projectors $T_l$ and $T_k$ combine as in (3). The crossing absorption property of $T_2$ now implies the one for $T_m$ and crossings $s_i$ for $1\leq i \leq m-1$. 

Using crossing absorption, we obtain the rotation conjugation invariance (6) from the $T_2$  case:
\begin{align*}
\wrapi T_m \wrap &=\wrapi (T_{m-1}\otimes \id_1)(\id_{m-2}\otimes T_2)(T_{m-1}\otimes \id_1) \wrap \\
&= (\id_1 \otimes T_{m-1})\wrapi s_{m-1}\cdots s_{2}s_{1}(T_2\otimes \id_{m-2})s_{1}s_{2}\cdots s_{m-1} \wrap (\id_1 \otimes T_{m-1})
\\
&= (\id_1 \otimes T_{m-1})((\wrapi T_2 \wrap)\otimes \id_{m-2}) (\id_1 \otimes T_{m-1}) 
\\&= (\id_1 \otimes T_{m-1})( T_2 \otimes \id_{m-2}) (\id_1 \otimes T_{m-1}) = T_{m} 
\end{align*}

This implies the missing crossing absorption relation (4)
\[T_m s_m= T_m \wrapi s_{m-1} \wrap = \wrapi T_m  s_{m-1} \wrap = \wrapi T_m  \wrap =T_m .\] 
Finally, the $u_i$ annihilation property (5) is equivalent to $s_i$ crossing absorption.
\end{proof}

Now we can give an alternative recursion relation for $T_m$ for $m\geq 3$. This is the direct generalization of the defining recursive relation in \cite[Definition 15]{QW}, and it is reminiscent of the Jones-Wenzl projectors.

\begin{corollary} The idempotents $T_m$ satisfy the following recursion for $m\geq 3$:
\[T_m= (T_{m-1}\otimes \id_1)s_{m-1}(T_{m-1}\otimes \id_1)\]
Graphically, we write this as: 
\begin{equation}
\label{eq:recursive}
 \begin{tikzpicture}[anchorbase, scale=.3]
\draw[thick] (0,0) circle (4);
\fill[black,opacity=.2] (0,0) circle (4);
\draw[thick,fill=white] (0,0) circle (2);
\draw[dotted] (-1.05,1.05) to [out=45,in=180] (0,1.5) to [out=0,in=135] (1.05,1.05);
\draw [thick] (0,0) circle (2);
\draw[dotted] (-3.16,3.16) to [out=45,in=180] (0,4.5) to [out=0,in=135] (3.16,3.16);
\draw (0,0) circle (1);
\draw (0,0) circle (5);
\draw [very thick] (.8,.6) to (1.6,1.2);
\draw [very thick] (1,0) to (2,0);
\draw [very thick,->] (4,0) to (5,0);
\draw [very thick] (-.8,.6) to (-1.6,1.2);
\draw [very thick,->] (3.2,2.4) to (4,3);
\draw [very thick,->] (-3.2,2.4) to (-4,3);
\node at (0,-1) {$*$};
\node at (0,-5) {$*$};
\draw [dashed] (0,-1) to (0,-5);
\node at (0,2.95) {\small$T_{m}$};
\end{tikzpicture}
\;\; :=\;\;
 \begin{tikzpicture}[anchorbase, scale=.3]
\draw[thick] (0,0) circle (2.5);
\fill[black,opacity=.2] (0,0) circle (2.5);
\draw[thick,fill=white] (0,0) circle (1.5);
\draw[dotted] (-1.93,1.93) to [out=45,in=180] (0,2.75) to [out=0,in=135] (1.93,1.93);
\draw[dotted] (-0.88,0.88) to [out=45,in=180] (0,1.25) to [out=0,in=135] (0.88,0.88);
\draw[thick] (0,0) circle (4.5);
\fill[black,opacity=.2] (4.5,0) arc (0:360:4.5) -- (3.5,0) arc (360:0:3.5);
\draw [thick] (0,0) circle (3.5);
\draw[dotted] (-3.4,3.4) to [out=45,in=180] (0,4.75) to [out=0,in=135] (3.4,3.4);
\draw[dotted] (-2.29,2.29) to [out=45,in=180] (0,3.25) to [out=0,in=135] (2.29,2.29);
\draw (0,0) circle (1);
\draw (0,0) circle (5);
\draw [very thick] (.8,.6) to (1.2,.9);
\draw [very thick] (-.8,.6) to (-1.2,.9);
\draw [very thick] (-2,1.5) to (-2.8,2.1);
\fill [white] (1.4,.2) -- (2.6,.2) -- (2.6,-.2) -- (1.4,-.2);
\fill [white] (3.4,.2) -- (4.6,.2) -- (4.6,-.2) -- (3.4,-.2);
\draw [draw =white, double=black, thick, double distance=1.25pt] (1,0) -- (2.5,0) to [out=0,in=210] (2.8,2.1);
\draw [very thick,->] (2,1.5) to [out=30,in=180] (3.5,0) to (5,0);
\draw [very thick] (2.8,2.1) -- (2.8,2.1);
\draw [very thick,->] (3.6,2.7) to (4,3);
\draw [very thick,->] (-3.6,2.7) to (-4,3);
\node at (0,-1) {$*$};
\node at (0,-5) {$*$};
\draw [dashed] (0,-1) to (0,-5);
\node at (.1,1.92) {\tiny$T_{m-1}$};
\node at (0,3.95) {\tiny$T_{m-1}$};
\end{tikzpicture}
\end{equation}
\end{corollary}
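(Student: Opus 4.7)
The plan is to prove the stronger intermediate identity
\[ (T_{m-1}\otimes\id_1)\, s_{m-1}\, (T_{m-1}\otimes\id_1) \;=\; T_m\cdot s_{m-1}, \]
from which the corollary follows immediately by property~(4) of Theorem~\ref{thm:Tm}, namely $T_m s_{m-1}=T_m$.

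\emph{Step 1: Closed-form expression for $T_m$.} First I would establish by induction on $m$ the formula
\[ T_m \;=\; \frac{1}{N^{m-1}}\!\sum_{\substack{(a_1,\dots,a_m)\in(\Z/N\Z)^m\\ a_1+\cdots+a_m\,\equiv\, 0\pmod{N}}} \wrap^{a_1}\otimes\cdots\otimes \wrap^{a_m}. \]
The base case $m=2$ is Definition~\ref{def:T}. For the inductive step, one expands $T_{m+1}=(\id_{m-1}\otimes T_2)(T_m\otimes \id_1)$ and verifies by elementary counting that each target tuple $(c_1,\dots,c_{m+1})$ with $\sum c_i\equiv 0$ arises exactly once. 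These summands are well-defined in $\essbAWeb[N]$ since the relation $\wrap^N=(-1)^{N-1}c_N$ from Lemma~\ref{lem:DNone}, combined with $c_N\otimes c_N^{-1}=\id$, renders the shifts $(a_i,a_j)\mapsto(a_i+N,a_j-N)$ harmless.

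\emph{Step 2: Direct computation.} Substituting Step 1 into the left-hand side of the intermediate identity yields a double sum. Sliding $s_{m-1}$ past the rightmost factor uses the naturality identity $s(\wrap\otimes\id)=(\id\otimes \wrap) s$ in $\AWeb[N](2)$ at $q=1$, which follows from Lemma~\ref{lem:Reid}. After multiplying wraps on shared strands I would obtain
\[ \frac{1}{N^{2m-4}}\!\!\sum_{\substack{\sum a_i\equiv 0\\ \sum b_i\equiv 0}}\!\! \wrap^{a_1+b_1}\!\otimes\cdots\otimes \wrap^{a_{m-2}+b_{m-2}}\!\otimes \wrap^{a_{m-1}}\!\otimes \wrap^{b_{m-1}}\cdot s_{m-1}. \]
Setting $c_i=a_i+b_i$ for $i\leq m-2$, $c_{m-1}=a_{m-1}$, $c_m=b_{m-1}$, one checks that each target tuple $(c_1,\dots,c_m)$ with $\sum c_i\equiv 0$ arises from exactly $N^{m-3}$ pairs $(a,b)$: once the constraint on $(a_i)$ holds, the constraint on $(b_i)$ is automatic, leaving $m-3$ free parameters among the $a_i$. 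Thus the double sum collapses to $T_m\cdot s_{m-1}$.

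The main obstacle is the combinatorial bookkeeping in Step 2, but it reduces to elementary counting in $(\Z/N\Z)^{m-1}$. A cleaner alternative argument, valid after passing to the quotient $\essAWeb[N]$ where $\wrap^N=\id$, would exploit the decomposition $T_m=\sum_{i=1}^{N} P_i^{\otimes m}$ in terms of the orthogonal idempotents $P_i$ defined before Theorem~\ref{thm:fullness}: the orthogonality $P_iP_j=\delta_{ij}P_i$ combined with naturality of $s_{m-1}$ at $q=1$ forces $(T_{m-1}\otimes\id_1)s_{m-1}(T_{m-1}\otimes\id_1)=\sum_i P_i^{\otimes m}\cdot s_{m-1}=T_m s_{m-1}=T_m$ with no further counting required.
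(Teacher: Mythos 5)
Your argument is correct, but it takes a genuinely different route from the paper. The paper's proof is a three-line diagrammatic manipulation: by an isotopy one writes $s_{m-1}(T_{m-1}\otimes\id_1)=s_{m-2}\cdots s_1(\id_1\otimes T_{m-1})s_1\cdots s_{m-1}$, then crossing absorption (Theorem~\ref{thm:Tm}(4)) removes $s_{m-2}\cdots s_1$, the overlap property (Theorem~\ref{thm:Tm}(3)) combines $(T_{m-1}\otimes\id_1)(\id_1\otimes T_{m-1})=T_m$, and a final absorption kills $s_1\cdots s_{m-1}$. You instead derive the closed formula $T_m=\tfrac{1}{N^{m-1}}\sum\wrap^{a_1}\otimes\cdots\otimes\wrap^{a_m}$ and reduce the identity to counting solutions in $(\Z/N\Z)^{m-1}$, invoking absorption only once at the very end via $T_ms_{m-1}=T_m$. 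Your computation checks out (the multiplicity $N^{m-3}$ and the normalization $N^{m-3}/N^{2m-4}=N^{-(m-1)}$ are right), and the closed formula is a worthwhile by-product that makes the ``discrete Fourier'' nature of $T_m$ transparent; the cost is that in $\essbAWeb[N]$, where $\wrap^N=(-1)^{N-1}c_N\neq\id$, the sum is only well defined once you fix lifts with \emph{integer} exponent sum equal to $0$ (your remark about the shifts $(a_i,a_j)\mapsto(a_i+N,a_j-N)$ is exactly the point, but the constraint should be stated as an exact equation in $\Z$ rather than a congruence, since lifts with different integer sums differ by powers of $c_N$). Two further cautions: your ``cleaner alternative'' via $T_m=\sum_iP_i^{\otimes m}$ is circular relative to the paper's logical order, since Lemma~\ref{lem:projexpand} is itself proved using this recursion, and it only establishes the identity in the quotient $\essAWeb[N]$ rather than in $\essbAWeb[N]$ where the $T_m$ live; and note that both your main route and the paper's ultimately rest on the nontrivial crossing absorption of Lemma~\ref{lem:crossabs}, so neither avoids that input.
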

\begin{proof} We check this identity as follows.
\begin{align*}
(T_{m-1}\otimes \id_1)s_{m-1}(T_{m-1}\otimes \id_1) &= (T_{m-1}\otimes \id_1)s_{m-2}\cdots s_{2}s_{1}(\id_1\otimes T_{m-1}) s_1 s_2 \cdots s_{m-1}\\ 
 &= (T_{m-1}\otimes \id_1)(\id_1\otimes T_{m-1} ) s_1 s_2 \cdots s_{m-1} = T_m s_1 s_2 \cdots s_{m-1} =T_m
\end{align*}
The first equation holds by isotopy, the third by item (3) of Theorem~\ref{thm:Tm}, and the others follows from crossing absorption.
\end{proof}

\begin{lemma}\label{lem:linkedproj} Let $m,n\in \N$ with $m+n\geq 3$, then $(T_m\otimes T_n)s_m(T_m\otimes T_n)=T_{m+n}$. This means, crossing-connected projectors can be combined. 
\end{lemma}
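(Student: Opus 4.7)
The proof plan combines the one-sided recursion $T_{k+1}=(T_k\otimes\id_1)\,s_k\,(T_k\otimes\id_1)$ from the preceding corollary with the overlap identity of Theorem~\ref{thm:Tm}(3) and the absorption property (2). Assuming $m\geq 2$, I would first invoke item (3) with $k=m+1$ and $l=n$ (so that $k+l=m+n+1>m+n$) to obtain the decomposition
\[T_{m+n}=(T_{m+1}\otimes\id_{n-1})(\id_m\otimes T_n).\]
Since $m+1\geq 3$, the corollary supplies $T_{m+1}=(T_m\otimes\id_1)\,s_m\,(T_m\otimes\id_1)$. Substituting and simplifying using $(T_m\otimes\id_n)(\id_m\otimes T_n)=T_m\otimes T_n$ (the supports are disjoint) yields
\[T_{m+n}=(T_m\otimes\id_n)\,s_m\,(T_m\otimes T_n).\]
The proof then finishes by left-multiplying by $(\id_m\otimes T_n)$: the left-hand side reduces to $T_{m+n}$ by Theorem~\ref{thm:Tm}(2), while the right-hand side becomes $(T_m\otimes T_n)\,s_m\,(T_m\otimes T_n)$, exactly the claim.

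The boundary case $m=1$, which forces $n\geq 2$, requires the mirrored recursion $T_{n+1}=(\id_1\otimes T_n)\,s_1\,(\id_1\otimes T_n)$. I would establish this by running the corollary's proof on the left-hand side: starting from the alternative factorization $T_{n+1}=(T_2\otimes\id_{n-1})(\id_1\otimes T_n)$ (Theorem~\ref{thm:Tm}(3) with $k=2$, $l=n$), one uses the crossing absorption of $T_2$ from Lemma~\ref{lem:crossabs} together with the same braid manipulation as in the corollary's proof to rewrite the leading $T_2$ as a single crossing $s_1$ flanked by two copies of $\id_1\otimes T_n$. Substituting this back via item (3) applied with $(k,l)=(m,n+1)$ then concludes the argument exactly as in the main case.

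The main anticipated obstacle is notational rather than conceptual: one must correctly track strand indices across the overlap decomposition and the recursion. Once the ingredients are aligned, the core identity for $m\geq 2$ drops out in essentially one line, and the mirrored recursion needed for the $m=1$ boundary case is a direct adaptation of the corollary's proof. No genuinely new diagrammatic computation is required beyond what is already codified in Theorem~\ref{thm:Tm} and Lemma~\ref{lem:crossabs}.
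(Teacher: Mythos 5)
Your argument is correct and is essentially the paper's own proof read in the opposite direction: both rest on the recursion $T_{m+1}=(T_m\otimes\id_1)s_m(T_m\otimes\id_1)$ together with items (2) and (3) of Theorem~\ref{thm:Tm}, the paper contracting $(\id_m\otimes T_n)(T_m\otimes\id_n)s_m(T_m\otimes\id_n)(\id_m\otimes T_n)$ to $(\id_m\otimes T_n)(T_{m+1}\otimes\id_{n-1})(\id_m\otimes T_n)=T_{m+n}$ where you expand $T_{m+n}$ instead. The only difference is cosmetic: the paper dismisses the boundary case with ``we may assume $m\geq 2$'' (implicitly by symmetry), whereas you supply an explicit mirrored recursion for $m=1$, which is a valid and slightly more self-contained way to close that case.
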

\begin{proof} We may assume that $m\geq 2$ and compute:
\[(\id_m\otimes T_n)(T_m\otimes \id_n)s_m(T_m\otimes \id_n)(\id_m\otimes T_n)= (\id_m\otimes T_n)(T_{m+1}\otimes \id_{n-1})(\id_m\otimes T_n) = T_{m+n}\] Here we have used the projector recursion \eqref{eq:recursive} and the fact that overlapping projectors can be combined, i.e. (\ref{item:overlap}) in Theorem \ref{thm:Tm}.
\end{proof}

Next we consider the images of the idempotents $T_m$ in the quotient category $\essAWeb[N]$. Recall the morphisms $\{P_a\}_{a\in \{1,\dots, N}$ that were introduced just before Theorem \ref{thm:fullness} as diagrammatic versions of projectors on eigenspaces. We will see in Lemma \ref{lem:projexpand} that they can be combined to give an alternate definition of the extremal weight projectors, which amounts to saying that in the quotient category $\essAWeb[N]$, extremal weight spaces can be broken into individual weight spaces.

\begin{lemma}\label{lem:extrabs} In $\essAWeb[N]$ we have $(P_a\otimes P_b)\circ T_2 = T_2\circ (P_a\otimes P_b) = \delta_{a,b}(P_a\otimes P_b)$.
\end{lemma}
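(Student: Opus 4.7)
The plan is to compute both compositions directly using the definition $T_2 = \frac{1}{N}\sum_{k=0}^{N-1} \wrap^{-k}\otimes \wrap^k$ and the key property of the idempotents $P_a$, namely that they project onto the eigenspace of $\wrap$ with eigenvalue $\zeta^a = e^{2\pi i a/N}$. Concretely, in $\essAWeb[N](1)$ we have $\wrap^N = \id_1$ by Lemma~\ref{lem:DNone}, and the $P_a$ were defined as the mutually orthogonal idempotents in $\C[\wrap]/\langle \wrap^N-1\rangle \cong \bigoplus_{j=1}^N \C[\wrap]/\langle \wrap - \zeta^j\rangle$ projecting onto the $\zeta^a$-summand. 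Consequently $P_a \wrap^k = \wrap^k P_a = \zeta^{ak} P_a$ for every integer $k$.

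First I would compute $(P_a\otimes P_b)\circ T_2$. Using the fact that the tensor product is by superposition and that $P_a$ and $P_b$ act on independent strands, we get
\begin{align*}
(P_a\otimes P_b)\circ T_2 &= \frac{1}{N}\sum_{k=0}^{N-1} (P_a \wrap^{-k})\otimes (P_b \wrap^{k}) = \frac{1}{N}\sum_{k=0}^{N-1} \zeta^{-ak}\zeta^{bk}\,(P_a\otimes P_b) \\
&= \Bigl(\frac{1}{N}\sum_{k=0}^{N-1} \zeta^{(b-a)k}\Bigr)(P_a\otimes P_b).
\end{align*}
The geometric sum evaluates to $1$ if $a\equiv b\pmod N$ and $0$ otherwise, which, since $a,b\in\{1,\dots,N\}$, is precisely $\delta_{a,b}$.

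The composition $T_2\circ(P_a\otimes P_b)$ is handled identically, using $\wrap^k P_a = \zeta^{ak}P_a$ instead. Both computations take place in $\essAWeb[N]$, where $\wrap^N=\id_1$ ensures that the relation $\C[\wrap]/\langle\wrap^N-1\rangle$ really does decompose into the $N$ distinct eigenlines, so that the orthogonality of the $P_a$ (which we use implicitly in rewriting $P_a\wrap^k = \zeta^{ak}P_a$) is available. There is no genuine obstacle here: the only subtlety worth flagging is to make sure that commuting the $P_a$ past the $\wrap^{\pm k}$ in different tensor factors uses only the naturality of the symmetric monoidal structure on $\essAWeb[N]$, and that the scalar identity $\frac{1}{N}\sum_{k=0}^{N-1}\zeta^{(b-a)k}=\delta_{a,b}$ is applied with the correct range of labels $a,b\in\{1,\dots,N\}$.
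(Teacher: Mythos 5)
Your proof is correct and is essentially the paper's argument: the paper just formalizes the same computation by working in the commutative algebra $\C[X^{\pm 1},Y^{\pm 1}]/\langle X^N-1,Y^N-1\rangle$ that surjects onto the subalgebra of $\essAWeb[N](2)$ generated by the wraps, where $T_2$ becomes $\frac{1}{N}\sum_k X^{-k}Y^k$ and $P_a\otimes P_b$ becomes $P_a(X)P_b(Y)$, and then evaluates the same geometric sum $\frac{1}{N}\sum_k\zeta^{k(b-a)}=\delta_{a,b}$ on the simultaneous eigenlines. Your identity $P_a\wrap^k=\zeta^{ak}P_a$ is exactly the eigenprojection property used there, so no gap.
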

\begin{proof} The $\C$-algebra $R:=\C[X^{\pm 1},Y^{\pm 1}]/\la X^N-1,Y^N-1\ra$ surjects onto the subalgebra of $\essAWeb[N](2)$ generated by wraps and their inverses via the map $1 \mapsto \id_2$, $X\mapsto \wrap\otimes \id_1=sD$ and $Y\mapsto \id_1\otimes \wrap=Ds$. We will check the desired equalities in $R$, where $T_2$ is represented by $\frac{1}{N}\sum_{k=0}^{N-1} X^{-k}Y^k$ and $P_a\otimes P_b$ is represented by $P_a(X)P_b(Y)$, which then implies that these equalities also hold in $\essAWeb[N](2)$.

Note that the idempotents $P_a(X)P_b(Y)$ decompose $R\cong \bigoplus_{a,b} P_a(X)P_b(Y) R$ into $1$-dimensional summands, which precisely consist of simultaneous eigenvectors for multiplication by $X$ and $Y$ with eigenvalues $\zeta^a$ and $\zeta^b$ respectively. Thus we can compute the action of $T_2$ on such an idempotent as:
\[P_a(X)P_b(Y)T_2 = P_a(X)P_b(Y) \frac{1}{N}\sum_{k=0}^{N-1} X^{-k}Y^k = P_a(X)P_b(Y) \frac{1}{N}\sum_{k=0}^{N-1} \zeta^{k(b-a)} = \delta_{a,b}P_a(X)P_b(Y).\vspace{-.6cm}\]

\end{proof}
\begin{corollary}\label{cor:Ttwo} In $\essAWeb[N]$ we have 
 $T_2=\sum_{k=1}^N P_k\otimes P_k$.
\end{corollary}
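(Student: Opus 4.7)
The plan is a one-line deduction from Lemma~\ref{lem:extrabs} together with the fact that $\{P_k\}_{k=1}^N$ form a complete system of orthogonal idempotents in $\essAWeb[N](1)$. Recall that the $P_k$ were defined as the orthogonal idempotents coming from the Chinese remainder decomposition $\C[\wrap^{\pm 1}]/\la \wrap^N-1\ra \cong \bigoplus_{k=1}^N \C[\wrap]/\la \wrap-\zeta^k\ra$, so in particular $\sum_{k=1}^N P_k = \id_1$ in $\essAWeb[N](1)$.

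First, I would tensor this completeness relation with itself to obtain $\id_2 = \sum_{a,b} P_a \otimes P_b$ in $\essAWeb[N](2)$. Next, I would use the fact that $T_2$ acts as the identity on itself (since $T_2$ is an endomorphism of the object of length $2$) and insert one copy of $\id_2$ on the left:
\[
T_2 \;=\; \id_2 \circ T_2 \;=\; \Bigl(\sum_{a,b=1}^N P_a \otimes P_b\Bigr) \circ T_2 \;=\; \sum_{a,b=1}^N (P_a \otimes P_b)\circ T_2 .
\]
By Lemma~\ref{lem:extrabs}, $(P_a \otimes P_b)\circ T_2 = \delta_{a,b}(P_a \otimes P_b)$, so the double sum collapses to $\sum_{k=1}^N P_k \otimes P_k$, which is exactly the desired identity.

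There is no real obstacle here: the only non-trivial ingredient is Lemma~\ref{lem:extrabs}, which has already been established by a direct computation in the commutative subalgebra generated by wraps. The content of the corollary is essentially the observation that, once the eigenvalue ambiguity of $\wrap$ is resolved via the idempotents $P_k$, the $\slnn{2}$-style symmetrization $T_2 = \frac{1}{N}\sum_k \wrap^{-k}\otimes \wrap^k$ is diagonalized and picks out precisely the ``diagonal'' eigenspaces where both strands carry the same weight label $k$, consistent with the representation-theoretic fact that $\phi(T_2)$ is the projection onto $\bigoplus_k \C\la v_k \otimes v_k\ra$.
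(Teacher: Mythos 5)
Your argument is correct and coincides with the paper's intended one: the corollary is stated without a written proof precisely because it follows from Lemma~\ref{lem:extrabs} in the way you describe, namely by expanding $\id_2=\sum_{a,b}P_a\otimes P_b$ (using completeness of the idempotents $P_k$ in $\C[\wrap^{\pm 1}]/\la \wrap^N-1\ra\cong\essAWeb[N](1)$) and letting the lemma collapse the double sum to the diagonal. Nothing is missing.
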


\begin{corollary}\label{cor:crossabs} In $\essAWeb[N]$ we have $s(P_k\otimes P_k)=(P_k\otimes P_k)s=P_k\otimes P_k $.
\end{corollary}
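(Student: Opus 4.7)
The plan is to deduce this corollary from the two immediately preceding results by a short chain of equalities. Specifically, Lemma~\ref{lem:extrabs} shows that $P_k \otimes P_k$ is absorbed by $T_2$ on either side, i.e.\ $T_2 \circ (P_k \otimes P_k) = (P_k \otimes P_k) \circ T_2 = P_k \otimes P_k$. Combining this with the crossing absorption property $s T_2 = T_2 s = T_2$ from Lemma~\ref{lem:crossabs} should give the result essentially for free.

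Concretely, I would first write
\[
s(P_k \otimes P_k) \;=\; s \bigl(T_2(P_k \otimes P_k)\bigr) \;=\; (sT_2)(P_k \otimes P_k) \;=\; T_2(P_k \otimes P_k) \;=\; P_k \otimes P_k,
\]
using Lemma~\ref{lem:extrabs} for the first and last equalities and Lemma~\ref{lem:crossabs} in the middle. A symmetric computation,
\[
(P_k \otimes P_k)s \;=\; \bigl((P_k \otimes P_k)T_2\bigr)s \;=\; (P_k \otimes P_k)(T_2 s) \;=\; (P_k \otimes P_k)T_2 \;=\; P_k \otimes P_k,
\]
handles the other side.

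There is no real obstacle here: the content has already been packaged into Lemma~\ref{lem:crossabs} (whose proof is deferred to Section~\ref{sec:proofs}) and Lemma~\ref{lem:extrabs}. One small thing worth double-checking before writing this up is that both sides of Lemma~\ref{lem:extrabs} are genuinely available (not only one-sided absorption), which they are, so the two computations above are symmetric. The corollary therefore reduces to this two-line manipulation in $\essAWeb[N](2)$.
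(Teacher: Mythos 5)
Your proof is correct and coincides with the paper's own argument: insert $T_2$ via Lemma~\ref{lem:extrabs}, absorb the crossing via Lemma~\ref{lem:crossabs}, and remove $T_2$ again via Lemma~\ref{lem:extrabs}. The paper only writes out the left-composition case and notes the other is analogous, which you spell out explicitly.
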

\begin{proof} We only consider composing with $s$ on the left:
\[s(P_k\otimes P_k)=s T_2(P_k\otimes P_k)=T_2(P_k\otimes P_k)=P_k\otimes P_k\]
Here we have used Lemma~\ref{lem:extrabs} twice and Lemma~\ref{lem:crossabs} in between.
\end{proof}

\begin{lemma}\label{lem:projexpand} In $\essAWeb[N]$ we have $T_m=\sum_{k=1}^N \bigotimes_m P_k$.
\end{lemma}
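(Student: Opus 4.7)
The plan is to prove the identity by induction on $m$, using the recursive definition of $T_m$ together with the decomposition of $T_2$ already established in Corollary~\ref{cor:Ttwo} and the orthogonality of the idempotents $P_k$.

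For the base cases, note first that the $\{P_k\}_{k=1}^N$ are defined as a complete set of orthogonal idempotents in $\essAWeb[N](1)$ coming from the Chinese remainder decomposition $\C[\wrap^{\pm1}]/\langle\wrap^N-1\rangle\cong\bigoplus_{k=1}^N\C[\wrap]/\langle\wrap-\zeta^k\rangle$. In particular $\sum_{k=1}^N P_k=\id_1$ and $P_iP_j=\delta_{i,j}P_i$. This gives $T_1=\id_1=\sum_{k=1}^N P_k$. The case $m=2$ is exactly Corollary~\ref{cor:Ttwo}.

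For the inductive step, assume $T_m=\sum_{k=1}^N \bigotimes_m P_k$. Using the recursion $T_{m+1}=(\id_{m-1}\otimes T_2)(T_m\otimes\id_1)$, substituting the inductive hypothesis and Corollary~\ref{cor:Ttwo}, and distributing yields
\begin{equation*}
T_{m+1}=\sum_{j=1}^N\sum_{k=1}^N\bigl(\id_{m-1}\otimes P_k\otimes P_k\bigr)\circ\bigl(P_j^{\otimes m}\otimes\id_1\bigr).
\end{equation*}
In this composition, positions $1,\dots,m-1$ carry $P_j$, position $m$ carries the composite $P_kP_j=\delta_{j,k}P_j$, and position $m+1$ carries $P_k$. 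Hence only diagonal terms $j=k$ survive, giving $T_{m+1}=\sum_{k=1}^N P_k^{\otimes(m+1)}$, as required.

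There is no genuine obstacle here: everything rests on the fact that the $P_k$ are orthogonal idempotents (valid in $\essAWeb[N]$ thanks to $\wrap^N=\id_1$ from Lemma~\ref{lem:DNone}, which is exactly the quotient being taken) together with the diagonal form of $T_2$ from Corollary~\ref{cor:Ttwo}. The only subtle point worth emphasizing in the write-up is that this identity genuinely lives in $\essAWeb[N]$ and not in the central extension $\essbAWeb[N]$, since the $P_k$ are defined only once $\wrap^N$ has been identified with $\id_1$ rather than $(-1)^{N-1}c_N$.
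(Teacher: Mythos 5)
Your proof is correct, and it takes a mildly but genuinely different route from the paper's. The paper runs the induction through the alternative recursion $T_{m+1}=(T_m\otimes\id_1)s_m(T_m\otimes\id_1)$: after substituting the inductive hypothesis on both sides of the crossing $s_m$, it invokes orthogonality of the $P_k$, the sliding of $P_k$ through crossings (e.g.\ $s\circ(P_k\otimes\id_1)=(\id_1\otimes P_k)\circ s$), and finally crossing absorption (Corollary~\ref{cor:crossabs}) to remove the leftover $s_m$. You instead use the original defining recursion $T_{m+1}=(\id_{m-1}\otimes T_2)(T_m\otimes\id_1)$ together with the diagonal form of $T_2$ from Corollary~\ref{cor:Ttwo}; the interchange law of the monoidal structure then reduces everything to the single composition $P_kP_j=\delta_{j,k}P_j$ on strand $m$. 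This avoids crossings altogether and needs neither the sliding property nor Corollary~\ref{cor:crossabs}, so it is the more economical argument. What it does use is that the $P_k(w_i)$ remain orthogonal idempotents in the multi-strand algebras $\essAWeb[N](n)$, i.e.\ that $w_i^N=\id$ there and not only for a single strand as literally stated in Lemma~\ref{lem:DNone}; this is a fact the paper itself relies on freely (in the proofs of Lemma~\ref{lem:extrabs} and Proposition~\ref{prop:faithful}), so nothing is lost. Your closing remark that the identity lives in $\essAWeb[N]$ rather than $\essbAWeb[N]$, because the $P_k$ only exist after $\wrap^N$ is identified with $\id_1$, is also the right point to flag.
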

\begin{proof} We have observed this for $m=2$ in Corollary~\ref{cor:Ttwo}. For $m=1$ this follows from the decomposition $1=\sum_{k=1}^N P_k$ in $\C[\wrap]/\la \wrap^N-1\ra$. Also, it is not hard to see that the $P_k$'s slide through crossings, e.g. $s \circ (P_k\otimes \id_1)=(\id_1 \otimes P_k) s$ in $\essAWeb[N](2)$. Now we proceed inductively for $m\geq 2$:
\begin{align*}
T_{m+1}=(T_m\otimes \id_1)s_m(T_m\otimes \id_1) &= \sum_{k=1}^N\sum_{l=1}^N (P_k\otimes\cdots \otimes P_k\otimes \id_1)s_m(P_l\otimes\cdots \otimes P_l\otimes \id_1) 
\\&= \sum_{k=1}^N (\id_{m-1} \otimes P_k\otimes \id_1) s_m(P_k\otimes\cdots \otimes P_k\otimes \id_1)
\\&= \sum_{k=1}^N s_m (P_k\otimes\cdots \otimes P_k\otimes P_k) =\sum_{k=1}^N \bigotimes_m P_k
\end{align*}
Here we have used the orthogonality of the idempotents $P_k$ to proceed to the second line and the sliding property to proceed to the third line. The final crossing absorption follows from Corollary~\ref{cor:crossabs}. 
\end{proof}

\subsection{Faithfulness of the diagrammatic presentation}
We will now combine the previous results to prove the following theorem.

\begin{theorem} \label{thm:faithfulness}
    The functor $\phi:\essAWeb[N]\mapsto \reph$ is faithful.
\end{theorem}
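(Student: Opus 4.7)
The plan is to decompose both $\essAWeb[N]$ and its target into cells via the orthogonal idempotents $P_k$ and to match them dimension by dimension. First, I reduce the faithfulness question to checking injectivity of $\phi$ on each endomorphism algebra $\essAWeb[N](m)$ of $m$ outward $1$-labeled points. The bending isomorphisms from the proof of Theorem~\ref{thm:fullness} identify $\Hom$ spaces with morphisms from the empty object; idempotents built from splits and merges realise every outward sequence as a retract of some $V^{\otimes m}$; and $\Hom$ spaces between objects of differing total label vanish on both sides by label conservation. This reduces the problem to showing that $\phi\colon\essAWeb[N](m) \to \End_{\reph}(V^{\otimes m})$ is injective.

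Using $\id_1 = \sum_{k=1}^N P_k$ in $\essAWeb[N](1)$, I then split
\[
\essAWeb[N](m) = \bigoplus_{\vec{k}, \vec{l}} M_{\vec{l},\vec{k}}, \qquad M_{\vec{l},\vec{k}} := (\textstyle\bigotimes_j P_{l_j}) \, \essAWeb[N](m) \, (\textstyle\bigotimes_i P_{k_i}),
\]
which under $\phi$ intertwines with the weight-space decomposition $\End_{\reph}(V^{\otimes m}) = \bigoplus_{\vec{k}, \vec{l}} \Hom_{\h}(\C v_{\vec{k}}, \C v_{\vec{l}})$. Each summand on the right is $\C$ when $\vec{k}, \vec{l}$ agree as multisets, and $0$ otherwise. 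Theorem~\ref{thm:fullness} yields surjectivity, so it suffices to prove $\dim M_{\vec{l},\vec{k}} \leq 1$ in all cases.

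To bound the cells, Lemma~\ref{lem:upwardannulus} and Lemma~\ref{lem:resolveastangles}, combined with the relations $c_k = 0$ for $0<k<N$ in $\essAWeb[N]$ and the scalar evaluation of any $N$-labeled essential circle, reduce arbitrary elements of $\essAWeb[N](m)$ to $\C$-linear combinations of products $\sigma \prod_i \wrap_i^{a_i}$ with $\sigma \in S_m$ ($1$-labeled crossings are symmetric at $q=1$), $a_i \in \Z/N\Z$, and $\wrap_i$ the counter-clockwise wrap on strand $i$ (the relation $\wrap_i^N = \id_m$ following from Lemma~\ref{lem:DNone} via an isotopy confining the extra wraps to an empty region of the annulus). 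Since $\wrap_i P_{k_i} = e^{2\pi i k_i / N} P_{k_i}$, each wrap factor reduces to a scalar after projection, so $M_{\vec{l},\vec{k}}$ is spanned by the images of the permutations alone; pushing $\sigma$ past the idempotents then kills the result unless $\sigma$ sends $\vec{k}$ to $\vec{l}$ componentwise, which forces $M_{\vec{l},\vec{k}}=0$ whenever the compositions disagree. In the remaining case, the crossing absorption of Corollary~\ref{cor:crossabs} extends inductively to $\tau (\bigotimes_i P_{k_i}) = \bigotimes_i P_{k_i}$ for every $\tau$ in the Young subgroup $\mathrm{Stab}(\vec{k})$, so any two $\sigma$'s in the same coset of $\mathrm{Stab}(\vec{k})$ yield proportional contributions; hence $\dim M_{\vec{l},\vec{k}} \leq 1$. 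Combined with fullness and the nonvanishing of the representation-theoretic image in the nontrivial case, $\phi$ restricts to an isomorphism on every cell, and is therefore faithful.

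The main obstacle is the spanning claim: showing that modulo the essential-circle relations, $\essAWeb[N](m)$ is really spanned by the wreath-product elements $\sigma\prod_i \wrap_i^{a_i}$. This requires a careful application of Lemma~\ref{lem:resolveastangles} to dispose of higher-labeled closed components (each of which evaluates to a scalar in $\essAWeb[N]$ by the essential-circle relations), together with the promotion of the single-strand relation $\wrap^N = \id_1$ of Lemma~\ref{lem:DNone} to the multistrand version $\wrap_i^N = \id_m$ via an isotopy in the annulus; each step is conceptually straightforward but benefits from articulation.
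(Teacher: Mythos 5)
Your proposal is correct and follows essentially the same route as the paper: reduce to endomorphism algebras of $1$-labeled outward objects via bending and merge/split retracts, use the essential-circle relations and $\wrap^N=\id$ to span $\essAWeb[N](m)$ by wreath-product elements $\sigma\prod_i \wrap_i^{a_i}$, and then use the idempotents $P_k$ together with crossing absorption (Corollary~\ref{cor:crossabs}) to cut down to one spanning element per weight-matching pair. Your cell-by-cell dimension count $\dim M_{\vec{l},\vec{k}}\leq 1$ combined with fullness is just a repackaging of the paper's exhibition of a spanning set $\{\phi_\epsilon^{\epsilon'}\}$ whose $\phi$-image is linearly independent.
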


We partition the proof of the theorem into three parts.

\begin{proposition}\label{prop:faithful} $\phi$ is injective when restricted to the endomorphism algebra $\essAWeb[N](n)$.
\end{proposition}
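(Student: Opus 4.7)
The plan is to exploit the orthogonal idempotent decomposition given by $P_{\vec{k}} := P_{k_1} \otimes \cdots \otimes P_{k_n}$ for $\vec{k} = (k_1,\dots,k_n) \in \{1,\dots,N\}^n$. Since $\id_1 = \sum_{k=1}^N P_k$ in $\essAWeb[N](1)$, these form a complete orthogonal family summing to $\id_n$, giving a direct sum decomposition $\essAWeb[N](n) = \bigoplus_{\vec{k},\vec{l}} P_{\vec{l}}\, \essAWeb[N](n)\, P_{\vec{k}}$. On the target side, $\End_{\reph}(V^{\otimes n}) = \bigoplus_{\vec{k},\vec{l}} \Hom_{\reph}(V_{\vec{k}}, V_{\vec{l}})$, where $V_{\vec{k}} := \C\langle v_{\vec{k}}\rangle$ is the one-dimensional weight space, and $\Hom_{\reph}(V_{\vec{k}},V_{\vec{l}}) \cong \C$ precisely when $\vec{k},\vec{l}$ have equal $\h$-weights (i.e.\ are multiset-permutations of each other) and vanishes otherwise. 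Since $\phi(P_{\vec{k}})$ is the projection onto $V_{\vec{k}}$, $\phi$ respects both decompositions, and by Theorem~\ref{thm:fullness} it restricts to a surjection on each summand: any $g \in \Hom_{\reph}(V_{\vec{k}},V_{\vec{l}})$ extends by zero to $\tilde{g} \in \End_{\reph}(V^{\otimes n})$, which lifts to some $f \in \essAWeb[N](n)$, and then $\phi(P_{\vec{l}} f P_{\vec{k}}) = g$. The proposition thus reduces to showing that $\dim_\C P_{\vec{l}}\, \essAWeb[N](n)\, P_{\vec{k}} \le 1$ when $\vec{k},\vec{l}$ have equal weight, and vanishes otherwise.

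For this upper bound, I would put an arbitrary element into a normal form. By Lemmas~\ref{lem:resolveastangles} and~\ref{lem:upwardannulus}, combined with the quotient relations of $\essAWeb[N]$ (essential $k$-labeled circles vanish for $0<k<N$ and the $N$-labeled one equals $(-1)^{N-1}\,\id$), every element of $P_{\vec{l}}\, \essAWeb[N](n)\, P_{\vec{k}}$ is a $\C$-linear combination of morphisms of the form $P_{\vec{l}}\, T\, P_{\vec{k}}$ for outward $1$-labeled annular tangles $T$ on $n$ strands. Each such $T$ carries an underlying permutation $\sigma_T \in S_n$ pairing inner and outer endpoints, and using annular isotopy (Lemma~\ref{lem:Reid}) together with the symmetric braiding at $q=1$, one pushes all windings to the bottom of the diagram to factor $T = \sigma_T \cdot D_T$, where $D_T$ is a product of wraps on individual strands. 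Since the $P_k$'s are eigenidempotents of $\wrap$ with $\wrap^N = \id$ (Lemma~\ref{lem:DNone}), $D_T$ acts on $P_{\vec{k}}$ by a scalar $\lambda_T \in \C$, yielding $P_{\vec{l}}\, T\, P_{\vec{k}} = \lambda_T\, P_{\vec{l}}\, \sigma_T\, P_{\vec{k}}$.

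The slide relation $\sigma P_{\vec{k}} = P_{\sigma(\vec{k})}\, \sigma$ (with $\sigma(\vec{k})_i := k_{\sigma^{-1}(i)}$) together with idempotent orthogonality then implies $P_{\vec{l}}\, \sigma_T\, P_{\vec{k}} = 0$ unless $\vec{l} = \sigma_T(\vec{k})$, so the whole summand vanishes whenever $\vec{k}$ and $\vec{l}$ have distinct weights. When the weights match, any two permutations $\sigma,\sigma'$ with $\sigma(\vec{k}) = \vec{l} = \sigma'(\vec{k})$ differ by an element of $\mathrm{Stab}_{S_n}(\vec{k})$, which is generated by adjacent transpositions $s_i$ satisfying $k_i = k_{i+1}$; these are absorbed by $P_{\vec{k}}$ via Corollary~\ref{cor:crossabs}, giving $P_{\vec{l}}\, \sigma\, P_{\vec{k}} = P_{\vec{l}}\, \sigma'\, P_{\vec{k}}$. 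Hence $P_{\vec{l}}\, \essAWeb[N](n)\, P_{\vec{k}}$ is spanned by a single element, and combined with the surjectivity this forces $\phi$ to restrict to an isomorphism on each component, proving injectivity. The main obstacle is the normal-form factorization $T = \sigma_T \cdot D_T$: it is a structural statement about outward annular tangles modulo $\wrap^N = \id$ requiring careful tracking of how wraps on one strand interact with crossings involving others, tractable at $q=1$ thanks to the symmetric braiding but forming the combinatorial heart of the argument.
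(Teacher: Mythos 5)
Your argument is correct and is essentially the paper's own proof reorganized through the Peirce decomposition by the orthogonal idempotents $P_{\vec{k}}$: both reduce to the normal form $(\text{permutation})\cdot(\text{wraps})$ coming from Lemma~\ref{lem:upwardannulus}, use the eigen-idempotents of $\wrap$ together with crossing absorption (Corollary~\ref{cor:crossabs}) for the dimension bound, and use the linear independence of the $\phi$-images for the lower bound. One small imprecision to fix: the stabilizer of $\vec{k}$ in $\mathfrak{S}_n$ is \emph{not} in general generated by adjacent transpositions $s_i$ with $k_i=k_{i+1}$ (e.g.\ $\vec{k}=(1,2,1)$), but writing a stabilizing transposition as a conjugate $w s_i w^{-1}$ and sliding $P_{\vec{k}}$ through the crossings of $w$ reduces it to the adjacent case, so your conclusion stands.
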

\begin{proof}
To see this result, we will exhibit a spanning set in $\essAWeb[N](n)$ that is sent under $\phi$ to a linear basis. Consider $\epsilon,\epsilon'\in \{1,\cdots,N\}^n$ so that $|\{i,\epsilon_i=k\}|=|\{i,\epsilon'_i=k\}|$ for all $k=1,\cdots, N$. Choose $\sigma_\epsilon^{\epsilon'}\in \mathfrak{S}_n$ to be the smallest in length so that $\epsilon'_{\sigma(i)}=\epsilon_i$. For example, it can be inductively defined by assigning to $1$ the smallest $r$ so that $\epsilon'_r=\epsilon_1$, etc. Recall the notation $P_k$ for the polynomial such that $P_k(D)\in \essAWeb[N](1)$ is the projector onto the $\zeta^k$ eigenspace of $D$. In $\essAWeb[N](n)$, denote $w_i=\id_{i-1}\otimes \wrap \otimes \id_{n-i}$ the complete wrap of the $i$-th strand. We define:
  \[
\phi_\epsilon^{\epsilon'}:=\sigma P_{\epsilon_n}(w_n)\cdots P_{\epsilon_1}(w_1).
\]
It is easy to see that the set $\{\phi_\epsilon^{\epsilon'}\}$ is linearly independent, because it is so under $\phi$.

On the other hand, we can deduce from Lemma~\ref{lem:upwardannulus} and its proof that this set spans $\essAWeb[N](n)$. Indeed, given the essential circle relations, we first deduce that any element $W\in \essAWeb[N](n)$ is made of a composition of elements from $\mathfrak{S}_n\rightarrow \essAWeb[N](n)$ and the wraps $w_i$. From there, using far-commutation and the formulas:
\[
w_is_{i-1}=s_{i-1}w_{i-1},\quad w_is_{i}=s_{i}w_{i+1},
\]
one can see that this gives an algebra epimorphism $ \langle w_i\rangle\rtimes \mathfrak{S}_n \twoheadrightarrow \essAWeb[N](n)$.

Now, by construction, the polynomials $\{P_k(w)\}_{k\in \{0,\dots,N-1\}}$ form a basis of $\C[w]/(w^N-1)$, and since the $w_i$'s commute, it follows that the elements $P_{\epsilon_n}(w_n)\cdots P_{\epsilon_1}(w_1)$ span $\langle w_i\rangle_{i\in \{1,\dots,n-1\}}\subset \essAWeb[N](n)$. Thus, any element in $\essAWeb[N](n)$ can be written as a linear combination of terms of the kind $\tilde{\sigma}\phi_{\epsilon}^{\epsilon}$ with $\tilde{\sigma}$ in the image of $\mathfrak{S}_n$. It remains to see that $\tilde{\sigma}$ can be assumed to be of minimal length, which is equivalent to saying that no two strands corresponding to the same value in $\epsilon$ cross.
Via isotopies, this reduces to the identities proven in Corollary~\ref{cor:crossabs}.

This proves that the set $\{\phi_{\epsilon}^{\epsilon'}\}$ spans $\essAWeb[N](n)$ and concludes the proof.
\end{proof}

\begin{proposition}\label{prop:faithful2} $\phi$ is injective when restricted to morphism spaces in $\essAWebp[N]$.
\end{proposition}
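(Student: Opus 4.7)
The plan is to bootstrap from the endomorphism case of Proposition~\ref{prop:faithful} to arbitrary morphism spaces in $\essAWebp[N]$ by exhibiting each object $\vec{k}$ as a summand, up to a nonzero scalar, of an object of the form $(1^{n})$ where $n=|\vec{k}|:=k_1+\cdots+k_r$. The block decomposition of Section~\ref{sec:lambdaequiv} reduces the problem to the case $|\vec{k}|=|\vec{l}|=:n$, since otherwise $\Hom_{\essAWebp[N]}(\vec{k},\vec{l})=0$ and faithfulness is automatic.

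The key construction is the following: for $\vec{k}=(k_1,\dots,k_r)$, define the \emph{explosion} $S_{\vec{k}}\colon\vec{k}\to(1^n)$ to be the planar web that iteratively splits each $k_i$-labeled strand into $k_i$ strands of label $1$, and the \emph{merge} $M_{\vec{k}}\colon(1^n)\to\vec{k}$ to be its mirror. I claim
\[
M_{\vec{k}}\circ S_{\vec{k}}\;=\;\Bigl(\prod_{i=1}^{r}k_i!\Bigr)\cdot\id_{\vec{k}}.
\]
Since $M_{\vec{k}}=M_{k_1}\otimes\cdots\otimes M_{k_r}$ and $S_{\vec{k}}=S_{k_1}\otimes\cdots\otimes S_{k_r}$, this factors strand by strand, so it suffices to prove $M_k\circ S_k=k!\cdot\id_k$ for a single $k$-labeled strand. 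This is an immediate induction: writing $S_k=(\id_1\otimes S_{k-1})\circ S_{(1,k-1)}$ and $M_k=M_{(1,k-1)}\circ(\id_1\otimes M_{k-1})$, one obtains
\[
M_k\circ S_k\;=\;M_{(1,k-1)}\circ\bigl(\id_1\otimes(M_{k-1}\circ S_{k-1})\bigr)\circ S_{(1,k-1)}\;=\;(k-1)!\cdot\tbinom{k}{1}\cdot\id_k\;=\;k!\cdot\id_k,
\]
using the bigon relation from \eqref{eq:webrel} at the last step.

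Granted this, let $f\in\Hom_{\essAWebp[N]}(\vec{k},\vec{l})$ with $\phi(f)=0$. Set $g:=S_{\vec{l}}\circ f\circ M_{\vec{k}}\in\essAWeb[N](n)$. Then $\phi(g)=\phi(S_{\vec{l}})\circ\phi(f)\circ\phi(M_{\vec{k}})=0$, so by Proposition~\ref{prop:faithful} we conclude $g=0$. Composing on the outside with $M_{\vec{l}}$ and $S_{\vec{k}}$ and applying the boxed identity twice yields
\[
\Bigl(\prod_{j}l_j!\Bigr)\Bigl(\prod_{i}k_i!\Bigr)\cdot f\;=\;M_{\vec{l}}\circ S_{\vec{l}}\circ f\circ M_{\vec{k}}\circ S_{\vec{k}}\;=\;M_{\vec{l}}\circ g\circ S_{\vec{k}}\;=\;0,
\]
and hence $f=0$. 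The only step requiring care is the bigon computation for $M_{\vec{k}}\circ S_{\vec{k}}$, which as indicated is a straightforward induction taking place entirely inside the planar subcategory $\Webp[N]\hookrightarrow\essAWebp[N]$, so no complications arise from the annular structure, from essential-circle relations, or from the wrap morphisms.
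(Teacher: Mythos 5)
Your proof is correct and follows essentially the same route as the paper's: conjugate $f$ by explosion/merge webs to land in the endomorphism algebra $\essAWeb[N](n)$, apply Proposition~\ref{prop:faithful}, and then undo the conjugation at the cost of the nonzero scalar $\prod_i k_i!\,\prod_j l_j!$ coming from opening bigons. The only difference is that you make the scalar explicit by induction on the bigon relation, whereas the paper simply records it as a nonzero constant $c$.
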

\begin{proof}
Suppose a linear combination $W$ of webs in such a morphism space are sent to zero under $\phi$. Then for each web in this linear combination we pre-compose with a merge web $M$ and post-compose with a splitter web $S$ in order to obtain an endomorphism $SWM$ in $\essAWeb[N](|\vec{k}|)$. Then we have $\phi(SWM)= \phi(S)\phi(W)\phi(M)=0$ and Proposition~\ref{prop:faithful} implies that $SWM=0$. This implies $W= c MSWMS = 0$ where $c\neq 0$ is a scalar resulting from opening bigons. 
\end{proof}

\begin{proof}[Proof of Theorem~\ref{thm:faithfulness}]
Let $W$ be a linear combination of webs in some morphism space of $\essAWeb[N]$, which is sent to zero under $\phi$. Then there exists a composition of invertible bending and braiding operations similar to those used in the proof of Theorem \ref{thm:fullness} that transforms $W$ into a linear combination $W^\prime$ of webs in a morphism space as in Proposition~\ref{prop:faithful2}, which is also sent to zero under $\phi$. The proposition then implies $W^\prime=0$ and, by invertibility of the operations, $W=0$.
\end{proof}

The final result of this section is best expressed in terms of Karoubi envelopes, the definition of which we recall now.

\begin{definition}
The Karoubi completion of a category $\mathcal{C}$ is the category with objects given by pairs $(X,e)$, where $X$ is an object of $\mathcal{C}$ and $e\in \Hom_\mathcal{C}(X,X)$ an idempotent. Morphisms between $(X,e)$ and $(Y,f)$ are of the form $f\circ g \circ e$ with $g\in \Hom_\mathcal{C}(X,Y)$.
\end{definition}

If $\mathcal{C}$ is additive or monoidal, then the Karoubi completion $Kar(\mathcal{C})$ inherits these structures. If $\mathcal{C}$ is not yet additive but linear, then we pass to the additive closure before taking the idempotent completion as above. The resulting category $Kar(\mathcal{C})$ will be additive and linear. If the morphism spaces of $\mathcal{C}$ furthermore admit a $\Z$-grading $\deg$, i.e. $\mathcal{C}$ is $\Z$-pre-graded, then so will be Karoubi completion, which we denote by $\Kar(\mathcal{C})^*$. In this case, we reserve the notation $Kar(\mathcal{C})$ for the $\Z$-graded, additive, linear category whose objects are generated by formal grading shifts $\sh^k (X,e)$ of the objects $(X,e)$ in $\Kar(\mathcal{C})^*$ and morphism required to be of degree zero. I.e. $g\colon \sh^k (X,e) \to \sh^l(Y,f)$ is required to satisfy $\deg(g)=l-k$.

\begin{corollary} \label{cor:diagrep} The functor $\phi$ induces an equivalence of additive, $\C$-linear pivotal categories: 
  \[
  \Kar(\essAWeb[N])^* \simeq \reph.
  \]
\end{corollary}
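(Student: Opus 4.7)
The plan is to deduce the equivalence from three ingredients: (i) $\phi$ is fully faithful on $\essAWeb[N]$ by Theorems~\ref{thm:fullness} and \ref{thm:faithfulness}; (ii) $\reph$ is additive and idempotent complete --- in fact, it is semisimple, with every object decomposing as a finite direct sum of one-dimensional weight modules $\C_\mu$ indexed by integral weights $\mu = (\mu_1,\ldots,\mu_N) \in \Z^N$; and (iii) each $\C_\mu$ appears as a direct summand of $\phi(X)$ for some object $X$ of $\essAWeb[N]$.

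Ingredients (i) and (ii), together with the universal property of the Karoubi envelope, immediately yield a fully faithful, additive, $\C$-linear, pivotal functor
\[ \tilde\phi \colon \Kar(\essAWeb[N])^* \to \reph, \]
since morphisms $(X,e) \to (Y,f)$ in the Karoubi envelope are by definition $f \circ \Hom_{\essAWeb[N]}(X,Y) \circ e$, which $\phi$ sends isomorphically onto $\Hom_{\reph}(\im \phi(e), \im \phi(f))$. Pivotality of $\tilde\phi$ is automatic since $\phi$ is pivotal and Karoubi completion preserves pivotal structures. It then suffices, by (ii), to establish essential surjectivity by verifying (iii) for each one-dimensional weight module $\C_\mu$.

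For (iii), fix $\mu = \sum_i m_i e_i - \sum_i n_i e_i$ with $m_i,n_i \in \Z_{\geq 0}$, set $m := \sum_i m_i$ and $n := \sum_i n_i$, and let $X$ be the object of $\essAWeb[N]$ consisting of $m$ upward and $n$ downward $1$-labeled points (in any order). Then $\phi(X) \cong V^{\otimes m} \otimes (V^*)^{\otimes n}$, which has $\C_\mu$ as a one-dimensional weight summand spanned by a suitable pure tensor of basis vectors $v_i$ and dual basis vectors $v_j^*$. The idempotents $P_k \in \essAWeb[N](1)$ introduced just before Theorem~\ref{thm:fullness} project $V$ onto $\C\langle v_k\rangle$. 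Applying the same Chinese remainder argument to a downward $1$-labeled strand (valid since by Lemma~\ref{lem:DNone} one still has $\wrap^N = \id$ on the dual strand, with eigenvalues $\zeta^{-k}$ on $v_k^*$), one obtains idempotents $P_k^* \in \essAWeb[N](1^*)$ projecting $V^*$ onto $\C\langle v_k^*\rangle$. A tensor product $e$ of $m_i$ copies of $P_i$ and $n_i$ copies of $P_i^*$, placed on the corresponding strands of $X$, then satisfies $\tilde\phi(X,e) \cong \C_\mu$ by construction.

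The main point to verify carefully is the construction of the dual projectors $P_k^*$, but this is a direct translation of the construction of $P_k$ to the downward case; all remaining steps are formal consequences of fullness, faithfulness, and the universal property of $\Kar(-)^*$. No decategorification or dimension count is needed, since essential surjectivity is reduced at once to realizing the simple objects $\C_\mu$.
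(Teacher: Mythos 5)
Your proposal is correct and follows essentially the same route as the paper: the paper's proof is a one-line deduction from Theorems~\ref{thm:fullness} and \ref{thm:faithfulness} plus the observation that $\reph$ is idempotent complete and every object is a direct sum of $\phi$-images of idempotents, which is exactly the skeleton you spell out. Your explicit construction of the weight idempotents via the $P_k$ (and their duals $P_k^*$, which indeed work by the same Chinese remainder argument since the wrap still satisfies $\wrap^N=\id$ on inward strands) is a legitimate elaboration of the detail the paper leaves implicit, and is already implicit in the paper's fullness proof.
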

\begin{proof} 
This follows from Theorems~\ref{thm:fullness} and \ref{thm:faithfulness} since $\reph$ is already idempotent complete and any of its object can be written as the direct sum of $\phi$-images of idempotents in $\essAWeb[N]$.
\end{proof}

Let $\rephp$ denote the full subcategory of $\reph$ containing only integral $\mathfrak{h}$-representations whose weights have non-negative entries.

\begin{remark} The functor $\phi$ restricts to a fully faithful functor $\essAWebp[N] \to \rephp$ that induces an equivalence of $\C$-linear monoidal categories $\Kar(\essAWebp[N])^* \simeq \rephp$. 
\end{remark}

\subsection{Proof of Lemma~\ref{lem:crossabs}}
\label{sec:proofs}
This section contains a proof of the fact that the $T_2$ projector absorbs crossings. It can be safely skipped on a first read-through.

In order to prove Lemma~\ref{lem:crossabs} we study the endomorphism algebra of the object $2$ in $\essbAWeb[N]$. For $k\geq 1$ we introduce the following notation: 

\begin{equation*}
E_k:=\begin{tikzpicture}[anchorbase, scale=.3]
    \draw (0,0) circle (1);
    \draw (0,0) circle (3);
    \draw [double,->] (0,1) to (0,3);
    \draw [very thick,directed=.55] (0,0) circle (2);
    \node at (0,-1) {$*$};
    \node at (0,-3) {$*$};
        \node at (0.5,-2.4) {\tiny $k$};
    \draw [dashed] (0,-1) to (0,-3);
  \end{tikzpicture}
  \quad,\quad
B_k:=  \begin{tikzpicture}[anchorbase, scale=.3]
    \draw (0,0) circle (1);
    \draw (0,0) circle (3);
    \draw [double,directed=.65] (0,1) to [out=90,in=270] (-.8,1.8);
     \draw [double,->] (.8,1.8) to [out=90,in=270] (0,2.8) to (0,3);
    \draw [very thick,directed=.55] (0,0) circle (2);
    \node at (0,-1) {$*$};
    \node at (0,-3) {$*$};
            \node at (0.5,-2.4) {\tiny $k$};
    \draw [dashed] (0,-1) to (0,-3);
  \end{tikzpicture}
    \quad,\quad
    A_k := \begin{tikzpicture}[anchorbase, scale=.3]
    \draw (0,0) circle (1);
    \draw (0,0) circle (3);
    \draw [double,directed=.65] (0,1) to [out=90,in=270] (.8,1.8);
     \draw [double,->] (-.8,1.8) to [out=90,in=270] (0,2.8) to (0,3);
    \draw [very thick,directed=.55,directed=.25] (0,0) circle (2);
    \node at (0,-1) {$*$};
    \node at (0,-3) {$*$};
            \node at (.8,-2.4) {\tiny $k$-$2$};
    \draw [dashed] (0,-1) to (0,-3);
  \end{tikzpicture}
  \quad,\quad
  \wrap_2:=\begin{tikzpicture}[anchorbase, scale=.3]
\draw (0,0) circle (1);
\draw (0,0) circle (3);
\draw [double,->] (0,1) .. controls (0,1.7) and (-1.6,1) .. (-1.6,0) to [out=-90,in=180] (0,-1.8) to [out=0,in=-90] (2.1,0) to [out=90,in=-90] (0,3); 
\node at (0,-1) {$*$};
\node at (0,-3) {$*$};
\draw [dashed] (0,-1) to (0,-3);
\end{tikzpicture}
\end{equation*}
Here we set $A_1=0$, $B_2=\wrap_2$ and $A_2=\id$, and doubled edges stand for 2-labeled edges. Note that in the definition of $B_k$, we haven't depicted the orientation of one of the strands: this is because it depends on $k$. More precisely, we have:
  \[
  B_1:=  \begin{tikzpicture}[anchorbase, scale=.3]
    \draw (0,0) circle (1);
    \draw (0,0) circle (3);
    \draw [double,directed=.65] (0,1) to [out=90,in=270] (-.8,1.8);
     \draw [double,->] (.8,1.8) to [out=90,in=270] (0,2.8) to (0,3);
    \draw [very thick,directed=.55,rdirected=.26] (0,0) circle (2);
    \node at (0,-1) {$*$};
    \node at (0,-3) {$*$};
    \node at (0.5,-2.4) {\tiny $1$};
    \node at (0.3,1.5) {\tiny $1$};
    \draw [dashed] (0,-1) to (0,-3);
  \end{tikzpicture}\quad,\quad
  B_2=\wrap_2,\quad
  B_k:=  \begin{tikzpicture}[anchorbase, scale=.3]
    \draw (0,0) circle (1);
    \draw (0,0) circle (3);
    \draw [double,directed=.65] (0,1) to [out=90,in=270] (-.8,1.8);
     \draw [double,->] (.8,1.8) to [out=90,in=270] (0,2.8) to (0,3);
    \draw [very thick,directed=.55,directed=.26] (0,0) circle (2);
    \node at (0,-1) {$*$};
    \node at (0,-3) {$*$};
    \node at (0.5,-2.4) {\tiny $k$};
    \draw [dashed] (0,-1) to (0,-3);
  \end{tikzpicture}
  \quad\text{if}\; k\geq 3.
\]
 Note also that $A_k=B_k=E_k=0$ for $k>N$.

\begin{lemma}\label{lem:endtwo} The following statements hold in the endomorphism algebra of the $2$-labeled upward point in $\essbAWeb[N]$:
\begin{enumerate}
\item $E_k = \delta_{k,N} c_N$,
\item $\wrap_2$ is invertible and central,
\item $B_k= A_k \wrap_2$ for $k\geq 2$
\item $B_1 A_k = - E_{k-1} + A_{k-1} \wrap_2 + A_{k+1}$ for $k\geq 2$ 
\item $B_N=E_N=c_N$ and thus $A_{N}=c_N \wrap_2^{-1}$.
\end{enumerate}
\end{lemma}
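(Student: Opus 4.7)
My plan is to handle the five statements in the given order, with (5) following algebraically from the others. The key tools are the defining essential-circle relations of $\essbAWeb[N]$ (which kill $k$-labeled essential circles for $k<N$ with any through-strings), the $q=1$ web relations from \eqref{eq:webrel}, the crossing resolution \eqref{eq:crossing} at $q=1$, and annular isotopy. All identities live in the endomorphism algebra of the 2-labeled upward point.

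For (1), I split into cases. When $1\leq k<N$, the diagram $E_k$ shows a $k$-labeled essential circle together with a single 2-labeled through-strand, which is directly in the defining ideal of $\essbAWeb[N]$; hence $E_k=0$. When $k>N$, no valid $k$-labeled edge exists, so $E_k=0$ trivially. When $k=N$, the configuration $E_N$ is by construction a realization of $c_N\otimes\id_2$: the $N$-circle and the 2-strand have a single geometric intersection in the annulus, and at $q=1$ this intersection does not introduce any nontrivial braiding correction, so $E_N$ coincides with the side-by-side composition $c_N\otimes\id_2$, i.e.\ $E_N=c_N$ as an endomorphism of the 2-object.

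For (2), invertibility of $\wrap_2$ is established by constructing the opposite wrap $\wrap_2^{-1}$; cancellation is verified by an annular Reidemeister I move analogous to Lemma~\ref{lem:Reid}. Centrality is obtained by checking that $\wrap_2$ commutes with any generator of $\End(2)$; by Lemma~\ref{lem:resolveastangles} it suffices to consider generators coming from $1$-labeled annular tangles, and sliding $\wrap_2$ past these reduces to annular isotopy. For (3), unpacking the diagrams shows that $B_k$ and $A_k$ have identical trivalent-vertex and arc structure, differing only by one full rotation of the 2-strand and a corresponding label shift of $2$ on the circle arcs; this is precisely the content of post-composition with $\wrap_2$.

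Part (4) is the technical heart of the lemma. The composition $B_1 A_k$ produces a single annular diagram containing a $1$-labeled arc, a $(k-2)$-labeled arc, and a crossing arising from the concatenation of half-wraps from $B_1$ and $A_k$. Applying the merge and digon relations from \eqref{eq:webrel} to combine neighboring edges of labels $1$ and $k-2$ into a $(k-1)$-edge, and resolving the residual crossing via \eqref{eq:crossing} at $q=1$, decomposes $B_1 A_k$ into three summands: an essential $(k-1)$-circle with the 2-strand (which is $E_{k-1}$, appearing with a $-1$ from the ladder sign convention), a $(k-1)$-configuration post-composed with $\wrap_2$ (i.e.\ $A_{k-1}\wrap_2$), and a $(k+1)$-configuration (i.e.\ $A_{k+1}$). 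I expect the main obstacle to lie in this careful bookkeeping of the three terms with correct signs. Finally, (5) is a purely algebraic consequence: applying (4) with $k=N+1$ and using the preamble vanishing $A_{N+1}=A_{N+2}=0$ yields $0=-E_N+A_N\wrap_2$, so $A_N\wrap_2=E_N=c_N$ by (1); using (2) to invert $\wrap_2$ and (3) to post-compose gives $A_N=c_N\wrap_2^{-1}$ and $B_N=A_N\wrap_2=c_N$.
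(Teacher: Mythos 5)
Your proposal is correct and matches the paper's proof in all essentials: (1) is definitional, (2) and (3) are isotopies, (5) follows from (4) at $k=N+1$ together with the vanishing $A_{N+1}=A_{N+2}=0$, and (4) rests on the same crossing-resolution computation. The only cosmetic difference is that you expand $B_1A_k$ and reassemble the $(k-1)$-labeled circle, whereas the paper starts from $E_{k-1}$, resolves its crossing into three terms, and identifies the middle term as $B_1A_k$ --- the same web relations read in the opposite direction.
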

Note that only (1) and (5) depend on the value of $N$.
\begin{proof}
  (1) holds by definition of $\essbAWeb[N]$, (2) and (3) follow from isotopies. For (4) we resolve the crossing in $E_{k-1}$ to obtain:
  \begin{equation*}
\begin{tikzpicture}[anchorbase, scale=.3]
    \draw (0,0) circle (1);
    \draw (0,0) circle (5);
    \draw [double,->] (0,1) to (0,5);
    \draw [very thick,directed=.55] (0,0) circle (3);
    \node at (0,-1) {$*$};
    \node at (0,-5) {$*$};
    \draw [dashed] (0,-1) to (0,-5);
    \node at (1.2,-3.6) {\tiny $k-1$};
  \end{tikzpicture}
  \;\; = \;\;
  \begin{tikzpicture}[anchorbase, scale=.3]
    \draw (0,0) circle (1);
    \draw (0,0) circle (5);
     \draw [very thick,directed=.55] (0,0) circle (3);
    \draw [double,directed=.65] (0,1) to [out=90,in=270] (-1.2,2.7);
     \draw [double,->] (1.2,2.7) to [out=90,in=270] (0,4.5) to (0,5);
    \node at (0,-1) {$*$};
    \node at (0,-5) {$*$};
    \draw [dashed] (0,-1) to (0,-5);
    \node at (1.2,-3.6) {\tiny $k-1$};
  \end{tikzpicture}
 \;\; -\;\;
   \begin{tikzpicture}[anchorbase, scale=.3]
    \draw (0,0) circle (1);
    \draw (0,0) circle (5);
     \draw[very thick,directed=.11] (180:3) arc (180:0+360:3);
      \draw [very thick] (3,0) to [out=90,in=0](1,3);
      \draw [very thick] (-1,3) to [out=180,in=90](-3,0);
    \draw [double,directed=.65] (0,1) to (0,2);
    \draw[very thick, directed=.55] (0,2) to (-1,3);
    \draw[very thick, directed=.55] (1,3) to (0,2);
    \draw[very thick, directed=.55] (-1,3) to (0,4);
    \draw[very thick, directed=.55] (1,3) to (0,4);
     \draw [double,->] (0,4) to (0,5);
    \node at (0,-1) {$*$};
    \node at (0,-5) {$*$};
    \draw [dashed] (0,-1) to (0,-5);
    \node at (1.2,-3.6) {\tiny $k-1$};
  \end{tikzpicture}        
\;\;+ \;\;
  \begin{tikzpicture}[anchorbase, scale=.3]
    \draw (0,0) circle (1);
    \draw (0,0) circle (5);
     \draw [very thick,directed=.55] (0,0) circle (3);
    \draw [double,directed=.65] (0,1) to [out=90,in=270] (1.2,2.7);
     \draw [double,->] (-1.2,2.7) to [out=90,in=270] (0,4.5) to (0,5);
    \node at (0,-1) {$*$};
    \node at (0,-5) {$*$};
    \draw [dashed] (0,-1) to (0,-5);
    \node at (1.2,-3.6) {\tiny $k-1$};
  \end{tikzpicture}        
\end{equation*}
Here the first and third summands are $B_{k-1}=A_{k-1}\wrap_2$ and $A_{k+1}$ respectively. The web in the second summand simplifies as follows: 
\[   \begin{tikzpicture}[anchorbase, scale=.3]
    \draw (0,0) circle (1);
    \draw (0,0) circle (5);
     \draw[very thick,directed=.11] (180:3) arc (180:0+360:3);
      \draw [very thick] (3,0) to [out=90,in=0](1,3);
      \draw [very thick] (-1,3) to [out=180,in=90](-3,0);
    \draw [double,directed=.65] (0,1) to (0,2);
    \draw[very thick, directed=.55] (0,2) to (-1,3);
    \draw[very thick, directed=.55] (1,3) to (0,2);
    \draw[very thick, directed=.55] (-1,3) to (0,4);
    \draw[very thick, directed=.55] (1,3) to (0,4);
     \draw [double,->] (0,4) to (0,5);
    \node at (0,-1) {$*$};
    \node at (0,-5) {$*$};
    \draw [dashed] (0,-1) to (0,-5);
    \node at (1.2,-3.6) {\tiny $k-1$};
  \end{tikzpicture}        
  \;\;=\;\;
   \begin{tikzpicture}[anchorbase, scale=.3]
    \draw (0,0) circle (1);
    \draw (0,0) circle (5);
     \draw[very thick,directed=.11] (180:2) arc (180:0+360:2);
     \draw[very thick,directed=.11] (180:4) arc (180:0+360:4);
      \draw [very thick] (2,0) to [out=90,in=0](0,2);
      \draw [very thick] (4,0) to [out=90,in=0](0,4);
      \draw [very thick, directed=.55] (-1,3) to [out=180,in=90](-3,1);
      \draw [very thick] (-4,0) to [out=90,in=225](-3,1);
      \draw [very thick] (-2,0) to [out=90,in=315](-3,1);
    \draw [double,directed=.65] (0,1) to (0,2);
    \draw[very thick, directed=.55] (0,2) to (-1,3);
    \draw[very thick, directed=.55] (-1,3) to (0,4);
     \draw [double,->] (0,4) to (0,5);
    \node at (0,-1) {$*$};
    \node at (0,-5) {$*$};
    \draw [dashed] (0,-1) to (0,-5);
    \node at (1.2,-2.6) {\tiny $k-2$};
    \node at (1.2,-4.5) {\tiny $1$};
  \end{tikzpicture}    
  \;\;=\;\;
    \begin{tikzpicture}[anchorbase, scale=.3]
    \draw (0,0) circle (1);
    \draw (0,0) circle (5);
     \draw [very thick,directed=.55, rdirected=.32] (0,0) circle (4);
     \draw [very thick,directed=.55, directed=.34] (0,0) circle (2);
    \draw [double,directed=.65] (0,1) to (0,2);
     \draw [double,->] (-1.42,1.42) to  (-2.84,2.84);
     \draw [double,->] (0,4) to (0,5);
    \node at (0,-1) {$*$};
    \node at (0,-5) {$*$};
    \draw [dashed] (0,-1) to (0,-5);
  \node at (1.2,-2.6) {\tiny $k-2$};
    \node at (1.2,-4.5) {\tiny $1$};
  \end{tikzpicture}
  \;\; =\;\; B_1 A_{k}
  \vspace{-.6cm} \]
\end{proof}

As a corollary, we get that the elements $A_k$ can be written in terms of powers of $B_1$ and $D_2$:
\begin{corollary} \label{cor:endtwob}	  The elements $A_k$ for $k<N+2$ satisfy the recursion $A_k := B_1 A_{k-1} -A_{k-2}\wrap_2$ for $k\geq 5$ with initial conditions $A_3=B_1$ and $A_4=B_1^2-\wrap_2$.
\end{corollary}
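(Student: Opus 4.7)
The plan is to derive this recursion directly from parts (1) and (4) of Lemma~\ref{lem:endtwo}. Starting from (4), which reads $B_1 A_k = -E_{k-1} + A_{k-1}\wrap_2 + A_{k+1}$ for $k\geq 2$, I would simply solve for $A_{k+1}$ to obtain
\[
A_{k+1} = B_1 A_k - A_{k-1}\wrap_2 + E_{k-1}.
\]

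The key observation is then part (1): $E_{k-1} = \delta_{k-1,N}\, c_N$ vanishes whenever $k-1 < N$. After re-indexing $k \mapsto k-1$, this yields the desired three-term recursion $A_k = B_1 A_{k-1} - A_{k-2}\wrap_2$ whenever $k-2 < N$, i.e.\ precisely when $k < N+2$. Note that this also explains the sharpness of the range: for $k = N+2$, the correction term $E_{N}=c_N$ would reappear, so the pure three-term recursion genuinely breaks down at this point.

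For the initial conditions, I would just feed the conventions $A_1 = 0$ and $A_2 = \id$ (from the definitions preceding Lemma~\ref{lem:endtwo}) into the recursion already derived: this gives $A_3 = B_1 \cdot \id - 0 \cdot \wrap_2 = B_1$ and $A_4 = B_1 \cdot B_1 - \id \cdot \wrap_2 = B_1^2 - \wrap_2$, matching the claim. So actually the recursion is valid for all $k \geq 3$ with $k < N+2$; the statement phrases it as $k \geq 5$ only in order to use $A_3$ and $A_4$ as base cases.

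There is no serious obstacle here — once Lemma~\ref{lem:endtwo} is in hand, the proof is a mechanical rearrangement of (4) together with the vanishing of $E_{k-1}$ from (1). The only point requiring care is the bookkeeping of indices that pins down the precise upper bound $k < N+2$.
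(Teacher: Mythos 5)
Your proof is correct and follows essentially the same route as the paper: both rearrange part (4) of Lemma~\ref{lem:endtwo} to isolate $A_{k}$, use part (1) to kill the $E_{k-2}$ term precisely when $k<N+2$, and obtain the initial conditions $A_3=B_1$, $A_4=B_1^2-\wrap_2$ by feeding $A_1=0$, $A_2=\id$ into the same relation. The only cosmetic difference is that the paper phrases this as an induction on $k$, whereas you present it as a single rearrangement plus re-indexing; your added remark on the sharpness of the bound at $k=N+2$ is a nice bonus but not needed.
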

\begin{proof} We induct on $k$. For $k=1<N$ we use Lemma~\ref{lem:endtwo} to obtain $B_1=B_1A_2 = -E_1 +A_1 \wrap_2 +A_3=A_3$. Similarly, for $k=2<N$, we get from Lemma~\ref{lem:endtwo} that $B_1^2 - \wrap_2= B_1 A_3-A_2\wrap_2 = -E_2 + A_4=A_4$. For the recurrence relation we compute:
  $B_1 A_{k-1} -A_{k-2}\wrap_2 = -E_{k-2} + A_{k} = A_{k}$ if $5\leq k<N+2$.
\end{proof}

\begin{remark} We will now find expressions for $T_2$ which are more convenient in the following proof.
First we introduce the notation $t=\wrapi s \wrap$ for the rotation conjugate of the crossing. Now we use the facts that $\wrap^2$ is central in $\essbAWeb[N](2)$ and that $(s\wrap)^{-1}$ and $(\wrap s)$ commute to write $(s \wrap)^{-k} (\wrap s)^k = (t s)^k$, which gives: 
\[T_2=\frac{1}{N}\sum_{k=0}^{N-1} (t s)^k\]  
Further, we resolve the crossings as $s=\id_2-u$ and $t=\id_2-v$ where $u=u_1$ is the 2-labeled dumbbell and $v$ its conjugate. So we have:
\begin{equation}
\label{eqn_Ttwob}
T_2=\frac{1}{N}\sum_{k=0}^{N-1} ((\id_2-v)(\id_2-u))^k
\end{equation}
\end{remark}

If we write $M$ and $S$ for the merge and split vertices on two strands, such that $SM=u$, we get the following equality for $k\geq 2$:
\[\underbrace{\cdots v u v u}_{k \text{ factors}} = \wrap^{1-k} S B_1^{k-1} M\]  

We will now attempt to rewrite the expressions $X_n:=\wrap^{1-n} S A_{n+1} M$ in terms of powers of $B_1$. We can then use the relation $X_N=0$ to deduce a relation between compositions of the webs $u$ and $v$. 
To this end we define $R_{2k-1}:=\underbrace{(\id_2-v)(\id_2-u)\cdots (\id_2-v)}_{2k-1 \text{ factors}}$ and $S_x:=\sum_{k=1}^x R_{2k-1}$. 

\begin{lemma} \label{lem:XY}For $2\leq n \leq N$ we have:
\[X_n = \begin{cases} 
u - R_{n-1}u - u S_{n/2-1} u & \text{ even } n \\
u - u S_{(n-1)/2} u & \text{ odd } n
\end{cases}\]
\end{lemma}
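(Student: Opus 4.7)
The plan is to proceed by induction on $n$, using the three-term recursion $A_{n+1} = B_1 A_n - A_{n-1} D_2$ from Corollary~\ref{cor:endtwob}, which is valid throughout the range $2 \leq n \leq N$ since the error term $E_{n-1}$ vanishes for $n - 1 < N$. The base cases $n = 2, 3$ follow from direct computation: the case $n = 2$ uses $A_3 = B_1$ together with the stated identity $\underbrace{\cdots vuvu}_{k \text{ factors}} = D^{1-k} S B_1^{k-1} M$ at $k = 2$ to give $X_2 = vu$, matching the closed form $u - (1-v)u$. For $n = 3$, using $A_4 = B_1^2 - D_2$, we obtain $X_3 = uvu - D^{-2} S D_2 M$. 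The key geometric identity $D^{-2} S D_2 M = u$ is obtained by an annular isotopy sliding the split vertex down through the $2$-labeled wrap, which replaces $D_2$ by a pair of parallel $1$-labeled wraps on the two exit strands; these precisely cancel the external $D^{-2}$. Combined with the $q=1$ bigon evaluation $u^2 = 2u$, this gives $X_3 = -u + uvu$, matching $u - u(1-v)u$.

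For the inductive step, applying the $A_{n+1}$ recursion to $X_n = D^{1-n} S A_{n+1} M$ yields
\[
X_n \;=\; D^{1-n} S B_1 A_n M \;-\; D^{1-n} S A_{n-1} D_2 M.
\]
The second summand is handled by combining centrality of $D_2$ (Lemma~\ref{lem:endtwo}(2)) with the generalization $S D_2 Y M = (\text{exit wrap-squared}) \cdot S Y M$ of the base-case isotopy, which reduces it to an explicit multiple of $X_{n-2}$. The first summand is handled by a companion isotopy identity: the $1$-labeled essential circle inside $B_1$ is slid past $Y$ and the split vertex, thereby being converted into the rotation-conjugated crossing $v = \id_2 - t$ acting on the two output strands (up to winding corrections absorbed by the external $D^{1-n}$). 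This produces a contribution of the form $v \cdot X_{n-1}$ plus correction terms. One then checks, using the identity $R_{2k+1} = (1-v)(1-u) R_{2k-1}$ and the $q = 1$ simplifications $u^2 = 2u$, $v^2 = 2v$ (the latter from $s^2 = t^2 = \id_2$), that the proposed closed forms satisfy the same recursive relation as $X_n$ and agree with $X_2, X_3$, completing the induction.

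The principal technical obstacle is establishing the isotopy for $S B_1 Y M$: since $B_1$ differs from a bare $D_2$ by an additional $1$-labeled essential circle linking with the $2$-labeled strand, the annular isotopy must carefully track the winding numbers of each component through $Y$ and compatibility with the external $D$-powers. The parity split in the statement (even $n$ producing an additional $R_{n-1}u$ term relative to odd $n$) ultimately reflects the alternating $(1-v)(1-u)\cdots(1-v)$ structure of $R_{2k-1}$, which always begins and ends with a $(1-v)$: each three-term recursive step prepends two factors to this alternating word, so the parity of $n$ controls whether the ``exposed'' position of the closed form is populated by a $u$-type or $v$-type contribution.
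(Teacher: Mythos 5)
Your strategy is the same as the paper's: induct on $n$ via the three-term recursion for $A_{n+1}$ from Corollary~\ref{cor:endtwob}, transport it to a recursion for $X_n=\wrap^{1-n}SA_{n+1}M$, and check that the claimed closed forms satisfy the same recursion with matching initial conditions. The base cases and the treatment of the $A_{n-1}\wrap_2$ term (centrality of $\wrap_2$ together with the isotopy $S\wrap_2=\wrap^2S$, giving exactly $X_{n-2}$ with coefficient $1$) are correct.

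Two places need to be firmed up before this is a proof. First, the summand $\wrap^{1-n}SB_1A_nM$ is not ``$v\cdot X_{n-1}$ plus correction terms'': the required identity is the exact, parity-dependent statement $\wrap^{1-n}SB_1=v\,\wrap^{2-n}S$ for even $n$ and $\wrap^{1-n}SB_1=u\,\wrap^{2-n}S$ for odd $n$, so that the summand equals $vX_{n-1}$ or $uX_{n-1}$ on the nose. If genuine correction terms survived, the induction could not close, because the right-hand sides $Y_n$ satisfy the clean recursions $Y_n=vY_{n-1}-Y_{n-2}$ (even $n$) and $Y_n=uY_{n-1}-Y_{n-2}$ (odd $n$) with nothing left over. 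The alternation between $u$ and $v$ here is produced by conjugating the output of $SB_1$ by the varying power $\wrap^{1-n}$ (conjugation by $\wrap$ exchanges $u$ and $v$, and $\wrap^2$ is central), and this is the actual source of the parity split in the statement; it is not absorbed by the external wraps, and it does not come from unfolding the word structure of $R_{2k-1}$ as your last paragraph suggests. Second, ``one then checks'' hides most of the work: the odd-$n$ verification of the recursion for $Y_n$ is a two-line computation from $u^2=2u$ and $S_x-S_{x-1}=R_{2x-1}$, but the even-$n$ case requires an auxiliary expansion of $vuS_{n/2-1}u$, resting on identities such as $(\id_2-v)S_x=\id_2+(\id_2-u)S_{x-1}$ (a consequence of $(\id_2-v)^2=\id_2$, i.e. $v^2=2v$). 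You name the right ingredients, but this is the longest part of the argument and must actually be carried out.
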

\begin{proof} We will use the notation $Y_n$ for the entries on the right-hand side of the equation in the statement of the lemma. The proof of $X_n=Y_n$ proceeds by induction on $n$. For $n=2$ we have $X_2=v u = u-(\id_2-v)u = Y_2$. Similarly, for $n=3$, we have
\[ X_3=\wrap^{-2} S A_{4} M = \wrap^{-2} S (B_1^2-\wrap_2) M = uvu-u = u - u(1-v)u = Y_3\] since $u^2=2 u$ by the bigon relation. We prove the remaining cases recursively. For this, note that the elements $X_n=\wrap^{1-n} S A_{n+1} M$ inherit a recurrence relation from the elements $A_{n+1}$:
\begin{align}\label{eqn:Xrec}
X_n=\wrap^{1-n} S A_{n+1} M &= \wrap^{1-n} S B_1 A_{n} M - \wrap^{1-n} S A_{n-1} \wrap_2 M \\
\nonumber
&= \wrap^{1-n} S B_1 A_{n} M - \wrap^{3-n} S A_{n-1} M  = \begin{cases}
v X_{n-1} - X_{n-2} & \text{even } n\\
u X_{n-1} - X_{n-2} & \text{odd } n
\end{cases} 
\end{align}
Here we have used $\wrap^{1-n}SB_1=v \wrap^{2-n}S$ for even $n$ and $\wrap^{1-n}SB_1=u \wrap^{2-n} S$ for odd $n$. Now it remains to check that the $Y_n$ also satisfy this recurrence \eqref{eqn:Xrec}. Indeed, for odd $N>4$ we can verify:
\begin{align*}
Y_n-uY_{n-1}+Y_{n-2} &= (u - u S_{(n-1)/2} u) - u(u - R_{n-2}u - u S_{(n-3)/2} u) + (u - u S_{(n-3)/2} u)\\
&=  u (-S_{(n-1)/2} + R_{n-2} + S_{(n-3)/2}) u =0
\end{align*}
Here we have used that $S_x-S_{x-1}= R_{2x-1}$. In order to check the recurrence for $Y_n$ in the case of even $n$ we need an auxiliary computation. For odd $x\geq 1$ we have
\begin{align*}S_x  &= (\id_2-v)  + (\id_2-v)(\id_2-u) S_{x-1}\\
  &= (\id_2-v)  + (\id_2-v)S_{x-1}  - u S_{x-1} + v u S_{x-1}  
\\
 &= 2\id_2 -v  + (\id_2-u)S_{x-2}  - u S_{x-1} + v u S_{x-1}  
\end{align*} which implies:
\begin{align*} v u S_{n/2-1} u &= - 2u + v u +S_{n/2} u- (\id_2-u) S_{n/2-2} u +u S_{n/2-1}u\\
&= 
-2 u + v u + R_{n-1} u+ R_{n-3}u + u S_{n/2-2} u +u S_{n/2-1}u
\end{align*}
Here we have used $(\id_2-v)S_x = \id_2 + (\id_2-u)S_{x-1}$. Now we check the recurrence for even $n>3$:
\begin{align*}
Y_n-v Y_{n-1}+Y_{n-2} &= (u - R_{n-1}u - u S_{n/2-1} u) - v (u - u S_{n/2-1} u) + (u - R_{n-3}u - u S_{n/2-2} u) \\
&= 2u -v u - R_{n-1}u -R_{n-3}u - u S_{n/2-2} u- u S_{n/2-1} u   + v u S_{n/2-1} u  =0 
\end{align*}
This completes the proof of the Lemma.
\end{proof}

\begin{proof}[Proof of Lemma~\ref{lem:crossabs}]
We only prove $T_2 s=T_2$, which is equivalent to $N T_2 u=0$ by expanding the crossing and multiplying by $N$. Using \eqref{eqn_Ttwob} we compute: 
\begin{align*} 
N T_2 u =  \sum_{k=0}^{N-1} ((\id_2-v)(\id_2-u))^k u = u - \sum_{k=1}^{N-1} ((\id_2-v)(\id_2-u))^{k-1} (\id_2-v)u = u - \sum_{k=1}^{N-1} R_{2k -1} u
\end{align*}
Now note that $\id_2 = (s \wrap)^{-N} (\wrap s)^N = ((\id_2-v)(\id_2-u))^N$ implies that 
\begin{equation}
\label{eqn:rflip}
R_{2k-1} = \left( (\id_2-u) R_{2N-2k-1} (\id_2-u)\right)^{-1} = (\id_2-u) R_{2N-2k-1} (\id_2-u).
\end{equation}
Now we distinguish two cases. For even $N$ we expand:
\begin{align*} 
N T_2 u  &= u - \sum_{k=1}^{N/2} R_{2k -1} u - \sum_{k=N/2+1}^{N-1} R_{2k -1} u
\\
&= u - \sum_{k=1}^{N/2} R_{2k -1} u - (\id_2-u)\sum_{l=1}^{N/2-1} R_{2l -1}(\id_2-u)u\\
&= u - \sum_{k=1}^{N/2} R_{2k -1} u + (\id_2-u)\sum_{l=1}^{N/2-1} R_{2l -1}u = u - R_{N-1} u - u S_{N/2-1} u = X_N 
\end{align*}
Here we have used \eqref{eqn:rflip} for the second equality and Lemma~\ref{lem:XY} for the last equality. For odd $N$ we expand analogously:
\begin{align*} 
N T_2 u  &= u - \sum_{k=1}^{(N-1)/2} R_{2k -1} u - \!\!\!\!\sum_{k=(N+1)/2+1}^{N-1} R_{2k -1} u\\
&= u - \sum_{k=1}^{(N-1)/2} R_{2k -1} u + (\id_2-u)\sum_{l=1}^{(N-1)/2} R_{2k -1} u = u- u S_{(N-1)/2}u = X_N
\end{align*}
We conclude the proof by noting that $X_N=\wrap^{1-N} S A_{N+1} M=0$ in $\essbAWeb[N]$ since $A_{N+1}=0$. 
\end{proof}

\begin{remark}
The expression $X_N$ in the rewritten form in Lemma~\ref{lem:XY} expresses the longest Kazhdan--Lusztig basis element $\underline{H}_{s t s\dots}=\underline{H}_{t s t\dots}$ in the type $I_2(N)$ Hecke algebra in terms of products of $\underline{H}_s:=u$ and $\underline{H}_t:=v$, see \cite[Section 2.3]{EW}. In particular, the relation $X^N=0$ suggests that $\essbAWeb[N](2)$ is related to a quotient of the Hecke algebra, by the $2$-cell containing the basis element associated to the longest word. 
\end{remark}

\section{Categorification of power-sum symmetric polynomials}
Before turning to the topological applications of our work, we will in this section focus on identifying more precisely the structures that are categorified by the categories defined before. The main result of this section consists in a categorification of the Newton's identities for power-sum and elementary symmetric polynomials (see Theorem \ref{thm:newton}).

Let $\X=\{X_1,\dots,X_N\}$ be an alphabet of $N$ variables and denote by $\Sym(\X):=\C[X_1,\dots,X_N]^{\mathfrak{S}_N}$ the ring of symmetric polynomials in $\X$. Recall that $\Sym(\X)\cong\C[e_1(\X),\dots, e_N(\X)]$, where $e_j(\X)$ denotes the $j^{th}$ elementary symmetric polynomial in $\X$. We use the notation $h_j(\X)$ for the $j^{th}$ complete symmetric polynomial.

\begin{definition} The split Grothendieck  group of an additive category $\mathcal{C}$ is the abelian group $K_0(\mathcal{C})$ defined as the quotient of the free abelian group spanned by the isomorphism classes $[X]$ of objects $X$ of $\mathcal{C}$, modulo the ideal generated by relations of the form $[A\oplus B]=[A]+[B]$ for objects $A$, $B$ of $\mathcal{C}$. 

If $\mathcal{C}$ is monoidal, then $K_0(\mathcal{C})$ inherits a unital ring structure with multiplication $[A]\cdot[B]:=[A\otimes B]$.
\end{definition}

The following lemma is classical.
\begin{lemma} There is an isomorphism \[K_0(\repp)\otimes\C \cong K_0(\Kar(\repp))\otimes\C\cong \Sym(\X)\cong \C[e_1(\X),\dots,e_N(\X)]\]
sending the classes of the fundamental representations $[\bVn^k V]$ to the elementary symmetric polynomials $e_k(\X)$. The class of the simple representation indexed by the partition $\lambda$ is then given by the Schur polynomial $\pi_\lambda(\X)$. If one includes duals, one obtains 
\[K_0(\rep)\otimes\C\cong K_0(\Kar(\rep))\otimes\C\cong \C[\X^{\pm 1}]^{\mathfrak{S}_N} \cong \C[e_1(\X),\dots,e_{N-1}(\X),e_N^{\pm 1}(\X)].\]
\end{lemma}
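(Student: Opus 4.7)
The plan is to identify both Grothendieck groups via the character map. Since $\glnn{N}$ is reductive, the category $\Rep(\glnn{N})$ of finite-dimensional integral-weight representations is semisimple. The subcategory $\repp$, being closed under direct summands of tensor products of fundamentals, has simple objects $V_\lambda$ indexed by partitions $\lambda = (\lambda_1 \geq \cdots \geq \lambda_N \geq 0)$, while $\rep$ picks up additionally all simples indexed by weakly decreasing integer sequences because one can tensor by inverse powers of the determinant representation $\bVn^N V$. I would then consider the character map $\chi \colon K_0(\rep) \otimes \C \to \C[\X^{\pm 1}]^{\mathfrak{S}_N}$ sending a representation to the trace of the action of a generic diagonal matrix $\mathrm{diag}(X_1, \ldots, X_N)$; this is a ring homomorphism since characters are multiplicative under tensor products.

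Next I would establish that $\chi$ is an isomorphism. By semisimplicity, $K_0(\repp) \otimes \C$ is freely spanned by the classes $[V_\lambda]$, and $K_0(\rep) \otimes \C$ is freely spanned by the $[V_\mu]$ for $\mu$ a decreasing integer sequence. The Weyl character formula identifies $\chi(V_\lambda)$ with the (Laurent) Schur polynomial $\pi_\lambda(\X)$, and these are known to be $\C$-linearly independent; hence $\chi$ is injective. For surjectivity, recall that $\{\pi_\lambda\}_{\lambda \text{ partition of length} \leq N}$ is a $\C$-basis of $\Sym(\X)$, so the image of the character map restricted to $\repp$ hits all of $\Sym(\X)$. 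Including duals amounts to adjoining $[\bVn^N V]^{-1}$, whose character is $(X_1 \cdots X_N)^{-1} = e_N(\X)^{-1}$, so the image extends to $\C[\X^{\pm 1}]^{\mathfrak{S}_N}$.

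The explicit identification $[\bVn^k V] \mapsto e_k(\X)$ is then immediate from a direct trace computation: the weight basis of $\bVn^k V$ is indexed by size-$k$ subsets $S \subset \{1, \ldots, N\}$, on which a diagonal matrix acts by $\prod_{i \in S} X_i$, summing to $e_k(\X)$. Equivalently $\bVn^k V = V_{(1^k, 0^{N-k})}$, whose Schur polynomial is $e_k$. The presentations $\Sym(\X) \cong \C[e_1(\X), \ldots, e_N(\X)]$ and $\C[\X^{\pm 1}]^{\mathfrak{S}_N} \cong \C[e_1, \ldots, e_{N-1}, e_N^{\pm 1}]$ are the classical fundamental theorem of symmetric polynomials (resp.\ its Laurent variant). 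Finally, the equivalence $K_0(\repp) \otimes \C \cong K_0(\Kar(\repp)) \otimes \C$ holds because Karoubi completion of an already idempotent-complete semisimple category is an equivalence; since $\repp$ is a subcategory of the semisimple $\Rep(\glnn{N})$ closed under summands, it is itself idempotent complete, and similarly for $\rep$.

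The main obstacle is pinning down the precise class of simples appearing in $\repp$ and $\rep$, i.e.\ showing that tensor powers of the fundamentals (and, for $\rep$, also their duals) generate under direct summands exactly the simples indexed by partitions (resp.\ by decreasing integer sequences). For $\repp$ this is the Pieri/Littlewood--Richardson decomposition, and for $\rep$ it requires one additional observation that any simple with some $\lambda_i < 0$ is a summand of a tensor product $V_\mu \otimes (\bVn^N V^*)^{\otimes k}$ with $\mu$ a partition and $k \gg 0$. Once these two statements are in hand, the character argument above concludes the proof.
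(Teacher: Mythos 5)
The paper gives no proof of this lemma at all (it is introduced with ``The following lemma is classical''), so there is no in-paper argument to compare against; your character-theoretic proof is the standard one and is essentially correct. The key steps — semisimplicity, the character map as a ring homomorphism, Weyl's character formula identifying $[V_\lambda]$ with $\pi_\lambda(\X)$, linear independence and spanning of Schur polynomials, the direct trace computation giving $[\bVn^k V]\mapsto e_k(\X)$, and inverting $e_N$ to pass to duals — are exactly what is needed.

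One point is imprecise, though it does not affect the conclusion. As defined in the paper, $\repp$ is only \emph{monoidally} generated by the exterior powers, so its objects are tensor products of fundamentals; it is not closed under direct summands (for instance $\Sym^2 V$ is a summand of $V^{\otimes 2}$ but is not isomorphic to any tensor product of exterior powers), so your justification that $\repp$ is already idempotent complete does not apply. The isomorphism $K_0(\repp)\otimes\C\cong K_0(\Kar(\repp))\otimes\C$ nevertheless holds by a direct argument: semisimplicity of the ambient category means isomorphism classes of objects of $\repp$ are detected by their characters, which are precisely the monomials in $e_1(\X),\dots,e_N(\X)$; these multiply freely and are linearly independent, so $K_0(\repp)\otimes\C\cong\C[e_1(\X),\dots,e_N(\X)]$ already, and the natural map to $K_0(\Kar(\repp))\otimes\C\cong\Sym(\X)$ (spanned by the classes of the simples, i.e.\ by Schur polynomials) is an isomorphism. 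With this substitution your proof is complete.
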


For example, the classes of the symmetric and anti-symmetric power representations are related as follows.\vspace{-3mm}
\begin{equation}
\label{eqn:Grassm} h_{m+1}(\X) = \sum_{i=1}^{m+1} (-1)^i h_{m+1-i}(\X) e_i(\X)
\end{equation}

 This can also be seen in the Grothendieck group of $\Web[N]$, at the cost of passing to the Karoubi envelope. For this, we recall the symmetric clasps~\cite{Kup}, which are higher-rank analogs of Jones--Wenzl projectors, and their anti-symmetric counterparts.

\begin{definition} \label{defn:clasps} The symmetric and anti-symmetric clasps $P_m\in \Web[N]$ and $V_m\in \Web[N]$ are defined by $P_1=V_1=\id_1$ and then:
 \[
      \begin{tikzpicture}[anchorbase, scale=.3]
\fill[black,opacity=.2] (0,1) rectangle (3,3);
\draw[thick] (0,1) rectangle (3,3);
\draw [very thick] (.5,0) to (.5,1);
\draw [thick, dotted] (.7,0.5) to (1.3,.5);
\draw [very thick] (1.5,0) to (1.5,1);
\draw [very thick] (2.5,0) to (2.5,1);
\draw [very thick,->] (.5,3) to (.5,4);
\draw [thick, dotted] (.7,3.25) to (1.3,3.25);
\draw [very thick,->] (1.5,3) to (1.5,4);
\draw [very thick,->] (2.5,3) to (2.5,4);
\node at (1.5,1.9) {$P_{m+1}$};
\end{tikzpicture}
   \;\; := \;\;
   \begin{tikzpicture}[anchorbase, scale=.3]
\fill[black,opacity=.2] (0,1) rectangle (2,3);
\draw[thick] (0,1) rectangle (2,3);
\draw [very thick] (.5,0) to (.5,1);
\draw [thick, dotted] (.7,0.5) to (1.3,.5);
\draw [very thick] (1.5,0) to (1.5,1);
\draw [very thick,->] (2.5,0) to (2.5,4);
\draw [very thick,->] (.5,3) to (.5,4);
\draw [thick, dotted] (.7,3.25) to (1.3,3.25);
\draw [very thick,->] (1.5,3) to (1.5,4);
\node at (1,1.9) {$P_{m}$};
\end{tikzpicture}
    \;-\;\frac{m}{m+1}\;
       \begin{tikzpicture}[anchorbase, scale=.3]
\fill[black,opacity=.2] (0,.5) rectangle (2,1.5);
\draw[thick] (0,.5) rectangle (2,1.5);
\fill[black,opacity=.2] (0,2.5) rectangle (2,3.5);
\draw[thick] (0,2.5) rectangle (2,3.5);
\draw [very thick] (.5,0) to (.5,.5);
\draw [very thick] (1.5,0) to (1.5,.5);
\draw [double] (2,1.75) to (2,2.25);
\draw [very thick] (2.5,0) to (2.5,1.5)to [out=90,in=90] (1.5,1.5);
\draw [very thick,->] (1.5,2.5) to [out=270,in=270] (2.5,2.5) to (2.5,4); 
\draw [very thick] (.5,1.5) to (.5,2.5);
\draw [thick, dotted] (.7,2) to (1.3,2);
\draw [very thick,->] (.5,3.5) to (.5,4);
\draw [very thick,->] (1.5,3.5) to (1.5,4);
\node at (1,.95) {\tiny$P_{m}$};
\node at (1,2.95) {\tiny$P_{m}$};
\end{tikzpicture}   
\quad,\quad
   \begin{tikzpicture}[anchorbase, scale=.3]
\fill[black,opacity=.2] (0,1) rectangle (3,3);
\draw[thick] (0,1) rectangle (3,3);
\draw [very thick] (.5,0) to (.5,1);
\draw [thick, dotted] (.7,0.5) to (1.3,.5);
\draw [very thick] (1.5,0) to (1.5,1);
\draw [very thick] (2.5,0) to (2.5,1);
\draw [very thick,->] (.5,3) to (.5,4);
\draw [thick, dotted] (.7,3.25) to (1.3,3.25);
\draw [very thick,->] (1.5,3) to (1.5,4);
\draw [very thick,->] (2.5,3) to (2.5,4);
\node at (1.5,1.9) {$V_{m+1}$};
\end{tikzpicture}
   \;\; := \;\; \frac{1}{(m+1)!}\;
  \begin{tikzpicture}[anchorbase, scale=.3]
\draw [very thick,->] (.5,0) to (.5,4);
\draw [very thick] (1.5,0) to (1.5,.2) to [out=90,in=315] (.5,1);
\draw [very thick] (2.5,0) to (2.5,.2) to[out=90,in=315] (.5,1.5);
\draw [very thick,->] (.5,2.5) to [out=45,in=270](2.5,3.8) to (2.5,4);
\draw [very thick,->] (.5,3) to [out=45,in=270](1.5,3.8)to (1.5,4);
\end{tikzpicture} 
\]

Note that the clasps are related by:
  \[
      \begin{tikzpicture}[anchorbase, scale=.3]
\fill[black,opacity=.2] (0,1) rectangle (3,3);
\draw[thick] (0,1) rectangle (3,3);
\draw [very thick] (.5,0) to (.5,1);
\draw [thick, dotted] (.7,0.5) to (1.3,.5);
\draw [very thick] (1.5,0) to (1.5,1);
\draw [very thick] (2.5,0) to (2.5,1);
\draw [very thick,->] (.5,3) to (.5,4);
\draw [thick, dotted] (.7,3.25) to (1.3,3.25);
\draw [very thick,->] (1.5,3) to (1.5,4);
\draw [very thick,->] (2.5,3) to (2.5,4);
\node at (1.5,1.9) {$P_{m+1}$};
\end{tikzpicture}
\;=\;
\begin{tikzpicture}[anchorbase, scale=.3]
\fill[black,opacity=.2] (0,1) rectangle (2,3);
\draw[thick] (0,1) rectangle (2,3);
\draw [very thick] (.5,0) to (.5,1);
\draw [thick, dotted] (.7,0.5) to (1.3,.5);
\draw [very thick] (1.5,0) to (1.5,1);
\draw [very thick,->] (2.5,0) to (2.5,4);
\draw [very thick,->] (.5,3) to (.5,4);
\draw [thick, dotted] (.7,3.25) to (1.3,3.25);
\draw [very thick,->] (1.5,3) to (1.5,4);
\node at (1,1.9) {$P_{m}$};
\end{tikzpicture}
    \;-\;\frac{2 m}{m+1}
       \begin{tikzpicture}[anchorbase, scale=.3]
\fill[black,opacity=.2] (0,.5) rectangle (2,1.5);
\draw[thick] (0,.5) rectangle (2,1.5);
\fill[black,opacity=.2] (0,2.5) rectangle (2,3.5);
\draw[thick] (0,2.5) rectangle (2,3.5);
\fill[black,opacity=.2] (1.5,1.5) rectangle (3,2.5);
\draw[thick] (1.5,1.5) rectangle (3,2.5);
\draw [very thick] (.5,0) to (.5,.5);
\draw [very thick] (1.5,0) to (1.5,.5);
\draw [very thick] (2.5,0) to (2.5,1.5);
\draw [very thick] (.5,1.5) to (.5,2.5);
\draw [thick, dotted] (.7,2) to (1.3,2);
\draw [very thick,->] (.5,3.5) to (.5,4);
\draw [very thick,->] (1.5,3.5) to (1.5,4);
\draw [very thick,->] (2.5,2.5) to (2.5,4);
\node at (1,.95) {\tiny$P_{m}$};
\node at (2.25,1.95) {\tiny$V_{2}$};
\node at (1,2.95) {\tiny$P_{m}$};
\end{tikzpicture} 
\quad,\quad
      \begin{tikzpicture}[anchorbase, scale=.3]
\fill[black,opacity=.2] (0,1) rectangle (3,3);
\draw[thick] (0,1) rectangle (3,3);
\draw [very thick] (.5,0) to (.5,1);
\draw [thick, dotted] (.7,0.5) to (1.3,.5);
\draw [very thick] (1.5,0) to (1.5,1);
\draw [very thick] (2.5,0) to (2.5,1);
\draw [very thick,->] (.5,3) to (.5,4);
\draw [thick, dotted] (.7,3.25) to (1.3,3.25);
\draw [very thick,->] (1.5,3) to (1.5,4);
\draw [very thick,->] (2.5,3) to (2.5,4);
\node at (1.5,1.9) {$V_{m+1}$};
\end{tikzpicture}
   \;\; = \;\;
   \begin{tikzpicture}[anchorbase, scale=.3]
\fill[black,opacity=.2] (0,1) rectangle (2,3);
\draw[thick] (0,1) rectangle (2,3);
\draw [very thick] (.5,0) to (.5,1);
\draw [thick, dotted] (.7,0.5) to (1.3,.5);
\draw [very thick] (1.5,0) to (1.5,1);
\draw [very thick,->] (2.5,0) to (2.5,4);
\draw [very thick,->] (.5,3) to (.5,4);
\draw [thick, dotted] (.7,3.25) to (1.3,3.25);
\draw [very thick,->] (1.5,3) to (1.5,4);
\node at (1,1.9) {$V_{m}$};
\end{tikzpicture}
    \;-\;\frac{2 m}{m+1}
       \begin{tikzpicture}[anchorbase, scale=.3]
\fill[black,opacity=.2] (0,.5) rectangle (2,1.5);
\draw[thick] (0,.5) rectangle (2,1.5);
\fill[black,opacity=.2] (0,2.5) rectangle (2,3.5);
\draw[thick] (0,2.5) rectangle (2,3.5);
\fill[black,opacity=.2] (1.5,1.5) rectangle (3,2.5);
\draw[thick] (1.5,1.5) rectangle (3,2.5);
\draw [very thick] (.5,0) to (.5,.5);
\draw [very thick] (1.5,0) to (1.5,.5);
\draw [very thick] (2.5,0) to (2.5,1.5);
\draw [very thick] (.5,1.5) to (.5,2.5);
\draw [thick, dotted] (.7,2) to (1.3,2);
\draw [very thick,->] (.5,3.5) to (.5,4);
\draw [very thick,->] (1.5,3.5) to (1.5,4);
\draw [very thick,->] (2.5,2.5) to (2.5,4);
\node at (1,.95) {\tiny$V_{m}$};
\node at (2.25,1.95) {\tiny$P_{2}$};
\node at (1,2.95) {\tiny$V_{m}$};
\end{tikzpicture} 
\]
\end{definition}
It is well-known that $\phi$ sends $P_m$ and $V_m$ to the projections onto simple representations in $V^{\otimes m}$ given by the $m$-fold symmetric and anti-symmetric powers of the vector representation respectively.

\begin{theorem}
  In $\Kar(\Web[N])$ there is an isomorphism
  \[
  \oplus_{i=0}^{\lfloor \frac{k-1}{2}\rfloor} (k,V_{k-1-2i}\otimes P_{2i+1})\simeq \bigoplus \oplus_{i=0}^{\lfloor\frac{k}{2}\rfloor}(k,V_{k-2i}\otimes P_{2i})
  \]
  which categorifies \eqref{eqn:Grassm}.
\end{theorem}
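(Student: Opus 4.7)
The plan is to establish the isomorphism by embedding $\Kar(\Web[N])$ into the semisimple category $\Rep(U(\glnn{N}))$ and comparing the two sides there. By the $q=1$ specialization of Theorem~\ref{thm:CKM} (equivalently, by combining Theorems~\ref{thm:fullness} and~\ref{thm:faithfulness} with the inclusion $\Web[N] \hookrightarrow \essAWeb[N]$), the functor $\phi$ is fully faithful on $\Kar(\Web[N])$. Under $\phi$, the clasps $V_m$ and $P_m$ become the projections onto the exterior power $\bigwedge^m V$ and the symmetric power $\Sym^m V$ respectively, so a summand $(k, V_a \otimes P_b)$ with $a + b = k$ corresponds to $\bigwedge^a V \otimes \Sym^b V$ (with $V_0$ and $P_0$ interpreted as the monoidal unit).

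The central step is a Pieri computation in $\Rep(U(\glnn{N}))$: for $a, b \geq 1$ one has
\[
  \bigwedge^a V \otimes \Sym^b V \;\cong\; s_{(b+1,\, 1^{a-1})}(V) \;\oplus\; s_{(b,\, 1^{a})}(V),
\]
together with the boundary cases $\bigwedge^k V = s_{(1^k)}(V)$ and $\Sym^k V = s_{(k)}(V)$, where $s_\mu$ denotes the Schur module. Indexing hook shapes of size $k$ by their first part $j \in \{1, \dots, k\}$, the LHS summand with $(a,b) = (k-1-2i,\, 2i+1)$ contributes hooks with first parts $j = 2i+1$ and $j = 2i+2$ (or only $j = k$ when $a = 0$, which happens for odd $k$ at $i = (k-1)/2$). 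As $i$ ranges, these cover $j = 1, \dots, k$ exactly once. The RHS summand with $(a,b) = (k-2i,\, 2i)$ and $i \geq 1$ contributes $j = 2i$ and $j = 2i+1$, supplemented by the boundary contributions $V_k = s_{(1^k)}(V)$ at $i = 0$ and, for even $k$, $V_0 \otimes P_k = s_{(k)}(V)$ at $i = k/2$; these again cover $j = 1, \dots, k$ exactly once. Hence both sides are isomorphic to $\bigoplus_{j=1}^{k} s_{(j, 1^{k-j})}(V)$, and by full faithfulness of $\phi$ the isomorphism lifts to $\Kar(\Web[N])$.

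To confirm that this isomorphism categorifies~\eqref{eqn:Grassm}, I would decategorify: in $K_0(\Kar(\Web[N]))$ the classes satisfy $[V_a] = e_a(\X)$ and $[P_b] = h_b(\X)$, and the resulting equality $\sum_i e_{k-1-2i}(\X)\, h_{2i+1}(\X) = \sum_i e_{k-2i}(\X)\, h_{2i}(\X)$ is precisely the even--odd splitting of Newton's identity $\sum_{i=0}^{k}(-1)^i e_i(\X)\, h_{k-i}(\X) = 0$ for $k \geq 1$.

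The main obstacle this approach sidesteps is giving an explicit diagrammatic construction of the isomorphism purely within $\Web[N]$, for instance by splitting a Koszul-type complex of webs $\bigwedge^\bullet V \otimes \Sym^{k-\bullet} V$. Such a direct construction would be conceptually preferable but considerably more intricate; the present argument instead leverages semisimplicity of $\Rep(U(\glnn{N}))$ together with full faithfulness of $\phi$ to produce the isomorphism abstractly.
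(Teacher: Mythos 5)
Your proof is correct, but it takes a genuinely different route from the paper. The paper does not write out a proof at all: it states that the argument is ``similar to but easier than'' the proof of the categorified Newton identity (Theorem~\ref{thm:newton}), i.e.\ the intended proof is an explicit diagrammatic zig-zag of maps between adjacent summands $(k,V_{a}\otimes P_{b})$ and $(k,V_{a\mp1}\otimes P_{b\pm1})$, built from the recursions in Definition~\ref{defn:clasps} and verified to be mutually inverse by expanding the middle clasps and using absorption/orthogonality. You instead pass through representation theory: under the $q=1$ analogue of Theorem~\ref{thm:CKM} the summands become $\bigwedge^{a}V\otimes\Sym^{b}V$, the Pieri rule $\bigwedge^{a}V\otimes\Sym^{b}V\cong s_{(b+1,1^{a-1})}(V)\oplus s_{(b,1^{a})}(V)$ shows both sides decompose as $\bigoplus_{j=1}^{k}s_{(j,1^{k-j})}(V)$ (your bookkeeping of the boundary cases $a=0$ for odd/even $k$ is right, and hooks with more than $N$ rows vanish consistently on both sides), and full faithfulness lifts the isomorphism back to $\Kar(\Web[N])$. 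Your approach is shorter and makes the ``one copy of each hook'' structure transparent, but it is non-constructive and leans on the equivalence $\Kar(\Web[N])\simeq\Rep(U(\glnn{N}))$ at $q=1$ — note that Theorem~\ref{thm:CKM} as stated is at generic $q$, and your parenthetical alternative (Theorems~\ref{thm:fullness} and~\ref{thm:faithfulness} plus the inclusion into $\essAWeb[N]$) yields faithfulness of $\phi$ on $\Web[N]$ but not obviously fullness onto $\Hom_{\Rep(\glnn{N})}$, since preimages produced there may a priori be annular; so you should cite the standard $q=1$ skew Howe statement directly. The paper's explicit construction, by contrast, produces the actual mutually inverse morphisms inside the web category, which is what is needed for the harder annular version in Theorem~\ref{thm:newton} where the summands $T_{m}$ are not projections onto $\glnn{N}$-subrepresentations and a purely Schur-functor argument is unavailable. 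Your decategorification check is also correct: the even--odd split of $\sum_{i}(-1)^{i}e_{i}(\X)h_{k-i}(\X)=0$ is exactly \eqref{eqn:Grassm}.
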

The proof of this is similar to but easier than the proof of Theorem~\ref{thm:newton} below, and thus omitted.

\subsection{Categorified Newton's identities}
\label{sec:NI}

We now explicitly show that the projectors $T_m$ categorify the power-sum symmetric polynomials $p_m(\X)= X_1^m+\cdots + X_N^m$ in the same sense as the clasps $P_m$ categorify the complete symmetric polynomials. To this end, we prove that the projectors $T_m$ satisfy categorified versions of the classical Newton identities:
\begin{align}
\label{eqn:newtonid}
p_k(\X) &= (-1)^{k-1}k e_k(\X) - \sum_{j=1}^{k-1} (-1)^{k-j} e_{k-j}(\X) p_j(\X)\quad \text{for }  1\leq k
\end{align}

\begin{theorem}
\label{thm:newton}
  In $\Kar(\essAWeb[N])^*$, there is an isomorphism:
  \[
  \oplus_{i=0}^{\lfloor \frac{k-1}{2}\rfloor} (k,V_{k-1-2i}\otimes T_{2i+1})\simeq \oplus_{k}(k,V_k)\bigoplus \oplus_{i=1}^{\lfloor\frac{k}{2}\rfloor}(k,V_{k-2i}\otimes T_{2i})
  \]
\end{theorem}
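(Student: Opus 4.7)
The plan is to reduce the claimed isomorphism in $\Kar(\essAWeb[N])^*$ to a character computation, invoking the equivalence $\Kar(\essAWeb[N])^* \simeq \reph$ established in Corollary~\ref{cor:diagrep}. Under this equivalence it suffices to exhibit the isomorphism between the $\phi$-images of the two sides in $\reph$. Since $\mathfrak{h}$ is abelian, every finite-dimensional integral-weight $\mathfrak{h}$-representation decomposes canonically as a direct sum of one-dimensional weight spaces, and two such objects are isomorphic if and only if they have the same character, i.e.\ the same weight multiplicity function on $\Z^N$.

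The next step is to compute these characters. By Theorem~\ref{thm:extweightproj}, the image of $\phi(T_m)$ is spanned by the extremal-weight vectors $v_i^{\otimes m}$ for $i \in \{1,\dots,N\}$, which carry $\mathfrak{h}$-weight $m\epsilon_i$; hence its character is the power-sum symmetric polynomial $p_m(\X) = X_1^m + \cdots + X_N^m$, where $X_i$ records the action of $L_i$. By Definition~\ref{defn:clasps}, $\phi(V_m)$ is the projection onto $\bigwedge^m V$, whose character is the elementary symmetric polynomial $e_m(\X)$. Characters multiply under tensor products, so the image of $V_j \otimes T_m$ has character $e_j(\X)\, p_m(\X)$. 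The proposed isomorphism therefore reduces to the polynomial identity
\[
\sum_{i=0}^{\lfloor(k-1)/2\rfloor} e_{k-1-2i}(\X)\, p_{2i+1}(\X) \;=\; k\, e_k(\X) \;+\; \sum_{i=1}^{\lfloor k/2\rfloor} e_{k-2i}(\X)\, p_{2i}(\X).
\]

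This is precisely the classical Newton identity
\[
\sum_{j=0}^{k-1} (-1)^j e_j(\X)\, p_{k-j}(\X) \;=\; (-1)^{k-1} k\, e_k(\X),
\]
rewritten after multiplying by $(-1)^{k-1}$ and separating the terms according to the parity of $m=k-j$: those with $m$ odd remain on the left, while those with $m$ even and $m \geq 2$ are transposed to the right. Combined with the character detection of isomorphism in $\reph$, this gives the desired decomposition.

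In this approach essentially all the technical substance has been absorbed into the faithfulness of $\phi$ (Theorem~\ref{thm:faithfulness}) and the identification of the $T_m$ with extremal-weight projectors (Theorem~\ref{thm:extweightproj}); no further web manipulation is needed. The main obstacle, should one instead seek a direct diagrammatic proof without passing through representation theory, would be constructing explicit mutually inverse web morphisms between the idempotents $V_{k-1-2i}\otimes T_{2i+1}$ on one side and the summands $k\cdot V_k$ together with $V_{k-2i'}\otimes T_{2i'}$ on the other, a task that would require a substantially more intricate combinatorial argument than the representation-theoretic route sketched above.
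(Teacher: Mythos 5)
Your proof is correct, but it takes a genuinely different route from the paper's. You transport the problem through the equivalence $\Kar(\essAWeb[N])^*\simeq\reph$ of Corollary~\ref{cor:diagrep} and reduce to a character computation in the target, where isomorphism classes are detected by weight multiplicities; the identification of the characters of the images of $\phi(V_m)$ and $\phi(T_m)$ with $e_m(\X)$ and $p_m(\X)$, the multiplicativity under $\otimes$, and the parity rearrangement of Newton's identity are all correct, so the argument closes. (One point worth making explicit: the images of all idempotents involved are submodules of $V^{\otimes k}$ and hence genuinely weight-decomposable, so character detection of isomorphism applies however one reads the definition of $\reph$.) The paper instead constructs the isomorphism explicitly as a ``zig-zag'' of web morphisms between consecutive summands, verifying diagrammatically --- via the recursions for $V_m$ and $T_m$, crossing absorption, and a decomposition of the bottom term into $k$ orthogonal idempotents using the eigenprojections $P_i$ of the wrap --- that the relevant composites are identities or zero. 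What the explicit construction buys is that the isomorphism is realized by concrete morphisms, in particular ones lying in the subcategory $\sessAWeb$, a refinement the paper records in a subsequent remark and which your non-constructive argument does not yield; what your argument buys is brevity and a clean separation of the combinatorial content (Newton's identity) from the categorical content (fullness and faithfulness of $\phi$), which the paper's own proof also invokes at key junctures.
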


\begin{proof}
The desired isomorphism takes the shape of a ``zig-zag'', i.e. each direct summand maps non-trivially to at most two direct summands on the other side. A typical segment of the zig-zag looks as follows: 
\[

    \] 
    into a sum of $N$ orthogonal idempotents, which are individually isomorphic to $(N, V_N)$. It is easy to check that the above is equal to
    \[\sum_{x=1}^N (\id_{N-1}\otimes \wrap^{-x})V_N(\id_{N-1}\otimes \wrap^{x}),\] which is manifestly a sum of idempotents, which are orthogonal since $V_N(\id_{N-1}\otimes \wrap^{k-l})V_N=\delta_{k,l} V_N$. 
     
\end{remark}

\begin{question}
Can the extremal weight projectors and symmetric clasps be used to give categorifications of the following identities?
\begin{equation}\label{eqn:newtonid2}
k h_k(\X) = \sum_{j=1}^{k} h_{k-j}(\X) p_j(\X)\quad \text{for } 1\leq k\leq N
\end{equation}
\end{question}

An isomorphism categorifying this identity for $k=2$ is easy to construct. For $k\geq 3$ such an isomorphism cannot be of zig-zag shape as for \eqref{eqn:newtonid}.

\subsection{Categorification of the symmetric polynomial ring}\label{sec:decat}
An easy consequence of Corollary~\ref{cor:diagrep} is the following.
\begin{lemma}\label{lem:K0} There are isomorphisms
\[K_0(\Kar(\essAWebp[N])^*)\otimes \C \cong K_0(\rephp) \otimes \C \cong \C[\X]\]
sending the object $(1,P_i)$ to $[\C\la v_i \ra]$ and further to $X_i$.
\end{lemma}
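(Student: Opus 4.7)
The plan is to combine the equivalence of categories $\Kar(\essAWebp[N])^* \simeq \rephp$ (stated in the remark immediately following Corollary~\ref{cor:diagrep}) with a direct computation of $K_0(\rephp)\otimes \C$. Since the split Grothendieck group of an additive monoidal category is invariant under equivalence, the first isomorphism is immediate, and the content of the lemma reduces to identifying $K_0(\rephp)\otimes \C$ with $\C[\X]$.

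For the second isomorphism, the main observation is that $\rephp$ is semisimple. Indeed, $U(\h)$ is the polynomial ring in the $N$ commuting generators $L_1,\dots,L_N$, so every finite-dimensional $U(\h)$-representation with integral weights splits canonically as a direct sum of one-dimensional weight modules $\C\la v_\lambda\ra$ for $\lambda\in\Z^N$; the subcategory $\rephp$ is cut out by the requirement $\lambda\in \N^N$. The tensor product satisfies $\C\la v_\lambda\ra\otimes \C\la v_\mu\ra\cong \C\la v_{\lambda+\mu}\ra$, identifying $K_0(\rephp)$ as a ring with the monoid algebra $\Z[\N^N]$, which in turn is the polynomial ring $\Z[X_1,\dots,X_N]$ via $[\C\la v_\lambda\ra]\mapsto X_1^{\lambda_1}\cdots X_N^{\lambda_N}$. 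Tensoring with $\C$ yields $\C[\X]$, as desired.

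Finally, I would track the image of $(1,P_i)$ across these identifications. By construction, as recorded in the paragraph preceding Theorem~\ref{thm:fullness}, $P_i\in \essAWeb[N](1)$ is the idempotent projecting onto the $\zeta^i$-eigenspace of $\wrap$, and $\phi(P_i)$ is the projector $V\twoheadrightarrow \C\la v_i\ra \hookrightarrow V$. Hence the object $(1,P_i)$ of $\Kar(\essAWebp[N])^*$ is sent to $[\C\la v_i\ra]\in K_0(\rephp)\otimes \C$, which corresponds to $X_i\in \C[\X]$. There is no real obstacle here; the only mild care needed is to note that the additional $\Z$-grading on morphism spaces built into $\Kar(-)^*$ does not affect $K_0$, which only records isomorphism classes of objects.
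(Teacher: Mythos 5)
Your proof is correct and follows the same route the paper intends: the paper offers no written proof beyond calling the lemma ``an easy consequence of Corollary~\ref{cor:diagrep},'' and your argument simply fills in the expected details (transporting $K_0$ along the equivalence $\Kar(\essAWebp[N])^* \simeq \rephp$, identifying $K_0(\rephp)$ with the monoid algebra of $\N^N$ via semisimplicity into one-dimensional weight modules, and tracking $(1,P_i)\mapsto [\C\la v_i\ra]\mapsto X_i$ using the description of $\phi(P_i)$ given before Theorem~\ref{thm:fullness}). No gaps.
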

We have seen that the extremal weight projectors in $\essAWebp[N]$ categorify the power sum symmetric polynomials. However, by Lemma~\ref{lem:K0} the Grothendieck group of the Karoubi envelope of $\essAWebp[N]$ is larger than the symmetric polynomial ring $\Sym(\X)\cong K_0(\Kar(\Webp[N]))\cong K_0(\repp)$. To see this, recall that the objects in $\rephp$ are direct sums of non-negative integral $\glnn{N}$ weight spaces. However, in the Grothendieck group, such direct sums can be written as formal differences of $\glnn{N}$-representations only if they are orbits of the action for the Weyl group $\mathfrak{S}_N$. In this section, we identify a sub-category of $\essAWebp[N]$ that is $\mathfrak{S}_N$-equivariant, that contains the extremal weight projectors and has $\Sym(\X)$ as Grothendieck group. 

\begin{definition} We let $\reph^{\mathfrak{S}_N}$ denote the subcategory of $\reph$ with objects that are invariant under $\mathfrak{S}_N$ and morphisms that are $\mathfrak{S}_N$-equivariant.  
\end{definition}  

\begin{lemma}\label{lem:sympolycat} The category $\reph^{\mathfrak{S}_N}$ is semi-simple and the homomorphism \[
\Sym(\X)\cong  K_0(\repp)\otimes \C \to K_0(\reph^{\mathfrak{S}_N})\otimes \C\] induced by the inclusion is an isomorphism.
\end{lemma}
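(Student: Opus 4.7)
The plan is to first verify semi-simplicity of $\reph^{\mathfrak{S}_N}$, and then identify both Grothendieck groups with $\Sym(\X)$ in such a way that the inclusion-induced map becomes the Schur-to-monomial change of basis, whose invertibility is classical.

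First I would check semi-simplicity. The ambient category $\reph$ is already semi-simple because $U(\h)$ is a commutative algebra acting by commuting diagonalizable operators on integral weight representations; its simple objects are the one-dimensional weight spaces $\C_\lambda$, one for each $\lambda$ in the weight lattice. The Weyl group $\mathfrak{S}_N$ permutes these simple objects. A standard averaging argument (any idempotent in $\reph$ may be symmetrized over $\mathfrak{S}_N$ to produce a morphism in $\reph^{\mathfrak{S}_N}$) shows that equivariantization of a semi-simple $\C$-linear category by a finite group remains semi-simple. The simple objects are the orbit modules $M_\lambda := \bigoplus_{\mu \in \mathfrak{S}_N \cdot \lambda} \C_\mu$ equipped with the tautological permutation equivariance, indexed by $\mathfrak{S}_N$-orbits on the weight lattice; to match $K_0(\repp)$ we restrict to orbits of non-negative weights.

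Next I would identify both Grothendieck groups with $\Sym(\X)$. Sending $[M_\lambda] \mapsto m_\lambda(\X)$, where $m_\lambda$ denotes the monomial symmetric polynomial, gives a canonical $\C$-linear isomorphism $K_0(\reph^{\mathfrak{S}_N})\otimes \C \cong \Sym(\X)$ realized in the monomial basis. On the other hand, classical $\glnn{N}$ character theory identifies $K_0(\repp)\otimes\C$ with $\Sym(\X)$ via the map $[V_\lambda]\mapsto s_\lambda(\X)$ sending Schur modules to Schur polynomials.

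Finally I would check that the inclusion-induced map is an isomorphism. Restricting a polynomial $\glnn{N}$-representation to $U(\h)$, with equivariant structure provided by the Weyl group $\mathfrak{S}_N \subset \glnn{N}$, gives the desired functor $\repp \hookrightarrow \reph^{\mathfrak{S}_N}$. On $K_0$, the class $[V_\lambda]$ is sent to its weight decomposition $\sum_\mu K_{\lambda,\mu}[M_\mu]$, where $K_{\lambda,\mu}$ is the Kostka number; under the identifications above this is precisely the classical expansion $s_\lambda = \sum_\mu K_{\lambda,\mu} m_\mu$. Since the Kostka matrix is unitriangular with respect to the dominance order on partitions, the induced map is invertible.

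The main obstacle will be pinning down the notion of $\mathfrak{S}_N$-equivariant structure precisely enough that the simple objects of $\reph^{\mathfrak{S}_N}$ are in bijection with $\mathfrak{S}_N$-orbits of weights, rather than with pairs (orbit, irrep of the stabilizer $\mathfrak{S}_\lambda$); this is what ensures the rank of $K_0$ matches that of $\Sym(\X)$. Once this bookkeeping is in place, the rest reduces to the unitriangularity of the Kostka matrix.
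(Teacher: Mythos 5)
Your proof is correct and follows essentially the same route as the paper: identify the indecomposables of $\reph^{\mathfrak{S}_N}$ as orbit modules indexed by partitions with at most $N$ parts, observe that their endomorphism algebras are $\C$ with no morphisms between distinct indecomposables, and then invoke a classical change of basis in $\Sym(\X)$ (you use unitriangularity of the Kostka matrix $s_\lambda=\sum_\mu K_{\lambda,\mu}m_\mu$, while the paper expresses the monomial symmetric polynomials as polynomials in the elementary symmetric ones coming from tensor products of fundamental representations --- both equally standard). The ``main obstacle'' you flag is resolved by the paper's definition of $\reph^{\mathfrak{S}_N}$: it is the subcategory of $\reph$ whose objects are $\mathfrak{S}_N$-invariant and whose morphisms are $\mathfrak{S}_N$-equivariant, not a genuine equivariantization carrying chosen equivariant structure, so the endomorphism algebra of an orbit module is the $\mathfrak{S}_N$-invariant part of $\prod_\mu \C$, namely $\C$, independently of the stabilizers, and the simples are indexed by orbits alone.
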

\begin{proof}
The indecomposable objects in $\reph^{\mathfrak{S}_N}$ are orbits of the form $\C\langle v_{s(\epsilon_1),\dots, s(\epsilon_{n})}| s\in \mathfrak{S}_N \rangle$. Through the braiding, such an object is isomorphic to an $\mathfrak{S}_N$-orbit of a vector $v_{0,\dots,0,1,\dots,1, \dots, N-1}$ with multiplicities $n_i$ of the weights $i$ determined by a partition $ \lambda\colon n_0\geq n_1 \geq \cdots \geq n_{N-1}$ of $n$. There are no morphisms between distinct indecomposables and their endomorphism algebras are $1$-dimensional over $\C$. This shows that $\reph^{\mathfrak{S}_N}$ is semi-simple. The isomorphism follows since the classes of these indecomposables can be expressed as linear combinations of the classes of tensor products of fundamental representations in the same way as monomial symmetric polynomials can be expressed as polynomials in elementary symmetric polynomials.
\end{proof}

We aim to describe the subcategory $\reph^{\mathfrak{S}_N}$ of $\reph$ by a subcategory of $\essAWebp[N]$.

\begin{definition}
Let $\sessAWeb$ denote the symmetric monoidal $\C$-linear subcategory of $\essAWebp[N]$ with the same objects, but with morphisms spaces generated (under tensor product and composition) by morphisms in $\Webp[N]$ and the extremal weight projectors $T_m$ for $m\geq 1$.
\end{definition}
Note that the restriction of $\phi$ to the subcategory $\sessAWeb$ has image contained in $\reph^{\mathfrak{S}_N}$.

\begin{proposition}
The functor $\phi \colon \sessAWeb \to \reph^{\mathfrak{S}_N}$  is fully faithful and induces an equivalence of $\C$-linear monoidal categories $\Kar(\sessAWeb) \simeq\reph^{\mathfrak{S}_N}$. 
\end{proposition}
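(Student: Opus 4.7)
The proof splits into faithfulness, fullness, and essential surjectivity on the Karoubi envelope. First, I would check that $\phi$ indeed lands in $\reph^{\mathfrak{S}_N}$: web morphisms from $\Webp[N]$ come from $\glnn{N}$-intertwiners (Theorem~\ref{thm:CKM} at $q=1$) and in particular commute with the $\mathfrak{S}_N$-action permuting the standard basis of $V$, while each $T_m$ is the projection onto the $\mathfrak{S}_N$-orbit $\bigoplus_i \C\langle v_{i,\ldots,i}\rangle$ by Theorem~\ref{thm:extweightproj}. Faithfulness of $\phi|_{\sessAWeb}$ is then immediate from Theorem~\ref{thm:faithfulness}.

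For fullness, given an $\mathfrak{S}_N$-equivariant morphism $f\colon \phi(\vec{k})\to \phi(\vec{l})$, Theorem~\ref{thm:fullness} produces a lift $g\in\essAWeb[N](\vec{k},\vec{l})$. Expanding $\id_1=\sum_{k=1}^N P_k$ together with $\wrap^N=\id_1$ (Lemma~\ref{lem:DNone}) rewrites $g$ as a $\C$-linear combination of terms $u\circ(P_{\epsilon_1}\otimes\cdots\otimes P_{\epsilon_m})$ with $u\in\Webp[N]$ and $\epsilon\in\{1,\ldots,N\}^m$ (after bending to use Lemma~\ref{lem:upwardannulus}). The $\mathfrak{S}_N$-equivariance of $f=\phi(g)$ forces the coefficients of $u\cdot P_\epsilon$ and $u\cdot P_{s\cdot\epsilon}$ to agree for every $s\in\mathfrak{S}_N$, so after averaging we may assume $g$ is a combination of $u\cdot(\sum_{\epsilon'\in\mathfrak{S}_N\cdot\epsilon}P_{\epsilon'})$. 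The identity $\sum_{k=1}^N P_k^{\otimes m}=T_m$ from Lemma~\ref{lem:projexpand}, applied via M\"obius inversion over the lattice of set partitions of strands grouped by label, shows that each such $\mathfrak{S}_N$-symmetric sum is an integral combination of tensor products of $T_m$'s, placing $g$ in $\sessAWeb$.

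For the Karoubi statement, $\reph^{\mathfrak{S}_N}$ is semisimple by Lemma~\ref{lem:sympolycat}, so essential surjectivity reduces to realizing each simple via an idempotent in $\sessAWeb$. For a partition $\lambda=(\lambda_1,\ldots,\lambda_k)$, I would form $T_\lambda:=T_{\lambda_1}\otimes\cdots\otimes T_{\lambda_k}\in\sessAWeb$; its $\phi$-image is the direct sum of orbits parametrized by \emph{all} tuples $(i_1,\ldots,i_k)$, and applying inclusion-exclusion together with the collapsing identity from Theorem~\ref{thm:Tm}(\ref{item:overlap})---that the ``diagonal'' restriction of $T_m\otimes T_n$ equals $T_{m+n}$---extracts the ``all indices distinct'' summand as an idempotent $E_\lambda\in\sessAWeb$ whose $\phi$-image is the simple indexed by $\lambda$. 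Further splittings accounting for equal parts can be obtained by composing with braidings from $\Webp[N]\subset\sessAWeb$, and objects of $\sessAWeb$ with higher labels reduce to summands of tensor powers of $V$ via merges in $\Webp[N]$, covering all simples.

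The main obstacle is the combinatorial accounting in the last step: inclusion-exclusion on set partitions combined with symmetric-group symmetrizations to isolate individual simples for partitions with repeated parts. The saving grace is that faithfulness of $\phi|_{\sessAWeb}$ is established at the outset, so verifying the proposed algebraic identities (idempotency of the $E_\lambda$, orthogonality between different partitions, and correct identification with simples) reduces to checking them after applying $\phi$, where the weight-space decompositions are transparent.
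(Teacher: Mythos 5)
Your proposal is correct, but it reaches both fullness and essential surjectivity by a genuinely different route than the paper. For the key step of producing idempotents in $\sessAWeb$ that hit the simples of $\reph^{\mathfrak{S}_N}$, the paper proceeds by explicit diagrammatic construction: an auxiliary recursively defined idempotent $O_{n+1}=s_1(\id_1\otimes O_n)s_1(\id_1\otimes O_n)(O_n\otimes\id_1)$ with $O_2=\id_2-T_2$ handles the ``all labels distinct'' case, and the general simple attached to a partition with parts $n_1\geq\cdots\geq n_k$ is obtained by sandwiching $\id_{n-k}\otimes O_k$ between permutations and two copies of $T_{n_1}\otimes\cdots\otimes T_{n_k}$; correctness is verified after applying $\phi$ using faithfulness, exactly the ``saving grace'' you invoke. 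You instead run M\"obius inversion on the lattice of set partitions: the sum of $P_{\epsilon'}$ over tuples constant on the blocks of a partition is a braid-conjugate of a tensor product of $T_\mu$'s, so the ``exact equality pattern'' orbit sums lie in $\sessAWeb$, which simultaneously yields the primitive idempotents and (via lifting through Theorem~\ref{thm:fullness} and averaging over $\mathfrak{S}_N$) a direct fullness argument; the paper's fullness argument is terser, resting on semisimplicity once the projections onto simples are available. Your approach is more uniform and arguably treats fullness more completely; the paper's is more explicit. Two small corrections: the diagonal collapse of $T_m\otimes T_n$ to $T_{m+n}$ is a consequence of Lemma~\ref{lem:projexpand} rather than of Theorem~\ref{thm:Tm}(\ref{item:overlap}), and no further splitting for repeated parts is needed, since the all-distinct summand of $\phi(T_{\lambda_1}\otimes\cdots\otimes T_{\lambda_k})$ is already a single $\mathfrak{S}_N$-orbit (the simples are orbits under relabelling the values $1,\dots,N$, not the tensor positions), so your $E_\lambda$ is already primitive.
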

\begin{proof} Faithfulness is inherited from Theorem~\ref{thm:faithfulness}. We shall prove fullness by showing that the image of $\phi$ contains the projections onto the simple objects in $\reph^{\mathfrak{S}_N}$ as identified in the proof of Lemma~\ref{lem:sympolycat}. Indeed, if $\lambda\colon n_0\geq n_1 \geq \cdots \geq n_{N-1}$ of $n$, then we will construct an idempotent morphism in $\sessAWeb$ that projects onto the $\mathfrak{S}_N$-orbit of the vector $v_{0,\dots,0,1,\dots,1, \dots, N-1}$ with weights $i$ appearing with multiplicities $n_i$.

To this end, we first define an auxiliary projector $O_n$ in $\sessAWeb$ for the case where $n_i\in \{0,1\}$ for $1\leq i \leq N$. We set $O_1=\id_1$ and $O_2= \id_2 - T_2$. For $n\geq 2$ we inductively define:

\[O_{n+1} := s_1 (\id_1\otimes O_n)s_1 (\id_1\otimes O_n)(O_n\otimes \id_1)  \]
It is easy to check that the image of $O_n$ under $\phi$ is the desired projection, and so the $O_n$ are the desired diagrammatic idempotents by faithfulness of $\phi$. It is also clear that the $O_n$ are contained in $\sessAWeb$.

Now let $\lambda\colon n_0\geq n_1 \geq \cdots \geq n_{k}$ be a partition of $n$ with $k$ non-zero parts $n_i$. Then consider the projector built as the composite of  $T_{n_1}\otimes \cdots\otimes  T_{n_k}$, the permutation given as the product of transpositions that interchange the strands in position $n_i$ and $n-i$ for $1 \leq i \leq k$, the projector $\id_{n-k}\otimes O_k$, the inverse permutation and again $T_{n_1}\otimes \cdots \otimes T_{n_k}$.

The image of this element under $\phi$ is the idempotent projecting onto the $\mathfrak{S}_N$-orbit of the vector $v_{0,\dots,0,1,\dots,1, \dots, N-1}$ with weights $i$ of multiplicities $n_i$, and by faithfulness of $\phi$ it is itself an idempotent in $\sessAWeb$.

\end{proof}

\begin{corollary} $\Kar(\sessAWeb)$ categorifies the symmetric polynomial ring $\Sym(\X)$ and its objects $T_m$ categorify the power-sum symmetric polynomials.
\end{corollary}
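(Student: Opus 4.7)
The plan is to combine the equivalence $\Kar(\sessAWeb) \simeq \reph^{\mathfrak{S}_N}$ established in the immediately preceding proposition with the decategorification statement from Lemma~\ref{lem:sympolycat}. Since taking Grothendieck groups (and tensoring by $\C$) is invariant under equivalences of additive categories, the proposition yields
\[
K_0(\Kar(\sessAWeb)) \otimes \C \;\cong\; K_0(\reph^{\mathfrak{S}_N}) \otimes \C \;\cong\; \Sym(\X),
\]
where the second isomorphism is exactly Lemma~\ref{lem:sympolycat}. This proves the first assertion; no new computation is needed beyond checking that this composite identification is compatible with the monoidal structure, which follows since both equivalences are monoidal.

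For the second assertion, I would trace the class $[(m,T_m)]$ through the composite isomorphism. By Theorem~\ref{thm:extweightproj}, the image $\phi(T_m)$ is the projector onto the extremal weight space $\C\la v_{i\cdots i} \mid i \in \{1,\dots,N\}\ra$ inside $V^{\otimes m}$. This subspace decomposes as an $\h$-module into the direct sum of $N$ one-dimensional weight spaces, the $i$-th of which has weight $m \varepsilon_i$, where $\varepsilon_i$ denotes the $i$-th standard basis vector of the weight lattice. Under the isomorphism $K_0(\rephp)\otimes\C \cong \C[\X]$ of Lemma~\ref{lem:K0}, each one-dimensional weight space of weight $m\varepsilon_i$ has class $X_i^m$, so
\[
[(m,T_m)] \;\longmapsto\; X_1^m + X_2^m + \cdots + X_N^m \;=\; p_m(\X).
\]

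The only subtle point is that this chain of identifications takes place a priori in the larger Grothendieck group $K_0(\Kar(\essAWebp[N])^*)\otimes\C\cong \C[\X]$, and one must verify that the image lies in the symmetric subring $\Sym(\X)$ and agrees with the class computed inside $K_0(\Kar(\sessAWeb))\otimes\C\cong\Sym(\X)$. But this compatibility is automatic from the commutative diagram of inclusions $\sessAWeb \hookrightarrow \essAWebp[N]$ and $\reph^{\mathfrak{S}_N}\hookrightarrow \reph$, which intertwine the respective functors $\phi$. I do not anticipate any genuine obstacle here; the entire argument is a straightforward bookkeeping assembly of results already proved in the paper, with the substantive content having been absorbed into Theorem~\ref{thm:extweightproj}, Lemma~\ref{lem:sympolycat} and the preceding proposition.
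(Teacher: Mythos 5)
Your argument is correct and is precisely the assembly the paper intends: the corollary is stated without proof because it follows immediately from the preceding proposition together with Lemma~\ref{lem:sympolycat}, and the identification $[(m,T_m)]\mapsto p_m(\X)$ follows from Theorem~\ref{thm:extweightproj} (or equivalently from Lemma~\ref{lem:projexpand}, which gives $T_m=\sum_k \bigotimes_m P_k$ and hence $[(m,T_m)]=\sum_k X_k^m$ directly via Lemma~\ref{lem:K0}). Your bookkeeping of the compatibility between the two Grothendieck groups is a reasonable extra check, but nothing beyond what the paper's cited results already supply.
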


\begin{remark} The isomorphisms of Theorem~\ref{thm:newton}, which categorify the Newton identities, holds in $\sessAWeb$, although the direct sum decomposition $\bigoplus_k(k, V_k)$ on the right-hand side is not $\mathfrak{S}_N$-equivariant.
\end{remark}

\section{Special properties of the \texorpdfstring{$\glnn{2}$}{gl(2)} case}
\label{sec:two}
We now review some of the special properties of the extremal weight projectors in the $N=2$ case. In this context, we encode $2$-labeled edges in webs as double edges and henceforth omit the labels. For convenience, we list the $\glnn{2}$ web relations for generic $q$ separately. 

\begin{gather}
\label{eqn:circles}
\begin{tikzpicture}[fill opacity=.2,anchorbase,scale=.3]
\draw[very thick, directed=.55] (1,0) to [out=0,in=270] (2,1) to [out=90,in=0] (1,2)to [out=180,in=90] (0,1)to [out=270,in=180] (1,0);
\end{tikzpicture} 
\quad=\quad
(q+ q^{-1}) \emptyset
\quad=\quad 
\begin{tikzpicture}[fill opacity=.2,anchorbase,scale=.3]
\draw[very thick, rdirected=.55] (1,0) to [out=0,in=270] (2,1) to [out=90,in=0] (1,2)to [out=180,in=90] (0,1)to [out=270,in=180] (1,0);
\end{tikzpicture}
\quad,\quad
\begin{tikzpicture}[fill opacity=.2,anchorbase,scale=.3]
\draw[double, directed=.55] (1,0) to [out=0,in=270] (2,1) to [out=90,in=0] (1,2)to [out=180,in=90] (0,1)to [out=270,in=180] (1,0);
\end{tikzpicture} 
\quad=\quad
\emptyset
\quad=\quad \begin{tikzpicture}[fill opacity=.2,anchorbase,scale=.3]
\draw[double, rdirected=.55] (1,0) to [out=0,in=270] (2,1) to [out=90,in=0] (1,2)to [out=180,in=90] (0,1)to [out=270,in=180] (1,0);
\end{tikzpicture}
\\
\label{eqn:bigons}
\begin{tikzpicture}[anchorbase, scale=.5]
\draw [double] (.5,0) -- (.5,.3);
\draw [very thick] (.5,.3) .. controls (.4,.35) and (0,.6) .. (0,1) .. controls (0,1.4) and (.4,1.65) .. (.5,1.7);
\draw [very thick] (.5,.3) .. controls (.6,.35) and (1,.6) .. (1,1) .. controls (1,1.4) and (.6,1.65) .. (.5,1.7);
\draw [double, ->] (.5,1.7) -- (.5,2);
\end{tikzpicture}
\quad= \quad
(q+q^{-1})\;
\begin{tikzpicture}[anchorbase, scale=.5]
\draw [double,->] (.5,0) -- (.5,2);
\end{tikzpicture}
\quad,\quad
\begin{tikzpicture}[anchorbase, scale=.5]
\draw [very thick] (.5,0) -- (.5,.3);
\draw [very thick] (.5,.3) .. controls (.4,.35) and (0,.6) .. (0,1) .. controls (0,1.4) and (.4,1.65) .. (.5,1.7);
\draw [double, directed=0.55] (.5,.3) .. controls (.6,.35) and (1,.6) .. (1,1) .. controls (1,1.4) and (.6,1.65) .. (.5,1.7);
\draw [very thick, ->] (.5,1.7) -- (.5,2);
\end{tikzpicture}
\quad= \quad
\begin{tikzpicture}[anchorbase, scale=.5]
\draw [very thick,->] (.5,0) -- (.5,2);
\end{tikzpicture}
\quad= \quad
\begin{tikzpicture}[anchorbase, scale=.5]
\draw [very thick] (.5,0) -- (.5,.3);
\draw [double, directed=0.55] (.5,.3) .. controls (.4,.35) and (0,.6) .. (0,1) .. controls (0,1.4) and (.4,1.65) .. (.5,1.7);
\draw [very thick] (.5,.3) .. controls (.6,.35) and (1,.6) .. (1,1) .. controls (1,1.4) and (.6,1.65) .. (.5,1.7);
\draw [very thick, ->] (.5,1.7) -- (.5,2);
\end{tikzpicture}
\\
\label{eqn:squares}
\begin{tikzpicture}[anchorbase,scale=.5]
\draw [double] (0,0) -- (0,0.5);
\draw [very thick] (1,0) -- (1,.7);
\draw [very thick] (0,0.5) -- (1,.7);
\draw [double] (1,.7) -- (1,1.3);
\draw [very thick] (0,.5) -- (0,1.5);
\draw [very thick] (1,1.3) -- (0,1.5);
\draw [double,->] (0,1.5) -- (0,2);
\draw [very thick, ->] (1,1.3) -- (1,2);
\end{tikzpicture}
\quad = \quad
\begin{tikzpicture}[anchorbase,scale=.5]
\draw [double,->] (0,0) -- (0,2);
\draw [very thick,->] (1,0) -- (1,2);
\end{tikzpicture}
\quad,\quad
\begin{tikzpicture}[anchorbase,scale=.5]
\draw [double] (1,0) -- (1,0.5);
\draw [very thick] (0,0) -- (0,.7);
\draw [very thick] (1,0.5) -- (0,.7);
\draw [double] (0,.7) -- (0,1.3);
\draw [very thick] (1,.5) -- (1,1.5);
\draw [very thick] (0,1.3) -- (1,1.5);
\draw [double,->] (1,1.5) -- (1,2);
\draw [very thick, ->] (0,1.3) -- (0,2);
\end{tikzpicture}
\quad = \quad
\begin{tikzpicture}[anchorbase,scale=.5]
\draw [double,->] (1,0) -- (1,2);
\draw [very thick,->] (0,0) -- (0,2);
\end{tikzpicture}
\quad , \quad
\begin{tikzpicture}[anchorbase,scale=.5]
\draw [double,->] (0,0) to  (0,2);
\draw [double,->] (1,2) to (1,0);
\end{tikzpicture}
\quad =\quad
\begin{tikzpicture}[anchorbase,scale=.5]
\draw [double,->] (0,0) to (0,.5) to [out=90,in=90] (1,.5) to (1,0);
\draw [double,->] (1,2) to (1,1.5) to [out=270,in=270] (0,1.5) to (0,2);
\end{tikzpicture}
\quad , \quad
\begin{tikzpicture}[anchorbase,scale=.5]
\draw [double,<-] (0,0) to  (0,2);
\draw [double,<-] (1,2) to (1,0);
\end{tikzpicture}
\quad =\quad
\begin{tikzpicture}[anchorbase,scale=.5]
\draw [double,<-] (0,0) to (0,.5) to [out=90,in=90] (1,.5) to (1,0);
\draw [double,<-] (1,2) to (1,1.5) to [out=270,in=270] (0,1.5) to (0,2);
\end{tikzpicture}
\end{gather}

In fact, $\glnn{2}$ webs satisfy generalizations of the $1$-labeled circle relation in \eqref{eqn:circles} that we now describe.

\begin{lemma}[The delooping lemma]
\label{lem:neckcut}
Let $W$ be a $\glnn{2}$-web (in a disc or some other surface ) and denote by $c(W)$ the unoriented multi-curve obtained by erasing all $2$-labeled edges. Suppose that $c(W)$ contains a circle $c$ which bounds a disc $\cat{D}$ in the complement of $c(W)$. Then $W = (q+q^{-1})V$, where $V$ is a web that agrees with $W$ outside a neighborhood of the disc $\cat{D}$ and with underlying curve $c(V)$ obtained by removing the circle in question from $c(W)$.
\end{lemma}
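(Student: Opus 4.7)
The plan is to proceed by induction on a complexity measure $\kappa$, defined as the number of trivalent vertices on $c$ plus the number of closed $2$-labeled components lying inside $\cat{D}$. The base case $\kappa = 0$ is immediate: $c$ is then an isolated $1$-labeled circle bounding the empty disc $\cat{D}$, and the first relation in \eqref{eqn:circles} gives $W = (q+q^{-1}) V$.

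For the inductive step, first suppose $\cat{D}$ contains a closed $2$-labeled loop. An innermost such loop bounds a sub-disc of $\cat{D}$ free of further web structure, and the second relation in \eqref{eqn:circles} removes it at cost $1$, strictly reducing $\kappa$. Once these are all gone, the remaining $2$-labeled edges in $\cat{D}$ have both endpoints on $c$, and, since they live in a disc, they assemble into a non-crossing matching on the inward-pointing vertices of $c$. Pick an innermost such chord $\alpha$ with endpoints $v, v'$, and let $C$ be the arc of $c$ between $v$ and $v'$ bounding, together with $\alpha$, a sub-disc $\cat{D}' \subset \cat{D}$ that contains no further $2$-labeled structure. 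Any additional vertex on $C$ is then forced by innermostness to have its $2$-labeled edge pointing outside $\cat{D}$.

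The key reduction uses the second relation in \eqref{eqn:bigons} (or an orientation-reversed or reflected variant), which erases a $1{-}2$ bubble at cost $1$. When $C$ carries no additional vertex, the local configuration exactly matches this relation, with the complementary arc $\bar{C}$ of $c$ playing the role of the main $1$-labeled strand (it extends beyond $v, v'$ along the rest of $c$) and $(C, \alpha)$ forming the attached $1{-}2$ bubble. Collapsing the bubble smooths $v, v'$ into interior points of $\bar{C}$ and strictly reduces $\kappa$. When $C$ does carry additional outward-pointing vertices, the square relations \eqref{eqn:squares} together with iterated bigon moves allow us to slide their $2$-labeled edges past $\alpha$ inside $\cat{D}$, reducing to the previous situation; a symmetric argument in a collar neighborhood outside $\cat{D}$ handles configurations in which $c$ still carries vertices but no chord $\alpha$ remains inside.

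The main technical obstacle is the orientation bookkeeping required at each application of a bigon or square relation, since in $\glnn{2}$ webs the vertices on $c$ must alternate between merges and splits, and the local flow condition constrains which reflected or reversed version of the relation applies at any given step. Crucially, only the terminal invocation of the first relation in \eqref{eqn:circles} contributes the overall factor $(q+q^{-1})$; every intermediate bigon, circle, or square reduction is a cost-$1$ move, so the accumulated scalar is exactly $(q+q^{-1})$, as claimed.
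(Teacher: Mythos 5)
Your proof is correct and follows essentially the same route as the paper: both arguments run an algorithm that removes $2$-labeled circles inside $\cat{D}$ via \eqref{eqn:circles}, erases innermost $2$-labeled chords with endpoints on $c$ via the $1$--$2$ bigon relations in \eqref{eqn:bigons}, and uses the square/saddle relations \eqref{eqn:squares} to clear adjacent pairs of external $2$-labeled edges before the final $1$-labeled circle removal, which is the only step contributing the factor $(q+q^{-1})$. Your explicit complexity measure and the observation that the internal chords form a non-crossing matching are a slightly more careful packaging of the same termination argument the paper leaves implicit.
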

\begin{proof} We only consider $W$ in a neighborhood of the disc $\cat{D}$ bounded by $c$. We will find a sequence of web relations which reduce the interaction of 2-labeled edges with $c$ until $c$ can be removed via a the first relation in~\eqref{eqn:circles}.  There are three types of interaction of $c$ with $2$-labeled edges to consider in sequence:
\begin{enumerate}
\item Any 2-labeled circle contained in $\cat{D}$ can be removed using one of the relations in \eqref{eqn:circles}, starting with an innermost one.
\item Suppose there exists a $2$-labeled edge in the interior of $\cat{D}$ with boundary on $c$. We take an innermost such edge, i.e. one which encloses a region in the disc with no other 2-labeled edges in the interior. Such an intersection edge can be removed via the bigon relations in \eqref{eqn:bigons}, provided there are no 2-labeled edges hitting the boundary of $\cat{D}$ from the outside in the relevant region. Otherwise, jump to (3) to remove external edges first. Note that they always come in pairs for orientation reasons.
\item There is a pair of 2-labeled edges, hitting $c$ from the outside $\cat{D}$, which are adjacent in the sense that an arc along $c$ connects them without hitting other $2$-labeled edges. Then one application of the saddle relations in \eqref{eqn:squares} creates a 2-labeled edge connecting two points on $c$ from the outside (see the right side of Figure~\ref{fig:compdiscint}), which can be removed as in (2). 
\end{enumerate}
This algorithm relates $W$ to a web that contains $c$ as an oriented $1$-labeled circle that can be removed via \eqref{eqn:circles}.
\end{proof}

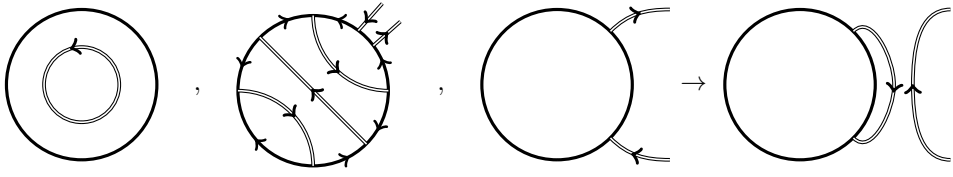
\begin{figure}[h]
\label{fig:compdiscint}
\begin{tikzpicture}[fill opacity=.2,anchorbase]
\draw[very thick] (1,0) to [out=0,in=270](2,1) to [out=90,in=0] (1,2) to [out=180,in=90] (0,1) to [out=270,in=180] (1,0);
\draw[double,directed=.55] (1,.5) to [out=0,in=270](1.5,1) to [out=90,in=0] (1,1.5) to [out=180,in=90] (.5,1) to [out=270,in=180] (1,.5);
\end{tikzpicture}
\quad , \quad
\begin{tikzpicture}[fill opacity=.2,anchorbase]
\draw[very thick,directed=.08,rdirected=.18,directed=.32,directed=.45,rdirected=.57,directed=.70,rdirected=.88] (1,0) to [out=0,in=270](2,1) to [out=90,in=0] (1,2) to [out=180,in=90] (0,1) to [out=270,in=180] (1,0);
\draw[double,directed=.55] (0,1) to [out=0,in=90](1,0);
\draw[double,directed=.55] (1,2) to [out=270,in=180](2,1);
\draw[double,rdirected=.55] (0.29,1.71) to (1.71,0.29);
\draw[double,directed=.55] (1.8,1.6) to (2.16,1.92);
\draw[double,rdirected=.55] (1.6,1.8) to (1.92,2.16);
\end{tikzpicture}
\quad , \quad
\begin{tikzpicture}[fill opacity=.2,anchorbase]
\draw[very thick] (1,0) to [out=0,in=270](2,1) to [out=90,in=0] (1,2) to [out=180,in=90] (0,1) to [out=270,in=180] (1,0);
\draw[double,directed=.55] (1.71,1.71) to [out=45,in=180](2.5,2) ;
\draw[double,rdirected=.55] (1.71,0.29) to [out=315,in=180](2.5,0) ;
\end{tikzpicture}
$\to $\;
\begin{tikzpicture}[fill opacity=.2,anchorbase]
\draw[very thick] (1,0) to [out=0,in=270](2,1) to [out=90,in=0] (1,2) to [out=180,in=90] (0,1) to [out=270,in=180] (1,0);
\draw[double,directed=.55] (1.71,1.71) to [out=45,in=90] (2.25,1) to [out=270,in=315](1.71,0.29);
\draw[double,rdirected=.55] (3,2) to [out=180,in=90] (2.5,1) to [out=270,in=180](3,0);
\end{tikzpicture}
\caption{Types of interaction of $2$-labeled edges with a $1$-labeled circle bounding a disc: internal circles, internal and external edges.}
\end{figure}

In the following, we again work in the $q=1$ specialization.  

\begin{lemma} The morphism spaces in $\essbAWebp[2]$ are spanned by outward pointing webs, except for the endomorphism space of the empty object, which is isomorphic to $\C[c_2^{\pm 1}]$.
\end{lemma}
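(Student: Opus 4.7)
The plan is to combine Lemma~\ref{lem:upwardannulus} with the local identity $D^2=-c_2$ in $\essbAWeb[2](1)$ obtained by specializing Lemma~\ref{lem:DNone} to $N=2$. Lemma~\ref{lem:upwardannulus} already expresses any morphism in $\AWebp[2]$ as a linear combination of outward pointing webs superposed with essential circles, and in the quotient $\essbAWebp[2]$ the relation $c_1=0$ leaves only powers of $c_2^{\pm 1}$ among these circles. For the empty object no further reduction is possible, and the description $\C[c_2^{\pm 1}]$ follows from Lemma~\ref{lem:endempty} after passing to the quotient, since $c_2$ and $c_2^{-1}$ remain algebraically independent (detected by $\phi_R$ as in the proof of that lemma).

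For a non-empty object, the main task is to absorb the remaining $c_2^{\pm 1}$ factors into the outward web. If the boundary sequence contains a $1$-labeled point, then an outward web representative $W'$ has at least one $1$-labeled outward strand $\alpha$ from inner to outer boundary. I would isotope the superposed $c_2$ into a tubular annular collar of $\alpha$, bringing the local configuration into the setup of Lemma~\ref{lem:DNone}, and then apply the identity $c_2=-D^2$ to replace the combined web by $-D^2_\alpha \cdot W'$, which is still outward oriented. Iteration handles arbitrary $c_2^k$ for $k\in\Z$.

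If all boundary labels equal $2$, I would first apply the bigon relation in \eqref{eqn:bigons} at $q=1$ in reverse to replace a small segment of a $2$-labeled outward strand by $\frac{1}{2}$ times a split-merge bigon. This introduces a pair of $1$-labeled outward strands in the interior of the web, after which the previous case absorbs $c_2^{\pm 1}$ into one of them. The result remains a sum of outward-pointing webs.

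The step requiring the most care is justifying the local application of Lemma~\ref{lem:DNone}: the essential circle $c_2$ must be isotoped through annular webs into a thin annular neighborhood of the chosen strand $\alpha$ without disturbing the rest of $W'$. Because superposition places $W'$ and $c_2$ in disjoint concentric sub-annuli, this is essentially a standard multi-curve isotopy that slides $c_2$ across $W'$ and cinches it onto a parallel translate of $\alpha$; after this isotopy the lemma applies verbatim inside $\essbAWeb[2](1)$.
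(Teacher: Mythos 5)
Your proof is correct and follows the same route as the paper, whose proof simply cites Corollaries~\ref{cor:end} and \ref{cor:out}; the one genuine addition is that you make explicit the absorption of $c_2^{\pm 1}$ into a boundary strand via $\wrap^{2}=-c_2$ (Lemma~\ref{lem:DNone} at $N=2$), a step the paper leaves implicit. The isotopy worry at the end is unnecessary: since $c_2$ sits in a disjoint concentric sub-annulus, functoriality of $\otimes$ gives $W'\otimes c_2 = W'\circ(\id_{\vec{k}'}\otimes(\id_1\otimes c_2))$, and the identity $\id_1\otimes c_2=-\wrap^{2}$ in $\essbAWeb[2](1)$ can be applied on that tensor factor directly (the resulting wrap, with its crossings over the remaining identity strands resolved at $q=1$, is still a combination of outward webs).
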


\begin{proof} This follows from Corollaries~\ref{cor:end} and \ref{cor:out}.
\end{proof}

\subsection{Decomposing the tensor product of extremal weight projectors}
We have seen that the tensor product of extremal weight projectors $T_m\otimes T_n$ contains a copy of $T_{m+n}$. For $\glnn{2}$ we will explicitly describe the difference $T_m\otimes T_n - T_{m+n}$ in terms of the projector $T_{|m-n|}$. The situation here is very similar to the $\slnn{2}$-case investigated in \cite{QW}.

Let $\pTr_1$ denote the linear maps on the morphism spaces of $\AWebp[2]$ that act on a web $W$ by first tensoring with $\id_1$ and then pre- and post-composing the result with splitter and merge webs between the new strand and the two rightmost $1$-labeled bottom and top boundary strands if they exist---otherwise we declare the result to be zero. We use the shorthand $\pTr_n:= (\pTr_1)^n$. The following is an example of $\pTr_2$ applied to a web $W$:
\[ 
\begin{tikzpicture}[anchorbase, scale=.4]
\draw[thick] (0,0) circle (3.5);
\fill[black,opacity=.2] (0,0) circle (3.5);
\draw[thick,fill=white] (0,0) circle (2.5);
\draw[dotted] (-1.05,1.05) to [out=45,in=180] (0,1.5) to [out=0,in=135] (1.05,1.05);
\draw [thick] (0,0) circle (2.5);
\draw[dotted] (-3.16,3.16) to [out=45,in=180] (0,4.5) to [out=0,in=135] (3.16,3.16);
\draw (0,0) circle (1);
\draw (0,0) circle (5);
\draw [very thick] (-.8,.6) to (-2,1.5);
\draw [very thick,->] (-2.8,2.1) to (-4,3);
\draw [double] (.8,.6) to (1.2,0.9);
\draw [very thick] (1.2,0.9) to (2,1.5);
\draw [very thick] (2.8,2.1) to (3.6,2.7);
\draw [double,->] (3.6,2.7) to (4,3);
\draw [very thick,directed=.55] (1.2,0.9) to [out=0,in=90] (1.75,0) to [out=270,in=180] (3,-1) to [out=0,in=270] (4.25,0) to [out=90,in=270] (3.6,2.7);
\draw [double] (1,0) to (2,0);
\draw [very thick] (2,0) to (2.5,0);
\draw [very thick] (3.5,0) to (4,0);
\draw [double,->] (4,0) to (5,0);
\draw [very thick,directed=.55] (2,0) to [out=315,in=225] (4,0);
\node at (0,-1) {$*$};
\node at (0,-5) {$*$};
\draw [dashed] (0,-1) to (0,-5);
\node at (0,2.95) {$W$};
\end{tikzpicture}
\comm{=
\begin{tikzpicture}[anchorbase, scale=.4]
\draw[thick] (0,0) circle (3.5);
\fill[black,opacity=.2] (0,0) circle (3.5);
\draw[thick,fill=white] (0,0) circle (2.5);
\draw[dotted] (-1.05,1.05) to [out=45,in=180] (0,1.5) to [out=0,in=135] (1.05,1.05);
\draw [thick] (0,0) circle (2.5);
\draw[dotted] (-3.16,3.16) to [out=45,in=180] (0,4.5) to [out=0,in=135] (3.16,3.16);
\draw (0,0) circle (1);
\draw (0,0) circle (5);
\draw [very thick] (-.8,.6) to (-2,1.5);
\draw [very thick,->] (-2.8,2.1) to (-4,3);
\draw [double] (.8,.6) to (1.4,1.05);
\draw [very thick] (1.4,1.05) to (2,1.5);
\draw [very thick] (2.8,2.1) to (3.4,2.55);
\draw [double,->] (3.4,2.55) to (4,3);
\draw [very thick,directed=.55] (1.4,1.05) to [out=0,in=270] (3.4,2.55);
\draw [double] (1,0) to (1.75,0);
\draw [very thick] (1.75,0) to (2.5,0);
\draw [very thick] (3.5,0) to (4.25,0);
\draw [double,->] (4.25,0) to (5,0);
\draw [very thick,directed=.55] (1.75,0) to [out=315,in=225] (4.25,0);
\node at (0,-1) {$*$};
\node at (0,-5) {$*$};
\draw [dashed] (0,-1) to (0,-5);
\node at (0,2.95) {$W$};
\end{tikzpicture}
}
\]
We can decompose $\pTr_n(W)=M_n (W\otimes \id_n)S_n$ where $S_n$ is a splitter web and $M_n$ is a merge web: 

\begin{equation} \label{def:splitMerge}
S_n=
\begin{tikzpicture}[anchorbase, scale=.4]
 \draw[dotted] (-1.4,1.4) to [out=45,in=180] (0,2) to [out=0,in=160] (0.68,1.86);
\draw (0,0) circle (1);
\draw (0,0) circle (3);
\draw [very thick,->] (-.8,.6) to (-2.4,1.8);
\draw [very thick,->] (.4,.91) to (1.2,2.73);
\draw [double] (.8,.6) to (1.2,0.9);
\draw [very thick,->] (1.2,0.9) to (2.4,1.8);
\draw [very thick,directed=1] (1.2,0.9) to [out=0,in=90] (1.75,0) to [out=270,in=140] (2.5,-1.6);
\draw [double] (1,0) to (2,0);
\draw [very thick,->] (2,0) to (3,0);
\draw [very thick,directed=1] (2,0) to [out=315,in=170] (2.904,-.6);
\draw [dotted] (2,1.5) to [out=-60,in=90] (2.5,0);
\draw [dotted] (2.4,-.7) to [out=-100,in=60] (2.2,-1.19);
\node at (0,-1) {$*$};
\node at (0,-3) {$*$};
\draw [dashed] (0,-1) to (0,-3);
\end{tikzpicture}
\quad
\text{and}
\quad
M_n=
\begin{tikzpicture}[anchorbase, scale=.4]
 \draw[dotted] (-1.4,1.4) to [out=45,in=180] (0,2) to [out=0,in=160] (0.68,1.86);
\draw (0,0) circle (1);
\draw (0,0) circle (3);
\draw [very thick,->] (-.8,.6) to (-2.4,1.8);
\draw [very thick,->] (.4,.91) to (1.2,2.73);
\draw [double,<-] (2.4,1.8) to (2,1.5);
\draw [very thick] (2,1.5) to (.8,.6);
\draw [very thick] (2,1.5) to [out=210,in=90] (2,0) to [out=-90,in=-50] (.7,-.71);
\draw [double,<-] (3,0) to (1.7,0);
\draw [very thick] (1.7,0) to (1,0);
\draw [very thick] (1.7,0) to [out=-135,in=10] (.95,-.3);
\draw [dotted] (2,1.5) to [out=-60,in=90] (2.5,0);
\node at (0,-1) {$*$};
\node at (0,-3) {$*$};
\draw [dashed] (0,-1) to (0,-3);
\end{tikzpicture}
\end{equation}

Recall that $\lambda$ denotes the endofunctor of $\essbAWeb[2]$ given on morphisms by tensoring with a $2$-labeled strand on the right as shown in the following: 
\[
\begin{tikzpicture}[anchorbase, scale=.4]
\draw[thick] (0,0) circle (2.5);
\fill[black,opacity=.2] (0,0) circle (2.5);
\draw[thick,fill=white] (0,0) circle (1.5);
\draw[dotted] (-1.93,1.93) to [out=45,in=180] (0,2.75) to [out=0,in=135] (1.93,1.93);
\draw[dotted] (-0.88,0.88) to [out=45,in=180] (0,1.25) to [out=0,in=135] (0.88,0.88);
\draw [very thick] (.8,.6) to (1.2,.9);
\draw [very thick] (-.8,.6) to (-1.2,.9);
\draw [very thick] (2,1.5) to (2.4,1.8);
\draw [very thick] (-2,1.5) to (-2.4,1.8);
\node at (0,-1) {$*$};
\node at (0,-3) {$*$};
\draw [dashed] (0,-1) to (0,-3);
\draw (0,0) circle (1);
\draw (0,0) circle (3);
\end{tikzpicture}
\quad
\xrightarrow{\lambda}
\quad 
 \begin{tikzpicture}[anchorbase, scale=.4]
\draw[thick] (0,0) circle (2.5);
\fill[black,opacity=.2] (0,0) circle (2.5);
\draw[thick,fill=white] (0,0) circle (1.5);
\draw[dotted] (-1.93,1.93) to [out=45,in=180] (0,2.75) to [out=0,in=135] (1.93,1.93);
\draw[dotted] (-0.88,0.88) to [out=45,in=180] (0,1.25) to [out=0,in=135] (0.88,0.88);
\draw [very thick] (.8,.6) to (1.2,.9);
\draw [very thick] (-.8,.6) to (-1.2,.9);
\draw [very thick] (2,1.5) to (2.4,1.8);
\draw [very thick] (-2,1.5) to (-2.4,1.8);
\draw [white, line width=.12cm] (.8,-.6) to (2.4,-1.8);
\draw [double,->] (.8,-.6) to (2.4,-1.8);
\node at (0,-1) {$*$};
\node at (0,-3) {$*$};
\draw [dashed] (0,-1) to (0,-3);
\draw (0,0) circle (1);
\draw (0,0) circle (3);
\end{tikzpicture}
\]

\begin{lemma} \label{lem:Tptr} The extremal weight projectors in $\essbAWeb[2]$ satisfy $\pTr_n(T_{m}) = \lambda^n(T_{m-n})$ for $1\leq n<m$ and also for $n=m$ if we set $T_0=2$.
\end{lemma}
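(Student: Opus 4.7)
The plan is induction on $n$, with the crucial case being $n=1$. For the inductive step, the key observation is that the $n$ rightmost $2$-labeled strands added by $\lambda^n$ sit to the right of $T_{m-n}$ and do not interact with the local merge $M_1$ and split $S_1$ that $\pTr_1$ applies to the rightmost $1$-labeled strand. This allows sliding $\lambda^n$ past $\pTr_1$, giving
\[
\pTr_{n+1}(T_m) = \pTr_1\bigl(\lambda^n(T_{m-n})\bigr) = \lambda^n\bigl(\pTr_1(T_{m-n})\bigr) = \lambda^{n+1}(T_{m-n-1})
\]
using the base case applied to $T_{m-n}$. The terminal case $n=m$ is handled by the same reduction, using additionally that $\pTr_1(T_1) = \pTr_1(\id_1) = M_1 S_1 = 2\,\id_2 = \lambda(T_0)$ under the convention $T_0 := 2$, which follows from the $\glnn{2}$ bigon relation in \eqref{eqn:bigons}.

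For the base case $\pTr_1(T_m) = \lambda(T_{m-1})$ with $m \geq 2$, I would start from the recursion $T_m = (T_{m-1}\otimes \id_1)\, s_{m-1} \,(T_{m-1}\otimes \id_1)$ of \eqref{eq:recursive}. Since $T_{m-1}$ acts only on the first $m-1$ strands while $M_1$ and $S_1$ act only on positions $m$ and $m+1$, the factors $T_{m-1}\otimes \id_2$ commute past $M_1$ and $S_1$, yielding
\[
\pTr_1(T_m) = (T_{m-1}\otimes \id_2)\cdot \bigl(M_1(s_{m-1}\otimes \id_1) S_1\bigr) \cdot (T_{m-1}\otimes \id_2).
\]
The proof then reduces to the purely local identity $M_1(s_{m-1}\otimes \id_1) S_1 = \id_{1^{m-1}}\otimes \id_2$, which upon combining with idempotence of $T_{m-1}$ yields $\pTr_1(T_m) = (T_{m-1}\otimes \id_2)^2 = \lambda(T_{m-1})$.

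For this local computation I would expand the crossing as $s = \id - u$, with $u$ the dumbbell, splitting $M_1(s_{m-1}\otimes \id_1) S_1 = M_1 S_1 - M_1(u_{m-1}\otimes \id_1) S_1$. The first term simplifies to $2\,\id_{1^{m-1}}\otimes \id_2$ via the bigon relation, and the second is a three-strand web on positions $m-1, m, m+1$ (split, merge at $m-1,m$, split back, merge at $m, m+1$) that collapses to $\id_{1^{m-1}}\otimes \id_2$ via the mirror square relation of \eqref{eqn:squares} after isotoping the arc traced by the split-leg at position $m+1$. The hard part is this last reduction: although representation-theoretically the equality is immediate after applying $\phi$ (and can thus also be deduced from Theorem~\ref{thm:faithfulness} combined with the fact that both sides lie in the winding-zero part of the morphism space), a fully diagrammatic proof requires careful matching of the three-strand configuration to a local square pattern across positions $m-1, m, m+1$.
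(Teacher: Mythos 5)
Your overall route is the same as the paper's: use the recursion \eqref{eq:recursive} to pull the two copies of $T_{m-1}$ out through the partial trace, reduce to the local identity $\pTr_1(s_{m-1})=\lambda(\id_{m-1})$, and then induct on $n$ by commuting $\pTr_1$ past $\lambda^n$. Your explicit verification of the local identity (expand $s=\id-u$, kill the bigon term and the dumbbell term via \eqref{eqn:bigons} and the mirror square relation in \eqref{eqn:squares}) is a correct and more detailed substitute for the paper's appeal to a Reidemeister-1-type move. One imprecision in the inductive step: the closing arcs of $\pTr_1$ \emph{do} cross the $n$ $2$-labeled strands created by $\lambda^n$ (the new strand is appended on the far right and must reach the rightmost $1$-labeled strand), so the commutation $\pTr_1\circ\lambda^n=\lambda^n\circ\pTr_1$ is not "no interaction" but rather a consequence of Reidemeister 2 moves between $1$- and $2$-labeled strands at $q=1$, which is exactly what the paper records; this is cosmetic, not a gap.

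There is, however, one genuine gap: the case $m=2$. You invoke the recursion $T_m=(T_{m-1}\otimes\id_1)s_{m-1}(T_{m-1}\otimes\id_1)$ "with $m\geq 2$", but \eqref{eq:recursive} is only valid for $m\geq 3$; for $m=2$ the right-hand side is $(T_1\otimes\id_1)s_1(T_1\otimes\id_1)=s_1\neq T_2$. Your argument would then compute $\pTr_1(s_1)=\lambda(\id_1)$, which happens to coincide with the desired answer $\lambda(T_1)$, but from a false premise. The identity $\pTr_1(T_2)=\lambda(T_1)$ is genuinely needed to close your induction (it is invoked whenever $m-n=2$), and it must be checked directly from Definition~\ref{def:T}: with $T_2=\tfrac12(\id_2+\wrap^{-1}\otimes\wrap)$ one gets $\pTr_1(T_2)=\lambda(\id_1)+\tfrac12\,\wrap^{-1}\otimes B_1$ in the notation of Section~\ref{sec:proofs}, where $B_1$ is the $2$-labeled strand emitting a $1$-labeled loop that wraps the annulus; this vanishes because $B_1=-E_1$ and $E_1=0$ by the defining essential-circle relations of $\essbAWeb[2]$ (cf. Lemma~\ref{lem:endtwo}). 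This is the only point where the quotient by $1$-labeled essential circles enters, and the fact that your argument never uses that quotient is the telltale sign of the gap: the lemma is false in $\AWeb[2]$ before passing to $\essbAWeb[2]$.
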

\begin{proof} The cases $m=1$ and $m=2$ are easily checked, so we assume $m\geq 3$. Then we proceed by induction on $n$.
For $n=1$ we start by expanding
\begin{align*}
\pTr_1(T_{m})=\pTr_1((T_{m-1}\otimes \id_1)s_{m-1} (T_{m-1}\otimes \id_1))=\lambda(T_{m-1})\pTr_1(s_{m-1}) \lambda(T_{m-1})
\end{align*}
The result follows from the Reidemeister 1 type move $\pTr_1(s_{m-1})= \lambda(\id_{m-1})$ and idempotency of $T_{m-1}$. The induction step $n\to n+1$ is analogous, except that it additionally involves Reidemeister 2 moves between $1$- and $2$-labeled strands.
\end{proof}

\begin{lemma} For $m,n\geq 1$ we have an orthogonal decomposition of idempotents $T_m\otimes T_n = T_{m+n} + e_{m,n}$ in $\essbAWeb[2]$ where $e_{1,1}=u_1/2 + \wrapi u_1 \wrap/2$ and $e_{m,n}=(T_m\otimes T_n)u_m(T_m\otimes T_n)$ otherwise.
\end{lemma}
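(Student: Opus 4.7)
The argument splits according to whether $m+n\geq 3$ or $m=n=1$, with the former being the generic situation and the latter requiring a separate direct calculation.

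When $m+n\geq 3$, the starting point is Lemma~\ref{lem:linkedproj}, which gives
\[
T_{m+n} \;=\; (T_m\otimes T_n)\, s_m\, (T_m\otimes T_n).
\]
Resolving the $\glnn{2}$ crossing at $q=1$ via $s_m = \id - u_m$ and distributing immediately yields
\[
T_{m+n} \;=\; T_m\otimes T_n \;-\; (T_m\otimes T_n)\, u_m\, (T_m\otimes T_n) \;=\; T_m\otimes T_n - e_{m,n},
\]
which rearranges to the asserted decomposition. Orthogonality and idempotency then follow formally from the absorption relations in Theorem~\ref{thm:Tm}: part~(2) supplies $T_{m+n}(T_m\otimes T_n) = T_{m+n} = (T_m\otimes T_n)T_{m+n}$, and part~(5) supplies $T_{m+n}u_m = u_m T_{m+n} = 0$. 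Combining these makes $T_{m+n}\,e_{m,n}$ and $e_{m,n}\,T_{m+n}$ manifestly zero, and expanding $e_{m,n}^2 = (T_m\otimes T_n - T_{m+n})^2$ collapses to $e_{m,n}$ by the same two identities.

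The base case $m=n=1$ is handled by direct computation, since the recursion underlying Lemma~\ref{lem:linkedproj} requires $m+n\geq 3$. Because $T_1\otimes T_1 = \id_2$, it suffices to prove $e_{1,1} = \id_2 - T_2$. Writing $u_1 = \id_2 - s$ converts the definition to
\[
e_{1,1} \;=\; \tfrac{1}{2}(\id_2-s) + \tfrac{1}{2}(\id_2 - \wrapi s \wrap) \;=\; \id_2 - \tfrac{1}{2}\bigl(s + \wrapi s \wrap\bigr),
\]
where $\wrap$ is interpreted in $\essbAWeb[2](2)$ as $\wrap\otimes \id_1$, consistent with the notation introduced just before Lemma~\ref{lem:crossabs}. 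The fork-slide identity $s(\wrap\otimes\id_1) = (\id_1\otimes \wrap)\,s$ then rewrites $\wrapi s \wrap = (\wrapi\otimes \wrap)\,s$, so that
\[
\tfrac{1}{2}(s + \wrapi s \wrap) \;=\; \tfrac{1}{2}(\id_2 + \wrapi\otimes \wrap)\,s \;=\; T_2\,s
\]
using the defining formula for $T_2$. Crossing absorption $T_2 s = T_2$ (Lemma~\ref{lem:crossabs}) finishes the identification $e_{1,1} = \id_2 - T_2$; idempotency of $e_{1,1}$ and its orthogonality to $T_2$ are then immediate from $T_2$ being idempotent.

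The proof is short and free of serious obstacles; the only subtlety is that the uniform dumbbell formula for $e_{m,n}$ is unavailable when $m=n=1$ (there is no inner strand $m$ to host the dumbbell inside both $T_m$ and $T_n$), which is precisely what forces the case distinction in the statement.
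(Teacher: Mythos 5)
Your proposal is correct and follows essentially the same route as the paper: the case $m+n\geq 3$ via Lemma~\ref{lem:linkedproj} and the resolution $s_m=\id-u_m$, with orthogonality and idempotency coming from the absorption and annihilation properties in Theorem~\ref{thm:Tm}, and the case $m=n=1$ from the explicit formula for $T_2$. Your base-case computation $e_{1,1}=\id_2-\tfrac{1}{2}(s+\wrapi s\wrap)=\id_2-T_2 s=\id_2-T_2$ just makes explicit what the paper leaves as "follows from the explicit description of $T_2$".
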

\begin{proof} For $m=n=1$ this follows from the explicit description of $T_2$. Otherwise we use Lemma~\ref{lem:linkedproj}:
\[T_{m+n}=(T_m\otimes T_n)s_m(T_m\otimes T_n)= (T_m\otimes T_n) - (T_m\otimes T_n)u_m(T_m\otimes T_n).\]
Since $e_{m,n}$ contains $u_m$, it is orthogonal to $T_{m+n}$. This implies that $e_{m,n}$ is an idempotent as well.
\comm{\begin{align*}
e_{m,n}^2&=(T_m\otimes T_n-T_{m+n})^2=(T_m\otimes T_n)^2-(T_m\otimes T_n)T_{m+n}-T_{m+n}(T_m\otimes T_n)+T_{m+n}^2 \\
&=T_m\otimes T_n-(T_m\otimes id_n)(id_m\otimes T_n)T_{m+n}-T_{m+n}(T_m\otimes id_n)(id_m\otimes T_n)+T_{m+n} \\
&= T_m\otimes T_n-2T_{m+n}+T_{m+n}=T_{m}\otimes T_n-T_{m+n}
\end{align*}
}
\end{proof}

\begin{lemma}\label{lem:diffproj}
For $1\leq n,m$ and $n+m\geq 3$, the idempotent $e_{m,n}$ can alternatively be written as 
\[e_{m,n}=(T_m\otimes T_n)(T_{m-r}\otimes(S_rM_r)\otimes T_{n-r})(T_m\otimes T_n)\] where $1\leq r\leq\min(m,n)$ and $M_r$ and $S_r$ are the merge and splitter webs introduced in equation \ref{def:splitMerge}.
\end{lemma}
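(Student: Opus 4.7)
The plan is to proceed by induction on $r$. For the base case $r=1$, note that $S_1 M_1$ is by construction the dumbbell web $u$ on two adjacent 1-labeled strands (a merge into a 2-labeled edge immediately followed by a split back). The right-hand side of the claim then reads
\[(T_m\otimes T_n)(T_{m-1}\otimes u\otimes T_{n-1})(T_m\otimes T_n).\]
By item~(2) of Theorem~\ref{thm:Tm}, the outer projector $T_m$ absorbs $T_{m-1}\otimes\id_1$ and $T_n$ absorbs $\id_1\otimes T_{n-1}$, so this collapses to $(T_m\otimes T_n)\,u_m\,(T_m\otimes T_n)$, which is exactly $e_{m,n}$ by definition.

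For the inductive step from $r$ to $r+1$ (with $r+1\le \min(m,n)$), I would decompose the inner structure $S_{r+1}M_{r+1}$, which lives on $2(r+1)$ strands, as the composition of $S_rM_r$ on the middle $2r$ strands together with an additional outermost merge-split pair acting on the two new strands. The idea is then to use Theorem~\ref{thm:Tm}(2) in reverse, exchanging the wider projectors $T_{m-r}$ and $T_{n-r}$ for $T_{m-r-1}\otimes\id_1$ and $\id_1\otimes T_{n-r-1}$, and to recognize the resulting outermost merge-split pair as being absorbed into the adjacent smaller projectors together with the neighboring strand of $S_rM_r$ — this is exactly the local move that relates the $r$-expression to the $(r+1)$-expression.

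The main obstacle is bookkeeping: one must be careful about the adjacency pattern of the pairs merged by $M_r$ and split by $S_r$, and about how the outermost merge-split pair of $S_{r+1}M_{r+1}$ sits relative to the new boundary strands of the $T_{m-r-1}$ and $T_{n-r-1}$ blocks. Crossing absorption (Theorem~\ref{thm:Tm}(4)) inside the sandwich $(T_m\otimes T_n)\cdots(T_m\otimes T_n)$ permits rearranging strands freely, and the bigon relations in~\eqref{eqn:bigons} together with the delooping Lemma~\ref{lem:neckcut} handle the resulting 2-labeled bigons that appear when $S_rM_r$ is glued to the outer merge-split pair.

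A potentially cleaner alternative is to reduce to representation theory. Both sides of the claimed equality have flow winding number zero (Definition~\ref{def:grading}), so by the central extension structure relating $\essbAWeb[2]$ to $\essAWeb[2]$ together with faithfulness of $\phi$ on $\essAWeb[2]$ (Theorem~\ref{thm:faithfulness}), it suffices to verify the equality in $\reph$. There the computation is transparent: the merge map $V\otimes V\to\bigwedge^2 V$ annihilates $v_i\otimes v_i$, while the outer projectors $T_m\otimes T_n$ confine all relevant inputs to the four extremal vectors $v_i^{\otimes m}\otimes v_j^{\otimes n}$ with $i,j\in\{1,2\}$. The sandwich with $r$ 2-labeled merge-splits then kills the two diagonal configurations ($i=j$) and acts as the identity on the two off-diagonal ones up to a scalar absorbed by the idempotency of $e_{m,n}$, independently of $r$. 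This matches $\phi(e_{m,n})$ and completes the proof.
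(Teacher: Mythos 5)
Your base case is fine, but both of your proposed routes stop exactly where the real work begins, and neither identifies the ingredient the argument actually turns on. The paper does not induct: it uses forkslides to write $S_rM_r$ as a conjugate of a product of $r$ disjoint dumbbells, expands each $u=\id-s$ to get a signed sum of $2^r$ permutations, observes that every non-identity term is a permutation mixing the two blocks of strands and hence sandwiches to $T_{m+n}$ by Lemma~\ref{lem:linkedproj} together with crossing absorption, and then counts signs: the non-identity coefficients sum to $(1-1)^r-1=-1$, giving $(T_m\otimes T_n)-T_{m+n}=e_{m,n}$. Your inductive step would in fact close by the same mechanism (peel off the outermost merge--split pair as $\id-\tilde s$; the $\id$ term gives the $r$-expression by projector absorption, and the $-\tilde s$ term vanishes because $\tilde s\gamma$ is cross-block for every $\gamma$ in the expansion of $S_rM_r$ and the coefficients of that expansion sum to zero), but you never invoke Lemma~\ref{lem:linkedproj} or any sign count. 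Instead you appeal to bigon relations and the delooping Lemma~\ref{lem:neckcut}, which are not the relevant tools here: no $1$-labeled circles or removable $2$-labeled bigons appear, and dumbbells adjacent to the projectors are \emph{annihilated} ($T_mu_i=0$), not absorbed, so the ``absorption'' picture you describe cannot be the right local move. As written, the induction does not close.

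The representation-theoretic alternative is a genuinely reasonable idea, but it also has two gaps. First, the reduction from $\essbAWeb[2]$ to $\reph$ needs an argument: faithfulness (Theorem~\ref{thm:faithfulness}) is proved for the quotient $\essAWeb[2]$, so you must show that the flow-winding-degree-zero part of a morphism space of $\essbAWeb[2]$ injects into the quotient by $c_2=-1$; this is true (using that $c_2$ is invertible of degree $2$ and the hom spaces are $\Z$-graded), but it is asserted, not proved. Second, the phrase ``acts as the identity on the two off-diagonal ones up to a scalar absorbed by the idempotency'' is not a valid step: if the scalar were anything other than $1$ the claimed identity would be false, so you must actually show it is $1$. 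Doing so amounts to expanding $S_rM_r$ into its $2^r$ signed swaps and checking that only the identity term survives the outer projection $T_m\otimes T_n$ on an off-diagonal extremal vector $v_i^{\otimes m}\otimes v_j^{\otimes n}$ --- i.e., the same bookkeeping as the diagrammatic proof. (Carrying this out carefully would also alert you to the boundary case $r=m=n$, where the ``all swaps'' term survives as well; this is precisely the term your loose description glosses over.)
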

\begin{proof} By using forkslides we can write $S_rM_r= \beta u_1\cdots u_r\beta^{-1}$ where $\beta$ is the permutation 
$(1,2r,2,2r-1,\dots,r,r+1)$. After replacing $u_i$ by $\id-s_i$, we see that $S_rM_r$ can be expressed as a signed sum of $2^{r}$ permutations, with precisely $2^{r-1}$ terms carrying minus signs. The identity appears only once with positive sign, and all other permutations $\gamma$ satisfy $\gamma\in S_{2r}\setminus (S_r\times S_r)$ and thus $(T_m\otimes T_n)(T_{m-r}\otimes\gamma\otimes T_{n-r})(T_m\otimes T_n)=T_{m+n}$ by Lemma~\ref{lem:linkedproj} and crossing absorption. This implies:
\[ (T_m\otimes T_n)(T_{m-r}\otimes(S_rM_r)\otimes T_{n-r})(T_m\otimes T_n) = (T_m\otimes T_n) + (2^{r-1}-1) T_{m+n} - 2^{r-1} T_{m+n} =  e_{m,n}  \] 
 \end{proof}

\begin{lemma}\label{lem:kariso} For $1\leq n\leq m$ we have
$(\id_{m-n}\otimes M_n)(T_m\otimes T_n)(\id_{m-n}\otimes S_n) = \lambda^n(T_{m-n})$ in $\essbAWeb[2]$. 
\end{lemma}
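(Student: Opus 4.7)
The plan is to reduce Lemma~\ref{lem:kariso} to Lemma~\ref{lem:Tptr} via an absorption argument. Unfolding $\pTr_n(W) = M_n(W\otimes \id_n)S_n$, Lemma~\ref{lem:Tptr} reads
\[
(\id_{m-n}\otimes M_n)(T_m\otimes \id_n)(\id_{m-n}\otimes S_n) = \lambda^n(T_{m-n}),
\]
and the desired identity differs only by replacing $\id_n$ with $T_n$ on the rightmost $n$ strands. It therefore suffices to establish the absorption
\[
(\id_{m-n}\otimes M_n)(T_m\otimes T_n)(\id_{m-n}\otimes S_n) = (\id_{m-n}\otimes M_n)(T_m\otimes \id_n)(\id_{m-n}\otimes S_n),
\]
saying that the factor $T_n$ in the middle is redundant once sandwiched between the merge and split webs.

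The key local identity is $T_2\, S_1 = 0$ in $\essbAWeb[2]$ (and dually $M_1\, T_2 = 0$), which I would prove first. Combining the bigon relation $M_1 S_1 = 2\id_{(2)}$ from~\eqref{eqn:bigons} with the crossing expansion $s_1 = \id_2 - S_1 M_1$ yields $s_1 S_1 = S_1 - 2S_1 = -S_1$. Applying the crossing absorption $T_2 s_1 = T_2$ of Theorem~\ref{thm:Tm}(4) then gives $T_2 S_1 = T_2(-s_1 S_1)\cdot(-1) = -T_2 S_1$, forcing $T_2 S_1 = 0$.

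Given this, I would prove the absorption identity by induction on $n$. The base case $n=1$ is immediate, since $T_1 = \id_1$. For the inductive step, the recursive decomposition $T_n = (\id_{n-2}\otimes T_2)(T_{n-1}\otimes \id_1)$ from Definition~\ref{def:T} isolates a rightmost $T_2$ factor inside $T_n$, while the annular web $S_n$ can be reshaped using the forkslide relations of Lemma~\ref{lem:Reid} and the rotation invariance $D^{-1} T_m D = T_m$ of Theorem~\ref{thm:Tm}(6) so that this $T_2$ comes to sit directly on the two outputs of a single split vertex. The local identity $T_2 S_1 = 0$ then collapses the ``non-identity'' contribution of $T_n-\id_n$, and the remaining identity contribution reduces to the $(n-1)$ case by induction.

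The main obstacle is the bookkeeping needed to align the $T_2$ arising from the recursion for $T_n$ with an actual split vertex inside the interleaved annular web $S_n$. Because the split/merge vertices of $S_n$ and $M_n$ straddle the $T_m$-region (their ``left legs'') and the $T_n$-region (their ``right legs'') rather than forming adjacent pairs, repositioning the $T_2$ requires repeated use of forkslides together with rotational absorption by $T_m$; verifying that these manipulations collectively put one in position to apply $T_2 S_1 = 0$ is the delicate part of the argument.
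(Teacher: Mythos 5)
Your opening reduction is exactly the paper's: the statement follows from Lemma~\ref{lem:Tptr} once one shows that the extra factor $T_n$ is absorbed, i.e.\ that $(\id_{m-n}\otimes M_n)(T_m\otimes T_n)(\id_{m-n}\otimes S_n)=(\id_{m-n}\otimes M_n)(T_m\otimes \id_n)(\id_{m-n}\otimes S_n)$ (the paper proves the slightly stronger $M_n(T_n\otimes T_n)=M_n(T_n\otimes\id_n)$ and then uses the absorption property (2) of Theorem~\ref{thm:Tm}). Your local identity $T_2S_1=0$ is also correct, though it is nothing new: it is equivalent to $T_2u_1=0$, which is item (5) of Theorem~\ref{thm:Tm}, since $u_1=S_1M_1$ and $M_1S_1=2\,\id$.

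The gap is in the inductive step, and it is not a bookkeeping issue but the actual content of the lemma. The two legs of the rightmost $T_2$ inside $T_n$ sit on strands $m+n-1$ and $m+n$, and in the rainbow pattern of $S_n$ these attach to two \emph{different} split vertices (paired with strands $m-n+2$ and $m-n+1$ respectively). Forkslide moves push crossings past trivalent vertices; they cannot fuse two distinct split vertices into one, and trying to make the two legs adjacent by permuting boundary points forces you to introduce crossings between a $T_m$-strand and a $T_n$-strand, which are \emph{not} absorbed by $T_m\otimes T_n$ (indeed $(T_m\otimes T_n)s_m(T_m\otimes T_n)=T_{m+n}\neq T_m\otimes T_n$). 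So you are never in a position to apply $T_2S_1=0$, which is a planar identity; what actually has to be checked is an annular statement about how the wraps $\wrap^{\pm1}$ inside $T_2$ interact with the split/merge rainbow (pushing a wrap through a split vertex produces a $2$-labelled wrap $\wrap_2$ on the connecting edge, and one must see that these contributions cancel against the absorption of $\wrap^{-1}\otimes\wrap$ by $T_m$). The paper isolates exactly this as the base case $M_2(T_2\otimes e_{1,1})=0$, verified by direct computation, and then runs an induction using only the overlapping-projector property (3) and commutation of the $T_2$-factors — no repositioning of vertices is needed because the two innermost merge vertices of $M_n$ already pair strands $(n-1,n+2)$ and $(n,n+1)$, which is precisely where the $T_2\otimes T_2$ from the recursion sits. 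To repair your argument you would need to supply that annular base computation; the planar identity $T_2S_1=0$ alone does not yield it.
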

\begin{proof} This follows from Lemma~\ref{lem:Tptr} once we have proved that $M_n(T_n\otimes T_n)=M_n(T_n\otimes \id_n)$. The case $n=1$ is trivial, whereas for $n=2$ we have $M_2(T_2\otimes T_2)- M_2(T_2\otimes \id_2)= M_2(T_2\otimes e_{1,1})$ and it is not hard to check that the latter is zero. For the induction step we compute
\begin{align*}
M_n(T_n\otimes T_n) &= M_n(\id_{n-2}\otimes T_2\otimes T_2\otimes \id_{n-2}) (T_{n-1}\otimes \id_2\otimes T_{n-1}) \\
&=M_n(\id_{n-2}\otimes T_2\otimes \id_n)(T_{n-1}\otimes \id_2\otimes T_{n-1})
\\
&= M_n(T_{n-1}\otimes \id_2\otimes T_{n-1})(\id_{n-2}\otimes T_2\otimes \id_n)
\\
&= M_n(T_{n-1}\otimes \id_{n+1})(\id_{n-2}\otimes T_2\otimes \id_n)=  M_n(T_n\otimes \id_n)
\end{align*}
In the first and last line, we use (\ref{item:overlap}) in Theorem \ref{thm:Tm}. For the second and last line we use the case $n=2$ and the induction hypothesis for $n-1$. The third line arises from projector commutation.
\end{proof}

\begin{proposition} 
\label{prop:parprod}For $1\leq n< m$ the idempotents $e_{m,n}$, $e_{n,m}$ and $\lambda^{n}(T_{m-n})$ represent isomorphic objects in $\Kar(\essbAWeb[2])$.
\end{proposition}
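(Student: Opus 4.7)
The plan is to exhibit, in sequence, Karoubi isomorphisms $e_{m,n}\cong \lambda^n(T_{m-n})$ and $e_{m,n}\cong e_{n,m}$; transitivity then yields the three-way claim. For the first, the natural candidate maps suggested by Lemma~\ref{lem:kariso} are
\[
\pi := (\id_{m-n}\otimes M_n)(T_m\otimes T_n) \qquad \text{and} \qquad \iota := (T_m\otimes T_n)(\id_{m-n}\otimes S_n).
\]
Using $(T_m\otimes T_n)^2=T_m\otimes T_n$, Lemma~\ref{lem:kariso} gives $\pi\iota = \lambda^n(T_{m-n})$ immediately. For the other composite $\iota\pi = (T_m\otimes T_n)(\id_{m-n}\otimes S_nM_n)(T_m\otimes T_n)$, I would first invoke the absorption $(T_m\otimes T_n)(T_{m-n}\otimes \id_{2n}) = T_m\otimes T_n$ (Theorem~\ref{thm:Tm}(2)) to rewrite this as $(T_m\otimes T_n)(T_{m-n}\otimes S_nM_n)(T_m\otimes T_n)$, which equals $e_{m,n}$ by Lemma~\ref{lem:diffproj} at $r=n$.

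To promote $\pi,\iota$ to genuine mutually-inverse Karoubi morphisms between $(1^{\otimes(m+n)}, e_{m,n})$ and $(1^{\otimes(m-n)}\otimes 2^{\otimes n}, \lambda^n(T_{m-n}))$, it remains to verify the sandwich identities $\pi=\lambda^n(T_{m-n})\pi e_{m,n}$ and $\iota= e_{m,n}\iota\lambda^n(T_{m-n})$. Expanding $T_m\otimes T_n = e_{m,n}+T_{m+n}$, both reduce to the dual identities
\[
(\id_{m-n}\otimes M_n)T_{m+n} = 0 \qquad \text{and} \qquad T_{m+n}(\id_{m-n}\otimes S_n) = 0.
\]
I would establish these by inserting $T_2$ on the pair of $1$-strands produced by an individual split vertex in $S_n$ (resp.\ absorbed by a merge vertex in $M_n$), using $T_{m+n}(\id\otimes T_2)=T_{m+n}=(\id\otimes T_2)T_{m+n}$ from Theorem~\ref{thm:Tm}(2) to reduce to the single local identities $T_2\circ S_1=0$ and $M_1\circ T_2=0$. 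Both of these follow at once from $u\cdot T_2 = T_2\cdot u = 0$ (Theorem~\ref{thm:Tm}(5)) combined with the antisymmetries $u\cdot S_1=2S_1$ and $M_1\cdot u = 2M_1$ (since $s\cdot S_1=-S_1$ and $M_1\cdot s=-M_1$). Non-adjacent split/merge vertices are first braided into adjacent position using the crossing absorption $T_{m+n}\sigma = \sigma T_{m+n}= T_{m+n}$ (Theorem~\ref{thm:Tm}(4)).

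For the isomorphism $e_{m,n}\cong e_{n,m}$ a purely braiding argument suffices. Let $\sigma\in\essbAWeb[2](m+n)$ be the block braid swapping the first $m$ strands past the subsequent $n$; crossing absorption yields $\sigma(T_m\otimes T_n)\sigma^{-1} = T_n\otimes T_m$ and $\sigma T_{m+n}\sigma^{-1} = T_{m+n}$, hence $\sigma e_{m,n}\sigma^{-1} = e_{n,m}$, and the morphisms $\sigma e_{m,n}$ and $\sigma^{-1}e_{n,m}$ realize mutually inverse Karoubi isomorphisms. The main technical step in the whole argument is the $\glnn{2}$-specific local vanishing $T_2\circ S_1 = 0$, which underlies the elimination of all $T_{m+n}$-cross-terms and expresses that the extremal weight projector does not accommodate any antisymmetric component on two $1$-strands.
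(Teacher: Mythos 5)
Your proof is correct and follows essentially the same route as the paper: the paper also takes $\iota=(T_m\otimes T_n)(T_{m-n}\otimes S_n)$ and $\pi=(T_{m-n}\otimes M_n)(T_m\otimes T_n)$, obtains $\iota\pi=e_{m,n}$ from Lemma~\ref{lem:diffproj} at $r=n$ and $\pi\iota=\lambda^n(T_{m-n})$ from Lemma~\ref{lem:kariso}, and treats $e_{n,m}$ as ``similar''. Your extra verification of the sandwich identities is sound but not strictly needed (given $\iota\pi=e$ and $\pi\iota=f$ with $e,f$ idempotent, the corrected maps $e\iota f$ and $f\pi e$ automatically split the idempotents), and your braid-conjugation argument $\sigma e_{m,n}\sigma^{-1}=e_{n,m}$ is a clean way to handle the remaining isomorphism.
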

\begin{proof} 
We use Lemma~\ref{lem:diffproj} to write $e_{m,n}=(T_m\otimes T_n)(T_{m-n}\otimes S_n)(T_{m-n}\otimes M_n)(T_m\otimes T_n)$. Then it is immediate from Lemma~\ref{lem:kariso} that the maps $(T_m\otimes T_n)(T_{m-n}\otimes S_n)$ and $(T_{m-n}\otimes M_n)(T_m\otimes T_n)$ are inverse isomorphisms between the elements of the Karoubi element represented by the idempotents $e_{m,n}$ and $\lambda^{n}(T_{m-n})$. The proof for $e_{n,m}$ is similar.
\end{proof}

\begin{proposition}
\label{prop:parprod2}
The idempotent $e_{m,m}$ is isomorphic to $\lambda^m(\emptyset)\oplus \sh \lambda^m(\emptyset)$ in $\Kar(\essbAWebp[2])$.
\end{proposition}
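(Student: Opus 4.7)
The plan is to parallel Proposition~\ref{prop:parprod}, but with two pairs of inverse morphisms in $\essbAWebp[2]$ whose winding numbers differ by one, thereby producing the required grading shift. The key input will be Lemma~\ref{lem:kariso} specialized to $n = m$, which, combined with the convention $T_0 = 2$ from Lemma~\ref{lem:Tptr}, yields
\[
M_m(T_m\otimes T_m)S_m \;=\; 2\,\id_{\lambda^m(\emptyset)}.
\]
For the first pair, I would take $\alpha_0 := (T_m\otimes T_m)S_m$ and $\beta_0 := \tfrac{1}{2}M_m(T_m\otimes T_m)$, both of winding zero; the displayed identity then gives $\beta_0\alpha_0 = \id_{\lambda^m(\emptyset)}$, and $\alpha_0$ factors through $e_{m,m}$ because $T_{2m}S_m = 0$ (the image of $S_m$ is antisymmetric within each merger pair while $T_{2m}$ projects onto totally symmetric configurations).

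For the second pair I would twist by a single wrap on the leftmost strand: set
\[
\alpha_1 := (T_m\otimes T_m)(\wrap\otimes\id_{2m-1})S_m, \qquad \beta_1 := \tfrac{1}{2}M_m(\wrap^{-1}\otimes\id_{2m-1})(T_m\otimes T_m),
\]
of windings $+1$ and $-1$ respectively. The identity $\beta_1\alpha_1 = \id$ would follow from the conjugation invariance $(\wrap^{-1}\otimes\id_{m-1})T_m(\wrap\otimes\id_{m-1}) = T_m$ in $\essbAWeb[2]$, which I would first check directly for $m = 2$ using the explicit formula for $T_2$ from Definition~\ref{def:T}, and then extend inductively via the projector recursion, using that the leftmost-strand wrap commutes with projectors on disjoint strands. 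The orthogonality $\beta_0\alpha_1 = \beta_1\alpha_0 = 0$ and the completeness $\alpha_0\beta_0 + \alpha_1\beta_1 = e_{m,m}$ are cleanest to check in $\essAWeb[2]$, where Lemma~\ref{lem:projexpand} gives $T_m = \sum_k P_k^{\otimes m}$ and hence $(\wrap\otimes\id_{m-1})T_m = -P_1^{\otimes m} + P_2^{\otimes m}$; the identities then reduce to orthogonality between the two rank-one summands $P_1^{\otimes m}\otimes P_2^{\otimes m}$ and $P_2^{\otimes m}\otimes P_1^{\otimes m}$ of $e_{m,m}$, which is immediate.

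The principal obstacle is lifting these last identities from $\essAWeb[2]$, where faithfulness of $\phi$ (Theorem~\ref{thm:faithfulness}) reduces them to a representation-theoretic calculation, back to the central extension $\essbAWeb[2]$, where faithfulness is not directly available. Since each such identity is homogeneous of a fixed winding and becomes zero after quotienting by $c_2 + 1$, a careful homogeneity argument using the $\Z$-grading, together with the fact that $c_2$ has winding $2$, should suffice to conclude the identities in $\essbAWeb[2]$ itself. Once these are established, the morphisms $\alpha_1, \beta_1$ of windings $\pm 1$ realize a grading-shifted copy of $\lambda^m(\emptyset)$ inside $e_{m,m}$ in $\Kar(\essbAWebp[2])$, producing the claimed decomposition $e_{m,m} \cong \lambda^m(\emptyset)\oplus\sh\lambda^m(\emptyset)$.
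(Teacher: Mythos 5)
Your argument is correct, but it takes a genuinely different route from the paper's. The paper also uses your $\alpha_0,\beta_0$ (there called $\phi_1,\psi_1$), but its second pair is $(T_m\otimes T_m)\wrapi(S_{m-1}\otimes S_1)$ and $(M_{m-1}\otimes M_1)\wrap(T_m\otimes T_m)/2$, obtained by writing $e_{m,m}=(T_m\otimes T_m)(\id_1\otimes S_{m-1}M_{m-1}\otimes\id_1)(T_m\otimes T_m)$ and splitting the middle term in two; it then checks $\psi_i\phi_j=\delta_{i,j}\,\lambda^m(\emptyset)$ entirely inside $\essbAWeb[2]$ by diagrammatic manipulation (isotopies, the essential-circle relation, Lemma~\ref{lem:Tptr}). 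Your route---twist the same $S_m,M_m$ by the single-strand wrap $w_1=\wrap\otimes\id_{2m-1}$, verify the idempotent identities in the quotient $\essAWeb[2]$ using $T_m=P_1^{\otimes m}+P_2^{\otimes m}$ and faithfulness of $\phi$, then lift---also works, and the lifting step is sound: since $c_2$ is central, invertible, and of winding $2$, every winding-homogeneous element of the ideal $(c_2+1)$ vanishes, so homogeneous identities may be checked in $\essAWeb[2]$. Two of your steps deserve to be made more explicit. First, the conjugation invariance is immediate rather than inductive: $T_m$ is by construction a polynomial in the commuting wraps $w_1,\dots,w_m$, so it commutes with $w_1$; the same observation gives $T_{2m}w_1=w_1T_{2m}$, whence $T_{2m}(\wrap\otimes\id)S_m=0$ and $\alpha_1,\beta_1$ also factor through $e_{m,m}$ (you only argued this for $\alpha_0$, and your ``antisymmetry'' justification is best phrased as $S_m=2^{-m}S_mM_mS_m$ with $S_mM_m$ a product of conjugated dumbbells, each killed by $T_{2m}$). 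Second, ``reduces to orthogonality \ldots which is immediate'' undersells the completeness check: one still needs the $2\times 2$ matrix of $S_mM_m$ on the span of $P_1^{\otimes m}\otimes P_2^{\otimes m}$ and $P_2^{\otimes m}\otimes P_1^{\otimes m}$, whose off-diagonal entries $(-1)^m$ cancel between $\alpha_0\beta_0$ and $\alpha_1\beta_1$; alternatively, observe that $\alpha_0\beta_0$ and $\alpha_1\beta_1$ are orthogonal idempotents below $e_{m,m}$ whose $\phi$-images are each one-dimensional while $\phi(e_{m,m})$ has rank two. What your approach buys is a transparent linear-algebra verification; what the paper's buys is a proof that never leaves $\essbAWeb[2]$.
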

\begin{proof}
For $m=1$ we have $e_{1,1}=u_1/2 + \wrapi u_1 \wrap/2$. The two summands are orthogonal idempotents. The first is isomorphic to $\lambda(\emptyset)$ in $\Kar(\essbAWebp[2])$, while the conjugation by $\wrap$ in the second summand makes it isomorphic to $\sh \lambda(\emptyset)$ . For $m>1$ we rewrite 
\begin{align*} e_{m,m} &= (T_m\otimes T_m)(\id_1\otimes(S_{m-1}M_{m-1})\otimes \id_1)(T_m\otimes T_m)
\\
&= (T_m\otimes T_m)S_{m}M_{m}(T_m\otimes T_m)/2 + (T_m\otimes T_m)\wrapi u_{2m-1}\wrap(\id_1\otimes(S_{m-1}M_{m-1})\otimes \id_1)(T_m\otimes T_m)/2 
\\
&= \underbrace{(T_m\otimes T_m)S_{m}}_{\phi_1}\circ \underbrace{M_{m}(T_m\otimes T_m)/2}_{\psi_1} +  \underbrace{(T_m\otimes T_m)\wrapi (S_{m-1}\otimes S_1)}_{\phi_2} \circ \underbrace{(M_{m-1}\otimes M_1)\wrap(T_m\otimes T_m)/2}_{\psi_2} 
\end{align*}

The equality in the second line can be verified by inserting $\wrapi(S_{m-1}M_{m-1}\otimes T_2)\wrap$ between two factors of $T_m\otimes T_m$ and realizing that the result is zero. To prove the proposition it remains to verify that $\psi_i \phi_j = \delta_{i,j} \lambda^m(\emptyset)$. We give one example for orthogonality:
\begin{align*}
\psi_1 \phi_2 &= M_{m}(T_m\otimes T_m)\wrapi (S_{m-1}\otimes S_1)/2 \\
&= M_{m}(T_m\otimes \id_m)\wrapi (S_{m-1}\otimes S_1)/2 
= \lambda^{m-1}(\emptyset) \otimes (M_1 \wrapi S_1) =0 
\end{align*}
Here we have used the proof of Lemma~\ref{lem:kariso}, an isotopy and the essential torus relation. The proof of $\psi_2 \phi_1=0$ is analogous. $\psi_1 \phi_1= \lambda^m(\emptyset)$ follows from Lemma~\ref{lem:Tptr}. It remains to check 

\[\psi_2 \phi_2= (M_{m-1}\otimes M_1)\wrap(T_m\otimes T_m)\wrapi (S_{m-1}\otimes S_1)/2 =  \lambda^m(\emptyset).\] For $m=2$ this follows by expanding the left copy of $T_2$ and seeing that all terms except the identity term die. The result is evaluated using Lemma~\ref{lem:Tptr}. For $m\geq 3$, we use the recursion on the left copy of $T_m$, absorb the resulting copies of $T_{m-1}$ as the proof of Lemma~\ref{lem:kariso} and then simplify via Lemma~\ref{lem:Tptr}. The result is a equal to $\lambda^{m-2}(\emptyset)$ superposed with the $m=2$ case, which we have already checked.
\end{proof}

\subsection{Skeleta}
Recall that we put basepoints on the boundary components of the annulus and fix a connecting arc $\alpha$ between them, which cuts the annulus into a square -- this is drawn as a dashed line above.

\begin{lemma}\label{lem:simpleend} The endomorphism algebra of $T_m$ in $\Kar(\essbAWeb[2])^*$ is isomorphic to $\C[\wrap^{\pm 1}]$ if $m\geq 1$ and isomorphic to $\C[c_2^{\pm 1}]$ if $m=0$. In $\Kar(\essbAWeb[2])$ both are given by $\C$.
\end{lemma}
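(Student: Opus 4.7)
The $m = 0$ case is exactly Corollary~\ref{cor:end}, and in $\Kar(\essbAWeb[2])$ only the degree-$0$ part $\C$ of $\C[c_2^{\pm 1}]$ survives since $c_2$ has winding degree $2$. Fix $m \geq 1$ and write $A := T_m \essbAWeb[2](m,m) T_m$ for the endomorphism algebra of $T_m$ in $\Kar(\essbAWeb[2])^*$. This is a $\Z$-graded algebra for the flow winding grading, with $c_2$ a central invertible element of degree~$2$. The plan is to produce an invertible $\wrap_{T_m} \in A_1$ satisfying $\wrap_{T_m}^2 = -c_2 T_m$, and then to show by a grading argument that $A$ is freely generated by $\wrap_{T_m}^{\pm 1}$.

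To construct $\wrap_{T_m}$, I would set $\wrap_{T_m} := T_m w_i T_m$, where $w_i$ is the single-strand wrap of the $i$-th strand as introduced in the proof of Proposition~\ref{prop:faithful}. The first step is to verify that $\wrap_{T_m}$ is independent of $i$: using the annular identity $s_i w_i s_i = w_{i+1}$ (together with $s_i^2 = \id$ at $q=1$) and the crossing absorption $T_m s_i = s_i T_m = T_m$ of Theorem~\ref{thm:Tm}, we get $T_m w_{i+1} T_m = T_m s_i w_i s_i T_m = T_m w_i T_m$. The second, more delicate step, is that $T_m$ commutes with each $w_i$ in $\essbAWeb[2]$. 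I would prove this by induction on $m$, with base case $m=2$ following from the explicit formula $T_2 = \frac{1}{2}(\id_2 + w_1^{-1}w_2)$ together with $w_1 w_2 = w_2 w_1$ at $q=1$, and the inductive step using the projector recursion of Definition~\ref{def:T}. With commutativity in hand, $\wrap_{T_m}^2 = T_m w_i T_m w_i T_m = T_m w_i^2 T_m = -c_2 T_m$ is immediate from $\wrap^N = (-1)^{N-1} c_N$ of Lemma~\ref{lem:DNone}. In particular, $\wrap_{T_m}$ is invertible, so $B := \C[\wrap_{T_m}^{\pm 1}] \subseteq A$ is an isomorphic copy of $\C[\wrap^{\pm 1}]$ by the winding grading.

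The heart of the argument is the converse inclusion $A \subseteq B$, which I would establish via passage to the quotient $\essAWeb[2] = \essbAWeb[2]/(c_2+1)$ combined with a grading lift. Via Theorem~\ref{thm:faithfulness} and Corollary~\ref{cor:diagrep}, $A/(c_2+1)A$ identifies with $\End_{\reph}(\phi(T_m))$, the endomorphism algebra of the $2$-dimensional extremal $\h$-weight space $\C\langle v_{1\cdots1}, v_{2\cdots2}\rangle$. This is $\C\oplus\C$, with each $\Z/2$-parity component one-dimensional, generated by $T_m$ and $T_m w_i T_m$ respectively. To lift this to $\essbAWeb[2]$, I claim the natural map $A_n \to (A/(c_2+1)A)_{n \bmod 2}$ is an isomorphism for each $n \in \Z$. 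Surjectivity follows from the invertibility of $c_2$ (any class in the quotient can be represented in degree $n$ after multiplying by an appropriate power of $c_2$). For injectivity, suppose a homogeneous element of degree $n$ equals $(c_2+1)h$; expanding $(c_2+1)h$ degree by degree forces the recurrence $h_k = -c_2 h_{k-2}$ for all $k \neq n$, which admits no nonzero solution of finite support. Hence $\dim_\C A_n \leq 1$, and since $\wrap_{T_m}^n \in B_n \subseteq A_n$ is nonzero (having winding degree $n$), we conclude $A_n = B_n = \C\,\wrap_{T_m}^n$, so $A = B \cong \C[\wrap^{\pm 1}]$.

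The main obstacle is the commutativity of $T_m$ and $w_i$ in $\essbAWeb[2]$; the simpler argument in $\essAWeb[2]$ via the expansion $T_m = \sum_k P_k^{\otimes m}$ of Lemma~\ref{lem:projexpand} is unavailable here because the $P_k$ require the quotient relation $\wrap^N = \id$. After commutativity is settled, the grading-plus-faithfulness argument is essentially formal. Finally, the statement for $\Kar(\essbAWeb[2])$ follows by restricting to degree-$0$ morphisms: $A_0 = \C T_m = \C$ for $m \geq 1$, and the degree-$0$ part of $\C[c_2^{\pm 1}]$ is $\C$ for $m = 0$.
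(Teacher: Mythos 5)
Your proposal is correct, but it reaches the upper bound $\dim A_n \le 1$ by a genuinely different route than the paper. The paper's proof is a two-line diagrammatic argument: by Corollary~\ref{cor:out} and the structure theory from the proof of Proposition~\ref{prop:faithful}, endomorphisms of $\id_m$ in $\essbAWeb[2]$ are spanned by words in the dumbbells $u_i$ and wraps; Theorem~\ref{thm:Tm}(5)--(6) then kills every word containing a dumbbell (push $T_m$ through the wraps until it meets a $u_i$), leaving only the powers $\wrap^k T_m$, which are independent by the flow winding grading. You instead compute $\End(\phi(T_m))\cong\C\oplus\C$ in $\reph$, transport this to $\essAWeb[2]$ via Theorems~\ref{thm:fullness} and \ref{thm:faithfulness}, and lift back along the central extension $\essbAWeb[2]\to\essAWeb[2]$ by showing $A_n\to (A/(c_2+1)A)_{n\bmod 2}$ is an isomorphism; your finite-support recurrence argument for injectivity is valid. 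What your approach buys is independence from the (rather terse) ``webs factor into dumbbells and wraps'' claim, at the price of invoking faithfulness and a graded-lifting step. Two remarks on your construction of the lower bound: the ``main obstacle'' you identify, commutativity of $T_m$ with each $w_i$, is essentially already available in the paper --- Theorem~\ref{thm:Tm}(6) gives $T_m\wrap=\wrap T_m$, and since $w_i$ is a permutation-conjugate of $\wrap$ and $T_m$ absorbs crossings, one gets $T_m w_i T_m=T_m\wrap T_m=\wrap T_m$ directly, so your $\wrap_{T_m}$ coincides with the paper's generator $\wrap T_m$ and the inductive commutativity lemma can be bypassed; and when you use $w_i^2=-c_2$ you are implicitly extending Lemma~\ref{lem:DNone} from $\essbAWeb[2](1)$ to a single wrapped strand with $m-1$ spectators, which is true by the same local computation but should be said.
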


\begin{proof} 
Any endomorphism of $T_m$ is represented by a linear combination of outward pointing webs with $m$ $1$-labeled input and output strands. Such webs factor into dumbbells $u_i$ and wraps $\wrap^{\pm 1}$. Since $T_m$ kills all $u_i$, the endomorphism algebra is generated by the isomorphisms $\wrap^{\pm 1}$. Since all web relations in $\essbAWeb[2]$ preserve the flow winding number of the web around the annulus, it is clear that the elements $\wrap^{m}$ for $m\in \Z$ are linearly independent.
\end{proof}

\begin{lemma} The endomorphism algebra of $\lambda^k(T_{m})$ in $\Kar(\essbAWeb[2])^*$ is isomorphic to $\C[\lambda^k(\wrap^{\pm 1})]$ if $m\geq 1$ and isomorphic to $\C[\wrap_2^{\pm 1}]$ if $m=0$. In $\Kar(\essbAWeb[2])$ both types of endomorphism algebras are isomorphic to $\C$. 
Furthermore, in both versions of the Karoubi envelope, there are no non-zero morphisms between $\lambda^k(T_{m})$ and $\lambda^l(T_{n})$  (or their $\sh$-shifts) unless $m=n$ and $k=l$. \label{lem:simpleendo}
\end{lemma}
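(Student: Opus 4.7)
The plan is to deduce each statement by transporting the known endomorphism algebras of $T_m$ and $\emptyset$ through the block equivalences $\lambda\colon\essbAWeb[2]_j \xrightarrow{\sim} \essbAWeb[2]_{j+2}$. First, I would verify that the natural isomorphisms exhibiting $\lambda$ and $\lambda^*$ as mutually quasi-inverse equivalences of blocks in Section~\ref{sec:lambdaequiv} (built from $N$-labeled cup/cap isotopies) pass to the quotient $\essbAWeb[2]$. This equivalence is manifestly compatible with the flow winding number of Definition~\ref{def:grading}, since adding an identity $2$-labeled strand to any morphism does not alter its algebraic intersection with the cutting arc $\alpha$. Iterating yields a graded equivalence $\lambda^k\colon\essbAWeb[2]_m \xrightarrow{\sim} \essbAWeb[2]_{m+2k}$.

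For $m\geq 1$, applying this equivalence to Lemma~\ref{lem:simpleend} sends $\End(T_m)=\C[\wrap^{\pm 1}]$ isomorphically to $\End(\lambda^k(T_m))=\C[\lambda^k(\wrap^{\pm 1})]$. For $m=0$, applying the equivalence to Corollary~\ref{cor:end} similarly yields $\End(\lambda^k(\emptyset))\cong\C[c_2^{\pm 1}]$; to rewrite the generator as $\wrap_2$, I would invoke the $N=2$ specialization of Lemma~\ref{lem:endtwo}, parts (1) and (5), which gives the key identity $\wrap_2 = B_2 = E_2 = c_2\cdot\id$ in $\End(\lambda(\emptyset))$, and then extend to $\End(\lambda^k(\emptyset))$ using centrality of $\wrap_2$ (Lemma~\ref{lem:endtwo}(2)). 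Passing from $\Kar(\essbAWeb[2])^*$ to $\Kar(\essbAWeb[2])$ restricts to degree-zero morphisms, which kills all non-trivial powers of $\wrap$ (of degree $1$) and $\wrap_2$ (of degree $2$), leaving only $\C\cdot\id$.

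For the vanishing of Hom spaces between distinct objects $\lambda^k(T_m)$ and $\lambda^l(T_n)$, I would compute the $\mathfrak{h}$-weights of their images under $\phi$: these form the multiset $\{(m+k,k),(k,m+k)\}$ for $m\geq 1$ and the singleton $\{(k,k)\}$ for $m=0$. A direct case analysis shows that these multisets determine the pair $(m,k)$ uniquely. By faithfulness of $\phi$ on $\essAWeb[2]$ (Theorem~\ref{thm:faithfulness}), all Hom spaces between distinct pairs vanish in $\essAWeb[2]$; the lift to $\essbAWeb[2]$ follows from the freeness of Hom spaces as $\C[c_2^{\pm 1}]$-modules, which results from Lemma~\ref{lem:upwardannulus} together with the winding-number grading, so that a morphism annihilated after the specialization $c_2=-1$ must itself vanish. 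The main obstacle I anticipate is confirming that the block equivalence $\lambda^k$ genuinely descends to a graded equivalence on $\essbAWeb[2]$ (rather than only on $\AWeb[N]$, where it is established in Section~\ref{sec:lambdaequiv}), and verifying the freeness claim with sufficient generality; both should be routine but require care with the flow winding grading and with the multi-strand extension of the identity $\wrap_2 = c_2\cdot\id$.
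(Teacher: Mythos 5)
Your argument for the endomorphism algebras follows the paper's route exactly: both transport Lemma~\ref{lem:simpleend} (and Corollary~\ref{cor:end}) through the block auto-equivalence $\lambda^k$, and your extra observations (that $\lambda$ preserves the flow winding grading, and that $\wrap_2=c_2$ on a single $2$-labeled strand via Lemma~\ref{lem:endtwo}) are correct and welcome detail. For the vanishing of Hom spaces between distinct $\lambda^k(T_m)$ and $\lambda^l(T_n)$, however, you take a genuinely different route. The paper stays entirely diagrammatic inside $\essbAWeb[2]$: after noting that both objects must lie in the same block ($m+2k=n+2l$) and applying $(\lambda^*)^k$, it reduces to $\Hom(T_m,\lambda^{l-k}(T_n))$, where any representing web must contain a merge vertex, which is annihilated by $T_m$ since $T_mu_i=0$. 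You instead compute the $\h$-weights of the $\phi$-images, observe they are disjoint for distinct pairs $(m,k)$, invoke faithfulness on $\essAWeb[2]$, and then lift the vanishing back to the central extension. Your weight computation and the disjointness analysis are correct, and this approach has the virtue of explaining \emph{why} the Hom spaces vanish representation-theoretically; the paper's approach has the virtue of never leaving $\essbAWeb[2]$, so no lifting is needed.

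The one step you should repair is the lifting. You claim the Hom spaces are \emph{free} $\C[c_2^{\pm1}]$-modules and that this "results from Lemma~\ref{lem:upwardannulus} together with the winding-number grading" --- but that lemma only produces a spanning set of outward webs times powers of $c_2$, not a basis, so freeness is asserted rather than proved. Fortunately you do not need freeness: the morphism spaces of $\essbAWeb[2]$ are genuinely $\Z$-graded by flow winding number, $c_2$ acts invertibly as a homogeneous operator of degree $2$, and for any such graded module $M$ (a direct sum of its graded pieces) the condition $M/(c_2+1)M=0$ forces $M=0$ by comparing homogeneous components of an equation $m=(c_2+1)x$ and using that $x$ has finitely many nonzero components. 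Substituting this graded argument for the freeness claim closes the gap; alternatively, you could avoid the detour through $\essAWeb[2]$ altogether by adopting the paper's merge-vertex argument, which works directly in $\essbAWeb[2]$.
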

\begin{proof}
The first part is an immediate corollary of Lemma~\ref{lem:simpleend}, due to the fact that $\lambda$ is an auto-equivalence, which guarantees that it induces isomorphisms on endomorphism algebras. For the second part, it is clear that we need $m+2k=n+2l$ to have both objects in the same block. Now suppose, without loss of generality, that $k<l$. Then an application of the essential inverse $(\lambda^*)^k$ provides an isomorphism between the morphism space in question and $\Hom(T_{m}, \lambda^{l-k}(T_{n}))$. However, any web representing a morphism in that space necessarily contains a merge vertex, which is killed by $T_{m}$. Thus $\Hom(T_{m}, \lambda^{l-k}(T_{n}))=0$.
\end{proof}

\begin{lemma} For $m\geq 1$ we have $\Kar(\essbAWeb[2])(\sh^a \lambda^k(T_{m}),\sh^b \lambda^k(T_{m})) \cong \C\langle \lambda^k(\wrap^{b-a})\rangle$ for any $a$, $b$ and $k$. On the other hand:\[\Kar(\essbAWeb[2])(\sh^a \lambda^k(\emptyset),\sh^b \lambda^k(\emptyset))\cong \begin{cases}\C\langle \wrap_2^{(b-a)/2}\rangle & \text{ if } a-b \text{ is even and } k\geq 1\\
\C\langle c_2^{(b-a)/2}\rangle & \text{ if } a-b \text{ is even and } k=0\\
0 & \text{ if } a-b \text{ is odd}\end{cases} \]
Clearly, all such non-zero morphisms are isomorphisms.
\end{lemma}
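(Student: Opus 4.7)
The plan is to reduce the statement to a decomposition of the endomorphism algebras identified in Lemma~\ref{lem:simpleendo} into their flow winding number homogeneous components, using the grading from Definition~\ref{def:grading}. By definition of the graded Karoubi envelope, a morphism $g\colon \sh^a X\to\sh^b X$ in $\Kar(\essbAWeb[2])$ is precisely a degree $b-a$ element of $\End_{\Kar(\essbAWeb[2])^*}(X)$, so the whole question becomes the identification of the graded pieces of $\End(\lambda^k(T_m))$ and $\End(\lambda^k(\emptyset))$. Note first that the idempotents $\lambda^k(T_m)$ and $\lambda^k(\id_\emptyset)$ themselves are homogeneous of degree $0$ (the recursive definition of $T_m$ is a sum of terms $\wrap^{-k_1}\otimes\wrap^{k_1-k_2}\otimes\cdots$ each of total winding number zero), so that the winding number grading passes cleanly to their endomorphism algebras in $\Kar(\essbAWeb[2])^*$.

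For $\lambda^k(T_m)$ with $m\geq 1$, Lemma~\ref{lem:simpleendo} gives $\End_{\Kar(\essbAWeb[2])^*}(\lambda^k(T_m))\cong\C[\lambda^k(\wrap^{\pm 1})]$, so the task reduces to computing the degree of $\lambda^k(\wrap^n)$. The basic wrap $\wrap$ is a single $1$-labeled strand that intersects $\alpha$ once, hence $\wrap^n$ has flow winding number $n$. The functor $\lambda$ acts on morphisms by superposing a radial outward-oriented $N=2$-labeled edge (as in the picture in Section~\ref{sec:lambdaequiv}); up to isotopy this edge can be placed away from $\alpha$, so $\lambda$ preserves winding number. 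Consequently $\lambda^k(\wrap^n)$ is homogeneous of degree exactly $n$, and the degree $b-a$ component of $\End(\lambda^k(T_m))$ is the one-dimensional space $\C\langle\lambda^k(\wrap^{b-a})\rangle$, as claimed.

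For $\lambda^k(\emptyset)$, the same strategy applies with the generator being $\wrap_2$ if $k\geq 1$ or $c_2$ if $k=0$. Both are $2$-labeled curves that wind once around the annulus and thus have flow winding number $2$. Therefore all elements of the Laurent polynomial algebras $\C[\wrap_2^{\pm 1}]$ and $\C[c_2^{\pm 1}]$ lie in even degrees, the degree $b-a$ piece being spanned by $\wrap_2^{(b-a)/2}$ or $c_2^{(b-a)/2}$ if $b-a$ is even and being zero if $b-a$ is odd. Finally, since $\wrap$, $\wrap_2$ and $c_2$ are invertible in their respective endomorphism algebras, every nonzero element of these one-dimensional graded pieces is a unit and hence an isomorphism in $\Kar(\essbAWeb[2])$.

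There is no serious obstacle: the content of the proof is essentially an unpacking of Lemma~\ref{lem:simpleendo} together with a check of the degrees of three generators. The only point requiring any care is verifying that the extra radial edges introduced by $\lambda$ are grading-neutral, which is immediate once one notices that $\alpha$ and the $\lambda$-edges can be arranged to be disjoint radial arcs.
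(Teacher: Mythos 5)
Your proof is correct and follows exactly the argument the paper intends (the lemma is stated there without proof): by the definition of the graded Karoubi envelope the claim reduces to identifying the flow-winding-degree $b-a$ component of the endomorphism algebras computed in the preceding lemma, and your degree checks for $\lambda^k(\wrap^{\pm1})$, $\wrap_2$ and $c_2$, together with the observation that the radial $2$-labeled edges introduced by $\lambda$ can be kept disjoint from $\alpha$, are precisely what is needed. No gaps.
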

In particular, this implies, that all objects of the form $\sh^a \lambda^k(T_{m})$ are actually isomorphic to unshifted objects $\lambda^k(T_{m})$ if $m\geq 1$. The objects $\sh^a \lambda^k(\emptyset)$, on the other hand, are isomorphic to their versions with $a=1$ or $a=0$.

\begin{lemma}\label{lem:webdecomp} Any object in $\Kar(\essbAWebp[2])$ is isomorphic to a direct sum of objects $\lambda^k(T_{m})$ for $m>1$, $k\geq 0$ or $\lambda^k(\emptyset)$ or $\sh\lambda^k(\emptyset)$ for $k \geq 0$.
\end{lemma}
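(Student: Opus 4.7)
The plan is to reduce to decomposing pure powers $1^{\otimes m}$, decompose the identity $\id_m$ inductively, and then exploit the semisimplicity of the resulting endomorphism algebra in the graded Karoubi envelope.

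First, since crossings between adjacent outward-pointing strands are invertible in $\essbAWebp[2]$, every object of $\essbAWebp[2]$ is isomorphic to one in which all $2$-labeled strands appear to the right of all $1$-labeled strands; that is, to $\lambda^k(1^{\otimes m})$ for some $m,k \geq 0$. Applying $\lambda^k$ to a decomposition of $(1^{\otimes m}, e')$ yields a decomposition of $(\lambda^k(1^{\otimes m}), \lambda^k(e'))$ whose summands are of the listed form. The case $m = 0$ is immediate from Corollary~\ref{cor:end}: the degree-zero endomorphisms of $\emptyset$ in $\Kar(\essbAWebp[2])$ reduce to $\C$, so the only nontrivial idempotent is the identity, producing $\lambda^k(\emptyset)$.

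Second, I will induct on $m$ to show that $\id_m \cong \bigoplus_\alpha A_\alpha^{n_\alpha}$ in $\Kar(\essbAWebp[2])$, with each summand $A_\alpha$ on the target list. The base case $\id_1 = T_1$ is immediate. For the inductive step, write $\id_m = \id_{m-1} \otimes T_1$ and expand each summand of the inductive hypothesis. A summand $\lambda^k(T_j)$ for $j \geq 2$ gives
\[
\lambda^k(T_j) \otimes T_1 \,\cong\, \lambda^k(T_j \otimes T_1) \,\cong\, \lambda^k(T_{j+1}) \oplus \lambda^{k+1}(T_{j-1})
\]
by Proposition~\ref{prop:parprod}; the summand $\lambda^k(T_1)$ gives $\lambda^k(T_2) \oplus \lambda^{k+1}(\emptyset) \oplus \sh\lambda^{k+1}(\emptyset)$ by Proposition~\ref{prop:parprod2}; and the summands $\lambda^k(\emptyset)$ and $\sh\lambda^k(\emptyset)$ give $\lambda^k(T_1)$ and $\sh\lambda^k(T_1) \cong \lambda^k(T_1)$ respectively, where the latter isomorphism uses the invertible degree-$1$ element $\lambda^k(\wrap) \in \End(\lambda^k(T_1))$ provided by Lemma~\ref{lem:simpleendo}. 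Here all rearrangements of tensor factors use the braiding. Every resulting summand lies in the target list.

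Third, I will lift the decomposition of $\id_m$ to an arbitrary idempotent $e$. By Lemma~\ref{lem:simpleendo}, each listed summand $A_\alpha$ satisfies $\End_{\Kar(\essbAWebp[2])}(A_\alpha) \cong \C$ and $\Hom_{\Kar(\essbAWebp[2])}(A_\alpha, A_\beta) = 0$ when $A_\alpha \not\cong A_\beta$. Grouping isomorphic summands then gives $\End_{\Kar(\essbAWebp[2])}(1^{\otimes m}) \cong \prod_\alpha M_{n_\alpha}(\C)$, a finite product of matrix algebras. Every idempotent in such an algebra is conjugate to a block-diagonal projection of prescribed ranks, so $(1^{\otimes m}, e) \cong \bigoplus_\alpha A_\alpha^{k_\alpha}$ for some $0 \leq k_\alpha \leq n_\alpha$, completing the decomposition.

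The main obstacle is verifying the inductive step of the second stage, specifically that the complementary idempotent $e_{j,1}$ appearing in $T_j \otimes T_1 = T_{j+1} \oplus e_{j,1}$ lives entirely inside the target list. This rests on the explicit identifications $e_{j,1} \cong \lambda(T_{j-1})$ and $e_{1,1} \cong \lambda(\emptyset) \oplus \sh\lambda(\emptyset)$ of Propositions~\ref{prop:parprod} and~\ref{prop:parprod2}, together with the observation (from Lemma~\ref{lem:simpleendo}) that arbitrary $\sh$-shifts of $\lambda^k(T_j)$ with $j\geq 1$ are absorbed by the invertible degree-$1$ element $\lambda^k(\wrap)$, while the $\sh$-shift parity persists only on the pure $\lambda^k(\emptyset)$ summands (whose endomorphism ring is generated in degree $2$). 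In particular, this implicitly interprets the first clause of the statement as including $m=1$, since $\lambda^k(T_1) = \lambda^k(\id_1)$ appears unavoidably as an indecomposable summand.
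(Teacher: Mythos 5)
Your proposal is correct and follows essentially the same route as the paper: reduce to decomposing $\id_m$, induct using the parallel product formulas of Propositions~\ref{prop:parprod} and~\ref{prop:parprod2} together with the absorption of shifts by $\wrap$, and then lift to arbitrary idempotents via the block-diagonal matrix-algebra structure of the endomorphism ring supplied by Lemma~\ref{lem:simpleendo}. The only cosmetic difference is that you handle $2$-labeled strands by braiding them to the right and applying $\lambda^k$, whereas the paper replaces them by split--merge idempotents on $1$-labeled strands; both reductions are valid, and your remark that $\lambda^k(T_1)$ necessarily appears (so the statement's ``$m>1$'' should be read as ``$m\geq 1$'') is consistent with the summands the paper's own induction produces.
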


\begin{proof} It suffices to decompose the objects in $\essbAWebp[2]$ into a formal direct sum of objects of the above type. Moreover, since $2$-labeled objects are isomorphic to idempotents on $1$-labeled objects in the Karoubi envelope, we only need to decompose $\id_m$. Then any idempotent endomorphism of $\id_m$ will give rise to an idempotent endomorphism of the decomposition, which is necessarily block-diagonal (there are no morphisms between distinct objects of the form $\lambda^k(T_{m-2k})$ or $\sh\lambda^{m/2}(\emptyset)$) and has entries in $\C$. Such idempotent matrices can be diagonalized, and thus decompose into objects of type $\lambda^k(T_{m-2k})$ or $\sh\lambda^{m/2}(\emptyset)$.

 The decomposition for $\id_m$ follows inductively from the parallel product formulas in Propositions \ref{prop:parprod} and \ref{prop:parprod2}.
More precisely, if we already know that $\id_{m}$ is isomorphic to a direct sum of terms $\lambda^{k_i}(T_{m-2k_i})$ and possibly $\sh \lambda^{m/2}(\emptyset)$ if $m$ is even, then $\id_{m+1}$ can be decomposed into summands  
\begin{align*}
\lambda^{k_i}(T_{m-2k_i}) \otimes \id_1 
\cong  \lambda^{k_i}(T_{m-2k_i}\otimes T_1) \cong 
\begin{cases}
\lambda^{k_i}(T_{m-2k_i+1}) \oplus  \lambda^{k_i+1}(T_{m-2k_i-1}) & m-2k_i>1\\
 \lambda^{k_i}(T_{2}) \oplus \lambda^{k_i+1}(\emptyset) \oplus \sh \lambda^{k_i+1}(\emptyset) & m-2k_i=1 
\end{cases}
\end{align*}
as well as $ \lambda^{m/2}(\emptyset) \otimes \id_1 \cong \sh \lambda^{m/2}(\emptyset) \otimes \id_1
\cong  \lambda^{m/2}(T_1)$.
\end{proof}

Since orientation of the boundary can be reversed by means of the auto-equivalences $\lambda$ and $\lambda^*$, one easily extends the previous result to the whole category:
\begin{corollary}\label{cor:webdecomp} Any object in $\Kar(\essbAWeb[2])$ is isomorphic to a direct sum of objects $\lambda^k(T_{m})$ for $m>1$, $k\in \Z$ or $\lambda^k(\emptyset)$ or $\sh\lambda^k(\emptyset)$ for $k \in \Z$. Here we identify $\lambda^{-1}=\lambda^{*}$.
\end{corollary}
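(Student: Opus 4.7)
The plan is to reduce Corollary~\ref{cor:webdecomp} to Lemma~\ref{lem:webdecomp} via a single object-level bending isomorphism that exchanges downward $1$-labeled boundary points for upward ones (at the cost of additional downward $2$-strands). Concretely, the key step is to establish in $\Kar(\essbAWeb[2])$ the isomorphism
\[
(1^{*}) \;\cong\; (1)\otimes \lambda^{-1}(\emptyset),
\]
or equivalently, after applying the auto-equivalence $\lambda$, the isomorphism $(1^{*},2)\cong (1)$. The inverse pair is produced by the merge vertex $(1^{*},2)\to (1)$ and the splitter vertex $(1)\to (1^{*},2)$ of the $\glnn{2}$ web calculus. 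Their compositions are checked to equal the identities by the bigon relations in \eqref{eq:webrel} together with the $N=2,\,k=1$ specialization of the bending relation in \eqref{eq:webrel2}. Crucially, each composite has flow winding number zero, so no extraneous power of $c_{2}$ appears and the isomorphism lives already in $\essbAWeb[2]$, not merely in its quotient $\essAWeb[2]$.

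Once this key isomorphism is in hand, the remainder of the proof is formal. Given an arbitrary object $X$ of $\essbAWeb[2]$, we iterate the isomorphism at every downward $1$-labeled boundary point. Using the symmetric monoidal structure (including the braiding, which is trivially the identity at $q=1$ on orientation-compatible strands), we collect every resulting downward $2$-labeled point on the right to obtain an isomorphism $X\cong Y\otimes \lambda^{-r}(\emptyset)$ in $\Kar(\essbAWeb[2])$ with $Y\in \essbAWebp[2]$ and $r\geq 0$. Since $\lambda$ is a symmetric monoidal auto-equivalence, this rewrites as $X\cong \lambda^{-r}(Y)$. We then apply Lemma~\ref{lem:webdecomp} to decompose $Y$ as a direct sum of objects $\lambda^{k}(T_{m})$ with $m>1,\ k\geq 0$ and $\lambda^{k}(\emptyset)$ or $\sh\lambda^{k}(\emptyset)$ with $k\geq 0$. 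Applying $\lambda^{-r}$ and using $\lambda^{-r}(\lambda^{k}(Z))\cong \lambda^{k-r}(Z)$ together with the convention $\lambda^{-1}=\lambda^{*}$ produces the desired decomposition, with exponents now ranging over all of $\Z$.

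The main obstacle is the first step. Because Theorem~\ref{thm:faithfulness} establishes faithfulness only for the quotient $\essAWeb[2]$, the representation-theoretic isomorphism $V^{*}\cong V\otimes (\bigwedge^{2}V)^{*}$ cannot simply be imported to $\essbAWeb[2]$; the candidate merge and splitter vertices must be shown to be mutually inverse by an explicit local diagrammatic calculation, and the bigon that appears as one of the two compositions is not of the standard upward-oriented form covered by the first relation of \eqref{eq:webrel}. Verifying it requires combining the blister identity (second relation of \eqref{eq:webrel}) with the bending identity of \eqref{eq:webrel2}, and simultaneously checking that the winding number of each composition is exactly zero so that no $c_{2}^{\pm 1}$ factor is generated—this is what ensures the isomorphism holds in the central extension $\essbAWeb[2]$ rather than only in its quotient.
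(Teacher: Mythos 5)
Your proposal is correct and follows essentially the same route as the paper: the paper's one-line proof invokes the fact that $\lambda$ and $\lambda^*$ are mutually quasi-inverse equivalences between blocks (established earlier in Section~\ref{sec:lambdaequiv}, where the natural isomorphisms are exactly the local bending webs you describe, so no essential circles or powers of $c_2$ can arise), which reduces the statement to Lemma~\ref{lem:webdecomp} just as you do. One small correction: at $q=1$ the braiding is a symmetry but not the identity (a $1$--$1$ crossing resolves as $\id_2-u$), though for your purposes it only needs to be an isomorphism reordering tensor factors, so the argument is unaffected.
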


\begin{definition} A subcategory $D$ of a category $C$ is a skeleton of $C$ if the inclusion of $D$ into $C$ is an equivalence of categories and additionally no two distinct objects of $D$ are isomorphic. 
\end{definition}

\begin{lemma} The full subcategory of $\Kar(\essbAWeb[2])$ containing (lexicographically ordered direct sums of) the objects $\lambda^k(T_{m})$ and $s \lambda^k(\emptyset)$ for $k,m\geq 0$ is a skeleton of $\Kar(\essbAWebp[2])$. Moreover, this skeleton is semisimple.
\end{lemma}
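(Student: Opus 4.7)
The plan is to deduce both assertions from Corollary~\ref{cor:webdecomp} and Lemma~\ref{lem:simpleendo}. First, I observe that every listed object lies in $\Kar(\essbAWebp[2])$: the projectors $T_m$ and the empty object have outward boundary, and $\lambda$ adds outward pointing $2$-labeled strands, so the proposed subcategory is a full subcategory of $\Kar(\essbAWebp[2])$ by construction.

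For essential surjectivity, I would apply the positive version of Corollary~\ref{cor:webdecomp}: the inductive decomposition proof in Lemma~\ref{lem:webdecomp} manifestly preserves outward orientation and never requires $\lambda^*$ when starting from objects in $\essbAWebp[2]$, so every object of $\Kar(\essbAWebp[2])$ is isomorphic to a direct sum of objects from the list with all $k\geq 0$ (reading $\lambda^k(\emptyset)$ as either $\lambda^k(T_0)$ or as the unshifted partner of $\sh\lambda^k(\emptyset)$). Imposing the lexicographic order on summands singles out a unique representative in the proposed skeleton for each such decomposition.

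For the skeleton property, I then need that distinct indecomposables on the list are pairwise non-isomorphic and that the Krull--Schmidt theorem applies. Lemma~\ref{lem:simpleendo} together with the Hom computations immediately following it yields both: all Hom spaces between distinct indecomposables in the list vanish in $\Kar(\essbAWebp[2])$, and each indecomposable has endomorphism ring $\C$. Since $\C$ is a (local) field, any two decompositions into objects from the list have the same multiset of summands; under the fixed lex ordering this multiset determines a unique representative, and distinct ordered direct sums represent non-isomorphic objects.

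Semisimplicity is immediate from the same data: every object is a finite direct sum of objects each with endomorphism ring $\C$ and with no non-zero morphisms between distinct summands, so the skeleton is equivalent to a coproduct of copies of $\mathrm{Vec}_\C$. The main subtlety will be keeping straight the distinction between the graded Karoubi envelope $\Kar^*$ (where endomorphism rings of $\lambda^k(T_m)$, $\lambda^k(\emptyset)$ are $\C[\wrap^{\pm 1}]$, $\C[\wrap_2^{\pm 1}]$, or $\C[c_2^{\pm 1}]$) and the ungraded $\Kar$ used in the statement (where only degree-zero morphisms count, collapsing each such ring to $\C$). It is precisely this that forces both $\lambda^k(\emptyset)$ and $\sh\lambda^k(\emptyset)$, but no further grading shifts, to appear on the list.
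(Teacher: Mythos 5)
Your proposal is correct and follows essentially the same route as the paper: essential surjectivity from the decomposition in Lemma~\ref{lem:webdecomp}, uniqueness of the decomposition and non-isomorphism of distinct summands from the Hom computations in Lemma~\ref{lem:simpleendo}, and semisimplicity from the fact that each indecomposable has endomorphism algebra $\C$ with no morphisms between distinct simples. Your extra care about the Krull--Schmidt step and the distinction between $\Kar(-)^*$ and $\Kar(-)$ is a welcome elaboration of points the paper leaves implicit, but it does not change the argument.
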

\begin{proof}
The inclusion of this full subcategory is essentially surjective by Lemma~\ref{lem:webdecomp}. Moreover, by Lemma~\ref{lem:simpleendo}, the decomposition of an object of $\Kar(\essbAWebp[2])$ into a lexicographically ordered direct sum of such simples is essentially unique. In particular, there are also no isomorphisms between distinct direct sums of simples. We have also seen that the endomorphism algebras of simples are isomorphic to $\C$, which implies that the skeleton is semisimple.
\end{proof}


%

%
\end{document}